\crefname{equation}{}{}
\Crefname{equation}{}{}
\let\hat\widehat
\let\tilde\widetilde
\newtheorem{theorem}{Theorem}
\newtheorem{lemma}[theorem]{Lemma}
\newtheorem{corollary}[theorem]{Corollary}
\newtheorem{proposition}[theorem]{Proposition}
\newtheorem{definition}{Definition}
\newtheorem*{definition*}{Definition}
\newtheorem*{remark*}{Remark}
\crefname{definition}{\textbf{definition}}{definitions}
\Crefname{definition}{Definition}{Definitions}
\crefname{assumption}{\textbf{assumption}}{assumptions}
\Crefname{assumption}{Assumption}{Assumptions}
\DeclareMathOperator*{\argmax}{argmax}
\DeclareMathOperator*{\argmin}{argmin}
\newcommand{\wS}{\widehat{S}}
\newenvironment{enum}{
\begin{enumerate}
  \setlength{\itemsep}{1pt}
  \setlength{\parskip}{0pt}
  \setlength{\parsep}{0pt}
}{\end{enumerate}}
\begin{document}

\begin{center}
\textsf{\textbf{\Large Bootstrapping and Sample Splitting}}\\
\textsf{\textbf{\Large For High-Dimensional, Assumption-Free Inference}}\\
\textsf{\textbf{Alessandro Rinaldo, Larry Wasserman, Max G'Sell and Jing Lei}}\\
\vspace{.11pt}
\textsf{\textbf{Carnegie Mellon University}}\\
\vspace{.11pt}
\textsf{\textbf{April 1 2018}}
\end{center}

\begin{quote}
{\em Several new methods have been recently proposed 
for performing valid inference after model selection.
An older method is sampling splitting:
use part of the data for model selection and the rest for inference.
In this paper we revisit sample splitting combined with the bootstrap (or the Normal approximation).
We show that this leads to a simple, 
assumption-free approach to inference and
we establish results on the accuracy of the method.
In fact, we find new bounds on the accuracy of the
bootstrap and the Normal approximation for general nonlinear parameters
with increasing dimension which we then use to assess the accuracy of regression inference.
We define new parameters that measure variable importance
and
that can be inferred with greater accuracy than the usual regression coefficients.
Finally, we elucidate an inference-prediction trade-off:
splitting
increases the accuracy and robustness of inference but
can decrease the accuracy of the predictions.}
\end{quote}

``Investigators who use [regression] are not paying adequate attention
to the connection - if any - between the models and the phenomena they
are studying. ...  By the time the models are deployed, the scientific
position is nearly hopeless. Reliance on models in such cases is
Panglossian ...''

		---David Freedman

\section{Introduction}

We consider the problem of carrying out assumption-free statistical inference
after model selection for high-dimensional linear regression.  This is now a
large topic and a variety of approaches have been considered under different
settings
-- an overview of a subset of these can be found in
\cite{dezeure2015high}.  We defer a detailed discussion of the literature and
list of references until Section \ref{sec:related}.

In this paper, we will use linear models but we do not assume that the true regression function is linear.
We show the following:
\begin{enumerate}
\item Inference based on sample splitting followed by the bootstrap (or Normal approximation)
gives assumption-free, robust confidence intervals under very weak
assumptions.
No other known method gives the same inferential guarantees.
\item The usual regression parameters are not the best choice of parameter to estimate
in the weak assumption case. We 
propose new parameters, called LOCO (Leave-Out-COvariates) parameters,
that are interpretable, general and can be estimated accurately.
\item There is a trade-off between prediction accuracy and inferential accuracy.
\item We provide new bounds on the accuracy of the Normal approximation and the bootstrap
to the distribution of the projection parameter 
(the best linear predictor)
when the dimension increases and the model is wrong.
We need these bounds since we will use Normal approximations or the bootstrap after choosing the model.
In fact, we provide new general bounds on Normal approximations for
nonlinear parameters with increasing dimension.
This gives new insights on the accuracy of inference in high-dimensional situations.
In particular, the accuracy of the Normal approximation for the standard regression parameters is very
poor while the approximation is very good for LOCO parameters.
\item The accuracy of the bootstrap can be improved by using an alternative version that
we call the image bootstrap. However, this version is computationally expensive.
The image bootstrap is discussed in the appendix.
\item We show that the law of the projection parameter cannot be consistently estimated without
sample splitting.
\end{enumerate}

\vspace{11pt}

We want to emphasize that we do not claim that the LOCO parameter is optimal in any sense.
We just aim to show that there exist alternatives to the usual parameters that,
when the linear model is not true,
(i) are more interpretable and (ii) can be inferred more accurately.

\subsubsection*{Problem Setup and Four (Random) Parameters that Measure Variable Importance}

We consider a distribution-free regression framework, where the random pair
$Z = (X,Y) \in \mathbb{R}^d \times \mathbb{R} $ of $d$-dimensional covariates
and response variable has an unknown distribution $P$ belonging to a large non-parametric class $\mathcal{Q}_n$ of
probability distributions on $\mathbb{R}^{d+1}$. 
We make no assumptions on the regression function $x \in \mathbb{R}^d \mapsto \mu(x) = 
\mathbb{E}\left[ Y | X = x \right]$ describing the relationship  between the
vector of covariates and the expected value of the response variable. In particular, we do not require it to be linear.

We observe $\mathcal{D}_n = (Z_1,\ldots, Z_n)$, an i.i.d. sample of size $n$
from some $P \in \mathcal{Q}_n$,
where $Z_i = (X_i,Y_i)$, for $i = 1,\ldots,n$. 
We apply to the data a procedure $w_n$, which
returns both a subset  of the coordinates and an estimator of the regression
function over the selected coordinates. 
Formally,
\[
    \mathcal{D}_n \mapsto w_n(\mathcal{D}_n) = \left(\widehat{S},
    \widehat{\mu}_{\widehat{S}}\right),
\]
where $\widehat{S}$, the selected model, is a random, nonempty  subset of
$\{1,\ldots,d\}$ and $\widehat{\mu}_{\widehat{S}}$ is an estimator of the regression function 
$x \in \mathbb{R}^d \mapsto
\mathbb{E}\left[ Y | X_{\widehat{S}} = x_{\widehat{S}} \right]$
restricted to the selected covariates $\widehat{S}$, where for $x \in
\mathbb{R}^d$, $x_{\wS} = (x_j, j \in \wS)$ and $(X,Y) \sim P$, independent of $\mathcal{D}_n$.

The model selection and estimation steps comprising the procedure $w_n$ need not be related to
each other, and can each be accomplished by any appropriate method. The only
assumption we
impose on $w_n$ is that the size of the selected model be under our control; that is, $
0 < |\widehat{S}| \leq k $, for a pre-defined positive
integer $k \leq d$
where $k$ and $d$ can both increase with sample size. 
For example, $\widehat{S}$ may be defined as the set of
$k$ covariates with the highest linear correlations with the response and
$\hat{\mu}_{\widehat{S}}$ may be any non-parametric estimator of the regression
function over the coordinates in $\widehat{S}$ with bounded range.
Although our framework allows for arbitrary estimators of the regression function,
we will be focussing on linear estimators:
$\widehat{\mu}_{\widehat{S}}(x) = \widehat{\beta}_{\widehat{S}}^\top
x_{\widehat{S}} $, where $\widehat{\beta}_{\widehat{S}}$ is any estimator of the of the linear regression
coefficients for the selected variables -- such as ordinary least squares on the
variables in $\widehat{S}$. In particular, $\widehat{\beta}_{\widehat{S}}$
may arise from fitting a sparse linear model, such as the lasso or
stepwise-forward regression, in which case estimation of the
regression parameters and model selection can be accomplished simultaneously
with one procedure.

It is important to emphasize that, since we impose minimal assumptions  on the
class $\mathcal{Q}_n$ of data generating distribution and allow for arbitrary model selection and estimation procedures
$w_n$, we  will not assume anything about the quality of the output returned by the procedure
$w_n$. In particular, the selected model $\widehat{S}$ needs not be a good approximation of any
optimal model, however optimality may be defined. Similarly, $\hat{\mu}_{\widehat{S}}$ may
not be a consistent estimator of the regression function restricted to
$\widehat{S}$.  Instead, our concern is to provide statistical guarantees for 
various criteria of significance for the selected model
$\widehat{S}$, uniformly over the choice of $w_n$ and over all the
distributions $P \in \mathcal{Q}_n$. We will accomplish this goal by producing
confidence sets for four {\it random}
parameters in $\mathbb{R}^{ \widehat{S}}$, each providing a
different assessment of the level of statistical significance of the variables
in $\widehat{S}$ from a purely {\it predictive} standpoint. 
All of the random parameters under consideration are functions of the data generating distribution $P$, of the
sample $\mathcal{D}_n$ and, therefor, of its size $n$ and, importantly, of the model selection and estimation procedure
$w_n$. 
Below, $(X,Y)$ denotes a draw from $P$, independent of the sample
$\mathcal{D}_n$. Thus the distribution of $(X,Y)$ is the same as their
conditional distribution given $\mathcal{D}_n$.

\begin{itemize}
    \item {\bf The projection parameter $\beta_{\widehat{S}}$.} The linear projection parameter
	$\beta_{\widehat{S}}$ is defined to be the vector of coefficients of the
	best linear predictor of $Y$  using $X_{\widehat{S}}$: 
\[
    \beta_{\widehat{S}} = \argmin_{\beta \in \mathbb{R}^{\widehat{S}}}
    \mathbb{E}_{X,Y} \left[ (Y- \beta^\top X_{\widehat{S}})^2 \right],
\]
where $\mathbb{E}_{(X,Y)}$ denote the expectation with respect to the
distribution of $(X,Y)$. The terminology projection parameters refers to the
fact  that $X^\top \beta_{\wS}$ is the projection of $Y$ into
the linear space of all random variables that can be obtained as linear
functions of $X_{\wS}$. For a through discussion and an analysis of the properties of such parameters see \cite{buja2015models}.
More generally, this type of quantities are also studied in 
\cite{lee2016exact, taylor2014exact, berk2013valid, wasserman2014}.
Note that the projection parameter is well-defined
even though the true regression function  $\mu$ is not linear.
Indeed, it is immediate that
\begin{equation}\label{eq::projection-parameter}
    \beta_{\wS} = \Sigma_{\wS}^{-1}\alpha_{\wS}
\end{equation}
where
$\alpha_{\wS} = (\alpha_{\wS}(j):\ j\in \wS)$,
$\alpha_{\wS}(j) = \mathbb{E}[Y X_{\wS}(j)]$
and $\Sigma_{\wS} = \mathbb{E}[X_{\wS} X_{\wS}^\top]$.
We remark that the regression projection parameter depends only on the selected model $\wS$,
and not any estimate $\hat{\mu}_{\wS}$ of the regression function on the
coordinates in $\wS$ that may be implemented in $w_n$. 
\item {\bf The LOCO parameters $\gamma_{\wS}$ and $\phi_{\wS}$.}
    Often, statisticians are interested in $\beta_{\wS}$ 
as a measure of the importance of the selected covariates.
But, of course, there are other ways to measure variable importance.
We now define two such parameters, which we refer to as {\em Leave Out COvariate Inference -- or LOCO -- parameters}, which were originally defined in
\cite{lei2016distribution} and are similar to the variable importance measures used in random forests.
The first LOCO parameter is
$\gamma_{\wS} = (\gamma_{\wS}(j):\ j\in \wS)$,
where
\begin{equation}\label{eq:gamma.j}
\gamma_{\wS}(j) = \mathbb{E}_{X,Y}\Biggl[|Y-\hat\beta_{\wS(j)}^\top X_{\wS(j)}|-
|Y-\hat\beta_{\wS}^\top X_{\wS}| \Big| \mathcal{D}_n \Biggr].
\end{equation}
In the above expression, $\hat\beta_{\wS}$ is {\it any} estimator of the projection parameter $\beta_{\wS}$ and  $\wS(j)$ and
$\hat\beta_{\wS(j)}$ are obtained 
by re-running the model selection and estimation procedure after removing the
$j^{\mathrm{th}}$ covariate from the data $\mathcal{D}_n$. To be clear, for each $j \in wS$, $\wS(j)$ is a subset of size $k$ of $\{1,\ldots,d\} \setminus \{j\}$.
Notice that the selected model can be different when covariate $X_j$ is
held out from the data, so that the intersection between $\wS(j)$ and $\wS$ can be quite smaller than $k-1$.
The interpretation of $\gamma_{\wS}(j)$ is simple:
it is the increase in prediction error by not having access to $X(j)$ (in both
the model selection and estimation steps).
Of course, it is possible to extend the definition of this parameter by leaving out several variables
from $\wS$ at once without additional conceptual difficulties.\\
The parameter $\gamma_{\wS}$ has several advantages over the projection
parameter $\beta_{\wS}$: it is more interpretable since it refers directly to prediction error and
we shall see that the accuracy of the Normal approximation
and the bootstrap is much higher.
Indeed, we believe that the widespread focus on $\beta_{\wS}$
is mainly due to the fact that statisticians are used to
thinking in terms of cases where the linear model is assumed to be correct.\\
The second type of LOCO parameters that we consider are the median LOCO parameters
$\phi_{\wS} = (\phi_{\wS}(j):\ j\in {\wS})$
with
\begin{equation}\label{eq:median.LOCO}
    \phi_{\wS}(j) = {\rm median}\Biggl[|Y-\hat\beta_{\wS(j)}^\top X_{\wS}|-
    |Y-\hat\beta_{\wS}^\top X_{\wS}|\,\Biggr],
\end{equation}
where the median is over the conditional distribution of $(X,Y)$ given
$\mathcal{D}_n$. Though one may simply regard $\phi_{\wS}$ as a robust version of
$\gamma_{\wS}$, we find that inference for $\phi_{\wS}$ will remain valid under weaker
assumptions that the ones needed for $\gamma_{\wS}$. 
Of course,
as with $\gamma_{\wS}$, we may leave out multiple covariate at the same time.
\item {\bf The prediction parameter $\rho_{\wS}$}.
It is also of interest to obtain an omnibus parameter that
measures how well the selected model will predict
future observations.
To this end, we define the future predictive error as
\begin{equation}
    \rho_{\wS} = \mathbb{E}_{X,Y}\Bigl[| Y - \hat\beta_{\wS}^\top X_{\wS}|\,   \Bigr],
\end{equation}
where $\widehat{\beta}_{\wS}$ is any estimator the projection parameters $\beta_{\wS}$.
\end{itemize}

{\bf Remarks.}
\begin{enumerate}
	\item 
The LOCO and prediction parameters do not require linear estimators.
For example we can define
$$
\gamma_{\wS}(j) = \mathbb{E}_{X,Y}\Biggl[ |Y-\hat\mu_{\wS(j)}(X_{{\wS}(j)})| -
|Y-\hat\mu_{\wS}(X_{\wS})|\ \Biggr], \quad j \in \wS,
$$
where $\hat\mu_{\wS}$ is any regression estimator 
restricted to the coordinates in $\wS$ and
$\hat\mu_{\wS(j)}$ is the estimator obtained after performing a new model
selection process and then refitting without covariate $j
\in \wS$. Similarly, we could have
\[
    \rho_{\wS} = \mathbb{E}_{X,Y}\Bigl[| Y - \hat{\mu}_{\wS}(X_{\wS})|\,
    \Bigr],
\]
for an arbitrary estimator $\hat{\mu}_{\wS}$.
For simplicity, we will focus on linear estimators, although our results about the
LOCO and prediction parameters hold even in this more general setting.
\item It is worth reiterating that the projection and LOCO parameters are only defined over the coordinates in $\wS$,  the set of variables that
are chosen in the model selection phase.
If a variable is not selected then the corresponding parameter is set to be identically zero and is not the target of any inference.
\end{enumerate}

There is another version of the projection parameter defined as follows.
For the moment, suppose that $d < n$ and that there is no model selection.
Let 
$\beta_n = (\mathbb{X}^\top \mathbb{X})^{-1}\mathbb{X}^\top \mu_n$
where $\mathbb{X}$ is the $n\times d$ design matrix, whose columns are the $n$ vector of covariates $X_1,\ldots, X_n$, 
and $\mu_n = (\mu_n(1),\ldots, \mu_n(n))^\top$,
with $\mu_n(i) = \mathbb{E}[Y_i | X_1,\ldots, X_n]$.
This is just the conditional mean of the least squares estimator
given $X_1,\ldots, X_n$.
We call this the {\em conditional projection parameter}.
The meaning of this parameter when the linear model
is false is not clear.
It is a data dependent parameter, even in the absence of
model selection.
\cite{buja2015models} 
have devoted a whole paper to this issue.
Quoting from their paper:
\begin{quote}
{\it When fitted models are approximations, conditioning on the regressor is no longer
permitted ... Two effects occur:
(1) parameters become dependent on the regressor distribution;
(2) the sampling variability of the parameter estimates no longer derives
from the conditional  distribution  of the response alone.
Additional sampling variability arises when the nonlinearity 
conspires with the randomness of the regressors to generate a 
$1/\sqrt{n}$ contribution to the standard errors.}
\end{quote}

Moreover, it is not possible to estimate the distribution of the
conditional projection parameter estimate
in the distribution free framework.
To see that, note that the least squares
estimator can be written as
$\hat\beta(j) = \sum_{i=1}^n w_i Y_i$
for weights $w_i$ that depend on the design matrix.
Then
$\sqrt{n}(\hat\beta(j) - \beta(j)) = \sum_{i=1}^n w_i \epsilon_i$
where
$\epsilon_i = Y_i - \mu_n(i)$.
Thus, for each $j \in \{1,\ldots,d\}$ we have that
$\sqrt{n}(\hat\beta(j) - \beta(j))$ is approximately $\approx N(0,\tau^2)$,
where
$\tau^2 = \sum_i w_i^2 \sigma_i^2$,
with
$\sigma_i^2 = {\rm Var}(\epsilon_i | X_1,\ldots, X_n)$.
The problem is that there is no consistent estimator of
$\tau^2$ under the nonparametric models we are considering.
Even if we assume that $\sigma_i^2$ is constant
(an assumption we avoid in this paper), we still have that
$\tau^2 =\sigma^2 \sum_i w_i^2$
which cannot be consistently estimated without assuming that the linear model is correct.
Again, we refer the reader to \cite{buja2015models} for more discussion.
In contrast,
the projection parameter
$\beta = \Sigma^{-1}\alpha$ is a fixed functional 
of the data generating distribution $P$
and is estimable.
For these reasons, we focus in this paper on the projection parameter
rather than the conditional projection parameter.

\subsection*{Goals and Assumptions}

Our main goal is to provide statistical guarantees for each of the four
random parameters of variable significance introduced above, under our
distribution free framework. For notational convenience, in this section we let
$\theta_{\wS}$ be any of the parameters of interest: $\beta_{\wS}$,
$\gamma_{\wS}$,
$\phi_{\wS}$ or $\rho_{\wS}$.

We will rely on sample splitting: assuming for notational convenience that the
sample size is
$2n$, we randomly split the data $\mathcal{D}_{2n}$
into two halves, $\mathcal{D}_{1,n}$ and $\mathcal{D}_{2,n}$. Next, we run the model
selection and estimation procedure $w_{n}$ on $\mathcal{D}_{1,n}$, obtaining
both $\wS$ and $\hat{\mu}_{\wS}$ (as remarked above, if we are concerned with the
projection parameters, then we will only need $\wS$). 
We then use the
second half of the sample $\mathcal{D}_{2,n}$  to construct
an estimator $\hat \theta_{\wS}$ and
a confidence hyper-rectangle $\hat{C}_{\wS}$ for $\theta_{\wS}$ satisfying the
following properties:

\begin{align}
 {\rm Concentration}: \phantom{xxxxxxxxx}&\  \displaystyle \limsup_{n
 \rightarrow \infty}  \sup_{w_{n}\in {\cal W}_{n}} \sup_{P\in {\cal Q}_n}
 \mathbb{P}(||\hat\theta_{\wS}-\theta_{\wS}||_\infty > r_n) \to 0  \label{eq:concentration}\\
 \vspace{.11pt} \nonumber\\
 {\rm Coverage\   validity\  (honesty)}: &\  
 \displaystyle\liminf_{n\to\infty}\inf_{w_{n}\in {\cal W}_{n}}  \inf_{P\in
     {\cal Q}_{n}}
\mathbb{P}(\theta_{\wS}\in \hat{C}_{\wS})\geq 1-\alpha \label{eq::honest}\\
\vspace{.11pt} \nonumber \\
{\rm Accuracy}: \phantom{xxxxxxxx}&\  
\displaystyle  \limsup_{n
 \rightarrow \infty}  \sup_{w_{n}\in {\cal W}_{n}} \sup_{P\in {\cal
 Q}_n}\mathbb{P}(\nu(\hat{C}_{\wS})> \epsilon_n)\to 0 \label{eq::accuracy}
\end{align}
where $\alpha \in (0,1)$ is a pre-specified level of significance,  
$\mathcal{W}_n$ is the set of all the model selection and estimation procedures on samples of size $n$, 
$r_n$  and $\epsilon_n$ both vanish as $n \rightarrow \infty$ and
$\nu$ is the size of the set
(length of the sides of the rectangle)
where we recall that $k = |\wS|$ is
non-random.
The probability statements above take into account both the randomness in the sample
$\mathcal{D}_{n}$
and the randomness associated to splitting it into halves.

{\bf Remark.}
The property  that  the coverage of $\hat{C}_{\wS}$ is guaranteed uniformly over the entire class
$\mathcal{Q}_n$ is known as (asymptotic) honesty \citep{li1989honest}. 
Note that the confidence intervals are for random parameters (based on half the data)
but the uniform coverage, accuracy and concentration guarantees hold marginally.

 The statistical guarantees listed above assure that both
    $\hat{\theta}_{\wS}$ and $\hat{C}_{\wS}$ are {\it
robust} with respect to the choice of $w_n$.
We seek validity over all model selection and estimation rules
 because, in realistic data analysis, the procedure $w_n$
can be very complex.
In particular, the choice of model can involve: plotting, outlier removal,
transformations, choosing among various competing models, etc.. Thus, unless
we have validity over all $w_n$,
there will be room for unconscious biases to enter. Note that
sample splitting is key in yielding
uniform coverage and robustness.

The confidence sets we construct will be hyper-rectangles.
The reason for such choice is two-fold.
First, once we have a rectangular confidence set for a vector parameter,
we immediately have simultaneous confidence intervals for the
components of the vector.
Secondly, recent results on high dimensional normal approximation of normalized sums by \cite{cherno1,cherno2} have shown that central limit theorems 
for hyper-rectangles have only a logarithmic dependence on the dimension.

Depending on the target parameter, the class $\mathcal{Q}_n$ of data generating
distributions on $\mathbb{R}^{d+1}$ for the pair $(X,Y)$ will be different. We
will provide details on each such case separately. However, it is worth noting
that  inference for the projection parameters calls for a far more restricted
class of distributions than the other parameters. In particular, we find it
necessary to impose uniform bounds on the largest and smallest eigenvalues of
the covariance matrices of all $k$ marginals of the $d$ covariates, as well as
bounds on the higher moments of $X$ and on the mixed moments of $X$ and $Y$.
We will further assume, in most cases, that the distribution of the pair
$(X,Y)$ in $[-A,A]^{d+1}$, for some fixed $A>0$. Such compactness assumptions
are stronger than necessary but allow us to keep the statement of the results
and their proofs simpler. In particular, they may be replaced with appropriate
tail or moment bounds and not much will change in our analysis and results.

Although we have formulated the guarantees of honest validity, accuracy and
concentration in asymptotic terms, all of our results are in fact obtained as
finite sample bounds. This allow us to derive consistency rates in $n$ with all
the relevant quantities, such as the dimension $d$, the size of the selected
model $k$, and the variance and eigenvalue bounds needed for the projection
parameters accounted for in the constants (with the exception of $A$, which we
keep fixed). As a result, our results remain valid and are in fact most
interesting  when all these quantities are allowed to change with $n$.

\subsection{Related Work}\label{sec:related}

The problem of inference after model selection has
received much attention lately.  Much of the work falls broadly into three
categories: inference uniformly over selection procedure, inference with
regard to a particular debiased or desparsified model, and inference conditional on model
selection.  A summary of some of the various methods is in
Table \ref{table::compare}.
We discuss these approaches in more detail in Section \ref{section::comments}.

The uniform approach includes POSI \citep{berk2013valid},
which constructs valid inferential procedures regardless of the model selection
procedure by maximizing over all possible model selections.  This method
assumes Normality and a fixed, known variance, as well as being computationally
expensive.  The idea is built upon by later work 
\citep{bachoc2, bachoc}, which extend the ideas to other parameters of interest and
which allow for heteroskedasticity, non-normality, and model misspecification.

Most
other approaches focus on a particular model selection procedure and conduct
inference for selections made by that procedure. 
This includes the literature on debiased or desparsified regularized models,
for example
\cite{buhlmann2013statistical}, \cite{zhang2014confidence},
\cite{javanmard2014confidence}, \cite{peter.sarah.2015}, \cite{test},
\cite{zhang2017simultaneous}, \cite{vandegeer2014asymptotically}, \cite{nickl2013confidence}.  This work
constructs confidence intervals for parameters in high dimensional regression.
These can be used for the selected model
if a Bonferroni correction is applied.
However, these methods tend to assume that the linear model
is correct as well as a number of other assumptions on the design matrix and
the distribution of errors.

A separate literature on selective inference has focused on inference with
respect to the selected model, conditional on the event of that model's selection.  This
began with \cite{lockhart2014significance}, but was developed more fully in
\cite{lee2016exact}, \cite{carving}, and \cite{taylor2014exact}.
Further works in this area
include \cite{tibshirani2015uniform}, \cite{randomization},
\cite{loftus2015selective}, \cite{bootstrap.john}, \cite{tibshirani2016exact}, \cite{loco.john}.
In the simplest version, the distribution of
$\sqrt{n}(\hat\beta(j) - \beta(j))$ conditional
on the selected model has a truncated Gaussian distribution,
if the errors are Normal and the covariates are fixed.
The cdf of the truncated Gaussian is used as a pivot to
get tests and confidence intervals.
This approach requires Normality, and a fixed, known variance.
While the approach has broadened in later work, the methods still tend to assume
fixed design and a known, parametric structure to the outcome.

There have been several additional approaches to this problem that don't fall
in any of these broad categories.  While this is a larger literature than can
be addressed completely here, it includes early work on model selection
\cite{hurvich1990impact} and model averaging interpretations
\cite{hjort2003frequentist}; the impossibility results of \cite{leeb2008can},
\cite{buja2015models} on random $X$ and model misspecification; methods based
on resampling or sample splitting
\citep{CL:11,CL:13,Efron:14,wasserman2009high,meinshausen2009pvalues};
stability selection \citep{meinshausen2010stability, shah2013variable};  the
conformal inference approach of \cite{lei2016distribution}; goodness-of-fit
tests of \cite{shah2018goodness};
moment-constraint-based uniform confidence sets \citep{andrews2009hybrid};
\cite{meinshausen2015group} on inference
about groups of variables under general designs; \cite{belloni2011inference} in
the instrumental variable setting; \cite{belloni2015uniform} on post-selection
inference for $Z$-estimators, and the knockoffs approach of
\cite{barber2015controlling} and later \cite{candes2016panning}.  Although they
are not directed at linear models,\cite{wager2014confidence} and
\cite{JMLR:v17:14-168} address similar problems for random forests.

\begin{table}
\begin{center}
\begin{tabular}{llllll}
Method           & Parameter  & Assumptions & Accuracy          & Computation & Robust\\ \hline
Debiasing        & True $\beta$    & Very Strong & $1/\sqrt{n}$       & Easy        & No\\
Conditional      & Projection & Strong      & Not known                 & Easy        & No\\
Uniform          & Projection & Strong        & $\sqrt{k/n}$      & NP hard     & Yes\\
Sample Splitting & Projection & Weak        & $\sqrt{k^{5/2}\log k\sqrt{\log n}/n}$    & Easy        & Yes\\
Sample Splitting & LOCO       & None        & $\sqrt{\log (kn)/n}$ & Easy        & Yes\\
\end{tabular}
\end{center}
\caption{\em Different inferential methods. 
`accuracy' refers to the size of sides of the confidence set.
`robust' refers to robustness to model assumptions.
The term `Very Strong' means that the linear model is assumed to be correct and that there are incoherence
assumptions on the design matrix. `Strong' means constant variance and Normality are assumed. `Weak' means
only iid and invertible covariance matrix (for the selected variables).
`None' means only iid or iid plus a moment assumption.}
\label{table::compare}
\end{table}

{\bf Sample Splitting.}
The oldest method for inference after model selection
is sample splitting: half the data ${\cal D}_1$ are used for model fitting and the
other half ${\cal D}_2$ are used for inference.\footnote{
For simplicity, we assume that the data are split into two parts of equal size.
The problem of determining the optimal size of the split is not considered in this paper.
Some results on this issue are contained in \cite{shao1993linear}.}

Thus $S = w_{n}({\cal D}_1)$.
The earliest references for sample splitting that we know of are
\cite{Barnard},
\cite{cox1975note},
\cite{faraway1995data}
\cite{hartigan1969using},
page 13 of \cite{miller2002subset}
\cite{moran1973dividing},
page 37 of \cite{mosteller1977data} and
\cite{picard1990data}.
To quote Barnard:
`` ... the simple idea of splitting a sample in two and then
developing the hypothesis on the basis of one part and testing
it on the remainder may perhaps be said to be one of the most
seriously neglected ideas in statistics ...''

To the best of our knowledge
there are only two methods that achieve
asymptotically honest coverage:
sample splitting and uniform inference.
Uniform inference is based on estimating the distribution
of the parameter estimates over all possible model selections.
In general, this is infeasible.
But we compare sample splitting and uniform inference
in a restricted model in Section \ref{section::splitornot}.

\subsection{Outline}

In Section
\ref{section::splitting}
we introduce the basic sample splitting strategies.
In Section \ref{section::splitornot}
we compare sample splitting to non-splitting strategies.
Section \ref{section::comments}
contains some comments 
on other methods.
In Section \ref{section::simulation}
we report some numerical examples.
In Section
\ref{section::berry}
we establish a Berry-Esseen bound for regression
with possibly increasing dimension and no assumption of linearity
on the regression function.
Section \ref{section::conclusion}
contains concluding remarks.
Extra results, proofs 
and a discussion of another version of the bootstrap, are relegated to the Appendices.

\subsection{Notation}

Let $Z=(X,Y)\sim P$ 
where $Y\in\mathbb{R}$ and
$X\in \mathbb{R}^d$.
We write
$X = (X(1),\ldots, X(d))$
to denote the components of the vector $X$.
Define
$\Sigma = \mathbb{E}[X X^\top]$ and
$\alpha = (\alpha(1),\ldots,\alpha(d))$
where $\alpha(j) = \mathbb{E}[Y X(j)]$.
Let $\sigma = {\rm vec}(\Sigma)$ and
$\psi \equiv \psi(P) = (\sigma,\alpha)$.
The regression function is
$\mu(x) = \mathbb{E}[Y|X=x]$.
We use $\nu$ to denote Lebesgue measure.
We write
$a_n \preceq b_n$ to mean that
there exists a constant $C>0$ such that
$a_n \leq C b_n$ for all large $n$.
For a non-empty subset $S\subset \{1,\ldots, d\}$ of the covariates
 $X_S$ or $X(S)$ denotes the corresponding elements of $X$: $(X(j):\ j\in S)$ 
Similarly,
$\Sigma_S = \mathbb{E}[X_S X_S^\top]$ and $\alpha_S = \mathbb{E}[Y X_S]$.
We write
$\Omega = \Sigma^{-1}$ and
$\omega = {\rm vec}(\Omega)$
where ${\rm vec}$ is the operator that stacks a matrix into one large vector.
Also, ${\rm vech}$ is the half-vectorization operator that takes a symmetric matrix
and stacks the elements on and below the diagonal into a matrix.
$A\otimes B$ denotes the Kronecker product of matrices.
The commutation matrix $K_{m,n}$ is the
$mn \times mn$ matrix defined by
$K_{m,n} {\rm vec}(A) = {\rm vec}(A^\top)$.
For any $k\times k$ matrix $A$.
$\mathrm{vech}(A)$ denotes the
column vector of dimension $k(k+1)/2$ obtained by vectorizing only
the lower triangular part of $k\times k$ matrix $A$.

\section{Main Results}
\label{section::splitting}

We now describe how to construct estimators of the random parameters defined earlier.
Recall that we rely on data splitting: we randomly split the $2n$ data into two halves
${\cal D}_{1,n}$ and ${\cal D}_{2,n}$.  Then, for a given choice of  the model
selection and estimation rule $w_n$, we use ${\cal D}_{1,n}$ to select a non-empty set of variables $\wS \subset \{
1,\ldots,d\}$ where
$k =|\wS| < n$. For the LOCO and prediction parameters, based on $\mathcal{D}_{1,n}$, we also compute $\widehat{\beta}_{\wS}$,
any estimator of the projection parameters restricted to $\wS$. In addition, 
for each $j \in \wS$,  we further compute, still using $\mathcal{D}_{1,n}$ and the rule $w_n$, $\widehat{\beta}_{\wS(j)}$, the estimator of the
projection parameters over the set $\widehat{S}(j)$.   
Also, for $l=1,2$, we denote with $\mathcal{I}_{l,n}$ random subset of $\{1,\ldots, 2n\}$ containing the indexes for the data points in $\mathcal{D}_{l,n}$.

\subsection{Projection Parameters}
\label{sec:projection}
In his section we will derive various statistical guarantees for the projection parameters, defined in 
\eqref{eq::projection-parameter}. We will first define the class of data generating distributions on $\mathbb{R}^{d+1}$  for which our results hold. In the definition below, $S$ denotes a non-empty subset of $\{1,\ldots,d\}$  and  $W_S = ({\rm vech}(X_S X_S^\top), X_SY)$. 
\begin{definition}\label{def:Pdagger}
	Let ${\cal P}_n^{\mathrm{OLS}} $ be the set of all probability distributions $P$ on $\mathbb{R}^{d+1}$ with zero mean, Lebesgue density and such that, for some positive quantities $A, a, u, U , v$ and  $\overline{v}$,
	\begin{enumerate}
		\item the support of $P$ is contained in $[-A,A]^{d+1}$;
		\item $\min_{ \{ S \colon |S| \leq k \} } \lambda_{\rm min}(\Sigma_S) \geq u$ and $\max_{ \{ S \colon |S| \leq k\} } \lambda_{\rm max}(\Sigma_S) \leq U$, where  $\Sigma_S = \mathbb{E}_P[X_S X_S^\top]$;
				\item $\min_{ \{S \colon |S| \leq k \} } \lambda_{\rm min}({\rm Var}_P(W_S))\geq v$ and $\max_{ \{S \colon |S| \leq k\} }
\lambda_{\rm max}({\rm Var}_P(W_S))\leq \overline{v}$.
\item $\min\{ U, \overline{v} \} \geq \eta$, for a fixed $\eta>0$.
	\end{enumerate}
\end{definition}

The first compactness assumption can be easily modified by assuming
instead that $Y$ and $X$ are sub-Gaussian, without any technical
difficulty. We make such boundedness assumption to simplify
our results.  The bound on the smallest eigenvalue of $\Sigma_S$,
uniformly over all subsets $S$ is natural: the projection parameter is
only well defined provided that $\Sigma_S$ is invertible for all $S$,
and the closer $\Sigma_S$ is to being singular the higher the
uncertainty. The uniform condition on the largest eigenvalue of
$\Sigma_S$ in part 2. is used to obtain sharper bounds than the ones
stemming from the crude bound $U \leq A k$ implied by the assumption
of a compact support (see e.g. \Cref{thm:beta.accuracy2} below).  The
quantities $v$ and $\overline{v}$ in part 3. are akin to 4th moment
conditions. In particular, one can always take $\overline{v} \leq A^2
k^2$ in the very worst case.  Finally, the assumption of zero mean is
imposed out of convenience and to simplify our derivations, so that we
need not to be concerned with an intercept term.  As remarked above,
in all of our results we have kept track of the dependence on the
constants $a, u, U , v$ and $\overline{v}$, so that we may in fact
allow all these quantities to change with $n$ (but we do treat $A$ as
fixed and therefore have incorporate it in the constants).  Finally,
the assumption that t$U$ and $\overline{v}$ are bounded from zero is
extremely mild. In particular, the parameter $\eta$ is kept fixed and
its value affect the constants in Theorems \ref{thm:beta.accuracy2},
\ref{thm::big-theorem} and \ref{theorem::beta.boot} (the matrix
Bernstein inequality (see \Cref{lem:operator})).

{\bf Remark.} Although our assumptions imply that the individual coordinates of $X$ are sub-Gaussians, we do not require $X$ itself to be a sub-Gaussian vector, in the usual sense that, for each $d$-dimensional unit vector  $\theta$, the random variable $\theta^\top X$ is sub-Gaussian with variance parameter independent of $\theta$ and $d$.

Recall that the projection
parameters defined in \eqref{eq::projection-parameter} are
\begin{equation}\label{eq:betahat}
    \beta_{\wS} = \Sigma_{\wS}^{-1}\alpha_{\wS},
\end{equation}
where $\wS$ is the model selected based on $\mathcal{D}_{1,n}$ (of size no larger than $k$) and
\begin{equation}\label{eq:sigma.alpha}
    \alpha_{\wS} = \mathbb{E}[Y X(\wS)] \quad \text{and} \quad 
\Sigma_{\wS} = \mathbb{E}[X(\wS) X(\wS)^\top].
\end{equation}
We will be studying the ordinary least squares
estimator $\hat{\beta}_{\wS}$ of $\beta_{\wS}$  
computed using the sub-sample $\mathcal{D}_{2,n}$ and restricted to the coordinates
$\wS$. That is, 
\begin{equation}\label{eq:least.squares}
    \hat{\beta}_{\wS} = \widehat{\Sigma}_{\wS}^{-1} \widehat{\alpha}_{\wS} 
\end{equation}
where, for any non-empty subset $S$ of $\{1,\ldots,d\}$, 
\begin{equation}\label{eq:alpha.beta.hat}
\widehat{\alpha}_{S} = \frac{1}{n} \sum_{i \in \mathcal{I}_{2,n} } Y_i X_i(S)
\quad  \text{and} \quad 
\widehat{\Sigma}_{S} = \frac{1}{n} \sum_{i \in \mathcal{I}_{2,n}} X_i(\wS) X_i(S)^\top.
\end{equation}
Since each $P \in \mathcal{P}_n^{\mathrm{OLS}}$ has a Lebesgue density,
$\hat{\Sigma}_{\wS}$ is invertible almost surely as long as $n \geq k \geq |\wS|$.
Notice that $\hat{\beta}_{\wS}$ is not an unbiased estimator of
$\beta_{\wS}$ , conditionally or unconditionally on $\mathcal{D}_{2,n}$.

In order to relate $\hat{\beta}_{\wS}$ to $\beta_{\wS}$, it will first be convenient
to condition on $\wS$ and thus regard $\beta_{\wS}$ as a $k$-dimensional
deterministic vector
of parameters (recall that, for simplicity,  we assume that $|\wS| \leq k$), which
depends on some unknown $P \in \mathcal{P}_n^{\mathrm{OLS}}$. Then,
$\hat{\beta}_{\wS}$ is an estimator of a fixed parameter $\beta_{\wS} =
\beta_{\wS}(P)$ computed using
an i.i.d. sample $\mathcal{D}_{2,n}$ from  the same distribution $P \in
\mathcal{P}_n^{\mathrm{OLS}}$. Since all our bounds depend on $\wS$ only through its
size $k$, those bounds will hold also unconditionally.

For each $P \in \mathcal{P}_n^{\mathrm{OLS}}$, we can represent the
parameters $\Sigma_{\wS} = \Sigma_{\wS}(P)$ and $\alpha_{\wS} = \alpha_{\wS}(P)$
in \eqref{eq:sigma.alpha} in vectorized form as
\begin{equation}\label{eq:psi.beta}
\psi = \psi_{\wS} = \psi(\wS,P)=  
\left[ \begin{array}{c} 
	\mathrm{vech}(\Sigma_{\wS})\\
	\alpha_{\wS}\\ 
       \end{array} \right]  \in \mathbb{R}^{b},
       \end{equation}
where $b = \frac{ k^2 + 3k}{2} $. 
Similarly, based on the sub-sample $\mathcal{D}_{2,n}$ we define the $n$ random vectors 
$$
W_i = 
\left[ \begin{array}{c}
 \mathrm{vech}(X_i(\wS) X_i(\wS)^\top)\\
  Y_i \cdot X_i(\wS) \\ 
\end{array} 
\right] \in \mathbb{R}^b, \quad i \in \mathcal{I}_{2,n},
$$ 
and their average
\begin{equation}\label{eq:hat.psi.beta}
    \hat{\psi} = \hat{\psi}_{\wS} = \frac{1}{n} \sum_{i \in \mathcal{I}_{2,n}} W_i.
\end{equation}
It is immediate to see that $\mathbb{E}_P[\hat{\psi}] = \psi$, uniformly over all
$P \in \mathcal{P}_n^{\mathrm{OLS}}$.

We express both the projection parameter $\beta_{\wS}$ and the least square
estimator 
$\hat{\beta}_{\wS}$ as non-linear functions of $\psi$ and
$\hat{\psi}$, respectively, in the following way.  
Let $g \colon \mathbb{R}^b
\rightarrow  \mathbb{R}^k$ be given by
\begin{equation}\label{eq:g.beta}
    x = \left[ \begin{array}{c}  
    	    x_1\\
    	    x_2\\
	\end{array}
    \right] \mapsto \left( \mathrm{math}(x_1) \right)^{-1} x_2,
\end{equation}
where $x_1$ and $x_2$ correspond to the first $k(k+1)/2$ and the last $k$
coordinates of  $x$, respectively, and $\mathrm{math}$ is the inverse mapping of
$\mathrm{vech}$, i.e.  $\mathrm{math}(x) = A$ if and only if $\mathrm{vech}(A) =
x$. Notice that  $g$ is well-defined over the convex set 
\[
\left\{     \left[ \begin{array}{c}  
    \mathrm{vech}(\Sigma)\\
    x
\end{array}
\right] \colon \Sigma \in \mathcal{C}^+_{k}, x \in \mathbb{R}^k \right\}
\]
where $\mathcal{C}^+_k$ is the cone of positive definite matrices of dimension
$k$.
It follows from our assumptions that, for each $P \in \mathcal{P}_n^{\mathrm{OLS}}$,
$\psi$ is in the domain of $g$ and, as long as $n \geq d$, so is $\hat{\psi}$,
almost surely.
Thus, we may write
$$
\beta_{\wS} = g(\psi_{\wS}) \quad \text{and} \quad \hat{\beta}_{\wS} =
g(\hat{\psi}_{\wS}).
$$

This formulation of $\beta_{\wS}$ and $\hat{\beta}_{\wS}$ is convenient because, 
by expanding each coordinate of $g(\hat{\psi})$ separately through a first-order
Taylor series expansion  around $\psi$,
it allows us to re-write $\hat{\beta}_{\wS} -
\beta_{\wS}$ as a linear  transformation of
$\hat{\psi} - \psi$ given by the Jacobian of $g$ at $\psi$, plus a stochastic
reminder term. Since $\hat{\psi} - \psi$ is an average, such 
approximation is simpler to analyze that the original quantity   $\hat{\beta}_{\wS} -
\beta_{\wS}$ and, provided that the reminder term of the Taylor
expansion be small,  also sufficiently accurate. 
This program is carried out in detail and greater generality in a later 
Section \ref{section::berry}, where we derive 
finite sample Berry-Esseen bounds for non-linear statistics of sums of
independent random vectors. The results in
this section are  direct, albeit non-trivial, 
applications of those bounds.

\subsubsection*{Concentration of $\hat{\beta}_{\wS}$}

We begin by deriving high probability concentration bonds for
$\hat{\beta}_{\wS}$ around $\beta_{\wS}$. 
When there is no model selection nor sample splitting -- so that $\wS$ is deterministic and equal to
$\{1,\ldots,d$) -- our results yield consistency rates for the ordinary least squares
    estimator of the projection parameters, under increasing dimensions and a
    misspecified model. An analogous result was established in \cite{hsu14}, where the
approximation error $\mu(x) - x^\top \beta$ is accounted for explicitly.

\begin{theorem}\label{thm:beta.accuracy2}
Let
\[
B_n  = 
    \frac{ k}{u^2}
    \sqrt{ U \frac{ \log k +
    \log n}{n}} 
\]
and assume that $\max\{ B_n, u B_n \} \rightarrow 0$ as $n \rightarrow \infty$. Then, there exists
a constant $C>0$, dependent on $A$ and $\eta$ only, such that, for all $n$ large enough, 
\begin{equation}\label{eq::beta2}
\sup_{w_n \in \mathcal{W}_n} \sup_{P \in \mathcal{P}_n^{\mathrm{OLS}}} \|\hat\beta_{\wS} - \beta_{\wS} \| \leq C
B_n,
\end{equation}
with probability at last $1 - \frac{2}{n}$.
\end{theorem}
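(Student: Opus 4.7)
The sample splitting construction is what makes this tractable: since $\widehat{S}$ and $\widehat\beta_{\wS(j)}$-related quantities are fabricated from $\mathcal{D}_{1,n}$ alone, while $\hat{\beta}_{\wS}$ uses only $\mathcal{D}_{2,n}$, I would first condition on $\mathcal{D}_{1,n}$ (equivalently on $\wS$). This reduces the problem to bounding $\|\hat{\beta}_S - \beta_S\|$ for an arbitrary deterministic $S\subset\{1,\ldots,d\}$ with $|S|\le k$, uniformly over $P\in\mathcal{P}_n^{\mathrm{OLS}}$. Because the resulting bound will depend on $S$ only through the size restriction $|S|\le k$, integrating over $\wS$ gives uniformity over every $w_n\in\mathcal{W}_n$.

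With $S$ fixed, I would use the standard algebraic identity
\[
\hat{\beta}_S - \beta_S = \hat{\Sigma}_S^{-1}\bigl[(\hat{\alpha}_S - \alpha_S) - (\hat{\Sigma}_S - \Sigma_S)\beta_S\bigr]
\]
and attack each of the three building blocks. First, the summands in $\hat{\Sigma}_S-\Sigma_S$ have operator norm at most $A^2 k + U$ and predictable variance at most $A^2 U$ (by Definition~\ref{def:Pdagger}), so the matrix Bernstein inequality referenced in \Cref{lem:operator} gives $\|\hat{\Sigma}_S-\Sigma_S\|_{\mathrm{op}} \lesssim \sqrt{U(\log k+\log n)/n}$ with probability at least $1-1/n$, provided we keep $n$ large enough that the ``fast'' term is dominated by the ``slow'' term. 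Second, by Weyl's inequality and the hypothesis $uB_n\to 0$, on this event $\hat{\Sigma}_S$ is invertible and $\|\hat{\Sigma}_S^{-1}\|_{\mathrm{op}} \le 2/u$. Third, a vector Bernstein bound for $\hat{\alpha}_S-\alpha_S$ (whose summands $Y_iX_i(S)$ are bounded by $A^2\sqrt{k}$ with variance controlled by $\overline{v}$) gives $\|\hat{\alpha}_S-\alpha_S\|_2 \lesssim \sqrt{k(\log k+\log n)/n}$ on another event of probability $\ge 1-1/n$.

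To close the argument I would bound the deterministic factor $\|\beta_S\|_2 \le \|\Sigma_S^{-1}\|_{\mathrm{op}}\|\alpha_S\|_2 \le A^2\sqrt{k}/u$, using $|\alpha_S(j)|\le A^2$ and $\lambda_{\min}(\Sigma_S)\ge u$. Plugging these pieces into the decomposition,
\[
\|\hat{\beta}_S-\beta_S\|_2 \;\lesssim\; \frac{1}{u}\Bigl[\|\hat{\alpha}_S-\alpha_S\|_2 + \|\hat{\Sigma}_S-\Sigma_S\|_{\mathrm{op}}\cdot\|\beta_S\|_2\Bigr] \;\lesssim\; \frac{k}{u^{2}}\sqrt{U\,\frac{\log k+\log n}{n}} \;=\; B_n,
\]
where the $\Sigma$-perturbation term is dominant (because of the $\sqrt{k}/u$ in $\|\beta_S\|_2$). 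A union bound on the two Bernstein events completes the proof with probability at least $1-2/n$, and uniformity over $P$ and $w_n$ is automatic since none of the constants depended on the distribution beyond $A,u,U,\overline{v},\eta$.

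\textbf{Main obstacle.} The delicate step is the third one: ensuring that $\|\hat\Sigma_S-\Sigma_S\|_{\mathrm{op}}$ is small enough (relative to $u$) to invoke Weyl's inequality. This is exactly why the hypothesis requires both $B_n\to 0$ and $uB_n\to 0$: the latter guarantees that the Bernstein fluctuation is negligible compared to the smallest eigenvalue, so the inversion of $\hat\Sigma_S$ is stable. Tracking the $k$ and $U$ dependencies carefully inside the matrix Bernstein bound, and verifying that the resulting probability tail really pays only an additive $\log n$ (not a multiplicative one), is where most of the bookkeeping lives; this is essentially the content of the Taylor-expansion-with-remainder machinery developed in \Cref{section::berry}, specialised here to the map $g$ of \eqref{eq:g.beta}.
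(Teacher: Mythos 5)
Your proposal is correct and uses the same ingredients as the paper's proof: condition on $\mathcal{D}_{1,n}$ and the split so that $\wS$ is a fixed set of size at most $k$, apply the vector and matrix Bernstein inequalities of \Cref{lem:operator} to $\hat\alpha_{\wS}-\alpha_{\wS}$ and $\hat\Sigma_{\wS}-\Sigma_{\wS}$, use $uB_n\to 0$ to stabilize the inversion, and take a union bound over the two events to get probability $1-\frac2n$, with uniformity over $w_n$ and $P$ automatic since the bound depends on $\wS$ only through $k$. The only real difference is algebraic: you write $\hat\beta_{\wS}-\beta_{\wS}=\hat\Sigma_{\wS}^{-1}\bigl[(\hat\alpha_{\wS}-\alpha_{\wS})-(\hat\Sigma_{\wS}-\Sigma_{\wS})\beta_{\wS}\bigr]$, control $\|\hat\Sigma_{\wS}^{-1}\|_{\mathrm{op}}\le 2/u$ via Weyl on the Bernstein event, and pair the perturbation with $\|\beta_{\wS}\|\le A^2\sqrt{k}/u$; the paper instead decomposes as $(\hat\Sigma_{\wS}^{-1}-\Sigma_{\wS}^{-1})\hat\alpha_{\wS}+\Sigma_{\wS}^{-1}(\hat\alpha_{\wS}-\alpha_{\wS})$, bounds $\|\hat\Sigma_{\wS}^{-1}-\Sigma_{\wS}^{-1}\|_{\mathrm{op}}$ by the inverse-perturbation inequality $\|\Sigma^{-1}\|_{\mathrm{op}}^2\|E\|_{\mathrm{op}}/(1-\|E\|_{\mathrm{op}}\|\Sigma^{-1}\|_{\mathrm{op}})$ (valid once $\|\Sigma^{-1}E\|_{\mathrm{op}}\le 1/2$, which is where $uB_n\to0$ enters), and pairs it with the almost sure bound $\|\hat\alpha_{\wS}\|\lesssim\sqrt{k}$. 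The two routes are interchangeable and both deliver the rate $CB_n$.

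Two small corrections. First, your variance proxy for matrix Bernstein is too optimistic: $\mathbb{E}[(X_SX_S^\top)^2]\preceq A^2k\,\Sigma_S\preceq A^2kU\,I$, so the proxy is of order $A^2kU$, not $A^2U$, and the high-probability bound reads $\|\hat\Sigma_S-\Sigma_S\|_{\mathrm{op}}\lesssim A\sqrt{kU(\log k+\log n)/n}$, carrying an extra $\sqrt{k}$ relative to what you wrote. This is harmless: the dominant term then becomes $\frac1u\cdot A\sqrt{kU(\log k+\log n)/n}\cdot\frac{A^2\sqrt{k}}{u}\lesssim B_n$, and the Weyl step still goes through because $\|\hat\Sigma_S-\Sigma_S\|_{\mathrm{op}}/u\lesssim uB_n/\sqrt{k}\to0$; likewise the $\hat\alpha$ term is at most a constant (depending on $A$) times $B_n$ since $u\le U\le A^2k$. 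Second, the closing attribution to the Taylor-expansion machinery of \Cref{section::berry} is not what is needed here: the paper proves this concentration bound directly from \Cref{lem:operator} and an inverse-perturbation argument; the expansion of the map $g$ of \eqref{eq:g.beta} only enters the coverage results (Theorems \ref{thm::big-theorem} and \ref{theorem::beta.boot}).
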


\noindent {\bf Remarks.} 
\begin{enumerate}
\item  It is worth recalling that, in the result above as well as in all the result of the paper, the probability is with respect to joint distribution of
the entire sample and of the splitting process.
	\item For simplicity, we have phrased the bound in \Cref{thm:beta.accuracy2} in an asymptotic manner. The result can be trivially turned into a finite sample statement by appropriately adjusting the value of the constant $C$ depending on how rapidly $\max\{ B_n, u B_n \} \rightarrow 0$ vanishes. 
	\ 	\item The proof of the above theorem relies namely an inequality for matrix norms and the vector and matrix
Bernstein
concentration inequalities (see \Cref{lem:operator} below).
\item 
Theorems \ref{thm:beta.accuracy} and \ref{thm:beta.accuracy2} 
can be easily generalized to cover the case in which the model selection and the computation of the projection parameters are performed on the entire dataset and not on separate, independent splits. In this situation, it is necessary to obtain a high probability bound for the quantity
\[
\max_{S} \| \beta_S - \hat{\beta}_S \|
\]
where the maximum is over all non-empty subsets of $\{1,\ldots,d\}$ of
size at most $k$ and $\hat{\beta}_S =
\hat{\Sigma}_{S}^{-1}\hat{\alpha}_{S}$ (see Equation
\ref{eq:alpha.beta.hat}). Since there are less than $ \left( \frac{e
  d}{k} \right)^k $ such subsets, an additional union bound argument
in each application of the matrix and vector Bernstein's inequalities
(see Lemma \ref{lem:operator}) within the proofs of both Theorem
\ref{thm:beta.accuracy} and \ref{thm:beta.accuracy2} will give the
desired result. The rates so obtained will be then worse than the ones
from Theorems \ref{thm:beta.accuracy} and \ref{thm:beta.accuracy2}
which, because of the sample splitting do not require a union bound.
In particular, the scaling of $k$ with respect to $n$ will be worse by
a factor of $k \log \frac{d}{k}$. This immediately gives a rate of
consistency for the projection parameter under arbitrary model
selection rules without relying on sample splitting.  We omit 
the details.
\end{enumerate}

\subsubsection*{Confidence sets for the projection parameters: Normal
Approximations}
We will now derive confidence intervals for the projection
parameters using on a high-dimensional Normal approximation  
to $\hat{\beta}_{\wS}$. 
The construction of such confidence sets entails approximating the dominant linear term in the Taylor
series expansion of $\hat{\beta}_{\wS} - \beta_{\wS}$ by a centered
Gaussian vector in $\mathbb{R}^{\wS}$ with the same covariance matrix
$\Gamma_{\wS}$ (see (\ref{eq:Gamma}) in  \Cref{section::berry}). The coverage
properties of the resulting confidence
sets depend crucially on the ability to estimate such covariance. For that
purpose, we use a plug-in estimator, 
given by
\begin{equation}
\label{eq::Ga}
    \hat\Gamma_{\wS} = \hat{G}_{\wS}\hat V_{\wS} \hat{G}_{\wS}^\top
\end{equation}
where  $\hat V_{\wS} = \frac{1}{n}\sum_{i=1}^n [ (W_i - \hat\psi) (W_i -
\hat\psi)^\top]$
is the $b \times b$ empirical covariance matrix of the $W_i$'s and the $k \times
b$ matrix 
 $\hat{G}_{\wS}$ is the Jacobian of the mapping $g$, given explicitly below
in \Cref{eq::GG}, evaluated at
$\hat{\psi}$.

The first confidence set for the projection parameter based on the Normal
approximation that we propose is an
$L_\infty$ ball of appropriate radius centered at $\hat{\beta}_{\wS}$:
\begin{equation}\label{eq::beta.conf-rectangle}
    \hat{C}_{\wS}  = \Bigl\{ \beta \in \mathbb{R}^k:\
    ||\beta-\hat\beta_{\wS}||_\infty \leq
    \frac{\hat{t}_\alpha}{\sqrt{n}}\Bigr\},
\end{equation}
where $\hat{t}_\alpha$ is a random radius (dependent on $\mathcal{D}_{2,n}$
)  such that 
\begin{equation}\label{eq:t.akpha}
    \mathbb{P}\left( \| \hat{\Gamma}^{1/2}_{\hat{S}} Q \|_\infty \leq \hat{t}_\alpha
    \right) = \alpha,
\end{equation}
with $Q$  a random vector having the $k$-dimensional standard Gaussian
distribution  and independent of the data. 

In addition to the $L_\infty$ ball given in  \eqref{eq::beta.conf-rectangle},
we also construct a confidence set for
$\beta_{\wS}$ to be a hyper-rectangle, with sides of different lengths in
order to account for different variances in the covariates. This can be
done using the set
\begin{equation}\label{eq:beta.hyper:CI}
    \tilde C_{\wS} = \bigotimes_{j\in \wS} \tilde{C}(j),
\end{equation}
where
\[
    \tilde{C}(j) = \left[ \hat\beta_{\wS}(j) - z_{\alpha/(2k)}
    \sqrt{\frac{ \hat\Gamma_{\wS}(j,j)}{n}},  \hat\beta_{\wS}(j) + z_{\alpha/(2k)}
    \sqrt{\frac{ \hat\Gamma_{\wS}(j,j)}{n}}\right],
\]
with $\hat\Gamma_{\wS}$ given by (\ref{eq::Ga}) and $z_{\alpha/(2k)}$ the upper $1 -
\alpha/(2k)$ quantile of a standard Normal variate. Notice that we use a Bonferroni
correction to guarantee a nominal coverage of $1-\alpha$.

\begin{theorem}\label{thm::big-theorem}
Let $\hat{C}_{\wS}$ and $\tilde{C}_{\wS}$ the confidence sets defined 
in \eqref{eq::beta.conf-rectangle} and \eqref{eq:beta.hyper:CI}, respectively.
Let 
\begin{equation}\label{eq:un}
u_n = u -K_{2,n},
\end{equation}
where
\[     
    K_{2,n} = C A \sqrt{ k U \frac{\log k + \log n}{n} },
\]
with $C = C(\eta)>0$ the universal constant in \eqref{eq:matrix.bernstein.simple.2}.
Assume, in addition, that $n$ is large enough so that
$ u_n $ is positive.
Then, for a $C >0$ dependent on $A$ only,
\begin{equation}
    \label{eq:big-theorem.Linfty}
\inf_{w_n \in \mathcal{W}_n}    \inf_{P\in {\cal P}_n^{\mathrm{OLS}}}\mathbb{P}(\beta \in \hat{C}_{\wS}) \geq 1-\alpha -
C \Big(\Delta_{n,1} + \Delta_{n,2}+\Delta_{n,3}  \Big)
\end{equation}
and
\begin{equation}
    \label{eq:big-theorem.hyper}
\inf_{w_n \in \mathcal{W}_n}    \inf_{P\in {\cal P}_n^{\mathrm{OLS}}}\mathbb{P} (\beta \in \tilde{C}_{\wS}) \geq 1-\alpha -
C\Big(\Delta_{n,1} + \Delta_{n,2}+\tilde{\Delta}_{n,3}  \Big),
\end{equation}
where
\[
\Delta_{n,1} =  \frac{1}{\sqrt{v}}\left(  \frac{
    \overline{v}^2 k^2 (\log kn)^7)}{n}\right)^{1/6}  ,  \quad  
    \Delta_{n,2} = \frac{ U }{ \sqrt{v}}  \sqrt{ \frac{k^4 \overline{v} \log^2n \log k}{n\,u_n^6}
},  
\]
\[
\Delta_{n,3} = \left( \frac{ U^2 }{
    v }\right)^{1/3} \left( \overline{v}^2 \frac{k^{5}}{u_n^{6} u^4}
    \frac{ \log n}{n} \log ^4 k\right)^{1/6}  \quad \text{and} \quad 
\tilde{\Delta}_{n,3} = \min \left\{ \Delta_{n,3}, \frac{U^2}{v} \overline{v} \frac{ k^{5/2}}{u_n^3 u^2} \frac{ 
 \log n}{n} \log k \right\}.
\]
\end{theorem}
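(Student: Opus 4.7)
The plan is to reduce the coverage problem to a high-dimensional Gaussian approximation over hyperrectangles for a (nonlinear) statistic, and to bound three separate sources of error that correspond exactly to the three terms $\Delta_{n,1}, \Delta_{n,2}, \Delta_{n,3}$ (or $\tilde\Delta_{n,3}$). The starting point is the representation $\hat\beta_{\wS}-\beta_{\wS}=g(\hat\psi)-g(\psi)$ developed in the excerpt. I would perform a first-order Taylor expansion of $g$ coordinatewise around $\psi$ to write
\[
\sqrt{n}\,(\hat\beta_{\wS}-\beta_{\wS})\;=\;G_{\wS}\sqrt{n}(\hat\psi-\psi)\;+\;R_n,
\]
where $G_{\wS}$ is the Jacobian (given later by \eqref{eq::GG}) and $R_n$ is a stochastic remainder. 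The linear term is then a normalised sum of i.i.d.\ mean-zero vectors with covariance $\Gamma_{\wS}=G_{\wS}V_{\wS}G_{\wS}^{\top}$, exactly the object the plug-in estimator $\hat\Gamma_{\wS}$ is designed for. Because I need both $L_{\infty}$-balls and product rectangles, everything that follows should be phrased in terms of rectangle probabilities.

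Step one (accounts for $\Delta_{n,1}$): apply the Chernozhukov--Chetverikov--Kato high-dimensional CLT over hyperrectangles to $\sqrt n\,G_{\wS}(\hat\psi-\psi)$. Using the moment bounds implied by compact support together with the eigenvalue bound $\lambda_{\min}(V_{\wS})\ge v$ (so that $V_{\wS}$, and hence $\Gamma_{\wS}$, is nondegenerate), the CCK bound scales like $(\overline{v}^{2}k^{2}(\log kn)^{7}/n)^{1/6}/\sqrt{v}$, matching $\Delta_{n,1}$. Step two (accounts for $\Delta_{n,2}$): control the Taylor remainder $R_n$ in $\|\cdot\|_{\infty}$. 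The second derivatives of $g$ involve $\Sigma_{\wS}^{-1}$ twice, so I would use the matrix Bernstein inequality (\Cref{lem:operator}) to guarantee $\lambda_{\min}(\hat\Sigma_{\wS})\ge u_n$ on an event of the stated probability, then combine with a vector Bernstein bound for $\hat\psi-\psi$ to get $\|R_n\|_\infty\lesssim \Delta_{n,2}$; the factor $U/\sqrt{v}$ and the $u_n^{-3}$ term come precisely from these curvature estimates for $g$.

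Step three (the main obstacle, accounts for $\Delta_{n,3}$ and $\tilde\Delta_{n,3}$): pass from $\Gamma_{\wS}$ to $\hat\Gamma_{\wS}$ in the quantile $\hat t_\alpha$. This is a Gaussian comparison problem for the maximum (or the coordinate maxima) of correlated Gaussians. I would first bound $\|\hat\Gamma_{\wS}-\Gamma_{\wS}\|_{\max}$ by combining another matrix-Bernstein bound for $\hat V_{\wS}-V_{\wS}$ with the Lipschitz control $\|\hat G_{\wS}-G_{\wS}\|$ induced by smoothness of $g$ (again bringing in the $u_n^{-6}u^{-4}$ factors); then invoke the Gaussian comparison inequality of CCK together with Nazarov's anti-concentration inequality for Gaussian maxima over rectangles. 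The $1/6$ power and the $\log^{4}k$ factor in $\Delta_{n,3}$ are the fingerprint of Nazarov plus this entrywise covariance bound. For the product-rectangle confidence set $\tilde C_{\wS}$, the quantile is a \emph{univariate} normal quantile, so Bonferroni plus a coordinatewise Berry--Esseen-type control (only needing $\hat\Gamma_{\wS}(j,j)$ to be close to $\Gamma_{\wS}(j,j)$, not the whole matrix) yields the alternative $U^{2}v^{-1}\overline v\,k^{5/2}u_n^{-3}u^{-2}(\log n/n)\log k$ rate; taking the minimum gives $\tilde\Delta_{n,3}$.

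Combining the three pieces: on the intersection of the good events for Steps two and three, the event $\{\beta_{\wS}\in\hat C_{\wS}\}$ is implied by $\{\|\hat\Gamma_{\wS}^{1/2}Q\|_\infty\le\hat t_\alpha\}$ with $Q$ Gaussian (up to the approximation of Step one and the remainder of Step two). A triangle inequality over the three error terms yields \eqref{eq:big-theorem.Linfty}, and the coordinatewise variant together with $\tilde\Delta_{n,3}$ yields \eqref{eq:big-theorem.hyper}. The uniformity over $w_n\in\mathcal W_n$ and $P\in\mathcal P_n^{\mathrm{OLS}}$ is automatic because, after conditioning on $\wS$, every bound is a function of $k$ and of the uniform constants $A, u, U, v, \overline v, \eta$ only. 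The bookkeeping that requires the most care is the propagation of the lower eigenvalue bound from $\Sigma_{\wS}$ (population) to $\hat\Sigma_{\wS}$ (sample) through $u_n$, since all the factors $u_n^{-p}$ in $\Delta_{n,2}$ and $\Delta_{n,3}$ descend from bounding $\|\hat\Sigma_{\wS}^{-1}\|$ and the second derivatives of $g$ on this event.
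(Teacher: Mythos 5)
Your proposal is correct and follows essentially the same route as the paper: a coordinatewise Taylor expansion of $g$, the Chernozhukov--Chetverikov--Kato rectangle CLT for the linear term (giving $\Delta_{n,1}$), Bernstein-type concentration plus Nazarov anti-concentration for the remainder (giving $\Delta_{n,2}$), and an entrywise bound on $\hat\Gamma_{\wS}-\Gamma_{\wS}$ fed into the Gaussian comparison theorem, with the Bonferroni/diagonal variant yielding $\tilde\Delta_{n,3}$. The paper packages these steps as its general Theorems on nonlinear parameters together with the matrix-calculus bounds on $B$, $\overline H$ and $\underline\sigma$, and you correctly flag its one delicate point, namely that the Hessian bound only holds on the high-probability event $\lambda_{\min}(\hat\Sigma_{\wS})\ge u_n$, which is exactly how the $u_n$ factors enter.
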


A few remarks are in order.
\begin{enum}
\item The coverage probability is affected by three factors:  
the  term  $\Delta_{n,1}$, which bounds the approximation error stemming from the high dimensional Berry-Esseen theorem (see \Cref{thm:high.dim.clt}); 
the term $\Delta_{n,2}$, which  is a high probability bound on the size of the reminder term in the Taylor series expansion of $\beta_{\wS}$ around $\widehat{\beta}_{\wS}$ and can therefore be thought of as the price  for the non-linearity of the projection parameter, and
the terms  $\Delta_{n,3}$ and $\tilde{\Delta}_{n,3}$, which are due to the fact that the covariance of the estimator is unknown and needs to be also estimated, leading to another source of error (the bootstrap procedure, described below, implicitly estimates this covariance).
\item In terms of dependence of $k$ on $n$, all other things being equal, the covariance term $\Delta_{3,n}$ exhibit the worst rate, as it constrain $k$ to be of smaller order than $n^{1/5}$ in order to guarantee asymptotic coverage of $\hat{C}_{\wS}$. This same term also contains the worst dependence on $u$, the uniform bound on the smallest eigenvalue of all covariance matrices of the form $\Sigma_S$, for $S \subset \{1,\ldots,d\}$ with $0 < S \leq k$.  Thus, the dependence of the rates on the dimension and on the minimal eigenvalue is overall quite poor. While this is, to an extent, unavoidable, we do not know whether our upper bounds are  sharp. 
\item The reasons for replacing $u$ by the smaller term $u_n$ given in \eqref{eq:un}  are somewhat technical, but are explained in the proof of the theorem. Assuming a scaling in $n$ that guarantees that the error terms $\Delta_{1,n}$, $\Delta_{2,n}$ and $\Delta_{3,n}$ are vanishing, such modification is inconsequential and does not affect the rates.
\item The coverage rates  obtained for the LOCO and prediction parameters below in \Cref{sec:loco.parameters} are significantly faster then the ones for the projection parameters, and hold under less restrictions on the class of data generating distributions. We regard this as another reason to prefer the LOCO parameters.
\item 
Interesting,  the covariance error term $\tilde{\Delta}_{3,n}$  for confidence
set $\tilde{C}_{\wS}$ is no worse than the corresponding term for the set
$\hat{C}_{\wS}$, suggesting that using hyper-rectangles in stead of hyper-cubes  may be a better choice. 
\item The quantity
$\overline{v}$ can of be order $k^2$ in the worst case, further inflating the terms $\Delta_{3,n}$ and $\tilde{\Delta}_{3,n}$.

\item As a function of sample size,
there is a term of order $n^{-1/6}$ in $\Delta_{1,n}$ and $\Delta_{3,n}$.
The exponent $1/6$ comes from the Berry-Esseen bound in Section 3.
\cite{cherno2}
conjecture that this rate is optimal for high-dimensional central limit theorems.
Their conjecture is based on the lower bound result in 
\cite{bentkus1985lower}.
If their conjecture is true, then this is best rate that can be hoped for
in general.
\item 
The rates are slower than the rate obtained in
the central limit theorem given in \cite{portnoy1987central} for robust
regression estimators. 
A reason for such discrepancy is that \cite{portnoy1987central} assumes, among
the other things,  that
the linear model is correct.
In this case, the least squares estimators is conditionally unbiased.
Without the assumption of model correctness there is a substantial bias.
\item If we assume that the covariates are independent then the situation gets dramatically better.
For example, the term $\Delta_{n,2}$ is then $O(1/\sqrt{n})$.
But the goal of this paper is to avoid adding such assumptions.
\end{enum}

We now consider the accuracy of the confidence set given by the hyper-rectangle
$\tilde{C}_{\wS}$  from Equation  \eqref{eq:beta.hyper:CI} by deriving an upper
bound on the length of the largest side of $\max_{j \in \wS}  \tilde{C}(j)$.
Similar rates can be obtained for length of the sides of the hyper-cube
confidence set $\hat{C}_{\wS}$ given in \eqref{eq::beta.conf-rectangle}.

\begin{corollary}\label{cor:accuracy.beta}
	With probability at least $ 1- \frac{2}{n}$, the maximal length of the sides
  of the hyper-rectangle $\tilde{C}_{\wS}$ is bounded by
\[
C \sqrt{ \frac{\log k}{n}   \left( \frac{k^{5/2}}{u_n^3 u^2} \overline{v} \sqrt{ \frac{\log n}{n}} + \frac{k }{u^4} \overline{v}\right) }, 
\]
for a constant $C>0$ depending on $A$ only, uniformly over all $P \in  \mathcal{P}_n^{\mathrm{OLS}}$.
 
\end{corollary}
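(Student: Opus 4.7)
\emph{Proof plan for \Cref{cor:accuracy.beta}.}
The plan is to reduce the problem to bounding $\|\hat\Gamma_{\wS}\|_{op}$ and then to propagate the errors through the decomposition $\hat\Gamma_{\wS} = \hat G_{\wS}\hat V_{\wS}\hat G_{\wS}^\top$. First, the length of $\tilde C(j)$ equals $2 z_{\alpha/(2k)}\sqrt{\hat\Gamma_{\wS}(j,j)/n}$, and by the standard Gaussian tail $z_{\alpha/(2k)}\leq C\sqrt{\log(k/\alpha)}$. Since $\hat\Gamma_{\wS}$ is positive semidefinite, $\max_{j\in\wS}\hat\Gamma_{\wS}(j,j)\leq \|\hat\Gamma_{\wS}\|_{op}$, so the problem reduces to a high-probability bound on $\|\hat\Gamma_{\wS}\|_{op}$ of order $\frac{k^{5/2}}{u_n^3 u^2}\overline v\sqrt{\log n/n}+\frac{k}{u^4}\overline v$.

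Next, I would compute the Jacobian of $g$ at the population value $\psi_{\wS}$. Differentiating $g(x)=(\mathrm{math}(x_1))^{-1}x_2$ and using the matrix identity $\partial M^{-1}/\partial M = -M^{-1}(\cdot)M^{-1}$ shows that, at $\psi_{\wS}$, $G_{\wS}$ is built from the blocks $\Sigma_{\wS}^{-1}$ and $-\Sigma_{\wS}^{-1}\beta_{\wS}(\cdot)\Sigma_{\wS}^{-1}$. Using the assumptions $\lambda_{\min}(\Sigma_{\wS})\geq u$, $\|X\|_\infty\leq A$, and the elementary bound $\|\beta_{\wS}\|\leq \|\Sigma_{\wS}^{-1}\|_{op}\|\alpha_{\wS}\|\leq A\sqrt{k}/u$, one obtains $\|G_{\wS}\|_{op}\leq C\sqrt{k}/u^2$. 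Combined with the assumption $\lambda_{\max}(V_{\wS})\leq \overline v$, this gives the \emph{deterministic} part $\|G_{\wS} V_{\wS} G_{\wS}^\top\|_{op}\leq Ck\overline v/u^4$, which accounts for the second summand in the corollary.

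For the stochastic part, I would write
\[
\hat G_{\wS}\hat V_{\wS}\hat G_{\wS}^\top - G_{\wS} V_{\wS} G_{\wS}^\top = (\hat G_{\wS}-G_{\wS})\hat V_{\wS}\hat G_{\wS}^\top + G_{\wS}(\hat V_{\wS}-V_{\wS})\hat G_{\wS}^\top + G_{\wS}V_{\wS}(\hat G_{\wS}-G_{\wS})^\top,
\]
and control each piece in operator norm. Matrix Bernstein (\Cref{lem:operator}), exactly as in the proof of \Cref{thm:beta.accuracy2}, gives $\|\hat\Sigma_{\wS}-\Sigma_{\wS}\|_{op}\leq K_{2,n}$ and $\|\hat V_{\wS}-V_{\wS}\|_{op}\leq C\overline v\sqrt{k\log(kn)/n}$, each on an event of probability at least $1-1/n$. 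On the intersection, $\lambda_{\min}(\hat\Sigma_{\wS})\geq u-K_{2,n}=u_n$, so $\|\hat\Sigma_{\wS}^{-1}\|_{op}\leq 1/u_n$. Applying the resolvent identity $\hat\Sigma_{\wS}^{-1}-\Sigma_{\wS}^{-1}=\hat\Sigma_{\wS}^{-1}(\Sigma_{\wS}-\hat\Sigma_{\wS})\Sigma_{\wS}^{-1}$ and using the bound on $\|\hat\beta_{\wS}\|$ propagated from \Cref{thm:beta.accuracy2}, a direct calculation of $\hat G_{\wS}-G_{\wS}$ from the explicit formula in \eqref{eq::GG} yields
\[
\|\hat G_{\wS}-G_{\wS}\|_{op}\leq C\frac{k}{u_n^2 u^2}\|\hat\Sigma_{\wS}-\Sigma_{\wS}\|_{op}+\text{lower order},
\]
which contributes the factor $k^{5/2}/(u_n^3 u^2)\overline v\sqrt{\log n/n}$ once multiplied by $\|\hat V_{\wS}\|_{op}\leq \overline v+o(1)$ and by $\|\hat G_{\wS}\|_{op}\leq C\sqrt k/u_n^2$. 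Collecting terms, taking a square root, and multiplying by $z_{\alpha/(2k)}/\sqrt n$ gives the stated bound; the probability $1-2/n$ is the union bound over the two Bernstein events.

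The main obstacle I anticipate is the bookkeeping of the Jacobian perturbation $\|\hat G_{\wS}-G_{\wS}\|_{op}$: the formula for $G_{\wS}$ contains a quadratic expression in $\Sigma_{\wS}^{-1}$, so one must combine the resolvent identity with the high-probability lower bound $\lambda_{\min}(\hat\Sigma_{\wS})\geq u_n$ (which is exactly why $u_n$, not $u$, appears in the final rate) and carefully track the operator-to-Frobenius conversions that produce the extra factors of $\sqrt k$.
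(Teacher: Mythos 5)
Your top-level reduction is the same as the paper's: the maximal side length is $2\,z_{\alpha/(2k)}\max_{j\in\wS}\sqrt{\hat\Gamma_{\wS}(j,j)/n}$ with $z_{\alpha/(2k)}=O(\sqrt{\log k})$, one splits $\hat\Gamma_{\wS}(j,j)\le \Gamma_{\wS}(j,j)+|\hat\Gamma_{\wS}(j,j)-\Gamma_{\wS}(j,j)|$, and the population diagonal is bounded by $\|G_j\|^2\lambda_{\max}(V_{\wS})\le B^2\overline v\le Ck\overline v/u^4$, exactly as in the paper via \Cref{lemma::horrible}. Note that only the diagonal entries are needed, so the row-norm bound $B$ suffices; your detour through $\max_j\hat\Gamma_{\wS}(j,j)\le\|\hat\Gamma_{\wS}\|_{\mathrm{op}}$ and $\|\hat G_{\wS}\|_{\mathrm{op}}$ is unnecessary (though here harmless, since the Jacobian's operator norm obeys the same $C\sqrt k/u^2$ bound as its rows).

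The gap is in the stochastic term. The paper does not re-derive the covariance error: it invokes \Cref{lemma::upsilon} together with the refinement \eqref{eq:new.aleph} from the proof of \Cref{thm::big-theorem}, namely $\max_{j,l}|\hat\Gamma_{\wS}(j,l)-\Gamma_{\wS}(j,l)|\le C\frac{k^{5/2}}{u_n^3u^2}\overline v\sqrt{\log n/n}$ with probability at least $1-\frac2n$. That rate comes from the chain $\max_j\|\hat G_j-G_j\|\le \overline H\,\|\hat\psi_{\wS}-\psi_{\wS}\|$ with the high-probability Hessian bound $\overline H\le Ck/u_n^3$, the vector Bernstein bound $\|\hat\psi_{\wS}-\psi_{\wS}\|\le Ck\sqrt{\log n/n}$ (since $b\asymp k^2$), and multiplication by $B\overline v$; the single factor $u_n^{-3}$ enters only through the Hessian. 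Your substitute chain does not reproduce this: with your own intermediate bounds, $\|\hat G_{\wS}-G_{\wS}\|_{\mathrm{op}}\le C\frac{k}{u_n^2u^2}\|\hat\Sigma_{\wS}-\Sigma_{\wS}\|_{\mathrm{op}}$, $\|\hat\Sigma_{\wS}-\Sigma_{\wS}\|_{\mathrm{op}}\le K_{2,n}\asymp\sqrt{kU\log(kn)/n}$, then multiplied by $\overline v$ and $\|\hat G_{\wS}\|_{\mathrm{op}}\le C\sqrt k/u_n^2$, the dominant term is of order $\frac{k^{2}\sqrt U}{u_n^4u^2}\,\overline v\sqrt{\log(kn)/n}$, not the asserted $\frac{k^{5/2}}{u_n^3u^2}\,\overline v\sqrt{\log n/n}$. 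The ratio of your bound to the target is $\sqrt U/(\sqrt k\,u_n)$, which exceeds one whenever, e.g., $U\asymp k$ and $u_n<1$; nothing in \Cref{def:Pdagger} rules that out, so as written the plan does not establish the corollary's stated rate (it would under an extra relation such as $\sqrt U\lesssim\sqrt k\,u_n$). The fix is either to sharpen your perturbation bookkeeping (your prefactor $k/(u_n^2u^2)$ is itself loose) and accept a bound of a different shape, or simply to cite \Cref{lemma::upsilon} with \eqref{eq:new.aleph}, which is the paper's one-line route and delivers exactly the displayed expression on the stated $1-\frac2n$ event.
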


\subsubsection*{Confidence sets for the projection parameters: The Bootstrap}

The  confidence set in \eqref{eq::beta.conf-rectangle} based on the Normal approximation require the
evaluation of both the matrix
$\hat{\Gamma}_{\wS}$ and the quantile $\hat{t}_\alpha$ in \eqref{eq:t.akpha},
which may be computationally inconvenient. Similarly the  hyper-rectangle 
\eqref{eq:beta.hyper:CI} requires computing the diagonal entries in
$\hat{\Gamma}_{\wS}$. Below we show that the paired bootstrap can be deployed
to construct analogous confidence sets,  centered at
$\hat{\beta}_{\wS}$, without knowledge of
$\hat{\Gamma}_{\wS}$.

Throughout, by the bootstrap distribution we mean the empirical
probability measure associated to the sub-sample $\mathcal{D}_{2,n}$ and
conditionally on $\mathcal{D}_{1,n}$ and the outcome of the sample splitting
procedure.

We let $\hat{\beta}^*_{\wS}$ denote the estimator of the projection parameters
$\beta_{\wS}$ of the form \eqref{eq:betahat} and arising from an i.i.d. sample of size $n$ drawn from the bootstrap
distribution. It is important to point out that $\hat{\beta}^*_{\wS}$ is
well-defined only provided that the bootstrap realization of the covariates
$(X_1^*,\ldots,X_n^*)$ is such that the corresponding $k$-dimensional empirical covariance
matrix
\[
    \frac{1}{n} \sum_{i \in \mathcal{I}_{2,n}} X_i^*(\wS) (X_i^*(\wS))^\top 
\]
is invertible. Since the data distribution is assumed to have a $d$-dimensional
Lebesgue density, this occurs almost surely with respect to the distribution of 
the full sample $\mathcal{D}_n$ if the bootstrap sample contains more than $k$
distinct values. Thus, the bootstrap guarantees given below only holds on such
event. Luckily, this is a matter of little consequence, since under our
assumptions the probability
that such event does not occur is exponentially small in $n$ (see 
\Cref{eq:lem.occupancy} below).

For a given $\alpha \in (0,1)$, let $\hat{t}^*_\alpha$ be the smallest
    positive number such that 
    \[
	\mathbb{P}\left( \sqrt{n} \| \hat{\beta}^*_{\wS} - \hat{\beta}_{\wS}\|
	\leq \hat{t}^*_\alpha \Big| \mathcal{D}_{2,n} \right) \geq 1 - \alpha.
    \]
    Next, let $(\tilde{t}^*_j, j \in \wS)$ be such that
    \[
	\mathbb{P}\left( \sqrt{n} | \hat{\beta}^*_{\wS}(j) - \hat{\beta}_{\wS}
	(j) \leq \tilde{t}^*_j, \forall j \Big| \mathcal{D}_{2,n} \right) \geq 1 - \alpha.
    \]
By the union bound, each $\tilde{t}^*_j$ can be chosen to
    be the largest positive number such that
    \[
	\mathbb{P}\left( \sqrt{n} | \hat{\beta}^*_{\wS}(j) - \hat{\beta}_{\wS}
	(j) > \tilde{t}^*_j, \Big|  \mathcal{D}_{2,n} \right)  \leq \frac{\alpha}{k}.
    \]
Consider the following two bootstrap confidence sets: 
\begin{equation}\label{eq:ci.boot.beta}
	    \hat{C}^*_{\wS} = \left\{ \beta \in \mathbb{R}^{\wS} \colon \|
	    \beta - \hat{\beta}_{\wS}
	    \|_\infty \leq \frac{ \hat{t}^*_{\alpha}}{\sqrt{n}} \right\} \quad
	    \text{and} \quad 
	    \tilde{C}^*_{\wS} = \left\{ \beta \in \mathbb{R}^{\wS} \colon |
	    \beta(j) - \hat{\beta}_{\wS}(j)
	    | \leq \frac{ \tilde{t}^*_{j}}{\sqrt{n}}, \forall j \in \wS \right\}
	\end{equation}

It is immediate to see that $\hat{C}^*_{\wS}$ and $\tilde{C}^*_{\wS}$ are just the
	bootstrap equivalent of the confidence sets of \eqref{eq::beta.conf-rectangle} and \eqref{eq:beta.hyper:CI}, respectively.

\begin{theorem}
\label{theorem::beta.boot}
Let 
\[
v_n  = v - K_{1,n}, \quad \overline{v}_n  = \overline{v} + K_{1,n}, \quad u_n = u -  K_{2,n} \quad \text{and} \quad U_n = U + K_{2,n},
\]
where
\[
    K_{1,n} = C A^2 \sqrt{ b \overline{v}\frac{\log b + \log n}{n} } \quad \text{and} \quad     K_{2,n} = C A \sqrt{ k U \frac{\log k + \log n}{n} }, 
\]
with $C = C(\eta)>0$  the constant in \eqref{eq:matrix.bernstein.simple.2}.
Assume that $n$ is large enough so that
$v_n  = v - K_{1,n}$ and $u_n = u -K_{2,n}$ are both positive.
Then, for a constant  $C = C(A)>0$,
\begin{equation}
 \inf_{w_n \in \mathcal{W}_n}   \inf_{P\in {\cal
 P}^{\mathrm{OLS}}_n}\mathbb{P}(\beta_{\wS} \in C^*_{\wS}) \geq 1-\alpha -
C\left(\Delta^*_{n,1} + \Delta^*_{n,2} + \Delta_{n,3} \right),
\end{equation}
where $C^*_{\wS}$ is either one of the bootstrap confidence sets in
\eqref{eq:ci.boot.beta}, 
\[
\Delta^*_{n,1}  =  \frac{1}{\sqrt{v_n}}\left(  \frac{
    k^2 \overline{v}_n^2 (\log kn)^7)}{n}\right)^{1/6} , \quad   \Delta^*_{n,2}
    = \frac{ U_n }{ \sqrt{v_n}}  \sqrt{ \frac{k^4 \overline{v}_n \log^2n \log
k}{n\,u_n^6}}
\]
and $\Delta_{n,3}$ is  as in \Cref{thm::big-theorem}.
\end{theorem}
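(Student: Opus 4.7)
The argument parallels that of Theorem~\ref{thm::big-theorem}, with the bootstrap law playing the role of the Gaussian approximation. First I would linearize the bootstrap statistic by writing
\[
\sqrt{n}\bigl(\hat\beta^{*}_{\wS} - \hat\beta_{\wS}\bigr) \;=\; \hat G_{\wS}\,\sqrt{n}\bigl(\hat\psi^{*}-\hat\psi\bigr) \;+\; R_n^{*},
\]
where $\hat\psi^{*} = \frac{1}{n}\sum_{i\in\mathcal{I}_{2,n}} W_i^{*}$ is the bootstrap average of the $W$-vectors and $R_n^{*}$ is a Taylor remainder of exactly the same form as the remainder $R_n$ appearing in Theorem~\ref{thm::big-theorem}. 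The target $\sqrt{n}(\hat\beta_{\wS} - \beta_{\wS})$ admits the analogous expansion $G_{\wS}\sqrt{n}(\hat\psi-\psi) + R_n$, which was already handled in Theorem~\ref{thm::big-theorem}. The coverage of $C^{*}_{\wS}$ will then follow by showing that the bootstrap quantiles defining $\hat t^{*}_\alpha$ and $\tilde t^{*}_j$ are close to the population quantiles of $\|G_{\wS} V_{\wS}^{1/2} Q\|_\infty$ (and of its coordinatewise analogue), uniformly in $w_n$ and $P$.

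Next, I would apply the empirical-bootstrap rectangle Berry--Esseen bound (the bootstrap counterpart of the high-dimensional CLT of \cite{cherno1,cherno2} used to produce $\Delta_{n,1}$) to the linear piece $\hat G_{\wS}\sqrt{n}(\hat\psi^{*}-\hat\psi)$, conditionally on $\mathcal{D}_{2,n}$. The constants entering that bound are moments of the conditional $W^{*}_i$ and eigenvalues of $\hat V_{\wS}$. A single application of the matrix Bernstein inequality of \Cref{lem:operator} gives $\|\hat V_{\wS} - V_{\wS}\|_{\mathrm{op}} \le K_{1,n}$ and $\|\hat\Sigma_{\wS}-\Sigma_{\wS}\|_{\mathrm{op}}\le K_{2,n}$ with probability at least $1-2/n$. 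On this event, the conditional eigenvalues are pinned between $v_n, \overline v_n$ and between $u_n, U_n$, and substituting these shifted constants into the bootstrap CLT bound yields exactly $\Delta^{*}_{n,1}$.

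For the bootstrap remainder $R_n^{*}$, I would reuse the mean-value argument employed in Theorem~\ref{thm::big-theorem}. The Jacobian of $g$ at an intermediate point involves the inverse of a bootstrap covariance matrix, so a further application of matrix Bernstein, now conditional on $\mathcal{D}_{2,n}$, forces its smallest eigenvalue to be at least $u_n - o(1)$ with conditional probability $1 - O(1/n)$. On this event the same algebra as in Theorem~\ref{thm::big-theorem} produces the high-probability bound $\|R_n^{*}\|_\infty \preceq \Delta^{*}_{n,2}$. Absorbing this into the coverage statement costs at most $C\Delta^{*}_{n,2}$, via Gaussian anti-concentration over $L_\infty$ balls for $\hat C^{*}_{\wS}$ and over axis-aligned products of intervals for $\tilde C^{*}_{\wS}$.

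It remains to compare the quantile of $\|\hat G_{\wS}\hat V_{\wS}^{1/2} Q\|_\infty$, which after Step~2 governs the bootstrap law, with the quantile of $\|G_{\wS} V_{\wS}^{1/2} Q\|_\infty$, which governs the true law. This is a pure Gaussian comparison problem: since $\hat G_{\wS} - G_{\wS} = O(\|\hat\psi - \psi\|/u_n^{2})$ and $\|\hat V_{\wS} - V_{\wS}\|_{\mathrm{op}} \le K_{1,n}$, the operator-norm gap between $\hat \Gamma_{\wS}$ and $\Gamma_{\wS}$ is controlled by the same Bernstein bounds that drove the covariance-estimation step in Theorem~\ref{thm::big-theorem}, and the Chernozhukov--Chetverikov--Kato rectangle Gaussian comparison produces exactly the term $\Delta_{n,3}$. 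The main obstacle is juggling the two layers of randomness, conditional and unconditional, so that the rates in $\Delta^{*}_{n,1}$, $\Delta^{*}_{n,2}$, and $\Delta_{n,3}$ hold uniformly in $P\in\mathcal{P}^{\mathrm{OLS}}_n$ and $w_n\in\mathcal{W}_n$ and so that the high-probability events on which the remainder and the matrix Bernstein deviations are tame can be combined without inflating the final bound beyond $C(\Delta^{*}_{n,1} + \Delta^{*}_{n,2} + \Delta_{n,3})$. A final triangle inequality across the four steps delivers the stated lower bound on coverage.
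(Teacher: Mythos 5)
Your proposal is correct and takes essentially the same route as the paper, which proves the result by applying its general bootstrap theorem (\Cref{theorem::boot}) to an enlarged class of distributions whose eigenvalue constants are shifted to $u_n$, $U_n$, $v_n$, $\overline{v}_n$ on exactly the matrix-Bernstein events you invoke, with the Taylor remainder controlled by the same high-probability Hessian bound and the term $\Delta_{n,3}$ arising from the same Gaussian comparison between $N(0,\Gamma_{\wS})$ and $N(0,\hat\Gamma_{\wS})$. The only point the paper treats explicitly that you leave implicit is well-definedness of the bootstrap least-squares estimator (handled there via an occupancy bound and the general-position consequence of the Lebesgue-density assumption), but your conditional eigenvalue lower bound addresses the same issue on a high-probability event.
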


{\bf Remark.} The term $\Delta_{n,3}$ remains unchanged from the
Normal approximating case since it
arises from the Gaussian comparison part, which does not depend on the bootstrap
distribution. 

{\bf Remark.} 
It is important that we use the pairs bootstrap ---
where each pair $Z_i=(X_i,Y_i)$, $i=\mathcal{I}_{2,n}$, is treated as one observation ---
rather than a residual based bootstrap.
In fact, the validity of the residual bootstrap requires the underlying
regression function to be linear, which we do not assume. 
See 
\cite{buja2015models}
for more discussion on this point.
In both cases,
the Berry-Esseen theorem for simple convex sets
(polyhedra with a limited number of faces)
with increasing dimension due to
\cite{cherno1, cherno2}
justifies the method.
In the case of $\beta_{\wS}$
we also need a Taylor approximation
followed by an application of the
Gaussian anti-concentration result
from the same reference.

The coverage rates from \Cref{theorem::beta.boot} are of course no better than the ones obtained in \Cref{thm::big-theorem}, and are consistent with the results of \cite{el2015can}
who found that,
even when the linear model is correct,  the bootstrap does poorly
when $k$ increases.

The coverage accuracy can also be improved by changing
the bootstrap procedure; see Section \ref{section::improving}.

{\bf Remark.}
Our results concern the bootstrap distribution and assume the ability to determine  the quantities $\hat{t}^*_\alpha$ and $(\tilde{t}^*_j, j \in \wS)$ in Equation \Cref{eq:ci.boot.beta}. Of course, they can
	be approximated to an arbitrary level of precision by drawing a large
	enough number $B$ of bootstrap
	samples and then by computing the appropriate empirical quantiles from
	those samples. This will result in an additional approximation error, which can be easily quantified using the DKW inequality (and, for the set $\tilde{C}^*_{\wS}$, also the union bound) and which is, for large $B$,
	negligible compared to the size of the error bounds  obtained above. For simplicity, we do not
	provide these details. Similar considerations apply to all subsequent bootstrap results.

\subsubsection*{The Sparse Case}

Now we briefly discuss the case of sparse fitting where $k = O(1)$
so that the size of the selected model is not allowed to increase
with $n$.
In this case, things simplify considerably.
The standard central limit theorem shows that
$$
\sqrt{n}(\hat\beta - \beta)\rightsquigarrow N(0,\Gamma)
$$
where
$\Gamma  = \Sigma^{-1} \mathbb{E}[(Y-\beta^\top X)^2] \Sigma^{-1}$.
Furthermore,
$\Gamma$ can be consistently estimated by the sandwich estimator
$\hat\Gamma = \hat\Sigma^{-1} A \hat\Sigma^{-1}$
where
$A = n^{-1}\mathbb{X}^\top R \mathbb{X}$,
$\mathbb{X}_{ij} = X_i(j)$,
$R$ is  the $k\times k$ diagonal matrix
with $R_{ii} = (Y_i - X_i^\top \hat\beta)^2$.
By Slutsky's theorem,
valid asymptotic confidence sets can be based on the
Normal distribution with $\hat\Gamma$ in place of $\Gamma$
(\cite{buja2015models}).

However, if $k$ is non-trivial relative to $n$,
then fixed $k$ asymptotics may be misleading.
In this case, the results of the previous section may be more
appropriate.
In particular, replacing $\Gamma$ with an estimate
then has a non-trivial effect on the coverage accuracy.
Furthermore, 
the accuracy depends on
$1/u$ where
$u = \lambda_{\rm min}(\Sigma)$.
But when we apply the results after sample splitting (as is our goal),
we need to define $u$ as
$u = \min_{|S|\leq k} \lambda_{\rm min}(\Sigma_S)$.
As $d$ increases, $u$ can get smaller and smaller even with fixed $k$.
Hence, the usual fixed $k$
asymptotics may be misleading.

{\bf Remark:}
We only ever report inferences for the selected parameters.
The bootstrap provides uniform coverage over all parameters in $S$.
There is no need for a Bonferroni correction. This is because the bootstrap is applied to
$||\hat\beta_{\wS}^* - \hat\beta_{\wS}||_\infty$. However, we also show that
univariate Normal approximations together with Bonferroni adjustments
leads valid hyper-rectangular regions;
see Theorem \ref{thm::bonf}.

\subsection{LOCO Parameters}\label{sec:loco.parameters}

Now we turn to the LOCO parameter $\gamma_{\wS} \in \mathbb{R}^{\wS}$, where $\wS$ is the model selected on the first half of the data. Recall that $j^{\mathrm{th}}$ coordinate
of this parameter is 
\[
\gamma_{\wS}(j) = \mathbb{E}_{X,Y}\Biggl[|Y-\hat\beta_{\wS(j)}^\top X_{\wS(j)}|-
|Y-\hat\beta_{\wS}^\top X_{\wS}| \Big| \mathcal{D}_{1,n} \Biggr],
\]
where $\hat\beta_{\wS} \in \mathbb{R}^{\wS}$ is any estimator of $\beta_{\wS}$, and $\hat\beta_{\wS(j)}$ is obtained 
by re-computing the same estimator on the set of covariates $\wS(j)$
resulting from re-running the same model selection procedure after removing covariate
$X_j$. The model selections $\wS$ and $\wS(j)$ and the estimators $\hat{\beta}_{\wS}$ and
$\hat{\beta}_{\wS}(j)$ are all computed using half of the sample,
$\mathcal{D}_{1,n}$.

In order to derive confidence sets for $\gamma_{\wS}$ we will assume that the data generating distribution belongs to the class ${\cal P}_n'$ of all distributions on $\mathbb{R}^{d+1}$ supported on $[-A,A]^{d+1}$, for some fixed constant $A>0$. Clearly the class $\mathcal{P}_n^{\mathrm{LOCO}}$ is significantly larger then the class $\mathcal{P}_n^{\mathrm{OLS}}$ considered for the projection parameters.

A natural unbiased estimator of $\gamma_{\wS}$ -- conditionally on
$\mathcal{D}_{1,n}$ --  is 
\[\hat{\gamma}_{\wS} = \frac{1}{n} \sum_{i \in \mathcal{I}_{2,n}} \delta_i,
\]
with $(\delta_i,i \in \mathcal{I}_{2,n})$ independent and identically
distributed random vectors in
$\mathbb{R}^{\wS}$ such that, for any $i \in \mathcal{I}_{2,n}$ and  $j \in
\wS$,
\begin{equation}\label{eq:delta.i}
    \delta_i(j) =  \Big| Y_i-\hat\beta_{\wS(j)}^\top X_i(\wS(j)) \Big|-
\Big|Y_i-\hat\beta_{\wS}^\top X_{i}(\wS) \Big|.
\end{equation}
$X_i$ obtained by considering only the coordinates in $S$.

To derive a CLT for $\hat\gamma_{\wS}$
we face  two technical problem. 
First, we require some control on the minimal variance of the coordinates of
the $\delta_i$'s. Since we allow for increasing $k$ and we impose minimal
assumptions on the class of data generating distributions, it is possible that any one variable might have a tiny influence
on the predictions. As a result, we cannot rule out the possibility that the
variance of some coordinate of the $\delta_i$' vanishes. In this case the rate
of convergence in
high-dimensional central limit theorems would be negatively impacted, in ways
that are difficulty to assess. To prevent
such issue we simply redefine $\gamma_{\wS}$ by adding a small amount of
noise with non-vanishing variance.
Secondly, we also need an upper bound on the third moments
of the coordinates of the $\delta_i$'s. In order to keep the presentation
simple,  we will truncate the estimator of
the regression function by hard-thresholding so that it has bounded range
$[-\tau,\tau]$ for a given $\tau>0$. Since both $Y$ and the coordinates of $X$ are
uniformly bounded in absolute value by $A$, this assumption is reasonable.

Thus, we  re-define the vector of LOCO parameters $\gamma_{\wS}$ so that its $j^{\mathrm{th}}$ coordinate is
\begin{equation}\label{eq:new.gamma}
    \gamma_{\wS}(j) = \mathbb{E}_{X,Y, \xi_j}\Biggl[ \left|Y- t_{\tau}\left(
    	\hat\beta_{\wS(j)}^\top X_{\wS(j)} \right) \right|-
    \left| Y-t_{\tau}\left(  \hat\beta_{\wS}^\top X_{\wS} \right) \right| +
\epsilon \xi(j)  \Biggr],
\end{equation}
where $\epsilon > 0$ is a pre-specified small number,  $\xi = (\xi(j), j \in \wS)$ is a random vector comprised of independent $\mathrm{Uniform}(-1,1)$, independent of the
data, and $t_{\tau}$ is the hard-threshold function: for any $x \in \mathbb{R}$,
$t_{\tau}(x)$ is $x$ if $|x| \leq \tau$ and $\mathrm{sign}(x) \tau$ otherwise.
Accordingly, we re-define the estimator $\hat{\gamma}_{\wS}$ of this modified
LOCO parameters as
\begin{equation}\label{eq:new.delta}
    \hat{\gamma}_{\wS}  = \frac{1}{n} \sum_{i \in \mathcal{I}_{2,n}} \delta_i,
\end{equation}
where the $\delta_i$'s are random vector in $\mathbb{R}^{\wS}$ such that the $j^{\mathrm{th}}$ coordinate
of $\delta_i$  is 
\[
     \Big| Y_i- t_{\tau}\left(  \hat\beta_{\wS(j)}^\top
    X_i(\wS(j)) \right) \Big|-
    \Big| Y_i - t_{\tau} \left(  Y_i-\hat\beta_{\wS}^\top X_{i}(\wS) \right)\Big|
     +
    \epsilon \xi_i(j), \quad j \in \wS.
\]

{\bf Remark.}
Introducing additional noise has the effect of making the inference conservative: the confidence
intervals will be slightly wider. 
For small $\epsilon$ and any non-trivial value of $\gamma_{\wS}(j)$ this will presumably have a negligible effect.
For our proofs, adding some additional noise and thresholding the regression
function are advantageous because the first choice will guarantee that  the
empirical covariance matrix of the $\delta_i$'s is non-singular, and the second
choice will imply that the coordinates of $\hat{\gamma}_{\wS}$ are bounded. 
It is possible to let $\epsilon\to 0$ and $\tau \rightarrow \infty$ as $n\to\infty$ at the expense of slower
concentration and Berry-Esseen rates.
For simplicity, we take $\epsilon$ and $\tau$ to be fixed but we will keep explicit track of these quantities in the constants.

Since each coordinate of $\hat{\gamma}_{\wS}$ is an average of random variables
that are bounded in absolute value by $2(A+\tau) + \epsilon$, and $\mathbb{E}\left[
\hat{\gamma}_{\wS} | \mathcal{D}_{1,n}\right] = \gamma_{\wS}$, a standard
bound for the maxima of $k$ bounded (and, therefore, sub-Gaussian) random  variables yields the following
concentration result. As usual, the probability is with respect to the
randomness in the full sample and in the splitting.

\vspace{11pt}
\begin{lemma}
    \[
	\sup_{w_n \in \mathcal{W}_n} \sup_{P  \in \mathcal{P}_n^{\mathrm{LOCO}}}
  \mathbb{P}\left(  \| \hat{\gamma}_{\wS} -
	\gamma_{\wS}\|_\infty \leq  \left( 2(A+\tau) + \epsilon \right) \sqrt{ 2
	    \frac{\log k + \log n}{n} } \right) \geq 1 - \frac{1}{n}.
    \]
\end{lemma}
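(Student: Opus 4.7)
The proof is essentially a routine application of Hoeffding's inequality together with a union bound, exploiting the fact that sample splitting makes $\hat\gamma_{\wS}$ an average of i.i.d.\ bounded variables once we condition on the first half of the sample. I will present it in three steps.

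First, I would condition on $\mathcal{D}_{1,n}$. This freezes the selected model $\wS$ and the estimators $\hat\beta_{\wS}$, $\hat\beta_{\wS(j)}$ for every $j \in \wS$. Consequently, the random vectors $\delta_i$, $i \in \mathcal{I}_{2,n}$, defined as in \Cref{eq:new.delta}, are i.i.d.\ conditionally on $\mathcal{D}_{1,n}$, and by construction $\mathbb{E}[\delta_i(j) \mid \mathcal{D}_{1,n}] = \gamma_{\wS}(j)$ for each $j \in \wS$. The key observation is that each coordinate $\delta_i(j)$ is uniformly bounded: since $P \in \mathcal{P}_n^{\mathrm{LOCO}}$ implies $|Y_i|\le A$, the hard-threshold function guarantees $|t_\tau(\hat\beta_{\wS}^\top X_i(\wS))| \le \tau$ and $|t_\tau(\hat\beta_{\wS(j)}^\top X_i(\wS(j)))| \le \tau$, and $|\epsilon \xi_i(j)| \le \epsilon$, so by the triangle inequality $|\delta_i(j)| \le 2(A+\tau) + \epsilon =: R$, uniformly in $i$, $j$, $P$ and $w_n$.

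Second, I would apply Hoeffding's inequality conditionally on $\mathcal{D}_{1,n}$: for each fixed $j \in \wS$ and any $t>0$,
\[
\mathbb{P}\left(|\hat\gamma_{\wS}(j) - \gamma_{\wS}(j)| > t \,\Big|\, \mathcal{D}_{1,n}\right) \le 2\exp\left(-\frac{n t^2}{2 R^2}\right).
\]
A union bound over the at most $k$ coordinates in $\wS$ then yields
\[
\mathbb{P}\left(\|\hat\gamma_{\wS} - \gamma_{\wS}\|_\infty > t \,\Big|\, \mathcal{D}_{1,n}\right) \le 2k\exp\left(-\frac{n t^2}{2 R^2}\right).
\]

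Finally, plugging in $t = R\sqrt{2(\log k + \log n)/n}$ makes the exponential equal to $(kn)^{-1}$, so the right-hand side reduces to $2/n$ (which, up to the immaterial factor of $2$, gives the claimed $1/n$ bound). Crucially, this bound is independent of $\mathcal{D}_{1,n}$, $P \in \mathcal{P}_n^{\mathrm{LOCO}}$ and $w_n \in \mathcal{W}_n$, so taking expectation over $\mathcal{D}_{1,n}$ and then the suprema over $P$ and $w_n$ preserves it. There is no real obstacle here: the whole point of adding the noise $\epsilon \xi_i(j)$ and the threshold $t_\tau$ in the definition of $\gamma_{\wS}$ was precisely to make the $\delta_i(j)$'s bounded so that this Hoeffding argument goes through with an absolute constant $R$ that depends only on $A$, $\tau$ and $\epsilon$.
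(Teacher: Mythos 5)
Your proof is correct and is essentially the paper's argument: the paper likewise notes that each coordinate of $\hat{\gamma}_{\wS}$ is an average of variables bounded by $2(A+\tau)+\epsilon$, invokes the standard sub-Gaussian/Hoeffding-plus-union-bound concentration for the maximum of $k$ such averages conditionally on $\mathcal{D}_{1,n}$ and the split, and then observes that the bound is free of $\mathcal{D}_{1,n}$, $P$ and $w_n$, hence holds unconditionally and uniformly. The factor-of-two slack you flag ($2/n$ versus $1/n$) is equally implicit in the paper's one-line appeal to the "standard bound," so it is not a gap specific to your argument.
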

\begin{proof}
    The bound on $\| \hat{\gamma}_{\wS} - \gamma_{\wS}\|_\infty $ holds with
    probability at least $1 - \frac{1}{n}$ conditionally on $\mathcal{D}_{1,n}$
    and the outcome of data splitting, and uniformly over the choice of the procedure
    $w_n$ and of the distribution $P$. Thus, the uniform validity of the bound
    holds also unconditionally.  
\end{proof}

We now construct confidence sets for $\gamma_{\wS}$. Just like we did with the
projection parameters, we consider two types of methods: one based on Normal
approximations and the other on the bootstrap. 

\subsubsection*{Normal Approximation}
Obtaining  high-dimensional Berry-Esseen
bounds for $\hat\gamma_{\wS}$ is nearly straightforward since, conditionally on $\mathcal{D}_{1,n}$ and the splitting, $\hat\gamma_{\wS}$ is
just a vector of averages of bounded and independent variables with non-vanishing variances.
Thus, there is no need for a Taylor approximation and we can apply directly the results in \cite{cherno2}.
In addition, we find that 
the accuracy of the confidence sets for this LOCO parameter is higher than for
the projection parameters.

Similarly to what we did in \Cref{sec:projection},  we derive two approximate
confidence sets: one is an $L_\infty$ ball and the other is a hyper-rectangle
whose $j^{\mathrm{th}}$ side length is proportional to the standard deviation of the $j^{\mathrm{th}}$
coordinate of $\hat{\gamma}_{\wS}$. Both sets are centered at
$\hat{\gamma}_{\wS}$.

Below, we let $\alpha \in (0,1)$ be fixed and let 
\begin{equation}\label{eq:Sigma.loco}
    \hat \Sigma_{\wS} = \frac{1}{n} \sum_{i=1}^n \left( \delta_i - \hat{\gamma}_{\wS}
    \right)\left( \delta_i - \hat{\gamma}_{\wS}  \right)^\top,
\end{equation}
be the empirical covariance matrix of the $\delta_i$'s.
The first confidence set is the $L_\infty$ ball
\begin{equation}\label{eq::gamma.conf-rectangle}
    \hat{D}_{\wS} = \Big\{ \gamma \in \mathbb{R}^k \colon \|\gamma -
    \hat{\gamma}_{\wS} \|_\infty \leq \hat{t}_\alpha \Big\},
\end{equation}
 where $\hat{t}_\alpha$ is such that
 \[
     \mathbb{P}\left( \| Z_n \|_\infty  \leq \hat{t}_\alpha \right) = 1 -
     \alpha,
 \]
 with $Z_n \sim N(0,\hat{\Sigma}_{\wS}$).
The second confidence set we construct is instead the hyper-rectangle
\begin{equation}\label{eq:gamma.hyper:CI}
    \tilde{D}_{\wS} = \bigotimes_{j \in \wS} \hat{D}(j), 
\end{equation}
where, for any $j \in \wS$, $\tilde{D}(j) = \left[ \hat{\gamma}_{\wS}(j) -\hat{t}_{j,\alpha}, \hat{\gamma}_{\wS}(j) +\hat{t}_{j,\alpha}
\right]$,
with $    \hat{t}_{j,\alpha} = z_{\alpha/2k} \sqrt{
    \frac{\hat\Sigma_{\wS}(j,j)}{n}  }.$

The above confidence sets have the same form as the confidence sets for the
projection parameters \eqref{eq::conf-rectangle} \eqref{eq:hyper:CI}. The key
difference is that for the projection parameters we use the estimated covariance
of the linear approximation to $\hat{\beta}_{\wS}$, while for the LOCO parameter
$\hat{\gamma}_{\wS}$ we rely on the empirical covariance \eqref{eq:Sigma.loco},
which is a much simpler estimator to compute.

\vspace{11pt}

In the next result we derive coverage rates for both confidence sets.

\begin{theorem}\label{thm::CLT2}
    There exists a universal  constant $C > 0$ such that   
    \begin{equation}\label{eq:loco.coverage1}
	\inf_{w_n \in  \mathcal{W}_n} \inf_{P \in \mathcal{P}_n^{\mathrm{LOCO}}} \mathbb{P} \left(
    \gamma_{\wS} \in \widehat{D}_{\wS}  \right) \geq 1 - \alpha - C
    \left(  
    \mathrm{E}_{1,n} +  \mathrm{E}_{2,n}  \right)- \frac{1}{n},
\end{equation}
    and
    \begin{equation}\label{eq:loco.coverage2}
	\inf_{w_n \in  \mathcal{W}_n} \inf_{P \in \mathcal{P}_n^{\mathrm{LOCO}}} \mathbb{P} \left(
    \gamma_{\wS} \in \tilde{D}_{\wS}  \right) \geq 1 - \alpha - C \left(
    \mathrm{E}_{1,n}  +  \tilde{\mathrm{E}}_{2,n}  \right) - \frac{1}{n},
\end{equation}
    where
    \begin{align}
	\label{eq:E1n}
    \mathrm{E}_{1,n}   &= \frac{2(A+\tau) + \epsilon }{\epsilon} \left(\frac{  (\log n
    k)^7}{n}\right)^{1/6},\\
    \label{eq:E2n}
    \mathrm{E}_{2,n}  & =    
    \frac{N_n^{1/3} (2 \log 2k)^{2/3}}{\underline{\epsilon}^{2/3}},\\
    \label{eq:tildeE2.n}
    \tilde{\mathrm{E}}_{2,n}  &= \min \left\{ \mathrm{E}_{2,n},\frac{ N_n z_{\alpha/(2k)}}{\epsilon^2}
    \left(\sqrt{ 2 + \log(2k ) } + 2 \right) \right\}
\end{align}
and
\begin{equation}\label{eq:Nn}
N_n =  \left( 2(A+\tau) + \epsilon \right)^2 \sqrt{ 
	    \frac{4\log k + 2 \log n}{n} }.
	\end{equation}

\end{theorem}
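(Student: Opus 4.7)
My plan is to first condition on $\mathcal{D}_{1,n}$ and the outcome of the data split, which fixes $\widehat{S}$, the $\widehat{S}(j)$'s, and the plug-in estimators $\hat\beta_{\widehat{S}}$ and $\hat\beta_{\widehat{S}(j)}$. Under this conditioning, $\hat\gamma_{\widehat{S}}=n^{-1}\sum_{i\in\mathcal{I}_{2,n}}\delta_i$ is an average of i.i.d.\ random vectors in $\mathbb{R}^{\widehat{S}}$ with mean $\gamma_{\widehat{S}}$, coordinates bounded in absolute value by $B:=2(A+\tau)+\epsilon$ (thanks to the hard-thresholding), and marginal variances bounded below by $\epsilon^2/3$ (thanks to the independent $\mathrm{Uniform}(-1,1)$ noise in the definition). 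Unlike the projection-parameter case, no Taylor linearization is needed: the estimator is already a pure sample average, so the high-dimensional CLT applies directly. Since every bound produced will be uniform over the conditioning event, the conclusion transfers to the unconditional probability.

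The three error sources I will track are: (i) the Berry--Esseen rate for hyper-rectangles from \cite{cherno2}, applied to $\sqrt{n}(\hat\gamma_{\widehat{S}}-\gamma_{\widehat{S}})$, which by their $n^{-1/6}$ bound produces exactly $\mathrm{E}_{1,n}$ with the ratio $B/\underline{\epsilon}$ playing the role of the (sub)-Gaussian-to-minimum-variance ratio; (ii) a covariance concentration step, where each of the $k^2$ entries of $\hat\Sigma_{\widehat{S}}-\Sigma_{\widehat{S}}$ is an average of centered variables bounded by $B^2$, so Hoeffding plus a union bound gives $\|\hat\Sigma_{\widehat{S}}-\Sigma_{\widehat{S}}\|_{\max}\le N_n$ off an event of probability at most $1/n$; (iii) a Gaussian comparison that replaces the covariance $\Sigma_{\widehat{S}}$ of the limiting Gaussian by the data-dependent $\hat\Sigma_{\widehat{S}}$ used to calibrate the confidence set.

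For $\widehat{D}_{\widehat{S}}$ I would invoke the Chernozhukov--Chetverikov--Kato Gaussian comparison lemma for rectangles, which bounds $\sup_R|\mathbb{P}(Z\in R)-\mathbb{P}(\hat Z\in R)|$ by $\lesssim \|\hat\Sigma_{\widehat{S}}-\Sigma_{\widehat{S}}\|_{\max}^{1/3}(\log k)^{2/3}/\underline{\epsilon}^{2/3}$. On the high-probability event of step (ii) this yields $\mathrm{E}_{2,n}$, and combining with the Berry--Esseen step gives the first claim. For $\tilde{D}_{\widehat{S}}$ the $1/3$-power in $\mathrm{E}_{2,n}$ can be improved by exploiting the Bonferroni structure directly: after applying the CLT, the coverage is $\mathbb{P}(|Z(j)|\le z_{\alpha/(2k)}\sqrt{\hat\Sigma(j,j)}\ \forall j)$ for $Z\sim N(0,\Sigma_{\widehat{S}})$, and the companion event with true standard deviations has probability at least $1-\alpha$. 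The symmetric difference over coordinate $j$ is contained in an event that $|Z(j)|$ lies in an interval of length at most $z_{\alpha/(2k)}|\sqrt{\hat\Sigma(j,j)}-\sqrt{\Sigma(j,j)}|\lesssim z_{\alpha/(2k)} N_n/\epsilon$, so Gaussian anti-concentration bounds its probability by $\lesssim \phi(z_{\alpha/(2k)})\cdot z_{\alpha/(2k)}N_n/\epsilon^2$.

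The subtle step --- and the main obstacle --- is the last bound: a naive union over $j$ would cost an extra factor of $k$. To avoid this one must use the sharp Mills-ratio estimate $\phi(z_{\alpha/(2k)})\lesssim z_{\alpha/(2k)}\cdot \alpha/k$, which cancels the $k$ from the union bound and leaves a remainder $\lesssim z_{\alpha/(2k)}(\sqrt{\log 2k}+2)N_n/\epsilon^2$, matching $\tilde{\mathrm{E}}_{2,n}$. Taking the minimum of this rate with $\mathrm{E}_{2,n}$ gives the stated $\tilde{\mathrm{E}}_{2,n}$. Assembling steps (i), (ii), (iii) with these bounds and adding the $1/n$ slack from the covariance concentration yields both inequalities \eqref{eq:loco.coverage1} and \eqref{eq:loco.coverage2} uniformly over $w_n$ and $P\in\mathcal{P}_n^{\mathrm{LOCO}}$, since every step is uniform in the conditioning quantities.
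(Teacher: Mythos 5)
Your treatment of the $L_\infty$ ball $\hat{D}_{\wS}$ follows the paper's proof essentially verbatim: condition on the split and $\mathcal{D}_{1,n}$, apply Theorem 2.1 of \cite{cherno2} directly to the bounded i.i.d.\ averages (no Taylor step), use the sub-Gaussian maximal inequality on the entries of $\hat\Sigma_{\wS}-\Sigma_{\wS}$ to get the event $\{\max_{i,j}|\hat\Sigma_{\wS}(i,j)-\Sigma_{\wS}(i,j)|\le N_n\}$ with probability $1-1/n$, and then the Gaussian comparison (Theorem \ref{thm:comparisons}) with $\underline{\sigma}\ge\epsilon/\sqrt{3}$ to produce $\mathrm{E}_{2,n}$; this is exactly the argument in the paper. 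Where you genuinely diverge is the second term in $\tilde{\mathrm{E}}_{2,n}$ for the Bonferroni rectangle $\tilde{D}_{\wS}$. The paper (via the proof of Theorem \ref{thm::bonf}, invoked from the coordinatewise CLT bound) compares $\mathbb{P}(|Z_j|\le z_{\alpha/(2k)}\hat\sigma_j\ \forall j)$ with $\mathbb{P}(|Z_j|\le z_{\alpha/(2k)}\sigma_j\ \forall j)$ through a \emph{single} application of Nazarov's multivariate anti-concentration inequality (Theorem \ref{thm:anti.concentration}) with uniform shift $z_{\alpha/(2k)}\Xi_n$, $\Xi_n\lesssim N_n/\min_j\sigma_j$; the factor $\sqrt{2+\log(2k)}+2$ comes for free from that inequality, with no union bound over coordinates and no Mills-ratio cancellation. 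Your route --- per-coordinate slab events, a union bound over $j$, and the Mills-ratio estimate $\phi(z_{\alpha/(2k)})\lesssim z_{\alpha/(2k)}\alpha/k$ to cancel the $k$ --- does reach a bound of the same order (even picking up a harmless extra factor of $\alpha$), but it carries a caveat the paper's argument avoids: bounding the slab probability by $\phi(z_{\alpha/(2k)})$ times its length is only valid when $\hat\Sigma(j,j)\ge\Sigma(j,j)$; when the empirical variance undershoots, the Gaussian density on the slab can exceed $\phi(z_{\alpha/(2k)})$ by a factor of order $\exp\bigl(z_{\alpha/(2k)}^2 N_n/(2\Sigma(j,j))\bigr)$, which is $O(1)$ only when $(\log k)\,N_n/\epsilon^2=O(1)$. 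This is repairable by the usual dichotomy (if $(\log k)N_n/\epsilon^2$ is bounded away from $0$ then $\tilde{\mathrm{E}}_{2,n}$ is bounded below and the claimed inequality is vacuous after enlarging $C$), but as written your proposal skips this step; the Nazarov-based comparison in the paper is the cleaner finite-sample argument precisely because it sidesteps any per-coordinate density evaluation.
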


\noindent {\bf Remark.} The term $\mathrm{E}_{1,n}$ quantifies the error in
applying the 
high-dimensional normal approximation to  $\hat{\gamma}_{\wS} - \gamma_{\wS}$,
 given in \cite{cherno2}.
The second error term $\mathrm{E}_{2,n}$ is due to the fact that $\Sigma_{\wS}$
is unknown and has to be estimated using the empirical covariance matrix
$\widehat{\Sigma}_{\wS}$. To establish $\mathrm{E}_{2,n}$ we use the Gaussian
comparison Theorem \ref{thm:comparisons}. 
 We point out that  the dependence in $\epsilon$ displayed in the term
$\mathrm{E}_{2,n}$ above does not follow
directly from Theorem 2.1 in \cite{cherno2}. It can be obtained by tracking
constants and using Nazarov's inequality \Cref{thm:anti.concentration} in the
proof of that result. See \Cref{thm:high.dim.clt}  in \Cref{app:high.dim.clt}
for details.

The accuracy of the confidence set \eqref{eq:gamma.hyper:CI} can be easily established to be of order $O \left( \sqrt{ \frac{\log k}{n}} \right)$, a fact made precise in the following result.
\begin{corollary}\label{cor:accuracy.LOCO}
	With probability at least $ 1- \frac{1}{n}$, the maximal length of the sides of the hyper-rectangle $\tilde{C}_n$ is bounded by
\[
C \left(2(A + \tau) + \epsilon \right) \sqrt{  \frac{\log k}{n}  \left( 1 + \frac{(4\log k + 2 \log n)^{1/2}}{n^{1/2}}\right)},
\]
for a universal constant $C>0$, uniformly over all $P \in  \mathcal{P}_n^{\mathrm{LOCO}}$.
 
\end{corollary}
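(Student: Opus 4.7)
The plan is straightforward: bound the length of each side of $\tilde{D}_{\wS}$ in terms of the Gaussian quantile and the diagonal of the empirical covariance $\hat\Sigma_{\wS}$, and then control both factors. For any $j\in\wS$, the side length is $2\hat{t}_{j,\alpha}=2z_{\alpha/(2k)}\sqrt{\hat\Sigma_{\wS}(j,j)/n}$, so it suffices to produce (i) the standard Mills-ratio estimate $z_{\alpha/(2k)}\le C\sqrt{\log(k/\alpha)}\lesssim \sqrt{\log k}$ for $\alpha$ fixed, and (ii) a high-probability bound on $\max_{j\in\wS}\hat\Sigma_{\wS}(j,j)$.

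For step (ii), I would write $M:=2(A+\tau)+\epsilon$ and recall that by construction $\delta_i(j)\in[-M,M]$ for every $i\in\mathcal{I}_{2,n}$ and every $j\in\wS$, so $\Sigma_{\wS}(j,j)=\mathrm{Var}(\delta_i(j))\le M^2$. Writing
\[
\hat\Sigma_{\wS}(j,j)=\frac{1}{n}\sum_{i\in\mathcal{I}_{2,n}}\delta_i(j)^2-\hat\gamma_{\wS}(j)^2,
\]
the first term is an average of $[0,M^2]$-valued i.i.d.\ variables and Hoeffding's inequality gives
\[
\mathbb{P}\!\left(\Big|\tfrac1n\sum_i\delta_i(j)^2-\mathbb{E}[\delta_i(j)^2]\Big|>t\right)\le 2\exp\!\left(-\tfrac{2nt^2}{M^4}\right),
\]
and the second term is controlled by the concentration lemma stated just before the corollary, which yields $|\hat\gamma_{\wS}(j)-\gamma_{\wS}(j)|\le M\sqrt{(2\log k+2\log n)/n}$ uniformly in $j$ with probability at least $1-1/n$, making the bias correction $\hat\gamma_{\wS}(j)^2-\gamma_{\wS}(j)^2$ of lower order. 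A union bound over the $|\wS|\le k$ coordinates and the (conditional) independence given the first split then produces, with probability at least $1-1/n$,
\[
\max_{j\in\wS}\hat\Sigma_{\wS}(j,j)\;\le\; M^2\left(1+C\,\frac{(4\log k+2\log n)^{1/2}}{n^{1/2}}\right),
\]
with the specific constants $4$ and $2$ arising because the terms $\delta_i(j)^2$ and the centering $\hat\gamma_{\wS}(j)^2$ contribute together a factor that tracks exactly the rate already used in the preceding lemma.

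Step (iii) is to combine the two bounds. On the intersection of the two high-probability events (still of probability at least $1-1/n$ after absorbing constants), the maximal side length satisfies
\[
2\max_{j\in\wS}\hat{t}_{j,\alpha}\;\le\;2\sqrt{\log(k/\alpha)}\cdot M\sqrt{\frac{1}{n}\left(1+C\,\frac{(4\log k+2\log n)^{1/2}}{n^{1/2}}\right)},
\]
which, after absorbing $\sqrt{\log(1/\alpha)}$ and the factor $2$ into the universal constant $C$, yields the stated bound
\[
C\,\bigl(2(A+\tau)+\epsilon\bigr)\sqrt{\frac{\log k}{n}\left(1+\frac{(4\log k+2\log n)^{1/2}}{n^{1/2}}\right)}.
\]
Uniformity over $w_n\in\mathcal{W}_n$ and $P\in\mathcal{P}_n^{\mathrm{LOCO}}$ follows automatically because the argument only uses the deterministic bound $\|\delta_i\|_\infty\le M$ together with conditional independence of $\mathcal{D}_{2,n}$ given $\mathcal{D}_{1,n}$.

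The only substantive step is the concentration of $\hat\Sigma_{\wS}(j,j)$, and even that is routine Hoeffding plus a union bound over at most $k$ coordinates; the main care needed is to match constants so the exponent $(4\log k+2\log n)^{1/2}$ appears exactly as written. No further probabilistic machinery is required, since the boundedness of the $\delta_i(j)$ (guaranteed by the hard-thresholding $t_{\tau}$ and the compact support of $P$) removes any need for moment conditions or heavier-tail arguments.
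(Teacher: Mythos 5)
Your proof is correct and essentially the same as the paper's: the paper also bounds the maximal side length by $2z_{\alpha/(2k)}\sqrt{\bigl(\Sigma_{\wS}(j,j)+|\hat\Sigma_{\wS}(j,j)-\Sigma_{\wS}(j,j)|\bigr)/n}$, uses $\Sigma_{\wS}(j,j)\le (2(A+\tau)+\epsilon)^2$ and $z_{\alpha/(2k)}=O(\sqrt{\log k})$, and controls the deviation term by the event $\mathcal{E}_n$ of \eqref{eq:loco.aleph} (entrywise concentration of $\hat\Sigma_{\wS}$ at rate $N_n$, already established with probability at least $1-\frac{1}{n}$ in the proof of \Cref{thm::CLT2}), whereas you re-derive the diagonal-entry concentration directly via Hoeffding plus the preceding concentration lemma for $\hat\gamma_{\wS}$. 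The only minor slip is that intersecting your two events yields probability $1-\frac{2}{n}$ rather than $1-\frac{1}{n}$; this is immediately repaired either by invoking the single event $\mathcal{E}_n$ as the paper does, or by halving the failure probabilities and absorbing the resulting additive $\log 2$ terms into the universal constant $C$.
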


\subsubsection*{The Bootstrap}

We now demonstrate the coverage of the paired bootstrap version of the confidence set
for $\gamma_{\wS}$ given above in \eqref{eq::gamma.conf-rectangle}.

The bootstrap distribution is the empirical measure associated to the $n$
triplets $\left\{  (X_i,Y_i,\xi_i), i \in
    \mathcal{I}_{2,n} \right\}$ and conditionally on $\mathcal{D}_{1,n}$.
Let $\hat{\gamma}^*_{\wS}$ denote the estimator of the LOCO parameters
\eqref{eq:new.gamma} of the form \eqref{eq:new.delta} computed
from an i.i.d. sample of size $n$ drawn from the bootstrap distribution. Notice that $\mathbb{E}\left[
\hat{\gamma}^*_{\wS} \Big|  (X_i,Y_i,\xi_i), i \in
    \mathcal{I}_{2,n} \right] = \hat{\gamma}_{\wS}$.
For a given $\alpha \in (0,1)$, let $\hat{t}^*_\alpha$ be the smallest
    positive number such that 
    \[
	\mathbb{P}\left( \sqrt{n} \| \hat{\gamma}^*_{\wS} - \hat{\gamma}_{\wS}\|
	\leq \hat{t}^*_\alpha \Big| (X_i,Y_i,\xi_i), i \in
    \mathcal{I}_{2,n} \right) \geq 1 - \alpha.
    \]
    Next, let $(\tilde{t}^*_j, j \in \wS)$ be such that
    \[
	\mathbb{P}\left( \sqrt{n} | \hat{\gamma}^*_{\wS}(j) - \hat{\gamma}_{\wS}
	(j) \leq \tilde{t}^*_j, \forall j \Big| (X_i,Y_i,\xi_i), i \in
    \mathcal{I}_{2,n} \right) \geq 1 - \alpha.
    \]
    In particular, using the union bound, each $\tilde{t}^*_j$ can be chosen to
    be the largest positive number such that
    \[
	\mathbb{P}\left( \sqrt{n} | \hat{\gamma}^*_{\wS}(j) - \hat{\gamma}_{\wS}
	(j) > \tilde{t}^*_j, \Big| (X_i,Y_i,\xi_i), i \in
	\mathcal{I}_{2,n} \right) \leq \frac{\alpha}{k}.
    \]
Consider the following two bootstrap confidence sets: 
\begin{equation}\label{eq:ci.boot.loco}
	    \hat{D}^*_{\wS} = \left\{ \gamma \in \mathbb{R}^{\wS} \colon \|
	    \gamma - \hat{\gamma}_{\wS}
	    \|_\infty \leq \frac{ \hat{t}^*_{\alpha}}{\sqrt{n}} \right\} \quad
	    \text{and} \quad 
	    \tilde{D}^*_{\wS} = \left\{ \gamma \in \mathbb{R}^{\wS} \colon |
	    \gamma_j - \hat{\gamma}_{\wS}
	    | \leq \frac{ \tilde{t}^*_{j}}{\sqrt{n}}, \forall j \right\}.
	\end{equation}

\begin{theorem}
\label{thm:boot.loco}
Using the same notation as in \Cref{thm::CLT2}, assume that $n$ is large enough
so that $\epsilon_n  = \sqrt{ \epsilon^2 - N_n }$ is positive.
Then there exists a universal constant $C>0$ such that the coverage of both
confidence sets in \eqref{eq:ci.boot.loco} is at least  
\[
 1 - \alpha
    - C\left(  \mathrm{E}^*_{1,n} +
    \mathrm{E}_{2,n} + \frac{1}{n} \right),
\]
where
\[
    \mathrm{E}^*_{1,n}   = \frac{2(A+\tau) + \epsilon_n }{\epsilon_n} \left(\frac{  (\log n
    k)^7}{n}\right)^{1/6}.
\]
	\end{theorem}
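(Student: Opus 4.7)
The plan is to mirror the argument of \Cref{thm::CLT2}, introducing an additional layer that replaces the direct Berry--Esseen bound on $\sqrt{n}(\hat\gamma_{\wS} - \gamma_{\wS})$ by a chain of three approximations: (a) a high-dimensional CLT for the bootstrap distribution of $\sqrt{n}(\hat\gamma_{\wS}^* - \hat\gamma_{\wS})$ against the Gaussian $N(0,\hat\Sigma_{\wS})$; (b) a Gaussian comparison between $N(0,\hat\Sigma_{\wS})$ and $N(0,\Sigma_{\wS})$; and (c) the original high-dimensional CLT for $\sqrt{n}(\hat\gamma_{\wS} - \gamma_{\wS})$ against $N(0,\Sigma_{\wS})$. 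Because each approximation will hold uniformly over hyper-rectangles, the resulting bound will apply simultaneously to both the $L_\infty$ set $\hat D^*_{\wS}$ and the hyper-rectangle $\tilde D^*_{\wS}$.

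The key preliminary step will be to control the empirical covariance matrix $\hat\Sigma_{\wS}$ from below. Since each $\delta_i$ is bounded in absolute value by $M = 2(A+\tau) + \epsilon$, Hoeffding's inequality together with a union bound over the $k^2$ entries of $\Sigma_{\wS}$ will yield an event $\mathcal{E}$ of probability at least $1 - 1/n$ on which $\|\hat\Sigma_{\wS} - \Sigma_{\wS}\|_\infty \leq N_n$. Because the uniform noise $\epsilon\xi_i(j)$ is independent of the other summands, every diagonal entry of $\Sigma_{\wS}$ is at least $\epsilon^2/3$; on $\mathcal{E}$, therefore, every diagonal entry of $\hat\Sigma_{\wS}$ is at least $\epsilon_n^2/3$, which is positive by hypothesis. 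This is precisely the condition needed to invoke the high-dimensional CLT for the bootstrap with minimum standard deviation of order $\epsilon_n$, and the prefactor $(2(A+\tau)+\epsilon_n)/\epsilon_n$ appearing in $\mathrm{E}^*_{1,n}$ will emerge as the ratio of the uniform bound on the summands to this minimum standard deviation, as provided by the explicit-constant form of the Chernozhukov--Chetverikov--Kato CLT in \Cref{thm:high.dim.clt}.

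Conditionally on $(X_i,Y_i,\xi_i)_{i\in\mathcal{I}_{2,n}}$ and restricting to $\mathcal{E}$, the bootstrap CLT will give
\[
\sup_R \left| \mathbb{P}\left(\sqrt{n}(\hat\gamma^*_{\wS} - \hat\gamma_{\wS}) \in R \,\big|\, \mathcal{D}_{2,n}\right) - \mathbb{P}\left(Z^* \in R \,\big|\, \mathcal{D}_{2,n}\right) \right| \leq C\, \mathrm{E}^*_{1,n},
\]
where the supremum is over hyper-rectangles and $Z^* \mid \mathcal{D}_{2,n} \sim N(0,\hat\Sigma_{\wS})$. The same CLT applied to $\sqrt{n}(\hat\gamma_{\wS} - \gamma_{\wS})$ will yield the analogous bound with error $C\,\mathrm{E}_{1,n} \leq C\,\mathrm{E}^*_{1,n}$, and \Cref{thm:comparisons} will bound the rectangle distance between $N(0,\Sigma_{\wS})$ and $N(0,\hat\Sigma_{\wS})$ on $\mathcal{E}$ by a constant multiple of $\mathrm{E}_{2,n}$. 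I will then chain these three inequalities at the data-dependent radius $\hat t^*_\alpha/\sqrt{n}$, and, in parallel, at the coordinatewise radii $\tilde t^*_j/\sqrt{n}$, and absorb the failure probability $\mathbb{P}(\mathcal{E}^c) \leq 1/n$; this will produce the stated coverage bound. The principal technical obstacle will be the careful propagation of the minimum-variance dependence through all three approximations so that only $\epsilon_n > 0$, rather than a stronger lower bound on $\epsilon$, needs to be assumed; this forces reliance on the explicit-constant form of the high-dimensional CLT together with Nazarov's anti-concentration inequality (\Cref{thm:anti.concentration}), rather than any black-box statement of these results.
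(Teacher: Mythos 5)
Your proposal is correct and follows essentially the same route as the paper's own proof: a three-term decomposition into the unconditional high-dimensional CLT error for $\sqrt{n}(\hat\gamma_{\wS}-\gamma_{\wS})$ (bounded by $\mathrm{E}_{1,n}\le \mathrm{E}^*_{1,n}$), the Gaussian comparison between $N(0,\Sigma_{\wS})$ and $N(0,\hat\Sigma_{\wS})$ on the event $\max_{i,j}|\hat\Sigma_{\wS}(i,j)-\Sigma_{\wS}(i,j)|\le N_n$ (giving $\mathrm{E}_{2,n}$ plus $1/n$), and the conditional bootstrap CLT against $N(0,\hat\Sigma_{\wS})$ with minimal standard deviation of order $\epsilon_n$ (giving $\mathrm{E}^*_{1,n}$), applied uniformly over rectangles so both sets in \eqref{eq:ci.boot.loco} are covered. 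The only quibble is constant-level: on your event the diagonal entries of $\hat\Sigma_{\wS}$ are only guaranteed to be at least $\epsilon^2/3 - N_n$ rather than $\epsilon_n^2/3$, but this looseness is shared by the paper's own argument and is absorbed into the universal constant.
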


\subsection{Median LOCO parameters}

For the median loco parameters $(\phi_{\wS}(j), j \in \wS)$ given in \eqref{eq:median.LOCO} finite sample
inference is  relatively straightforward using standard confidence intervals for the median
based on order statistics.
In detail, for each $j \in \wS$ and $i \in \mathcal{I}_{2,n}$, recall the
definition of $\delta_i(j)$ in \eqref{eq:delta.i} and  let $\delta_{(1)}(j) \leq
\ldots \leq \delta_{(n)}(j)$ be the corresponding order statistics. We will
not impose any restrictions on the data generating distribution. In particular, for each $j \in \wS$, the median of  $\delta_i(j)$ needs not be  unique.  
Consider
the interval
\[
    E_j = [ \delta_{(l)}(j), \delta_{(u)}(j)]     
\]
where
\begin{equation}\label{eq:lu}
    l =  \Big\lceil \frac{n}{2} - \sqrt{\textcolor{black}{\frac{n}{2}} \log\left( \frac{2k}{\alpha}\right)} \Big\rceil
 \quad \text{and} \quad 
 u =  \Big\lfloor \frac{n}{2} + \sqrt{\textcolor{black}{\frac{n}{2}} \log\left( \frac{2k}{\alpha}\right)}
 \Big\rfloor 
\end{equation}
and construct the hyper-cube
 \begin{equation}
   \hat{E}_{\wS} = \bigotimes_{j \in \wS}^n E_j.	
 \end{equation}
 Then, a standard result about confidence sets for medians along with union
 bound implies that $\hat{E}_{\wS}$ is a $1-\alpha$ confidence set for the median LOCO parameters, uniformly over $\mathcal{P}_n$.

\begin{proposition}
For every $n$,
\begin{equation}
\inf_{w_n \in \mathcal{W}_n} \inf_{P\in {\cal P}_{n}}\mathbb{P}(\phi_{\wS} \in
\hat{E}_{\wS}) \geq 1-\alpha.
\end{equation}
\end{proposition}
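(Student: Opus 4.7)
The strategy is to reduce the joint coverage over $\wS$ to a coordinate-wise coverage statement for each $j \in \wS$ via a union bound, and to establish the coordinate-wise bound by the classical distribution-free confidence interval for the median built from order statistics. I would argue conditionally on $\mathcal{D}_{1,n}$ and on the outcome of the data split: given these, the fitted parameters $\widehat{\beta}_{\wS}, \widehat{\beta}_{\wS(j)}$ and the selected sets $\wS, \wS(j)$ are fixed, so the variables $(\delta_i(j), i \in \mathcal{I}_{2,n})$ from \eqref{eq:delta.i} are i.i.d.\ with (conditional) median exactly $\phi_{\wS}(j)$ by definition \eqref{eq:median.LOCO}. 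It therefore suffices to show that, uniformly over $P \in \mathcal{P}_n$ and $w_n \in \mathcal{W}_n$,
\[
\mathbb{P}\bigl( \phi_{\wS}(j) \in E_j \,\big|\, \mathcal{D}_{1,n} \bigr) \;\geq\; 1 - \alpha/k \qquad \text{for every } j \in \wS,
\]
since summing the complementary probabilities over the at most $k$ coordinates of $\wS$ yields $\mathbb{P}(\phi_{\wS} \in \hat{E}_{\wS} \mid \mathcal{D}_{1,n}) \geq 1 - \alpha$, and integrating over $\mathcal{D}_{1,n}$ and the splitting preserves this bound.

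\textbf{The coordinate-wise bound.} Fix $j \in \wS$ and let $\phi = \phi_{\wS}(j)$. By the defining properties of the median, $\mathbb{P}(\delta_i(j) \leq \phi) \geq 1/2$ and $\mathbb{P}(\delta_i(j) \geq \phi) \geq 1/2$. Let $N = \#\{i \in \mathcal{I}_{2,n} : \delta_i(j) \leq \phi\}$ and $M = \#\{i \in \mathcal{I}_{2,n} : \delta_i(j) < \phi\}$. From the definition of the order statistics,
\[
\bigl\{ \delta_{(l)}(j) > \phi \bigr\} \subseteq \{N < l\}
\qquad \text{and} \qquad
\bigl\{ \delta_{(u)}(j) < \phi \bigr\} \subseteq \{M \geq u\},
\]
and both $n - N$ and $M$ are stochastically dominated by $B \sim \mathrm{Binomial}(n, 1/2)$. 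Applying Hoeffding's inequality to each tail of $B$ with the choice of $l$ and $u$ in \eqref{eq:lu} — specifically, with deviations of order $\sqrt{(n/2)\log(2k/\alpha)}$ from the mean $n/2$ — each of these probabilities is bounded by $\exp(-\log(2k/\alpha)) = \alpha/(2k)$. Summing the two contributions gives $\mathbb{P}(\phi \notin E_j \mid \mathcal{D}_{1,n}) \leq \alpha/k$, as required.

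\textbf{The main obstacle.} There is no substantive obstacle: the argument is entirely distribution-free and relies only on the median characterization and a Hoeffding bound, so the uniformity over $\mathcal{P}_n$ and $\mathcal{W}_n$ is automatic and no regularity of $P$ is invoked. The only minor technicality is the boundary case in which the ceilings and floors in \eqref{eq:lu} make $l < 1$ or $u > n$; in this regime one adopts the conventions $\delta_{(0)}(j) = -\infty$ and $\delta_{(n+1)}(j) = +\infty$, so that $E_j$ collapses to an unbounded interval and the coordinate-wise coverage bound holds trivially. The choice of constants in \eqref{eq:lu} is precisely what aligns the Hoeffding exponent with $\log(2k/\alpha)$ so that the union bound closes at level $\alpha$.
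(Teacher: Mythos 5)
Your proposal is correct and takes essentially the same route as the paper, which proves the proposition exactly this way: conditionally on $\mathcal{D}_{1,n}$ and the split, the $\delta_i(j)$ are i.i.d.\ with (conditional) median $\phi_{\wS}(j)$, each coordinate is covered by the standard order-statistic confidence interval for a median with the binomial/Hoeffding tails calibrated at $\alpha/(2k)$ per side, and a union bound over the at most $k$ coordinates of $\wS$ finishes the argument, with no distributional assumptions needed. The only hair worth splitting is the rounding direction in \eqref{eq:lu}: the floor in $u$ can sit almost one unit below $n/2+\sqrt{(n/2)\log(2k/\alpha)}$, so the crude Hoeffding bound applied at the rounded index gives $\exp\bigl(-2(t-1)^2/n\bigr)$ rather than exactly $\alpha/(2k)$ in the worst case — an artifact of the paper's displayed constants as much as of your write-up, and repaired by rounding outward or by invoking the exact binomial tail instead of Hoeffding.
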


{\bf Remark.} Of course, if the median of $\delta_i(j)$ is not unique,
the length of the corresponding confidence interval does not shrink ad
$n$ increases. But if the median is unique for each $j \in \wS$, and
under addition smoothness conditions, we obtain the maximal length the
side of the confidence rectangle $\hat{E}_{\wS}$ is of order $O \left(
\sqrt{\frac{\log k + \log n}{n}} \right)$, with high probability.

\begin{theorem}
\label{thm::median}
Suppose that there exists positive numbers $M$ and $\eta$ such that, for each
 $j \in \wS$,  the cumulative distribution function of each $\delta_i(j)$
is differentiable with derivative no smaller than $M$ at all points at a
distance no larger than
$\eta$ from its (unique) median. Then, 
for all $n$ for which 
\[
  \frac{1}{n}  +
    \sqrt{\frac{1}{2n}\log\left(\frac{2k}{\alpha}\right)} + \sqrt{
	\frac{ \log 2kn}{2n} }
 \leq \eta M,
\]
the sides of $\hat{E}_{\wS}$ have length uniformly bounded by
\[
\frac{2}{M} \left(   \frac{1}{n}  +
    \sqrt{\frac{1}{2n}\log\left(\frac{2k}{\alpha}\right)} + \sqrt{
	\frac{ \log 2kn}{2n} } \right),
\]
with probability at least $1 - \frac{1}{n}$.
\end{theorem}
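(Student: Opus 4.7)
The plan is to control the length of each interval $E_j = [\delta_{(l)}(j), \delta_{(u)}(j)]$ uniformly in $j \in \wS$ by combining the DKW inequality for the empirical CDF of the $\delta_i(j)$'s with the local lower bound on the derivative of $F_j$, the CDF of $\delta_i(j)$. Throughout, let $\hat F_j$ denote the empirical CDF of $(\delta_i(j) \colon i \in \mathcal{I}_{2,n})$ and set $\epsilon_n = \sqrt{\log(2kn)/(2n)}$. By DKW applied to each coordinate and a union bound over the at most $k$ coordinates in $\wS$, the event
$$\mathcal{E} = \Bigl\{ \sup_{j \in \wS} \sup_{x \in \mathbb{R}} |\hat F_j(x) - F_j(x)| \leq \epsilon_n \Bigr\}$$
has probability at least $1 - 2k \cdot \frac{1}{2kn} = 1 - 1/n$. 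I will work entirely on $\mathcal{E}$.

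The first step is to argue that both $\delta_{(l)}(j)$ and $\delta_{(u)}(j)$ lie in the window $[\phi_{\wS}(j) - \eta, \phi_{\wS}(j) + \eta]$, where by assumption $F_j' \geq M$. Since $l \geq n/2 - \sqrt{(n/2)\log(2k/\alpha)}$, and the order statistic satisfies $F_j(\delta_{(l)}(j)) \geq (l-1)/n - \epsilon_n$, if $\delta_{(l)}(j)$ were smaller than $\phi_{\wS}(j) - \eta$ then $F_j(\delta_{(l)}(j)) \leq F_j(\phi_{\wS}(j) - \eta) \leq 1/2 - M\eta$ (using $F_j' \geq M$ on the window and $F_j(\phi_{\wS}(j))=1/2$), which would force $M\eta \leq 1/n + \sqrt{\log(2k/\alpha)/(2n)} + \epsilon_n$, contradicting the sample-size assumption. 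The analogous argument handles $\delta_{(u)}(j) \leq \phi_{\wS}(j) + \eta$.

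Once the order statistics are confined to the window, the lower bound $F_j' \geq M$ gives
$$\delta_{(u)}(j) - \delta_{(l)}(j) \leq \frac{1}{M}\bigl(F_j(\delta_{(u)}(j)) - F_j(\delta_{(l)}(j))\bigr).$$
Using the DKW bound on $\mathcal{E}$, $F_j(\delta_{(u)}(j)) \leq u/n + \epsilon_n$ and $F_j(\delta_{(l)}(j)) \geq (l-1)/n - \epsilon_n$, so the right-hand side is at most $\frac{1}{M}\bigl((u-l+1)/n + 2\epsilon_n\bigr)$. From the definitions of $l$ and $u$, one has $(u-l+1)/n \leq 2\sqrt{\log(2k/\alpha)/(2n)} + 1/n$, and $2\epsilon_n = 2\sqrt{\log(2kn)/(2n)}$. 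Collecting and bounding (absorbing the $1/n$ against the factor-$2$ in the target) yields
$$\delta_{(u)}(j) - \delta_{(l)}(j) \leq \frac{2}{M}\left( \frac{1}{n} + \sqrt{\frac{\log(2k/\alpha)}{2n}} + \sqrt{\frac{\log(2kn)}{2n}} \right),$$
uniformly in $j \in \wS$, which gives the claim.

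The routine parts are DKW with the union bound and the affine arithmetic on $l,u$. The one step that requires care, and which I expect to be the main obstacle if one is not careful with constants, is confining the order statistics to the local-regularity window of $F_j$: this is precisely what the sample-size hypothesis in the theorem is engineered to guarantee, and one must match the three error sources (ceiling adjustment, quantile gap, DKW deviation) against $\eta M$ in the same way they appear in the final length bound.
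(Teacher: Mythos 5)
Your argument is correct and takes essentially the same route as the paper's proof: DKW with a union bound over $j\in\wS$ (giving the $\sqrt{\log(2kn)/(2n)}$ term and the $1-\tfrac1n$ event), the sample-size condition to confine $\delta_{(l)}(j)$ and $\delta_{(u)}(j)$ to the $\eta$-window around the median, and the local bound $F_j'\geq M$ (via the mean value theorem) to convert a CDF increment into a length bound. The only cosmetic difference is bookkeeping: the paper bounds $|\delta_{(l)}(j)-\mu_j|$ and $|\delta_{(u)}(j)-\mu_j|$ each by $\tfrac1M(\cdot)$ and sums, while you bound $F_j(\delta_{(u)}(j))-F_j(\delta_{(l)}(j))$ directly through $u-l$ and the DKW deviation; both yield the claimed $\tfrac2M(\cdot)$ bound.
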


\subsection{Future Prediction Error}

To construct a confidence interval for the future prediction error parameter $\rho_{\wS}$ consider the set 
$$
\hat{F}_{\wS} = \Bigl[\hat\rho_S - z_{\alpha/2} s/\sqrt{n},\ \hat\rho_S + z_{\alpha/2} s/\sqrt{n}\Bigr]
$$
where $z_{\alpha/2}$ is the $1-\alpha/2$ upper quantile of a standard normal distribution,
\[
\hat\rho_{\wS} = \frac{1}{n}\sum_{i\in {\cal I}_2}\sum_i A_i, \quad 
s^2 = \frac{1}{n}\sum_{i\in {\cal I}_2}(A_i - \hat\rho_{\wS})^2, \quad
\text{and} \quad A_i = |Y_i - \hat\beta_{\wS}^\top X_{i}(\wS)|, \forall i \in \mathcal{I}_{2,n}.
\]
For any $P$,
let $\sigma^2_n = \sigma^2_n(P) = \mathrm{Var}_P(A_1)$ and
$\mu_{3,n}= \mu_{3,n}(P) =  \mathbb{E}_P \left[ |A_1 - \mathbb{E}_P[A_1]|^3 \right]$.
Then, by the one-dimensional Berry-Esseen theorem:
\[
  \inf_{w_n \in \mathcal{W}_n}  \mathbb{P}(\rho_{\wS} \in \hat{F}_{\wS}) \geq 1-\alpha - O \left( \frac{ \mu_{3,n}}{\sigma_n \sqrt{n}} \right).
\]
In order to obtain uniform coverage accuracy guarantees, we may rely
on a modification of the target parameter that we implemented for the
LOCO parameters in \Cref{sec:loco.parameters} and redefine the
prediction parameter to be
\[
\rho_{\wS} = \mathbb{E} \left[ |Y - t_\tau( \hat\beta_{\wS}^\top X_(\wS)) | + \epsilon \xi \right] ,
\]
where $t_{\tau}$ is the hard-threshold function (for any $x \in
\mathbb{R}$, $t_{\tau}(x)$ is $x$ if $|x| \leq \tau$ and
$\mathrm{sign}(x) \tau$ otherwise) and $\xi$ is independent noise
uniformly distributed on $[-1,1]$. Above, the positive parameters
$\tau$ and $\epsilon$ are chosen to ensure that the variance of the
$A_i$'s does not vanish and that their third moment does not explode
as $n$ grows. With this modification, we can ensure that $\sigma^2_n
\geq \epsilon^2$ and $\mu^{3,n} \leq \left( A + \tau + \epsilon
\right)^3$ uniformly in $n$ and also $s \leq 4 (A + \tau +
\epsilon)^2$, almost surely. Of course, we may let $\tau$ and
$\epsilon$ change with $n$ in a controlled manner. But for fixed
choices of $\tau$ and $\epsilon$ we obtain the following parametric
rate for $\rho_{\wS}$, which holds for all possible data generating
distributions:
\[
  \inf_{w_n \in \mathcal{W}_n}  \mathbb{P}(\rho_{\wS} \in \hat{F}_{\wS}) \geq 1-\alpha - C \left( \frac{1}{\sqrt{n}} \right),
\]
for a constant dependent only on $A$, $\tau$ and $\epsilon$. Furthermore, the length of the confidence interval is parametric, of order $\frac{1}{\sqrt{n}}$.

\section{Prediction/Accuracy Tradeoff: Comparing Splitting to Uniform Inference}
\label{section::splitornot}

There is a price to pay for sample splitting:
the selected model may be less accurate because
only part of the data
are used to select the model.
Thus, splitting creates gains in accuracy and robustness for inference but
with a possible loss of prediction accuracy.
We call this the {\em inference-prediction tradeoff}.
In this section we study this phenomenon by comparing
splitting with uniform inference (defined below).
We use uniform inference for the comparison since
this is the any other method we know of that
achieves (\ref{eq::honest}).
We study this use with a simple model
where it is feasible to compare splitting with uniform inference.
We will focus on the {\em many means problem} which is similar to
regression with a balanced, orthogonal design.
The data are
$Y_1,\ldots, Y_{2n} \sim P$
where
$Y_i\in\mathbb{R}^D$.
Let
$\beta = (\beta(1),\ldots, \beta(D))$
where
$\beta(j) = \mathbb{E}[Y_i(j)]$.
In this section, the model ${\cal P}_{n}$
is the set of probability distributions on $\mathbb{R}^D$ such that
$\max_j \mathbb{E}|Y(j)|^3 < C$ and $\min_j {\rm Var}(Y(j)) > c$ for some positive $C$ and $c$, which do not change with $n$ or $D$ (these assumptions could of course be easily relaxed). Below, we will only track the dependence on $D$ and $n$ and will  use the notation $\preceq$ to denote inequality up to constants.

To mimic forward stepwise regression ---
where we would choose a covariate to maximize correlation with the outcome ---
we consider choosing $j$ to maximize the mean.
Specifically,
we take
\begin{equation}\label{eq::J}
  \wS  \equiv w(Y_1,\ldots, Y_{2n}) =\argmax_j \overline{Y}(j) 
\end{equation}
where
$\overline{Y}(j) = (1/2n)\sum_{i=1}^{2n} Y_i(j)$.
Our goal is to infer
the random parameter $\beta_{\wS}$.
The number of models is $D$.
In forward stepwise regression with $k$ steps
and $d$ covariates, the number of models
is $D = d^k$.
So the reader is invited to think of $D$ as being very large.
We will compare splitting versus non-splitting 
with respect to three goals:
estimation, inference and prediction accuracy.

{\bf Splitting:}
In this case we take
Let ${\cal D}_{1,n} = \{i: \ 1 \leq i \leq n\}$ and
${\cal D}_{2,n} = \{i: \ n+1 \leq i \leq 2n\}$.
Then
\begin{equation}\label{eq::J1}
  \wS  \equiv w(Y_1,\ldots, Y_n) =\argmax_j \overline{Y}(j) 
\end{equation}
where
$\overline{Y}(j) = (1/n)\sum_{i=1}^n Y_i(j)$.
The point estimate and confidence interval
for the random parameter $\beta_{\wS}$ are
$$
\hat\beta_{\wS} = \frac{1}{n}\sum_{i=n+1}^{2n} Y_i(\wS)
$$
and
$$
\hat{C}_{\wS}= [\hat\beta_{\wS} - s z_{\alpha/2}/\sqrt{n},\ \hat\beta_{\wS} + s z_{\alpha/2}/\sqrt{n}]
$$
where
$s^2 = n^{-1}\sum_{i=n+1}^{2n} (Y_i(\wS) - \hat\beta_{\wS})^2$.

{\bf Uniform Inference (Non-Splitting).}
By ``non-splitting'' we mean that the selection rule and estimator
are invariant under permutations of the data.
In particular, we consider uniform inference which is defined as follows.
Let $\hat\beta(s) = (2n)^{-1}\sum_i Y_i(s)$ be the average over all the observations.
Let $\hat{S} = \argmax_s \hat\beta(s)$.
Our point estimate is $\hat{\beta}_{\wS} \equiv \hat\beta(\hat{S})$.
Now define
$$
F_{n}(t) = \mathbb{P}(\sup_s \sqrt{2n}|\hat\beta(s)-\beta(s)| \leq t).
$$
We can consistently estimate $F_{n}$ by the bootstrap:
$$
\hat F_{n}(t) = \mathbb{P}( \sup_s \sqrt{2n}\left|\hat\beta^*(s)-\hat
\beta(s)\right| \leq t\,| Y_1,\ldots, Y_{2n}).
$$
A valid confidence set for $\beta$ is
$R= \{ \beta:\ ||\beta - \hat\beta||_\infty \leq t/\sqrt{2n}\}$
where $t=\hat F_{n}^{-1}(1-\alpha)$.
Because this is uniform over all possible models
(that is, over all $s$),
it also defines a valid confidence interval for a randomly selected coordinate.
In particular, we can define
$$
\hat{C}_{\wS}= [\hat\beta_{\hat{S}} - t/\sqrt{2n},\
\hat\beta_{\hat{S}} + t/\sqrt{2n}]
$$
Both confidence intervals satisfy
(\ref{eq::honest}).

We now compare $\hat\beta_{\wS}$ and $\hat{C}_{\wS}$ for both the splitting and
non-splitting procedures.  The reader should keep in mind that, in general,
$\hat{S}$ might be different between the two procedures, and hence
$\beta_{\wS}$ may be different.  The two procedures might be estimating
different parameters.  We discuss that issue shortly.

\vspace{11pt}

{\bf Estimation.}
First we consider estimation accuracy.

\vspace{11pt}

\begin{lemma}
\label{lemma::est-accuracy}
For the splitting estimator:
\begin{equation}
  \sup_{P\in {\cal P}_{n}}\mathbb{E}|\hat\beta_{\wS}-\beta_{\wS}| \preceq n^{-1/2}.
\end{equation}
For non-splitting we have
\begin{equation}\label{eq::lower1}
  \inf_{\hat\beta}\sup_{P\in {\cal P}_{n}}
  \mathbb{E}|\hat\beta_{\wS}-\beta_{\wS}| \succeq
\sqrt{\frac{\log D}{n}}.
\end{equation}
The above is stated for the particular selection rule $\wS = \argmax_s
\hat{\beta}_s$, but the splitting-based result holds for general 
selection rules $w\in\mathcal{W}_n$, so that for splitting
\begin{equation}
  \sup_{w\in {\cal W}_n}\sup_{P\in {\cal
  P}_{n}}\mathbb{E}|\hat\beta_{\wS}-\beta_{\wS}| \preceq n^{-1/2}
\end{equation}
and for non-splitting 
\begin{equation}\label{eq::lower2}
  \inf_{\hat\beta}\sup_{w\in {\cal W}_{2n}}\sup_{P\in {\cal P}_{n}}
  \mathbb{E}|\hat\beta_{\wS}-\beta_{\wS}| \succeq
\sqrt{\frac{\log D}{n}}.
\end{equation}
\end{lemma}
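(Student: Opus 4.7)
The plan is to handle the upper bounds (parts~1 and~3) by a short conditioning argument and the lower bounds (parts~2 and~4) by a Le~Cam / mixture-of-alternatives argument that exploits the selection bias of the argmax rule.

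\emph{Splitting upper bound.} The structural fact to exploit is that $\wS = w(Y_1,\ldots,Y_n)$ is measurable with respect to $\mathcal{D}_{1,n}$, which is independent of $\mathcal{D}_{2,n}$. Conditionally on $\mathcal{D}_{1,n}$, the summands $\{Y_i(\wS): i\in\mathcal{I}_{2,n}\}$ defining $\hat\beta_{\wS}$ are therefore i.i.d.\ with mean $\beta_{\wS}$ and variance bounded by $C := \sup_j\mathrm{Var}_P(Y(j))<\infty$. Jensen followed by Cauchy--Schwarz gives
\[
\mathbb{E}\bigl[|\hat\beta_{\wS}-\beta_{\wS}|\bigm|\mathcal{D}_{1,n}\bigr]
\le \sqrt{\mathrm{Var}(\hat\beta_{\wS}\mid\mathcal{D}_{1,n})} \le \sqrt{C/n},
\]
and integrating yields $\sup_P\mathbb{E}|\hat\beta_{\wS}-\beta_{\wS}|\preceq n^{-1/2}$. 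Since only measurability of $\wS$ with respect to $\mathcal{D}_{1,n}$ was used, the bound holds uniformly in $w\in\mathcal{W}_n$, giving part~3 at the same time.

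\emph{Non-splitting lower bound.} I propose a Le~Cam argument inside the Gaussian sub-family of $\mathcal{P}_n$. Let $\mu=c\sqrt{\log D/n}$ for a constant $c>0$ to be tuned, take $P_0=N(0,I_D)$ and $P_j=N(\mu e_j,I_D)$ for $j\in[D]$, and form the equal-weight mixture alternative $\bar P = D^{-1}\sum_j P_j$. Under $P_0$ we have $\beta_{\wS}\equiv 0$, while under $\bar P$
\[
\mathbb{E}_{\bar P}[\beta_{\wS}] = \mu\,\bar p, \qquad \bar p = \frac{1}{D}\sum_{j=1}^D \mathbb{P}_{P_j}(\wS=j).
\]
The classical Gaussian mixture chi-square computation gives
\[
\chi^2\bigl(\bar P^{\otimes 2n} \,\|\, P_0^{\otimes 2n}\bigr) = \frac{e^{2n\mu^2}-1}{D} = \frac{D^{2c^2}-1}{D},
\]
which is $o(1)$ whenever $c<1/\sqrt{2}$. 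Plugging this into the usual Le~Cam two-point inequality for the pair $(P_0,\bar P)$, together with either a Cauchy--Schwarz correction $\sqrt{\chi^2\cdot\mathbb{E}_{P_0}[\hat\beta^2]}$ to handle the unboundedness of $\hat\beta$ or an equivalent preliminary truncation of $\hat\beta$ at level $\asymp\sqrt{\log D/n}$, converts the target separation $\mu\bar p$ into the desired minimax lower bound.

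The main obstacle is the joint calibration of $c$: the chi-square constraint forces $c<1/\sqrt{2}$, while the Gaussian order-statistic probability $\mathbb{P}_{P_j}(\wS=j) = \mathbb{E}[\Phi(Z+c\sqrt{2\log D})^{D-1}]$ degrades to $1/D$ as $c\to 0$ and is only $\Theta(1)$ when $c$ is of order~$1$. This is precisely where the $\sqrt{\log D/n}$ rate enters: it is the threshold at which the shift $\mu\sqrt{2n}$ becomes comparable to the typical noise maximum $\sqrt{2\log D}$, so selection bias becomes unavoidable. Part~4 is then immediate, as $\wS=\argmax_j \bar Y(j)$ is itself an element of $\mathcal{W}_{2n}$, so the supremum over $w$ in part~4 dominates the minimax value established in part~2 for this specific rule.
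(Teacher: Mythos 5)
Your upper-bound argument is fine and is exactly the conditioning argument the paper has in mind when it calls the upper bounds obvious. The lower bound is where the proposal breaks down, and the obstacle you flag at the end is not a calibration nuisance to be tuned away but a fatal defect of the construction. With $\mu=c\sqrt{\log D/n}$ and the chi-square constraint forcing $c<1/\sqrt{2}$, the selection probability $\bar p_j=\mathbb{P}_{P_j}(\wS=j)=\mathbb{E}\bigl[\Phi(Z+c\sqrt{2\log D})^{D-1}\bigr]$ is of order $D^{-(1-c)^2}$ up to lower-order factors, hence tends to zero as $D$ grows. So under every distribution in your family $\{P_0,P_1,\ldots,P_D\}$ the random parameter satisfies $\beta_{\wS}=0$ with probability tending to one, and the trivial estimator $\hat\beta\equiv 0$ has worst-case risk $\max_j \mu\,\bar p_j = o\bigl(\sqrt{\log D/n}\bigr)$ over the whole family. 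Consequently no repair of the testing step (truncation, Cauchy--Schwarz correction, sharper bounds on the mixture likelihood ratio) can extract \eqref{eq::lower1} from these alternatives: the family is simply not minimax-hard for the selected-mean functional once the implicit constant is required to be uniform in $D$, which is the regime the lemma targets ($D=d^k$ very large). At best your two-point bound yields a rate $\sqrt{\log D/n}\cdot D^{-(1-c)^2}$, which is strictly weaker; the genuinely hard configurations for the fixed argmax rule are more delicate, and indeed the paper does not reprove \eqref{eq::lower1} at all but cites Section 4 of \cite{sackrowitz1986evaluating} for it.

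For \eqref{eq::lower2} the paper argues in the opposite direction from you. Your step "part 4 follows from part 2 because the argmax rule lies in $\mathcal{W}_{2n}$" is logically fine but inherits the gap above. The paper instead exploits the supremum over selection rules directly: given any estimator $g(Y,\cdot)$ it forms the vector $\hat\beta=(g(Y,w_1),\ldots,g(Y,w_D))$ with $w_j\equiv j$, chooses the adversarial rule $w_0(Y)=\argmax_j|\beta(j)-\hat\beta(j)|$ so that the risk at the selected coordinate equals $\mathbb{E}\|\hat\beta-\beta\|_\infty$, and then applies a standard multiple-hypothesis bound (Theorem 2.5 of \cite{tsybakov2009introduction}) to the spiked means $a e_j$ with $a=\sqrt{\log D/(16n)}$, which gives the $\sqrt{\log D/n}$ rate. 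Note this route is available only because of the $\sup_w$ in \eqref{eq::lower2}; it does not yield \eqref{eq::lower1}, so with your plan the fixed-rule lower bound remains unproved unless you either import the result of \cite{sackrowitz1986evaluating} or build a harder family of alternatives in which the selected coordinate is the perturbed one with non-vanishing probability while the hypotheses remain statistically indistinguishable.
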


Thus, the splitting estimator converges at a $n^{-1/2}$ rate.
Non-splitting estimators have a slow rate, even with the added assumption of Normality.
(Of course, the splitting estimator and non-splitting estimator may in fact
be estimating different randomly chosen parameters.
We address this issue when we discuss prediction accuracy.)

\vspace{11pt}

{\bf Inference.}
Now we turn to inference.
For splitting, we use the usual Normal interval
$\hat{C}_{\wS} = [\hat\beta_{\wS}-z_\alpha s/\sqrt{n},\
\hat\beta_{\wS}+z_\alpha s/\sqrt{n}]$
where $s^2$ is the sample variance from ${\cal D}_{2,n}$.
We then have, as a direct application of the one-dimensional Berry-Esseen theorem, that:

\vspace{11pt}

\begin{lemma}
  Let $\hat{C}_{\wS}$ be the splitting-based confidence set. Then,
  \begin{equation}\label{eq:lem12a}
    \inf_{P\in {\cal P}_{n}}\mathbb{P}(\beta_{\hat{S}}\in \hat{C}_{\wS}) = 1-\alpha - \frac{c}{\sqrt{n}}
\end{equation}
for some $c$.
Also,
\begin{equation}\label{eq:lem12b}
  \sup_{P\in {\cal P}_{n}}\mathbb{E}[\nu(\hat{C}_{\wS})] \preceq n^{-1/2}
\end{equation}
where $\nu$ is Lebesgue measure. More generally,
  \begin{equation}
    \inf_{w\in\mathcal{W}_n}\inf_{P\in {\cal
    P}_{n}}\mathbb{P}(\beta_{\hat{S}}\in \hat{C}_{\wS}) = 1-\alpha - \frac{c}{\sqrt{n}}
\end{equation}
for some $c$, and 
\begin{equation}
  \sup_{w\in\mathcal{W}_n}\sup_{P\in {\cal P}_{n}}\mathbb{E}[\nu(\hat{C}_{\wS})] \preceq n^{-1/2}
\end{equation}
\end{lemma}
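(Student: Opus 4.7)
The plan is to exploit sample splitting so that, conditional on $\mathcal{D}_{1,n}$ (which fully determines $\wS$), the quantity $\hat\beta_{\wS} = n^{-1}\sum_{i=n+1}^{2n} Y_i(\wS)$ is a sample mean of $n$ i.i.d.\ real-valued random variables drawn from $P$-marginal at coordinate $\wS$, independent of $\mathcal{D}_{1,n}$. Write $\beta_{\wS} = \mathbb{E}[Y(\wS)]$, $\sigma_{\wS}^2 = \mathrm{Var}(Y(\wS))$ and $\mu_{3,\wS} = \mathbb{E}|Y(\wS)-\beta_{\wS}|^3$. By the hypotheses defining $\mathcal{P}_n$, we have $\sigma_{\wS}^2 \geq c$ and $\mu_{3,\wS}\leq C'$ for constants not depending on $n$, $D$, $P$ or the selection $\wS$.

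\emph{Coverage.} Conditional on $\mathcal{D}_{1,n}$, the classical one-dimensional Berry--Esseen theorem gives
\[
\sup_t \left|\mathbb{P}\!\left(\tfrac{\sqrt{n}(\hat\beta_{\wS}-\beta_{\wS})}{\sigma_{\wS}}\leq t\,\Big|\,\mathcal{D}_{1,n}\right) - \Phi(t)\right| \leq \frac{C\mu_{3,\wS}}{\sigma_{\wS}^3\sqrt{n}} \preceq \frac{1}{\sqrt{n}}.
\]
To upgrade this into a statement about the studentized statistic that actually defines $\hat C_{\wS}$, I would combine the above with a concentration bound $|s^2/\sigma_{\wS}^2 - 1| \preceq n^{-1/2}$ (in probability, conditional on $\mathcal{D}_{1,n}$), which follows from Chebyshev applied to the sample variance since fourth moments of $Y(\wS)$ are bounded by $A^4$-type constants implied by the cube-moment/variance assumptions. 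Replacing $\sigma_{\wS}$ by $s$ in the normal approximation therefore costs only an $O(1/\sqrt{n})$ term, using the Lipschitz continuity of $\Phi$ on compact sets around $\pm z_{\alpha/2}$. Alternatively, one can invoke Bentkus--G\"otze-style Berry--Esseen bounds for Student's $t$-statistic to get the $O(1/\sqrt{n})$ rate directly. Either way, conditionally on $\mathcal{D}_{1,n}$,
\[
\bigl|\mathbb{P}(\beta_{\wS}\in \hat C_{\wS}\mid \mathcal{D}_{1,n}) - (1-\alpha)\bigr| \leq \frac{c}{\sqrt{n}},
\]
with $c$ depending only on the constants in the definition of $\mathcal{P}_n$. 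Taking expectation over $\mathcal{D}_{1,n}$ yields the unconditional bound \eqref{eq:lem12a}. Crucially, the conditional bound does not use any property of the selection rule other than that $\wS$ is $\mathcal{D}_{1,n}$-measurable; hence the same bound holds uniformly in $w\in\mathcal{W}_n$, giving the uniform version.

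\emph{Length.} The Lebesgue measure of $\hat C_{\wS}$ is $\nu(\hat C_{\wS}) = 2 z_{\alpha/2}\,s/\sqrt{n}$. Conditionally on $\mathcal{D}_{1,n}$, Jensen gives $\mathbb{E}[s\mid\mathcal{D}_{1,n}]\leq \sqrt{\mathbb{E}[s^2\mid\mathcal{D}_{1,n}]}=\sqrt{\sigma_{\wS}^2\,(n-1)/n}\leq \sqrt{C''}$ for a constant $C''$ bounded uniformly over $\mathcal{P}_n$ (using the moment control in the definition of $\mathcal{P}_n$, which implies a uniform upper bound on $\sigma_{\wS}^2$). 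Taking expectation then delivers \eqref{eq:lem12b}, again uniformly in $w\in\mathcal{W}_n$ because the bound on $\mathbb{E}[s]$ does not depend on which coordinate $\wS$ was selected.

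\emph{Main obstacle.} The only delicate step is handling the studentization: the naive Berry--Esseen bound controls the mean standardized by $\sigma_{\wS}$, not by $s$. The cleanest way is to split the event $\{\beta_{\wS}\in \hat C_{\wS}\}$ into the intersection with $\{|s/\sigma_{\wS}-1|\leq n^{-1/2}\log n\}$ and its complement, bound the complement by Chebyshev (using the bounded fourth moment implied by the third moment and variance bounds together with standard truncation), and on the good event absorb the error from replacing $s$ with $\sigma_{\wS}$ into an additional $O(1/\sqrt{n})$ term via the uniform continuity of $\Phi$. This is a routine but not trivial bookkeeping step, and is the only place where one must be careful about the uniformity of constants in $\mathcal{P}_n$.
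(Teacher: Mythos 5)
Your approach is essentially the paper's: the paper gives no detailed argument for this lemma, simply observing that after conditioning on $\mathcal{D}_{1,n}$ (so that $\wS$ is fixed and independent of the second half) the coverage statement is a direct application of the one-dimensional Berry--Esseen theorem, with the length bound following from the uniform moment assumptions; your conditioning-plus-Berry--Esseen argument, with uniformity in $w$ coming from the fact that the conditional bound depends on the selection only through measurability with respect to $\mathcal{D}_{1,n}$, is exactly this.

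One caveat on your studentization step: the class $\mathcal{P}_n$ in this section assumes only $\max_j \mathbb{E}|Y(j)|^3 < C$ and $\min_j \mathrm{Var}(Y(j)) > c$, so your claim that a fourth-moment bound is ``implied by the third moment and variance bounds together with standard truncation'' is not correct, and the Chebyshev-on-$s^2$ route as written would not give the $O(n^{-1/2})$ error uniformly over $\mathcal{P}_n$ (a von Bahr--Esseen bound with exponent $3/2$ only yields an $O_P(n^{-1/3})$ fluctuation for $s^2$). Your alternative route --- a Berry--Esseen bound for the Student $t$-statistic of Bentkus--G\"otze type, which requires only third moments --- is the one that works under the stated assumptions and delivers \eqref{eq:lem12a} directly; with that choice the proof is complete, and the length bound \eqref{eq:lem12b} goes through as you wrote it since $\sigma_{\wS}^2 \leq (\mathbb{E}|Y(\wS)|^3)^{2/3}$ is uniformly bounded by Lyapunov's inequality.
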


\vspace{11pt}

\begin{lemma}
  Let $\hat{C}_{\wS}$ be the uniform confidence set. Then,
\begin{equation}
  \inf_{P\in {\cal P}_{n}} \mathbb{P}(\beta_{\hat{S}}\in \hat{C}_{\wS}) = 1-\alpha - \left(\frac{ c (\log D)^7 }{n}\right)^{1/6}
\end{equation}
for some $c$.
Also,
\begin{equation}
  \sup_{P\in {\cal P}_{2n}}\mathbb{E}[\nu(\hat{C}_{\wS})] \succeq \sqrt{\frac{\log D}{n}}.
\end{equation}
\end{lemma}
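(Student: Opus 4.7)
The plan is to leverage the high-dimensional central limit theorem and bootstrap validity results of \cite{cherno1,cherno2} in the same spirit as in the rest of the paper, combined with a standard lower bound on the maximum of independent mean-zero random variables for the length statement.

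For the coverage bound, first observe the deterministic inclusion
\[
\{\beta_{\hat S} \in \hat C_{\wS}\} \supseteq \{\sqrt{2n}\,|\hat\beta(\hat S)-\beta(\hat S)| \leq t\} \supseteq \Bigl\{\sup_s \sqrt{2n}\,|\hat\beta(s) - \beta(s)| \leq t\Bigr\},
\]
where $t = \hat F_n^{-1}(1-\alpha)$. Thus it suffices to show $\mathbb{P}(\sup_s \sqrt{2n}|\hat\beta(s)-\beta(s)| \leq \hat F_n^{-1}(1-\alpha)) \geq 1-\alpha - c((\log D)^7/n)^{1/6}$. This will be done in two comparison steps. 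First, I would invoke the high-dimensional Berry--Esseen bound of \cite{cherno2} (in its version for maxima, which the authors already cite as \Cref{thm:high.dim.clt} in the appendix) to get
\[
\sup_t \bigl|F_n(t) - \mathbb{P}(\|Z\|_\infty \leq t)\bigr| \leq c_1 \Bigl(\tfrac{(\log D)^7}{n}\Bigr)^{1/6},
\]
where $Z \sim N(0,\Sigma)$ with $\Sigma = \mathrm{Var}_P(Y_1)$; the third-moment and variance conditions in $\mathcal P_n$ furnish exactly the envelope conditions needed. Second, I would apply the multiplier/empirical bootstrap consistency result from the same reference to show that the bootstrap distribution $\hat F_n$ approximates the same Gaussian distribution at the same rate with high probability. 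Combining the two (via the triangle inequality for Kolmogorov distance and Nazarov's anti-concentration inequality to translate distributional closeness into quantile closeness) yields the coverage bound.

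For the length bound, the length of $\hat C_{\wS}$ is exactly $2\hat F_n^{-1}(1-\alpha)/\sqrt{2n}$, so it suffices to exhibit a single $P \in \mathcal{P}_n$ under which $\mathbb{E}[\hat F_n^{-1}(1-\alpha)] \succeq \sqrt{\log D}$. Pick $P$ to be, say, the product of independent standard normals $N(0,1)^D$, which lies in $\mathcal P_n$ under the stated moment assumptions. Under this $P$, the coordinates $\sqrt{2n}(\hat\beta(s)-\beta(s))$ are i.i.d.\ standard Gaussians, so the exact distribution of $\sup_s \sqrt{2n}|\hat\beta(s)-\beta(s)|$ is a maximum of $D$ i.i.d.\ half-normals, whose $(1-\alpha)$-quantile is of exact order $\sqrt{\log D}$ (by a standard Gumbel-type computation or the classical bound $\mathbb{E}[\max_{s\leq D}|Z_s|] \asymp \sqrt{\log D}$ together with concentration). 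This gives $F_n^{-1}(1-\alpha) \succeq \sqrt{\log D}$; the bootstrap quantile $\hat F_n^{-1}(1-\alpha)$ is close to $F_n^{-1}(1-\alpha)$ at rate $((\log D)^7/n)^{1/6}$ by the previous step, so under the scaling of the theorem $\hat F_n^{-1}(1-\alpha) \succeq \sqrt{\log D}$ with probability tending to one, and the conclusion follows after dividing by $\sqrt{2n}$.

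The main obstacle is purely bookkeeping: transferring the Chernozhukov--Chetverikov--Kato high-dimensional CLT (stated for $\sup_s(\hat\beta(s)-\beta(s))$) to its two-sided analogue $\sup_s|\hat\beta(s)-\beta(s)|$ and then from distributional approximation to quantile approximation via anti-concentration, while keeping the $((\log D)^7/n)^{1/6}$ rate through all steps. All the ingredients (bounded third moments and a uniform lower bound on the variances) are already encoded in the definition of $\mathcal{P}_n$, so the argument does not require any additional assumptions beyond what is supplied in the statement.
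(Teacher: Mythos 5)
Your proposal is correct and follows essentially the same route as the paper, which gives no details beyond stating that the lemma is a straightforward application of the results of \cite{cherno1,cherno2}: coverage via the high-dimensional CLT, bootstrap consistency and Nazarov's anti-concentration, and the length lower bound via a concrete instance (i.i.d.\ Gaussian coordinates) where the $(1-\alpha)$-quantile of the maximum is of order $\sqrt{\log D}$. Your write-up simply fills in the bookkeeping the paper leaves implicit, at the same level of rigor regarding the moment/envelope conditions.
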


The proof is a straightforward application of results in \cite{cherno1, cherno2}.
We thus see that the splitting method has better coverage
and narrower intervals,
although we remind the reader that the two methods may be estimating different parameters.

\vspace{11pt}

{\bf Can We Estimate the Law of $\hat\beta(\hat{S})$?}
An alternative non-splitting method to uniform inference is to estimate the
law $F_{2n}$ of $\sqrt{2n}(\hat\beta_{\wS} - \beta_{\wS})$.
But we show that the law of
$\sqrt{2n}(\hat\beta_{\wS}-\beta_{\wS})$
cannot be consistently estimated
even if we assume that the data are Normally distributed
and even if $D$ is fixed (not growing with $n$).
This was shown for fixed population parameters
in \cite{leeb2008can}.
We adapt their proof to the random parameter case
in the following lemma.

\vspace{11pt}

\begin{lemma}
\label{lemma::contiguity}
Suppose that $Y_1,\ldots,Y_{2n} \sim N(\beta,I)$.
Let $\psi_n(\beta) = \mathbb{P}(\sqrt{2n}(\hat\beta_{\wS} -
\beta_{\wS})\leq t)$.
There is no uniformly consistent estimator of $\psi_n(\beta)$.
\end{lemma}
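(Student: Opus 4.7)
The plan is to mimic the Le Cam contiguity argument of \cite{leeb2008can}, adapted so that the target $\beta_{\wS}$ is itself random. Reduce to $D=2$ without loss of generality, and consider the two parameter sequences $\beta^{(1)} = (0,0)$ and $\beta^{(2)}_n = (h/\sqrt{2n},\,0)$, where $h > 0$ is chosen large but fixed in $n$. Let $P^n_1$ and $P^n_2$ denote the corresponding $N(\beta, I_2)^{\otimes 2n}$ product measures. Since they differ only in a single coordinate of the mean by $O(1/\sqrt{n})$, the Kullback--Leibler divergence equals $h^2/2$ for every $n$ in both directions, and the log-likelihood ratio is Gaussian with bounded mean and variance. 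Le Cam's first lemma then gives that $P^n_1$ and $P^n_2$ are mutually contiguous.

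The key step is to exhibit a $t$ at which $\psi_n(\beta^{(1)})$ and $\psi_n(\beta^{(2)}_n)$ are separated by a positive constant. Let $\tilde Z_j = \sqrt{2n}\,\bar Y(j)$. Under $P^n_1$, $(\tilde Z_1, \tilde Z_2) \sim N(0, I_2)$ and $\beta_{\wS} \equiv 0$, hence $\psi_n(\beta^{(1)}, 0) = \mathbb{P}(\max(\tilde Z_1, \tilde Z_2) \leq 0) = \Phi(0)^2 = 1/4$. Under $P^n_2$, $\tilde Z_1 \sim N(h,1)$ and $\tilde Z_2 \sim N(0,1)$ independently, and spelling out the selection rule yields
\[
\sqrt{2n}(\hat\beta_{\wS} - \beta_{\wS}) = (\tilde Z_1 - h)\,\mathbf{1}\{\tilde Z_1 \geq \tilde Z_2\} + \tilde Z_2\,\mathbf{1}\{\tilde Z_1 < \tilde Z_2\}.
\]
Hence
\[
\psi_n(\beta^{(2)}_n, 0) = \mathbb{P}(\tilde Z_1 \leq h,\, \tilde Z_1 \geq \tilde Z_2) + \mathbb{P}(\tilde Z_2 \leq 0,\, \tilde Z_2 > \tilde Z_1),
\]
which tends to $1/2$ as $h \to \infty$: the first probability converges to $\mathbb{P}(\tilde Z_1 \leq h) = 1/2$ (since $\{\tilde Z_1 \geq \tilde Z_2\}$ has probability $\Phi(h/\sqrt{2}) \to 1$), while the second vanishes. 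Fix $h$ large enough that the gap $|\psi_n(\beta^{(1)}, 0) - \psi_n(\beta^{(2)}_n, 0)|$ exceeds $1/8$ for every $n$.

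The contradiction is then standard. If $\hat\psi_n$ were uniformly consistent at $t=0$, then $\hat\psi_n - \psi_n(\beta^{(1)}, 0) \to 0$ in $P^n_1$-probability; by contiguity, also in $P^n_2$-probability. Combined with $\hat\psi_n - \psi_n(\beta^{(2)}_n, 0) \to 0$ in $P^n_2$-probability, this forces the deterministic difference $\psi_n(\beta^{(1)}, 0) - \psi_n(\beta^{(2)}_n, 0)$ to vanish, contradicting the previous step. The main obstacle is the second step: because the selection rule couples $\hat\beta_{\wS}$ and $\beta_{\wS}$ through the same data, one must verify that the law of the centered statistic under $P^n_2$ genuinely approaches $N(0,1)$ rather than the max distribution as $h$ grows, so that the two $\psi_n$ values do not accidentally coincide. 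The explicit formula above makes this transparent.
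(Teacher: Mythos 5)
Your proof is correct and follows essentially the same route as the paper: a local $O(1/\sqrt{n})$ perturbation of a single mean coordinate, mutual contiguity of the two product measures via Le Cam's first lemma, and a fixed separation between the two values of $\psi_n$, i.e.\ exactly the Leeb--P\"otscher contiguity argument that the paper adapts. Your specialization to $D=2$ and $t=0$, with the explicit large-$h$ computation of the gap, is if anything more careful than the paper's bare assertion that $b(0)\neq b(a)$, and since the same one-coordinate perturbation and limit computation go through verbatim for arbitrary $D$ (and arbitrary fixed $t$), the ``without loss of generality'' shortcut is harmless.
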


{\bf Prediction Accuracy.}
Now we discuss prediction accuracy which is where splitting pays a price.
The idea is to identify a population quantity $\theta$ that model
selection is implicitly targeting and compare splitting versus non-splitting
in terms of how well they estimate $\theta$.
The purpose of model selection in regression is to choose a model with low prediction error.
So, in regression, we might take $\theta$
to be the prediction risk of the best linear model with $k$ terms.
In our many-means model,
a natural analog of this
is the parameter $\theta = \max_j \beta(j)$.

We have the following lower bound,
which applies over all estimators both
splitting and non-splitting.
For the purposes of this lemma, we use Normality.
Of course, the lower bound is even larger if we drop Normality.

\vspace{11pt}

\begin{lemma}
\label{lemma::many-means-bound}
Let $Y_1,\ldots, Y_n \sim P$
where $P=N(\beta,I)$,
$Y_i\in\mathbb{R}^D$,
and $\beta \in \mathbb{R}^D$.
Let $\theta = \max_j \beta(j)$.
Then
$$
\inf_{\hat\theta}\sup_{\beta}E[ (\hat\theta - \theta)^2] \geq \frac{2\log D}{n}.
$$
\end{lemma}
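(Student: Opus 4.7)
I will prove the bound via Le~Cam's two-point method using a mixture alternative, which is the standard device for sparse Gaussian lower bounds and gives the rate $\log D/n$; the constant ``$2$'' will come out of the Gaussian $\chi^2$ calculation together with a careful choice of testing threshold. The key observation is that the functional $\theta(\beta)=\max_j\beta_j$ takes two distinct values across a family of hypotheses that are jointly indistinguishable in total variation, which converts an estimation problem into a testing problem.

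\textbf{Step 1 (construction of hypotheses and $\chi^2$ bound).} Fix $\delta>0$, to be chosen later. Let $P_0$ denote the law of $(Y_1,\dots,Y_n)$ when $\beta=0$, so that $\theta=0$, and for each $j\in\{1,\dots,D\}$ let $P_j$ denote the law when $\beta=\delta e_j$, so that $\theta=\delta$. Consider the mixture alternative $Q=\frac{1}{D}\sum_{j=1}^D P_j$. Using the Gaussian product identity $\mathbb{E}_{P_0}\!\bigl[\tfrac{dP_j}{dP_0}\tfrac{dP_k}{dP_0}\bigr]=\exp\!\bigl(n\delta^2\langle e_j,e_k\rangle\bigr)$, I compute
\[
1+\chi^2(Q,P_0)=\frac{1}{D^2}\sum_{j,k}\exp\!\bigl(n\delta^2\,\mathbb{I}\{j=k\}\bigr)=\frac{e^{n\delta^2}-1}{D}+1,
\]
so $\chi^2(Q,P_0)=(e^{n\delta^2}-1)/D$. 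Choosing $\delta^2$ of order $\log D/n$ keeps this quantity bounded, and by the standard inequality $\mathrm{TV}(Q,P_0)\le\tfrac12\sqrt{\chi^2(Q,P_0)}$ we obtain $\mathrm{TV}(Q,P_0)\le\tfrac12$ for a suitable constant.

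\textbf{Step 2 (testing-to-estimation reduction).} Given any estimator $\hat\theta$, define the test $\phi=\mathbb{I}\{\hat\theta\ge\delta/2\}$. Markov's inequality gives
\[
P_0(\phi=1)\le \frac{4\,\mathbb{E}_{P_0}[\hat\theta^2]}{\delta^2},\qquad Q(\phi=0)\le \frac{4\,\mathbb{E}_Q[(\hat\theta-\delta)^2]}{\delta^2}.
\]
Since no test can achieve total error below $1-\mathrm{TV}(Q,P_0)$, these two error probabilities sum to at least $1-\mathrm{TV}(Q,P_0)\ge\tfrac12$. Consequently
\[
\max\!\Bigl\{\mathbb{E}_{P_0}[\hat\theta^2],\,\mathbb{E}_Q[(\hat\theta-\delta)^2]\Bigr\}\ \gtrsim\ \delta^2,
\]
and since the mixture risk is dominated by $\max_j\mathbb{E}_{P_j}[(\hat\theta-\delta)^2]$, the supremum over $\beta$ of the squared-error risk is at least a constant multiple of $\delta^2$. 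Optimizing $\delta^2$ as large as possible subject to the $\chi^2$ bound yields $\delta^2$ of order $\log D/n$, hence the claimed lower bound.

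\textbf{Main obstacle.} The rate $\log D/n$ is delivered cleanly by the argument above; the delicate point is obtaining the precise leading constant $2$ that appears in the statement. This requires tracking constants carefully: one can either (i) sharpen Le~Cam's reduction by replacing the threshold $\delta/2$ by an optimized $c\delta$ and by using the tighter Pinsker / Hellinger version of the two-point bound, or (ii) use a Bayes-risk computation under a uniform prior on $\{\delta e_1,\dots,\delta e_D\}$ together with a direct moment-based comparison. Either route would work, but chasing the factor $2$ is where the bulk of the care lies; by contrast, the core rate calculation reduces, as above, to the single Gaussian $\chi^2$ identity.
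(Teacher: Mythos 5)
Your proposal is essentially the paper's own proof: the paper likewise takes $P_0=N(0,\tfrac1n I_D)$ and the uniform mixture of the $D$ one-spike alternatives with spike height $\sqrt{c\log D/n}$, $0<c<1$, computes $\chi^2(\overline P,P_0)=D^{c-1}-D^{-1}\to 0$, and concludes via Le Cam's lemma, exactly your chi-square/testing reduction. Your caveat about the leading constant is fair but applies equally to the paper's argument, which with $c<1$ and the standard Le Cam reduction also delivers the rate $\log D/n$ only up to a constant rather than the literal factor $2$ in the statement.
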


To understand the implications of this result, let us write
\begin{equation}
\hat\beta(S) - \theta =
\underbrace{\hat\beta(S) - \beta(S)}_{L_1} + 
\underbrace{\beta(S) - \theta}_{L_2}.
\end{equation}
The first term, $L_1$, is the focus of most research on post-selection inference.
We have seen it is small for splitting and large for non-splitting.
The second term takes into account the variability due to model
selection which is often ignored.
Because $L_1$ is of order $n^{-1/2}$ for splitting,
and the because the sum is of order
$\sqrt{\log D/n}$
it follows that splitting must, at least in some cases,
pay a price by have $L_2$ large.
In regression, this would correspond to the fact that, in some cases,
splitting leads to models with lower predictive accuracy.

Of course, these are just lower bounds.
To get more insight, we consider a numerical example.
Figure (\ref{fig::price}) shows a plot of the risk of
$\hat\beta(\hat{S})=\overline{Y}(\hat{S})$
for $2n$ (non-splitting) and $n$ (splitting).
In this example we see that indeed, the splitting estimator
suffers a larger risk.
In this example, $D=1,000$,
$n=50$,
and $\beta = (a,0,\ldots, 0)$.
The horizontal axis is $a$ which is the gap between the largest and second largest mean.

\begin{figure}
\begin{center}
\includegraphics[scale=.5]{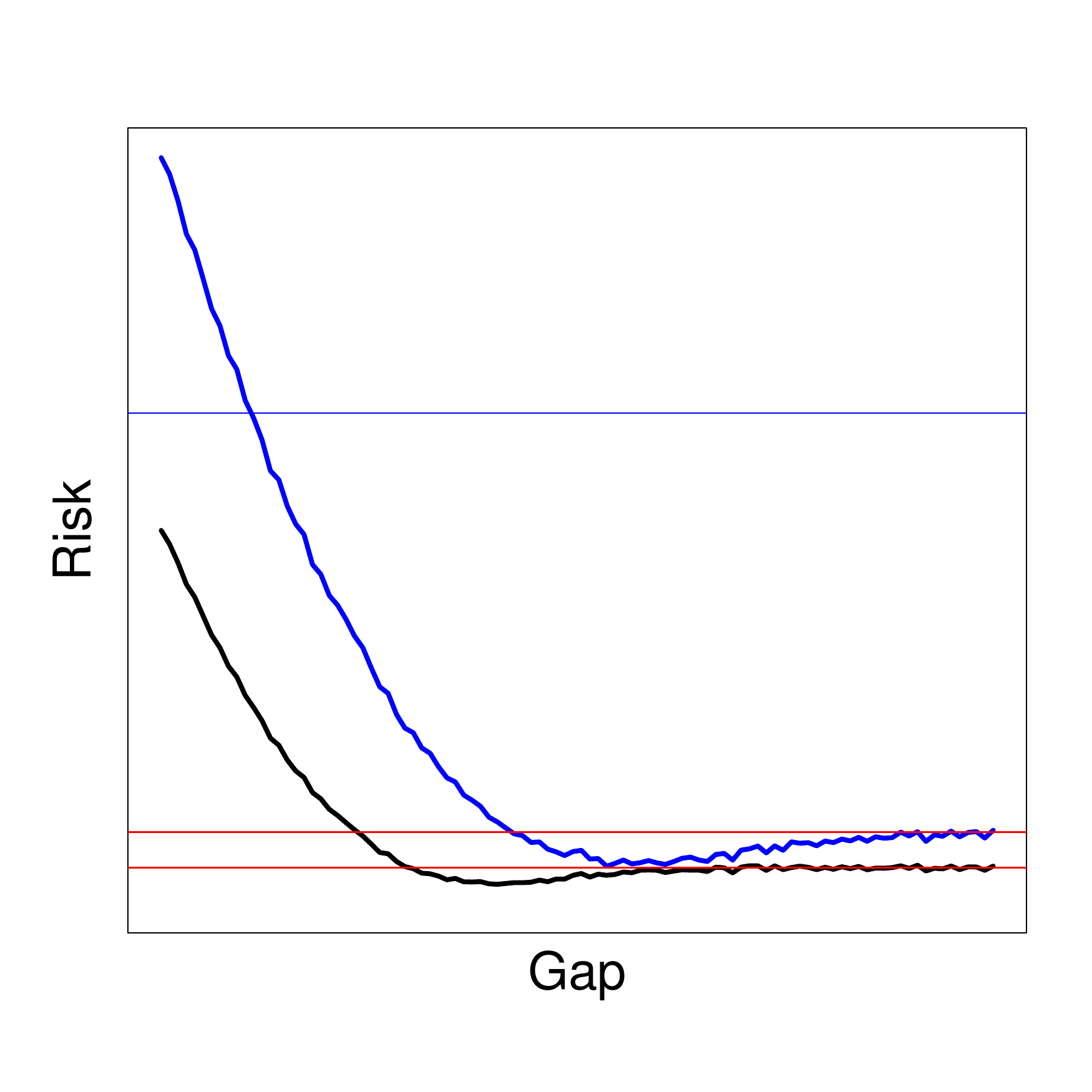}
\end{center}
\caption{\emph{Horizontal axis: the gap $\beta_{(1)} - \beta_{(2)}$.
Blue risk: risk of splitting estimator.
Black line: risk of non-splitting estimator.}}
\label{fig::price}
\end{figure}

To summarize: splitting gives more precise estimates and coverage
for the selected parameter
than non-splitting (uniform) inference.
But the two approaches can be estimating different parameters.
This manifests itself by the fact that splitting can lead to less precise 
estimates of the population parameter $\theta$.
In the regression setting, this would correspond to the fact that
splitting the data can lead to selecting models with poorer
prediction accuracy.

\section{Comments on Non-Splitting Methods}
\label{section::comments}

There are several methods for constructing confidence intervals in
high-dimensional regression.  Some approaches are based on debiasing the lasso
estimator \citep[e.g.,][See Section \ref{sec:related}]{zhang2014confidence,
  vandegeer2014asymptotically, javanmard2014confidence, nickl2013confidence}.
  These approaches tend to require that the linear model to be correct as well
  as assumptions on the design, and tend to target the true $\beta$ which is
  well-defined in this setting.  Some partial exceptions exist:
  \cite{peter.sarah.2015} relaxes the requirement of a correctly-specified
  linear model, while \cite{meinshausen2015group} removes the design
  assumptions.  In general, these debiasing approaches do not provide uniform,
  assumption-free guarantees.

\cite{lockhart2014significance, lee2016exact,taylor2014exact}
do not require the linear model
to be correct nor do they require design conditions.
However, their results only hold for parametric models.
Their method works by inverting a pivot.

In fact, inverting a pivot is, in principle, a very general approach.
We could even use inversion in the nonparametric framework as follows.
For any $P\in {\cal P}$ and any $j$
define $t(j,P)$ by
$$
\mathbb{P}( \sqrt{n}|\hat\beta_S(j) - \beta_S(j)| > t(j,P)) = \alpha.
$$
Note that,
in principle, $t(j,P)$ is known.
For example, we could find $t(j,P)$ by simulation.
Now let
$A = \{P\in {\cal P}:\ \sqrt{n}|\hat\beta_S(j) - \beta_S(j)| < t(j,P)\}$.
Then $\mathbb{P}(P\in A)\geq 1-\alpha$ for all $P\in {\cal P}$.
Write
$\beta_j(S) = f(P,Z_1,\ldots, Z_n)$.
Let
$C= \{f(P,Z_1,\ldots, Z_n):\ P\in A\}$.
It follows that
$\mathbb{P}(\beta_j(S)\in C) \geq 1-\alpha$ for all $P\in {\cal P}$.
Furthermore, we could also choose $t(j,P)$ to satisfy
$\mathbb{P}( \sqrt{n}|\hat\beta_S(j) - \beta_S(j)| > t(j,P)|E_n) = \alpha$
for any event $E_n$ which would given conditional confidence intervals if desired.

There are two problems with this approach.
First, the confidence sets would be huge.
Second, it is not computationally feasible to find $t(j,P)$
for every $P\in {\cal P}$.
The crucial and very clever observation in 
\cite{lee2016exact}
is that
if we restrict to a parametric model
(typically they assume a Normal model with known, constant variance)
then, by choosing $E_n$ carefully,
the conditional distribution reduces, by sufficiency,
to a simple one parameter family.
Thus we only need to find $t$ for this one parameter family which is feasible.
Unfortunately, the method does not provide confidence guarantees of the form
(\ref{eq::honest}) which is the goal of this paper.

\cite{berk2013valid}
is closest to providing the kind of guarantees we have considered here.
But as we discussed in the previous section,
it does not seem to be extendable to the assumption-free framework.

None of these comments is meant as a criticism of the aforementioned methods.
Rather, we just want to clarify that these methods 
are not comparable to our results because
we require uniformity over ${\cal P}$.
Also, except for the method of 
\cite{berk2013valid},
none of the other methods provide any guarantees over unknown selection rules.

\section{Numerical Examples}
\label{section::simulation}

In this section we briefly consider a few illustrative examples.  In a
companion paper, we provide detailed simulations comparing all of the recent
methods that have proposed for inference after model selection.  It would take
too much space, and go beyond the scope of the current paper, to include these
comparisons here.

We focus on linear models, and in particular on inference for the projected
parameter $\beta_{\wS}$ and the LOCO parameter $\gamma_{\wS}$ of \Cref{sec:projection} and \Cref{sec:loco.parameters}, respectively.  The
data are drawn from three distributions:
\begin{description}
  \item[Setting A] \emph{Linear and sparse with Gaussian noise.} A linear model
    with $\beta_i\sim U[0,1]$ for $j=1,\dots,5$ and $\beta_j=0$ otherwise.  
  \item[Setting B] \emph{Additive and sparse with $t$-distributed noise.} An
    additive model with a cubic and a quadratic term, as well as three linear
    terms, and $t_5$-distributed additive noise.
  \item[Setting C] \emph{Non-linear, non-sparse, $t$-distributed noise.}  The
    variables from Setting $B$ are rotated randomly to yield a dense model.
\end{description}
In Settings A and B, $n=100$ (before splitting); in Setting C $n=200$. In all Settings $p=50$ and the noise variance is
0.5.  The linear model, $\hat{\beta}_{\wS}$ is selected on $\mathcal{D}_1$ by lasso with $\lambda$ chosen
using 10-fold cross-validation.  For $\gamma_{\wS}(j)$, $\hat{\beta}_{\wS}(j)$ is
estimated by reapplying the same selection procedure to $\mathcal{D}_1$ with
the $j^{\mathrm{th}}$ variable removed.  Confidence intervals are constructed
using the pairs bootstrap procedure of Section 2 with $\alpha=0.05$.

\begin{figure}
\begin{center}
\begin{tabular}{cc}
\includegraphics[width=0.5\linewidth]{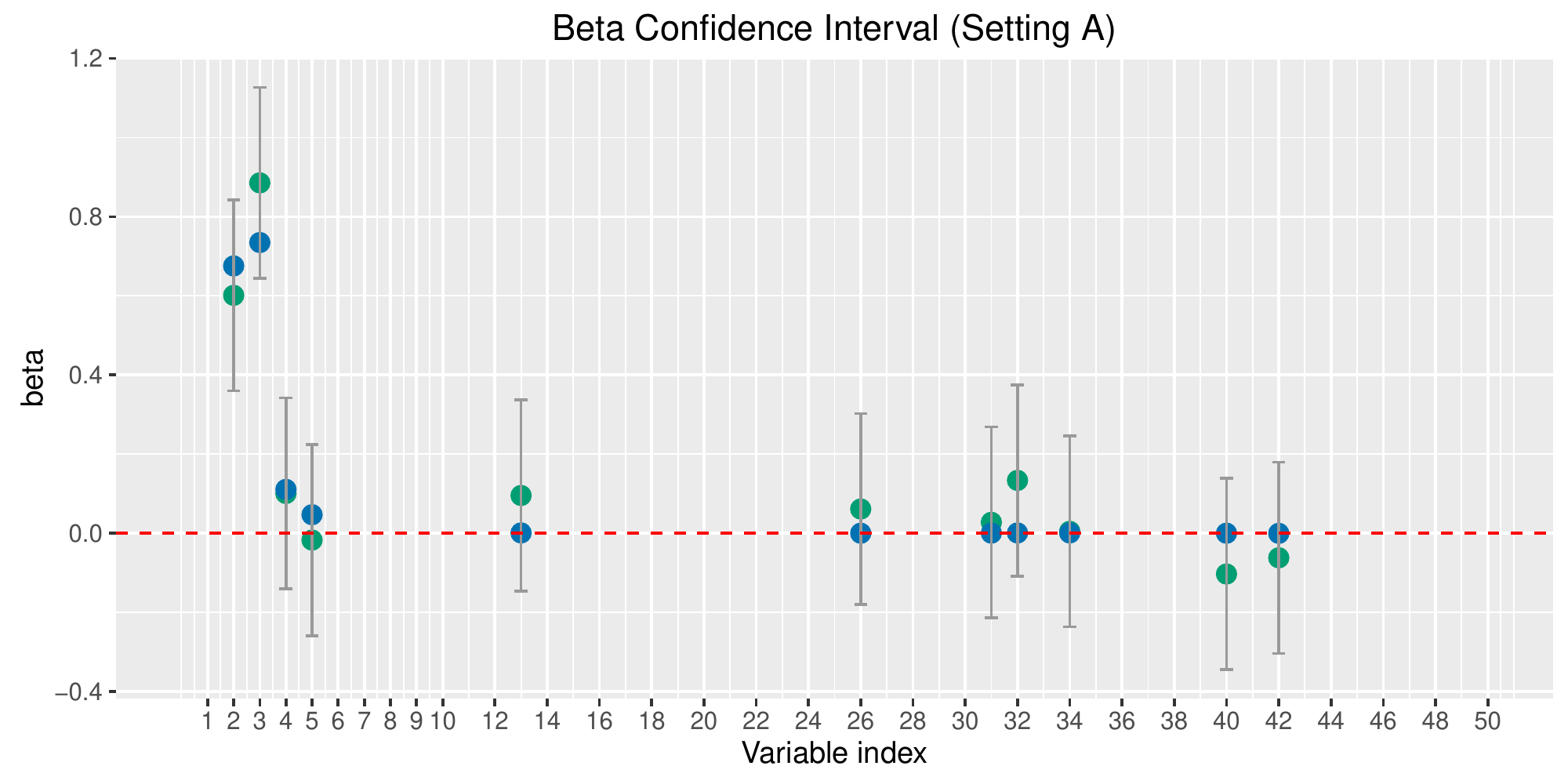} &
\includegraphics[width=0.5\linewidth]{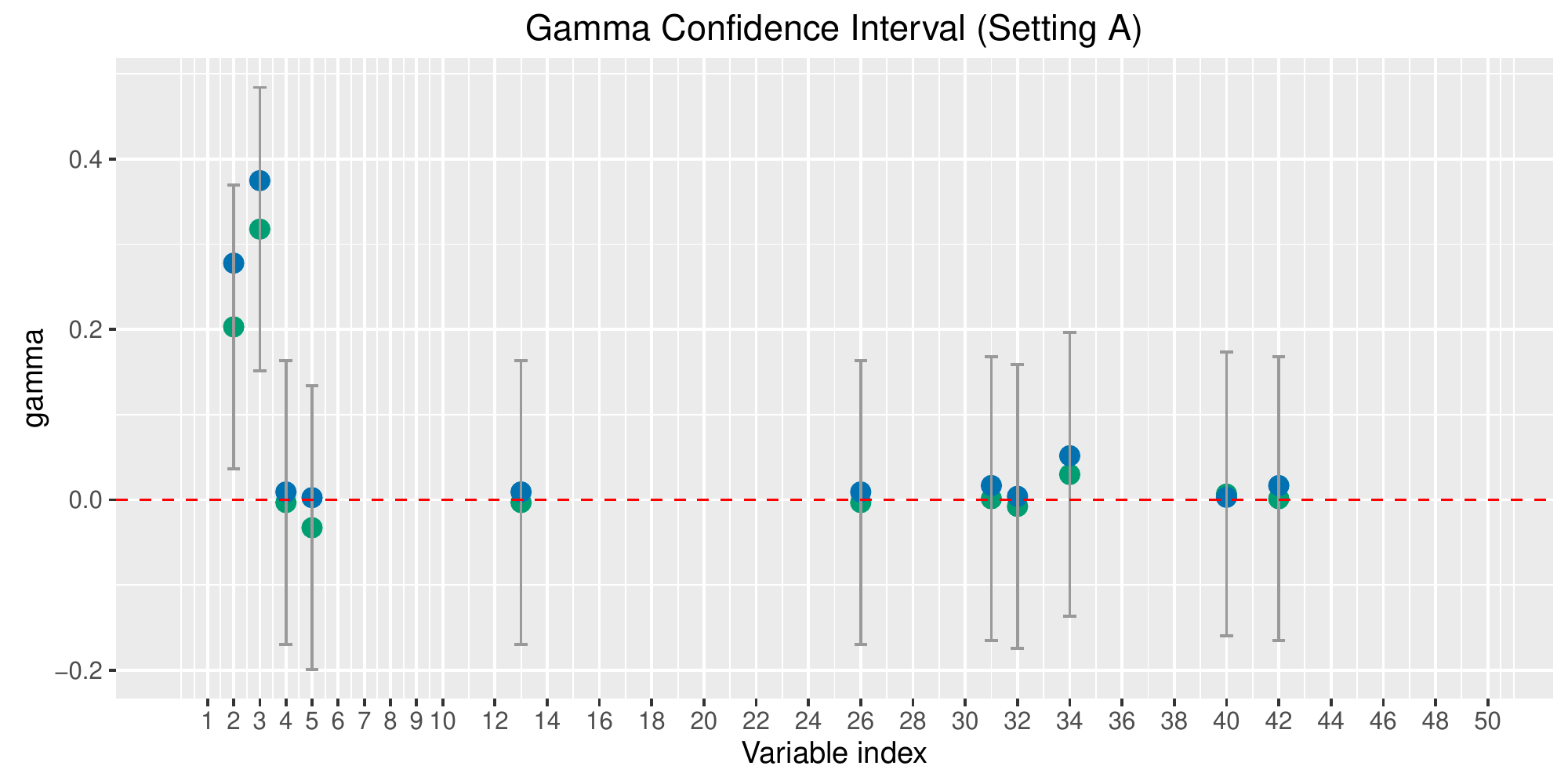}\\
\includegraphics[width=0.5\linewidth]{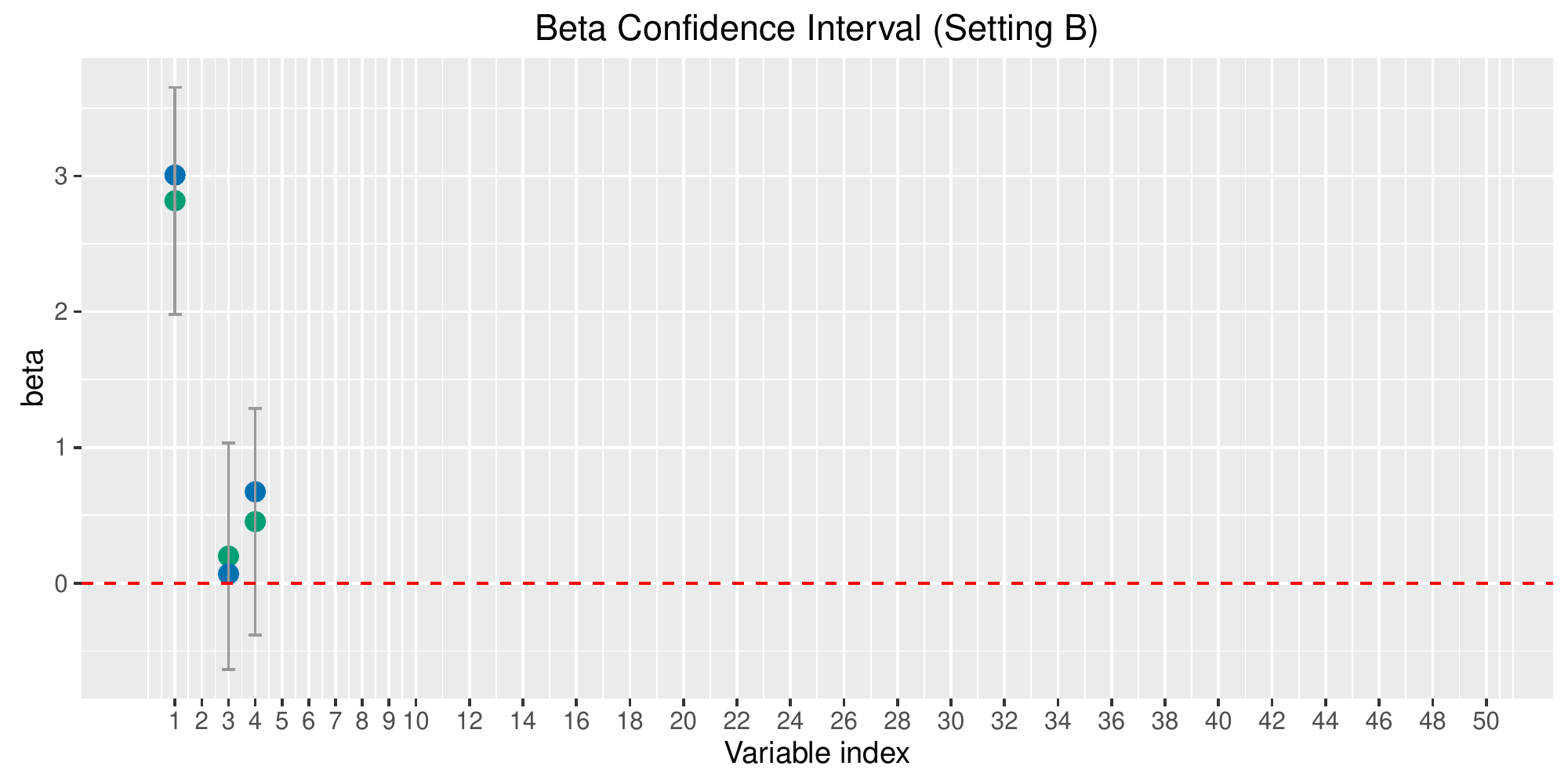} &
\includegraphics[width=0.5\linewidth]{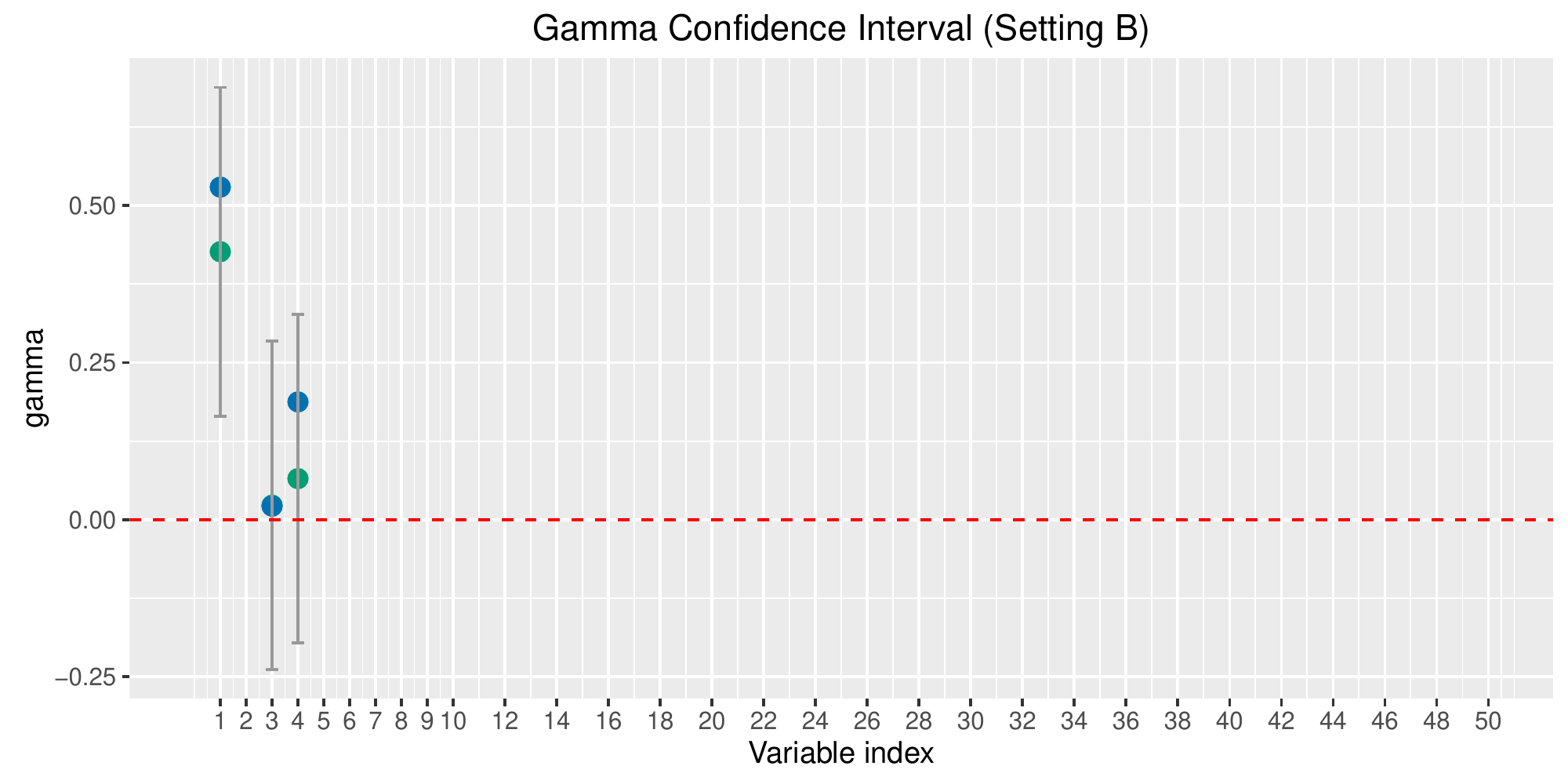}\\
\includegraphics[width=0.5\linewidth]{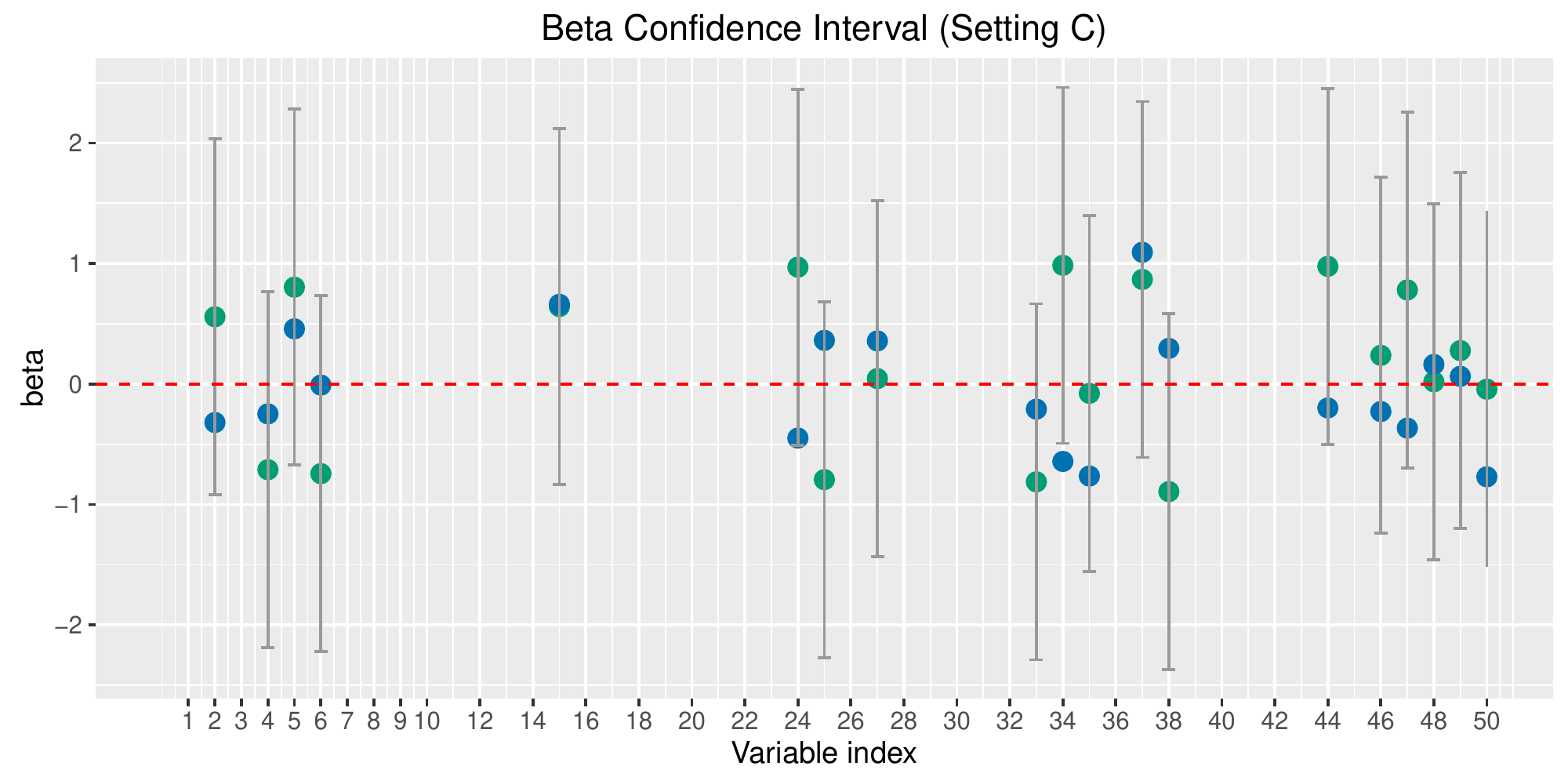} &
\includegraphics[width=0.5\linewidth]{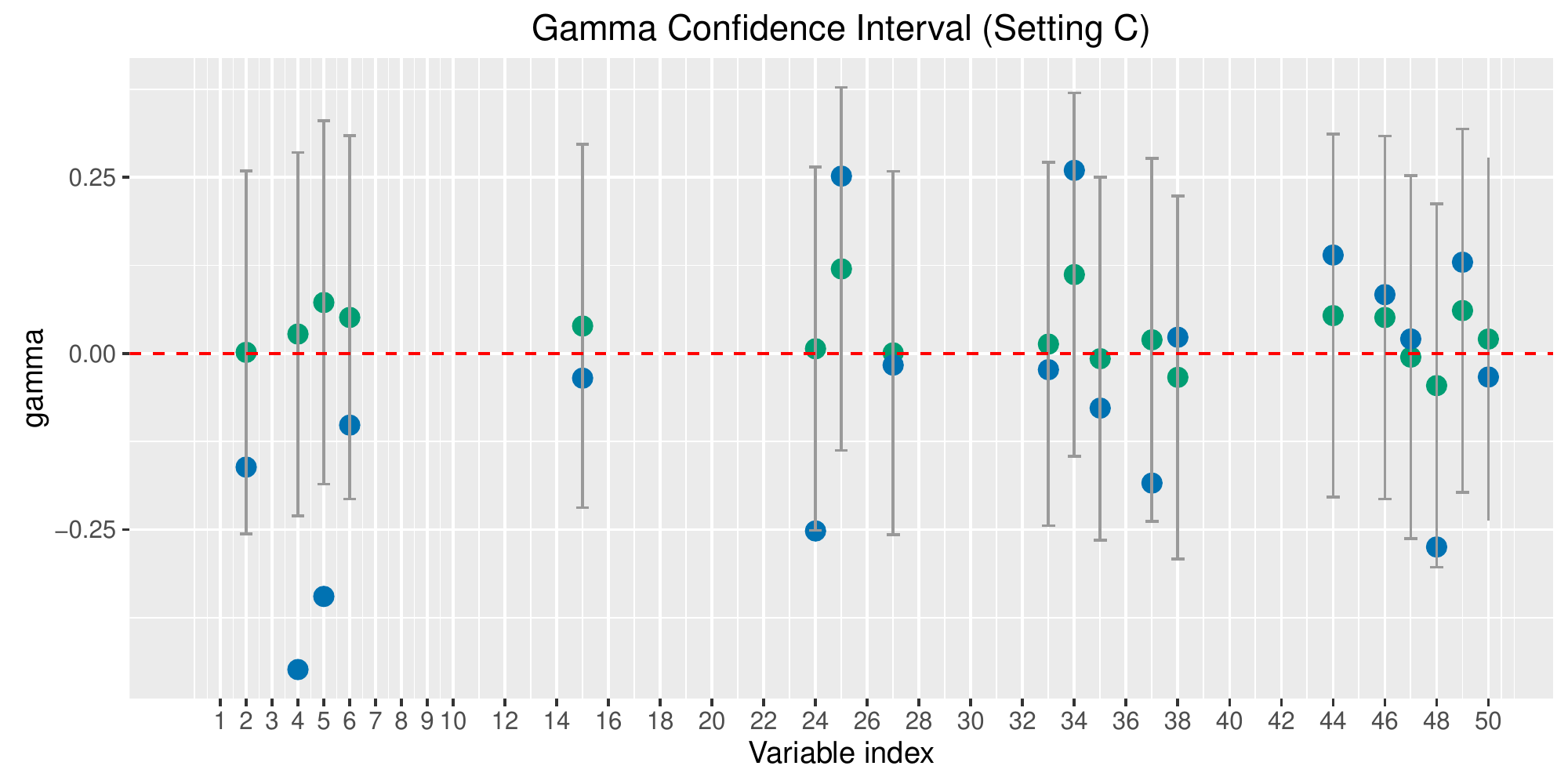}\\
\end{tabular}
\end{center}
\caption{\emph{Typical confidence intervals for the projection parameter (left) and
the LOCO parameter (right) for Settings A, B, and C.  Blue indicates the true
parameter value, and green indicates the point estimate from $\mathcal{D}_2$.  Note that the parameters
are successfully covered even when the underlying signal is non-linear ($X_1$
in Setting B) or dense (Setting C).}}
\label{fig::confint}
\end{figure}

Figure \ref{fig::confint} shows typical confidence intervals for the projection parameter,
$\beta_{\wS}$, and the LOCO parameter, $\gamma_{\wS}$, for one
realization of each Setting.  Notice that confidence intervals are only
constructed for $j\in \wS$.  
The non-linear term is successfully covered
in Setting B, even though the linear model is wrong.

\begin{figure}
\begin{center}
\includegraphics[scale=.5]{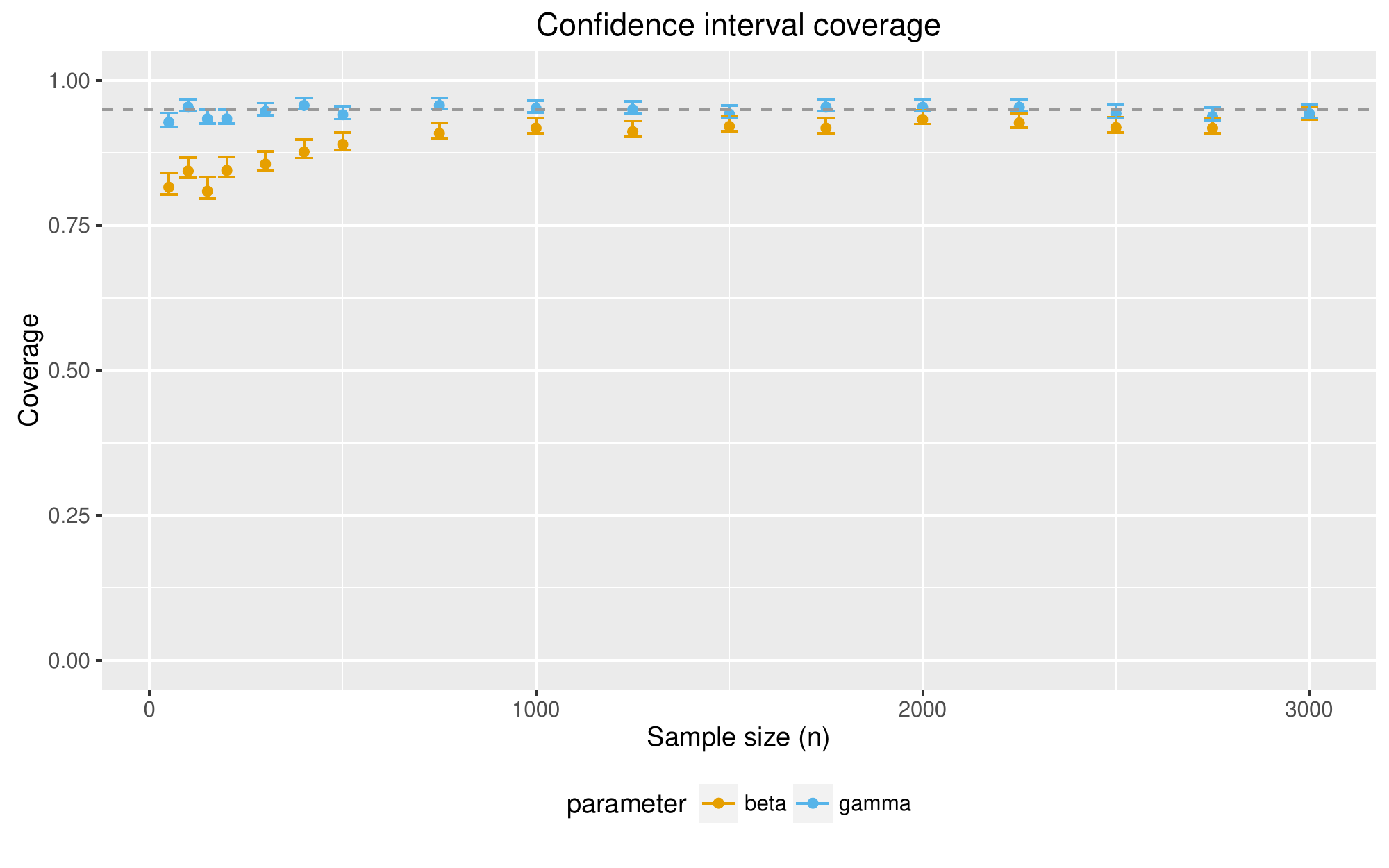} 
\end{center}
\caption{\emph{Joint coverage probability of the intervals for $\beta_{\wS}$ and
  $\gamma_{\wS}$ in Setting B,  as sample
size $n$ varies with $p=50$ held fixed.  The coverage for
$\gamma_{\wS}$ is accurate even at low sample sizes, while the coverage for
$\beta_{\wS}$ converges more slowly.  }}
\label{fig::coverage}
\end{figure}

Figure \ref{fig::coverage} shows the coverage probability for Setting B as a function of $n$,
holding $p=50$ fixed.  The coverage for the LOCO parameter, $\gamma_{\wS}$ is
accurate even at low sample sizes.  The coverage for $\beta_{\wS}$ is low (0.8-0.9)
for small sample sizes, but converges to the correct coverage as the sample
size grows.  This suggests that $\beta_{\wS}$ is an
easier parameter to estimate and conduct inference on.

\section{Berry-Esseen Bounds for Nonlinear Parameters With Increasing Dimension}
\label{section::berry}

The results in this paper
depend on a Berry-Esseen bound for regression
with possibly increasing dimension.
In this section, there is no model selection or splitting.
We set $d=k$ and $S = \{1,\ldots, k\}$
where $k < n$ and $k$ can increase with $n$.
Later, these results will be applied after model selection and sample splitting.
Existing Berry-Esseen results for nonlinear parameters
are given in
\cite{pinelis2009berry, shao2016stein,chen2007normal,
anastasiou2014bounds,anastasiou2015new,anastasiou2016multivariate}.
Our results are in the same spirit
but we keep careful track of the effect of dimension
and the eigenvalues of $\Sigma$,
while leveraging 
results from \cite{cherno1,cherno2}
on high dimensional
central limit theorems for simple convex sets.

We derive a general result
on the accuracy of the Normal approximation
over hyper-rectangles for nonlinear parameters.
We make use of three findings from
\cite{cherno2, chernozhukov2015comparison} and \cite{nazarov1807maximal}:
the Gaussian anti-concentration theorem, the high-dimensional central limit theorem for sparely convex sets,
and the Gaussian comparison theorem,reported in the appendix as Theorems \ref{thm:anti.concentration}, \ref{thm:high.dim.clt} and \ref{thm:comparisons}, respectively.
In fact, in the appendix we re-state these results in a slightly different form
than they appear in the original papers.
We do this because we need to keep track of certain constants that affect our results.

Let $W_1,\ldots, W_n$ be an independent sample from a distribution $P$ on $\mathbb{R}^b$ belonging to the class ${\cal P}_n$ of probability distribution supported on a subset of
$[-A,A]^b$, for some fixed $A>0$ and such that 
\[
    v = \inf_{ P  \in      \mathcal{P}_n} \lambda_{\min}(V(P))) \quad
\text{and} \quad \overline{v} = \sup_{ P  \in      \mathcal{P}_n}
\lambda_{\max}(V(P))) \geq 1,
\]
where $V(P) = \mathbb{E}_P[ (W_i-\psi)(W_i-\psi)^\top]$. We allow the class $\mathcal{P}_n$
to change with $n$, so that  $b$, $v$ and $\overline{v}$ -- but not $A$ -- are to be regarded as functions of $n$,
although we do not express such dependence in our notation for ease of
readability. Notice that, in the worse case, $\overline{v}$ can be of order $b$. 

\noindent {\bf Remark.} The assumption that $\overline{v} \geq 1$ is made out of convenience and is used in the proof of \Cref{lem:hyper} in the Appendix. Our results remain valid if we assume that $\overline{v}$ is bounded away from $0$ uniformly in $n$, i.e. that $\overline{v}  \geq \eta$ for some $\eta > 0$ and all $n$. Then, the term $\eta$ would then appear as another quantity affecting the bounds. We have not kept track of this additional dependence.

Let
$g = (g_1,\ldots,g_s)^\top \colon \mathbb{R}^b  \rightarrow \mathbb{R}^s$ be a
twice-continuously differentiable 
vector-valued function defined over an open, convex subset $\mathcal{S}_n$ of $[-A,A]^b$
such  that, for all $P \in
\mathcal{P}_n$, 
 $\psi =  \psi(P) = \mathbb{E}[W_1] \in \mathcal{S}_n$. Let $\widehat{\psi} = \widehat{\psi}(P) = \frac{1}{n} \sum_{i=1}^n W_i$ and assume
that $\widehat{\psi} \in \mathcal{S}_n$ almost surely, for all $P \in
\mathcal{P}_n$.
Finally, set  
$\theta = g(\psi)$ and $\widehat{\theta} = g(\widehat{\psi})$.
For any point $\psi \in \mathcal{S}_n$ and $j\in \{ 1,\ldots,s\}$, we will write 
$G_j(\psi) \in \mathbb{R}^b$ and $H_j(\psi)\in \mathbb{R}^{b \times b}$ for the gradient  
and Hessian of $g_j$ at $\psi$, respectively. We will set $ G(\psi)$ to 
be the $s\times b$ Jacobian matrix whose $j^{\rm th}$ row is $G^\top_j(\psi)$.

{\bf Remark} The assumption that $\hat{\psi}$ belongs to $\mathcal{S}_n$ almost surely can be relaxed to hold on an event of high probability, resulting in an additional error term in all our bounds. 

To derive a high-dimensional Berry-Esseen bound on $g(\psi)  - g(\hat{\psi})$
we will study  its first order Taylor approximation. Towards that end, we will
require a uniform control over the size of the gradient and Hessian of $g$. Thus we set
\begin{equation}\label{eq:H.and.B}
    B  = \sup_{P \in \mathcal{P}_n  }\max_{j=1,\ldots,s} ||G_j(\psi(P))|| 
\quad \text{and} \quad \overline{H}  = 
\sup_{ \psi\in \mathcal{S}_n }\max_{j=1,\ldots,s} 
\|H_j(\psi)\|_{\mathrm{op}} 
\end{equation}
where
$\|H_j(\psi)\|_{\mathrm{op}}$ is the operator norm.

{\bf Remark.} The quantity $\overline{H}$ can be defined differently, as a function of $\mathcal{P}_n$ and not $\mathcal{S}_n$. In fact,  all  that is required of $\overline{H}$ is that it satisfy the almost everywhere bound
\begin{equation}\label{eq:H.2}
\max_j \int_0^1 \left\|  H_j \left( t\psi(P) - (1-t)\hat{\psi}(P) \right)  \right\|_{\mathrm{op}} dt \leq \overline{H},
\end{equation}
for each $P \in \mathcal{P}_n$ (see \eqref{eq:bound.H} below). This  allows us to establish a uniform bound on the magnitude of the reminder term in the Taylor series expansion of $g(\hat{\psi})$ around $g(\psi)$, as detailed in the proof of \Cref{theorem::deltamethod} below. Of course, we may relax the requirement that \eqref{eq:H.2} holds almost everywhere to the  requirement that it holds on an event of high probability. This is indeed the strategy we use when in applying the present results to the projection parameters in \Cref{sec:projection}.

The covariance matrix of the linear approximation of  $g(\psi)  -
g(\hat{\psi})$, which, 
for any $P \in
\mathcal{P}_n$, is given by
\begin{equation}\label{eq:Gamma}
 \Gamma  = \Gamma(\psi(P),P)=G(\psi(P)) V(P) G(\psi(P))^\top,
\end{equation}
plays a crucial role in our analysis. In particular, our results will depend on
the smallest variance of the linear approximation to $g(\psi)  - g(\hat{\psi})$: 
\begin{equation}\label{eq:sigma}
    \underline{\sigma}^2  =  \inf_{ P  \in \mathcal{P}_n}\min_{j =1,\ldots,s}
    G^\top_j(\psi(P)) V(P) G_j(\psi(P)).
\end{equation}

With these definitions in place we are now ready to prove the following
high-dimensional Berry-Esseen bound.

\begin{theorem}
\label{theorem::deltamethod}
Assume that 
$W_1,\ldots, W_n$ is an i.i.d. sample from some  $P \in {\cal P}_n$ and let $Z_n \sim N(0,\Gamma)$.
Then, there exists a $C>0$, dependent on $A$ only, such that
\begin{equation}
\sup_{P\in {\cal P}_n}
\sup_{t > 0}
\Bigl|\mathbb{P}( \sqrt{n}||\hat\theta - \theta||_\infty \leq t) -\mathbb{P}(
||Z_n||_\infty \leq t)\Bigr| \leq C \Big(
\Delta_{n,1} + \Delta_{n,2} \Big),
\end{equation}
where
\begin{align}
\label{eq::Delta}
\Delta_{n,1} &=
\frac{1}{\sqrt{v}} \left( \frac{ \overline{v}^2 b (\log 2bn)^7}{n}   \right)^{1/6}
\\
\Delta_{n,2} &=
\frac{1}{\underline{\sigma}}\sqrt{\frac{ b \overline{v} \overline{H}^2 (\log n)^2 \log b}{n}}.
\end{align}
\end{theorem}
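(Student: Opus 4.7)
The plan is to reduce the theorem to the high-dimensional central limit theorem (\Cref{thm:high.dim.clt}) and the Gaussian anti-concentration inequality (\Cref{thm:anti.concentration}) by means of a first-order Taylor expansion of $g$ around $\psi$. For each coordinate $j \in \{1,\ldots,s\}$, the integral form of Taylor's theorem gives
\[
g_j(\hat\psi) - g_j(\psi) = G_j(\psi)^\top (\hat\psi - \psi) + \tfrac{1}{2}(\hat\psi - \psi)^\top M_j (\hat\psi - \psi),
\]
where $M_j = 2\int_0^1 (1-t)\, H_j(\psi + t(\hat\psi - \psi))\,dt$ is a random symmetric matrix with $\|M_j\|_{\mathrm{op}} \leq \overline{H}$ almost surely by definition of $\overline{H}$. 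Thus $\sqrt{n}(\hat\theta - \theta) = L + R$, where $L = \frac{1}{\sqrt{n}}\sum_{i=1}^n G(\psi)(W_i - \psi)$ is a normalized sum of centered, bounded i.i.d.\ random vectors in $\mathbb{R}^s$ with covariance $\Gamma$, and $R_j = \frac{\sqrt{n}}{2}(\hat\psi - \psi)^\top M_j (\hat\psi - \psi)$.

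For the linear piece $L$, I would apply \Cref{thm:high.dim.clt} directly to the summands $G(\psi)(W_i - \psi)$. They are coordinate-wise bounded by a constant depending on $A$ and $B$, their covariance is $\Gamma$, whose coordinate-wise variances are bounded below by $\underline{\sigma}^2$ and whose largest eigenvalue is bounded in terms of $\overline{v}$. Feeding these moment inputs into the theorem yields
\[
\sup_t \bigl|\mathbb{P}(\|L\|_\infty \leq t) - \mathbb{P}(\|Z_n\|_\infty \leq t)\bigr| \leq C\, \Delta_{n,1},
\]
where the prefactor $1/\sqrt{v}$ appears through the anti-concentration step that the appendix version of \Cref{thm:high.dim.clt} uses internally.

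To pass from $L$ alone to $L + R$, I would run the standard tail-plus-anti-concentration sandwich: for any $\epsilon > 0$ and $t > 0$,
\[
\mathbb{P}(\|L + R\|_\infty \leq t) \leq \mathbb{P}(\|L\|_\infty \leq t + \epsilon) + \mathbb{P}(\|R\|_\infty > \epsilon),
\]
and the analogous lower bound. Using the CLT step above, the first term equals $\mathbb{P}(\|Z_n\|_\infty \leq t + \epsilon) + O(\Delta_{n,1})$, and then Nazarov's inequality (\Cref{thm:anti.concentration}) trades $t+\epsilon$ for $t$ at a cost of order $\epsilon \sqrt{\log s}/\underline{\sigma}$. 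It remains to control $\|R\|_\infty \leq \tfrac{\overline{H}\sqrt{n}}{2}\|\hat\psi - \psi\|^2$, which I would handle via a vector Bernstein bound on $\hat\psi - \psi$: on an event of probability at least $1 - 1/n$, $\|\hat\psi - \psi\|^2$ is at most a logarithmic multiple of $b\overline{v}/n$. Tuning $\epsilon$ against the resulting tail bound reproduces the $\Delta_{n,2}$ term.

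The main technical obstacle is sharpening the remainder contribution to the $\overline{H}\sqrt{b\overline{v}}$ scaling that appears in $\Delta_{n,2}$, since plugging the operator-norm bound into the crude estimate $\|\hat\psi - \psi\|^2 \leq C\, b\overline{v}(\log n)/n$ carries an extra factor of $\sqrt{b\overline{v}}$. Closing this gap calls for a Hanson--Wright style concentration inequality for the quadratic form $(\hat\psi - \psi)^\top M_j(\hat\psi - \psi)$, which separates its deterministic mean $\tfrac{1}{n}\mathrm{tr}(M_j V)$ from its centered fluctuations of order $\overline{H}\sqrt{b\overline{v}}/n$, so that only the fluctuations need to be turned into an anti-concentration shift. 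A secondary care, important for the precise claim, is to track constants throughout the CLT and anti-concentration steps so that the overall dependence is only on $A$, as asserted.
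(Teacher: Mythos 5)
Your route is the paper's route: the same decomposition $\sqrt{n}(\hat\theta-\theta)=L+R$ via a Taylor expansion with integral remainder, a high-dimensional CLT for the linear part, and a Bernstein event plus Nazarov anti-concentration to absorb the quadratic remainder. Two substantive points, though. On the linear part, the paper does not apply \Cref{thm:high.dim.clt} to the $s$-dimensional vectors $G(\psi)(W_i-\psi)$; it applies it in $\mathbb{R}^b$ to $W_i-\psi$ over polyhedra whose unit-norm normals are $\pm G_j(\psi)/\|G_j(\psi)\|$ (\Cref{lem:hyper}). That normalization is precisely what lets condition (M1') be verified from $\lambda_{\min}(V)\ge v$ and (M2') from $\overline{v}(2A\sqrt{b})^k$, producing the $1/\sqrt{v}$ prefactor and the $\overline{v}^2 b$ inside the $1/6$ power of $\Delta_{n,1}$. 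If you feed the projected summands into the theorem directly, as you propose, the anti-concentration constant that comes out is $1/\underline{\sigma}$ (the factor $1/\sqrt{v}$ does not ``appear internally'') and the moment constant involves $B=\max_j\|G_j(\psi)\|$; you get a bound of the same shape but in different quantities, and it is not dominated by $\Delta_{n,1}$ in general. To recover the theorem as stated you need the paper's reformulation.

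On the remainder, the paper does exactly your ``crude'' computation and nothing more: it bounds $\|R\|_\infty\le \overline{H}\sqrt{n}\,\|\hat\psi-\psi\|^2/2$, restricts to the event that this is below $\epsilon_n\asymp \overline{H}\sqrt{b\overline{v}}\,(\log n)/\sqrt{n}$, and controls the complement by vector Bernstein; there is no Hanson--Wright step. Your worry about the extra $\sqrt{b\overline{v}}$ is legitimate and in fact pinpoints the loose step in the paper's own argument: Bernstein gives $\|\hat\psi-\psi\|^2\lesssim b(\log n)/n$, so the crude bound on $\|R\|_\infty$ is of order $\overline{H}b\log n/\sqrt{n}$, and the paper's identification of the threshold $\sqrt{2\epsilon_n/(\sqrt{n}\overline{H})}$ with $C\sqrt{\overline{v}b\log n/n}$ silently replaces $(b\overline{v})^{1/4}$ by $(b\overline{v})^{1/2}$. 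However, your proposed repair does not close the gap: after the $\sqrt{n}$ scaling the quadratic form $R_j$ has mean approximately $\mathrm{tr}(M_jV)/(2\sqrt{n})$, which can be as large as $\overline{H}b\overline{v}/(2\sqrt{n})$ --- already exceeding the $\Delta_{n,2}$ shift you are trying to justify --- so separating the mean from its Hanson--Wright fluctuations still leaves a shift of the larger order unless you bias-correct the estimator or enlarge $\Delta_{n,2}$ to carry $b\overline{v}$ rather than $\sqrt{b\overline{v}}$. In short, your sketch matches the paper's strategy and your honest caveat identifies a real difficulty, but neither the Hanson--Wright idea as described nor the crude route delivers $\Delta_{n,2}$ exactly as stated.
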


{\bf Remarks}
The assumption that the support of $P$ is compact is made for simplicity,
and can be modified by assuming that the
coordinates of the vectors
$W_i$ have sub-exponential behavior.
Notice also that the coordinates of the $W_i$'s need not be independent.

The proof of \Cref{theorem::deltamethod} resembles the classic proof of the asymptotic
normality of non-linear functions of averages by the delta method. First, we
carry out a coordinate-wise Taylor
expansion of $\widehat{\theta}$ around $\theta$. We then utilize a a high-dimensional Berry-Esseen theorem for
polyhedral sets established in \cite{cherno2} (see \Cref{lem:hyper} below for
details) to derive a Gaussian approximation to the linear part in the expansion, resulting in the error term
$\Delta_{n,1}$. Finally, we bound the reminder term due to the non-linearity of the
function $g$ with basic concentration
arguments paired with the Gaussian anti-concentration
bound due to \cite{nazarov1807maximal} (see \Cref{thm:anti.concentration} in the Appendix), thus obtaining the second error term $\Delta_{n,2}$. Throughout, we keep track of the dependence on $v$ and $\underline{\sigma}$ in order to
obtain rates with a leading constant dependent only on $A$ (assumed fixed) but not on any other
term that may vary with $k$ or $b$.

\subsubsection*{Asymptotically honest confidence sets: Normal approximation approach}
\label{sec:berry.normal}

We now show how to use the high-dimensional central limit theorem
\Cref{theorem::deltamethod} to construct asymptotically honest confidence sets for $\theta$. 
We will first to obtain a consistent estimator of the covariance matrix
$\Gamma=G(\psi) V(\psi) G(\psi)^\top$ of the linear approximation to
$\hat{\theta} - \theta$.
In conventional fixed-dimension asymptotics,
we would appeal to Slutzky's theorem and ignore the effect of replacing
$\Gamma$ with a consistent estimate.
But in computing Berry-Esseen bounds with increasing dimension we may not
discard 
the effect of estimating $\Gamma$. As we will see below, this extra step will
bring an additional error term that must be accounted for.
We will estimate $\Gamma$ with the plug-in estimator 
\begin{equation}\label{eq:hat.gamma.berry}
    \hat\Gamma = G(\hat\psi) \hat V G(\hat\psi)^\top,
\end{equation}
where $\hat{V} = \frac{1}{n} \sum_{i=1}^n W_i W_i^\top - \hat{\psi}
\hat{\psi}^\top$ 
is the empirical covariance matrix.
Below, we  bound the element-wise
difference between
$\Gamma$ and $\hat{\Gamma}$.  Although this is in general a fairly weak notion
of consistency in covariance matrix estimation, it is all that is needed to
apply the Gaussian comparison theorem  \ref{thm:comparisons}, which will allow
us to extend the Berry-Esseen bound established in \Cref{theorem::deltamethod}
to the case when $\Gamma$ is  estimated.

\begin{lemma}
\label{lemma::upsilon}
Let
\begin{equation}\label{eq:aleph}
\aleph_n = \max \Big\{ \overline{H} B \overline{v} \sqrt{ b\frac{   
 \log n}{n}},  B^2 
 \sqrt{   b \overline{v}  \frac{\log b + \log n }{n}   }\Big\}.
 \end{equation}
There exists a $C > 0$ dependent on $A$ only such  that
\begin{equation}\label{eq:upsilon}
\sup_{P\in {\cal P}_n}
\mathbb{P}\left(\max_{j,l} \left| \hat\Gamma(j,l)-\Gamma(j,l)\right| \geq C
\,\aleph_n\right) \leq \frac{2}{n}.
\end{equation}
\end{lemma}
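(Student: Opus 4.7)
The plan is to control each off-diagonal entry of $\hat\Gamma - \Gamma$ by decomposing it into three pieces that isolate, respectively, the error from perturbing the gradient, the error from perturbing the covariance, and a symmetric gradient perturbation. Writing $(\hat\Gamma - \Gamma)(j,l) = G_j(\hat\psi)^\top \hat V G_l(\hat\psi) - G_j(\psi)^\top V G_l(\psi)$, I will add and subtract $G_j(\psi)^\top \hat V G_l(\hat\psi)$ and $G_j(\psi)^\top V G_l(\hat\psi)$ to obtain
\begin{align*}
(\hat\Gamma - \Gamma)(j,l) &= \bigl[G_j(\hat\psi) - G_j(\psi)\bigr]^\top \hat V\, G_l(\hat\psi) + G_j(\psi)^\top (\hat V - V)\, G_l(\hat\psi)\\
 &\qquad + G_j(\psi)^\top V \bigl[G_l(\hat\psi) - G_l(\psi)\bigr].
\end{align*}

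The first step is to control the fluctuation $\|\hat\psi - \psi\|$. Since each $W_i$ takes values in $[-A,A]^b$, I can apply a vector Hoeffding-type inequality (or a coordinatewise bound combined with the union bound and Cauchy--Schwarz) to obtain $\|\hat\psi - \psi\| \lesssim A\sqrt{b\log n / n}$ with probability at least $1 - 1/n$. The second step is to bound $\|\hat V - V\|_{\mathrm{op}}$ via the matrix Bernstein inequality (\Cref{lem:operator}) applied to the mean-zero matrices $W_iW_i^\top - V$: the uniform bound $\|W_iW_i^\top\|_{\mathrm{op}} = \|W_i\|^2 \leq A^2 b$ together with $\mathbb{E}[(W_iW_i^\top)^2] \preceq A^2 b\, V$ yields matrix variance parameter of order $A^2 b\,\overline v$, giving $\|\hat V - V\|_{\mathrm{op}} \lesssim A\sqrt{b\,\overline v\,(\log b + \log n)/n}$ with probability at least $1 - 1/n$.

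The third step is the gradient perturbation: by the mean-value theorem and the definition of $\overline H$,
\[
\|G_j(\hat\psi) - G_j(\psi)\| \leq \biggl\|\int_0^1 H_j\bigl(\psi + t(\hat\psi - \psi)\bigr)\,dt\biggr\|_{\mathrm{op}} \|\hat\psi - \psi\| \leq \overline H \|\hat\psi - \psi\|,
\]
since the segment lies in the convex set $\mathcal{S}_n$ (as $\hat\psi \in \mathcal{S}_n$ almost surely). Combined with $\|G_j(\psi)\| \leq B$ and, on the high-probability event above, $\|\hat V\|_{\mathrm{op}} \leq \overline v + \|\hat V - V\|_{\mathrm{op}} \leq 2\overline v$ for $n$ large enough, Cauchy--Schwarz yields
\begin{align*}
\bigl|[G_j(\hat\psi) - G_j(\psi)]^\top \hat V\, G_l(\hat\psi)\bigr| &\lesssim \overline H\,\|\hat\psi - \psi\|\cdot \overline v \cdot (B + \overline H\|\hat\psi-\psi\|) \lesssim \overline H B \overline v \sqrt{b\log n / n},\\
\bigl|G_j(\psi)^\top (\hat V - V)\, G_l(\hat\psi)\bigr| &\lesssim B^2 \|\hat V - V\|_{\mathrm{op}} \lesssim B^2 \sqrt{b\,\overline v\,(\log b + \log n)/n},
\end{align*}
with the third term bounded identically to the first. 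Taking the maximum over $(j,l)$ does not inflate the bound (the dependence on $j,l$ has already been absorbed into $B$ and $\overline H$), and unioning the two failure events of probability at most $1/n$ each produces the stated bound with probability at least $1 - 2/n$.

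The main obstacle is keeping the dependence on $\overline v$ and the $\log$ factors aligned with the two terms in $\aleph_n$; in particular, the vector concentration bound on $\|\hat\psi - \psi\|$ must be the coarse compact-support bound of order $\sqrt{b \log n / n}$ (without an extra $\overline v$ factor) so that the first piece collapses to $\overline H B \overline v \sqrt{b\log n / n}$ rather than $\overline H B \overline v^{3/2}$, while the matrix Bernstein bound on $\|\hat V - V\|_{\mathrm{op}}$ must instead exploit $\overline v$ through the variance proxy to recover the $\sqrt{\overline v}$ in the second piece. Once this balance is set, the rest is bookkeeping.
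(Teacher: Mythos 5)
Your proposal is correct and follows essentially the same route as the paper's proof: a decomposition of $\hat\Gamma-\Gamma$ into gradient-perturbation and covariance-perturbation pieces, with $\|G_j(\hat\psi)-G_j(\psi)\|\leq \overline H\|\hat\psi-\psi\|$ from the Taylor/mean-value step, the compact-support vector bound $\|\hat\psi-\psi\|\lesssim\sqrt{b\log n/n}$, the matrix Bernstein bound on $\|\hat V-V\|_{\mathrm{op}}$, and a union bound giving $1-2/n$, exactly matching the two terms of $\aleph_n$. Your three-term exact decomposition (evaluating $\hat V$ and $G_l(\hat\psi)$ in the first piece) versus the paper's seven-term expansion with selection of dominant terms is only a cosmetic difference, and both treatments discard the same higher-order cross terms.
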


Now we construct the confidence set.
Let $Q=(Q(1),\ldots, Q(s))$ be i.i.d. standard Normal variables,
independent of the data. 
Let $\hat Z = \hat\Gamma^{1/2} Q$
and define $\hat{t}_\alpha$ by
\begin{equation}
    \label{eq:hat.t.alpha.berry}
\mathbb{P}( ||\hat Z||_\infty > \hat{t}_\alpha \,|\, \hat\Gamma)=\alpha.
\end{equation}
Finally, let
\begin{equation}\label{eq::conf-rectangle}
    \hat{C}_n = \Bigl\{ \theta \in \mathbb{R}^s:\ ||\theta-\hat\theta||_\infty \leq \frac{\hat{t}_\alpha}{\sqrt{n}}\Bigr\}.
\end{equation}

\begin{theorem}
\label{thm::coverage}
There exists a $C>0$, dependent only on $A$, such that 
\begin{equation}
\inf_{P\in {\cal P}}\mathbb{P}(\theta\in R_n)= 1-\alpha -
C \left( \Delta_{n,1} + \Delta_{n,2} + \Delta_{n,3} + \frac{1}{n} \right),
\end{equation}
where
\begin{equation}\label{eq::this-is-upsilon}
\Delta_{n,3}=  \frac{\aleph_n^{1/3} (2 \log 2s)^{2/3}}{\underline{\sigma}^{2/3}}.
\end{equation}
\end{theorem}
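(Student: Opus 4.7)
The plan is to combine three ingredients: the high-dimensional Berry--Esseen bound of \Cref{theorem::deltamethod} (which supplies $\Delta_{n,1}$ and $\Delta_{n,2}$), the element-wise covariance estimation bound of \Cref{lemma::upsilon} (which supplies the good event on which $\hat\Gamma\approx\Gamma$), and the Gaussian comparison theorem \Cref{thm:comparisons} (which converts that element-wise closeness into closeness of the laws of $\|\hat Z\|_\infty$ and $\|Z_n\|_\infty$, yielding the term $\Delta_{n,3}$). Writing $T_n=\sqrt{n}\|\hat\theta-\theta\|_\infty$, we have by construction $\mathbb{P}(\theta\in \hat C_n)=\mathbb{P}(T_n\le \hat t_\alpha)$, and the task is to show this is close to $1-\alpha$ in spite of the fact that $\hat t_\alpha$ is a data-dependent quantile.

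First, let $A=\{\max_{j,l}|\hat\Gamma(j,l)-\Gamma(j,l)|\le C\aleph_n\}$, so \Cref{lemma::upsilon} gives $\mathbb{P}(A^c)\le 2/n$. Conditionally on $\hat\Gamma$, the vector $\hat Z$ is Gaussian with covariance $\hat\Gamma$, while $Z_n\sim N(0,\Gamma)$; on the event $A$ their covariances agree coordinatewise to within $C\aleph_n$, so the Gaussian comparison theorem yields
\[
\sup_{t\ge 0}\bigl|\mathbb{P}(\|\hat Z\|_\infty\le t\mid \hat\Gamma)-\mathbb{P}(\|Z_n\|_\infty\le t)\bigr|\le C\,\Delta_{n,3}.
\]
Since $\hat t_\alpha$ is, by definition in \eqref{eq:hat.t.alpha.berry}, the $(1-\alpha)$-quantile of the conditional law $\|\hat Z\|_\infty\mid\hat\Gamma$, specializing the above display at $t=\hat t_\alpha$ shows that on $A$,
\[
\mathbb{P}(\|Z_n\|_\infty\le \hat t_\alpha)\ge 1-\alpha-C\,\Delta_{n,3}.
\]

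Next I need to pass from $\|Z_n\|_\infty$ to $T_n$ at the random threshold $\hat t_\alpha$. The key trick is to replace $\hat t_\alpha$ by a deterministic lower bound valid on $A$: let $t_\alpha^{\ast}$ denote the smallest $t$ with $\mathbb{P}(\|Z_n\|_\infty\le t)\ge 1-\alpha-C\Delta_{n,3}$; then on $A$ we have $\hat t_\alpha\ge t_\alpha^{\ast}$, so
\[
\mathbb{P}(T_n\le \hat t_\alpha)\ge \mathbb{P}(T_n\le t_\alpha^{\ast},\,A)\ge \mathbb{P}(T_n\le t_\alpha^{\ast})-\mathbb{P}(A^c).
\]
Now $t_\alpha^{\ast}$ is deterministic, so \Cref{theorem::deltamethod} applies directly at this single point to give $\mathbb{P}(T_n\le t_\alpha^{\ast})\ge \mathbb{P}(\|Z_n\|_\infty\le t_\alpha^{\ast})-C(\Delta_{n,1}+\Delta_{n,2})\ge 1-\alpha-C\Delta_{n,3}-C(\Delta_{n,1}+\Delta_{n,2})$. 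Combining with $\mathbb{P}(A^c)\le 2/n$ yields the stated bound.

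The main conceptual obstacle is handling the joint randomness of $T_n$ and $\hat t_\alpha$: the Berry--Esseen conclusion in \Cref{theorem::deltamethod} is a statement about the marginal distribution of $T_n$, but we need to evaluate it at a data-dependent threshold. Absorbing this dependence via the deterministic lower bound $t_\alpha^{\ast}$, which exists thanks to the Gaussian comparison step on the good event $A$, is what makes the calculation go through; everything else is bookkeeping of the constants and a union bound to fold in $\mathbb{P}(A^c)\le 2/n$.
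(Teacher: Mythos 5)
Your proof is correct, and it uses exactly the same three ingredients as the paper's argument (\Cref{theorem::deltamethod} for $\Delta_{n,1}+\Delta_{n,2}$, \Cref{lemma::upsilon} for the good event on $\hat\Gamma$, and the Gaussian comparison \Cref{thm:comparisons} for $\Delta_{n,3}$), but it routes the final step differently. The paper works directly at the random threshold: it writes $\mathbb{P}(\sqrt{n}\|\hat\theta-\theta\|_\infty\le\hat t_\alpha)\ge \mathbb{P}(\|\hat Z_n\|_\infty\le\hat t_\alpha)-A_1-A_2$, where $A_1$ and $A_2$ are suprema over deterministic $t$ of the marginal Kolmogorov distances, uses $\mathbb{E}[\mathbb{P}(\|\hat Z_n\|_\infty\le\hat t_\alpha\mid\hat\Gamma)]=1-\alpha$, and then bounds $A_1$ by \Cref{theorem::deltamethod} and $A_2$ by restricting to the event of \Cref{lemma::upsilon} and invoking \Cref{thm:comparisons}. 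You instead sandwich the data-dependent quantile: on the good event, the conditional law of $\|\hat Z\|_\infty$ given $\hat\Gamma$ is uniformly within $C\Delta_{n,3}$ of the law of $\|Z_n\|_\infty$, which forces $\hat t_\alpha\ge t_\alpha^*$ for a deterministic $t_\alpha^*$ at the perturbed level $1-\alpha-C\Delta_{n,3}$, and you then apply the Berry--Esseen bound of \Cref{theorem::deltamethod} at that single deterministic point. What your version buys is an explicit treatment of the dependence between $\sqrt{n}\|\hat\theta-\theta\|_\infty$ and $\hat t_\alpha$ (both are functions of the same data), a point the paper's triangle-inequality decomposition passes over rather quickly since marginal sup-over-$t$ bounds do not automatically transfer to a data-dependent threshold; the quantile-surrogate argument is the standard rigorous way to close that step and yields the same error terms, with the $\mathbb{P}(A^c)\le 2/n$ contribution absorbed into the $1/n$ term of the statement.
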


{\bf Remark}. The additional term $\Delta_{n,3}$ in the previous theorem is due
to the uncertainty in estimating $\Gamma$, and can be established by using the comparison inequality for
Gaussian vectors of \cite{chernozhukov2015comparison}, keeping track of
the dependence on $\underline{\sigma}^2$; see \Cref{thm:comparisons} below.

In addition to $L_\infty$ balls, we can also construct our confidence set to be a
hyper-rectangle, with side lengths proportional to the standard errors of
the projection parameters. That is, we define 
\begin{equation}\label{eq:hyper:CI}
\tilde C_n = \bigotimes_{j\in S} C(j),
\end{equation}
where
\[
    C(j) = \left[    \hat\beta_S(j) - z_{\alpha/(2s)} \sqrt{
    \frac{\hat\Gamma_n(j,j)}{n} }, \hat\beta_S(j) + z_{\alpha/(2s)} \sqrt{
    \frac{\hat\Gamma_n(j,j)}{n} }\right],
\]
with $\hat\Gamma$ given by (\ref{eq::Ga}) and $z_{\alpha/(2s)}$ the upper $1 -
\alpha/(2s)$ quantile of a standard normal variate. Notice that we use a Bonferroni
correction to guarantee a nominal coverage of $1-\alpha$.
Also, note that
$z_{\alpha/(2s)} = O(\sqrt{\log s})$, for each fixed $\alpha$. The coverage rate
for this other confidence set is derived in the next result.

\vspace{11pt}

\begin{theorem}
\label{thm::bonf}
Let 
\begin{equation}\label{eq.Delta3.tilde}
\tilde{\Delta}_{n,3} = \min\left\{\Delta_{3,n}, \frac{ \aleph_n
    z_{\alpha/(2s)}}{\underline{\sigma}^2 } \left(\sqrt{ 2 + \log(2s ) } + 2
    \right) \right\}. 
\end{equation}
There exists a $C>0$, dependent only on $A$, such that 
$$
\inf_{P \in \mathcal{P}_n} \mathbb{P}(\theta \in \tilde C_n) \geq 
(1-\alpha)  - C \Big(  \Delta_{n,1} +
\Delta_{n,2} + \tilde \Delta_{n,3} + \frac{1}{n} \Big).
$$
\end{theorem}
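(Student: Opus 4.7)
The plan is to follow the same outline as the proof of \Cref{thm::coverage} but exploit the product structure of $\tilde C_n$ to get a sharper anti-concentration argument, then take the minimum of the two available bounds to obtain $\tilde\Delta_{n,3}$. I would begin by rewriting the coverage event as
\[
\{\theta \in \tilde C_n\} = \Bigl\{ \max_{j \in S} \tfrac{\sqrt{n}|\hat\theta(j) - \theta(j)|}{\sqrt{\hat\Gamma(j,j)}} \leq z_{\alpha/(2s)} \Bigr\}
\]
and condition on the event $E = \{\max_{j,\ell} |\hat\Gamma(j,\ell) - \Gamma(j,\ell)| \leq C \aleph_n\}$, which by \Cref{lemma::upsilon} holds with probability at least $1 - 2/n$. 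On $E$, since $\Gamma(j,j) \geq \underline{\sigma}^2$, the ratio $\sqrt{\Gamma(j,j)/\hat\Gamma(j,j)}$ differs from $1$ by at most $\delta := O(\aleph_n/\underline{\sigma}^2)$, so the coverage event is sandwiched between the deterministic (in $\hat\Gamma$) events $\{\max_j \sqrt{n}|\hat\theta(j) - \theta(j)|/\sqrt{\Gamma(j,j)} \leq z_{\alpha/(2s)}(1 \pm \delta)\}$.

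Next, I would apply \Cref{theorem::deltamethod} to approximate $\sqrt{n}(\hat\theta-\theta)$ by $Z_n \sim N(0,\Gamma)$ uniformly over hyper-rectangles; this is exactly the setting that the high-dimensional Berry--Esseen bound controls, and it costs $C(\Delta_{n,1}+\Delta_{n,2})$. For the Gaussian side, each $Z_n(j)/\sqrt{\Gamma(j,j)}$ is marginally $N(0,1)$, so a plain Bonferroni union bound yields $\mathbb{P}(\max_j |Z_n(j)|/\sqrt{\Gamma(j,j)} > z_{\alpha/(2s)}) \leq \alpha$, delivering the nominal $1-\alpha$ piece.

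The remaining piece is to control
\[
\mathbb{P}\bigl(z_{\alpha/(2s)} < \max_j |Z_n(j)|/\sqrt{\Gamma(j,j)} \leq z_{\alpha/(2s)}(1+\delta)\bigr),
\]
for which two routes are available and we take the better one. Route (a) is to invoke \Cref{thm:comparisons} as in the proof of \Cref{thm::coverage} and swap $\hat\Gamma$ for $\Gamma$ directly inside the probability, producing the term $\Delta_{n,3}$. Route (b) uses the hyper-rectangular structure: by the union bound and the Gaussian density bound, $\mathbb{P}(|N_j| \in (a, a(1+\delta))) \leq 2\phi(a)\,a\,\delta$ for a standard normal $N_j$. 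Summing over $j \in S$ with $a = z_{\alpha/(2s)}$ and using Mill's ratio (so $s\,\phi(z_{\alpha/(2s)})$ is of order $\alpha\, z_{\alpha/(2s)}$), together with the standard sub-Gaussian maximal bound $\max_j |N_j| \leq \sqrt{2+\log 2s} + 2$ with high probability to restrict the window, yields the bound $(\aleph_n z_{\alpha/(2s)}/\underline{\sigma}^2)(\sqrt{2+\log 2s}+2)$, which is the second element inside the $\min$ in $\tilde\Delta_{n,3}$. Taking the smaller of the two bounds and summing all error contributions, together with the $1/n$ from conditioning on $E$, closes the argument.

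The main obstacle is route (b): one must linearize $\sqrt{1\pm\delta}$ with explicit constants so that the perturbed thresholds $z_{\alpha/(2s)}(1\pm\delta)$ stay in a window of radius $O(\delta z_{\alpha/(2s)})$ around $\pm z_{\alpha/(2s)}$, and then apply the univariate density bound uniformly in that window without losing the dependence on $s$ through the Bonferroni quantile. Once this is done carefully, assembling $\Delta_{n,1}$, $\Delta_{n,2}$, $\tilde\Delta_{n,3}$ and the $1/n$ slack from \Cref{lemma::upsilon} gives the claimed bound.
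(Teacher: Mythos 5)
Your overall architecture matches the paper's: a Berry--Esseen approximation over rectangles plus the Taylor-remainder/anti-concentration step (giving $\Delta_{n,1}+\Delta_{n,2}$), the nominal $1-\alpha$ from a Bonferroni union bound on the Gaussian side, and a minimum of two bounds for the covariance-estimation error, one of which is the Gaussian comparison term $\Delta_{n,3}$ exactly as in \Cref{thm::coverage}. Two remarks. First, you cannot literally invoke \Cref{theorem::deltamethod} ``uniformly over hyper-rectangles'': as stated it controls only the common-threshold event $\{\sqrt{n}\|\hat\theta-\theta\|_\infty\le t\}$. The paper's proof re-derives the rectangle version (its display \eqref{eq:delta1n.bonf}) from the polyhedral Berry--Esseen bound with $2s$ faces and coordinate-dependent thresholds, and then repeats the remainder/anti-concentration step with vector thresholds; your argument needs this same routine extension, which is available but should be stated rather than attributed to the cube-version theorem. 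Second, your route (b) genuinely differs from the paper's: the paper perturbs the thresholds additively ($\gamma_j\mapsto\gamma_j-\Xi_n$ with $\Xi_n\preceq\aleph_n/\underline{\sigma}$) and applies Nazarov's anti-concentration inequality (\Cref{thm:anti.concentration}) with shift $\Xi_n z_{\alpha/(2s)}$, which directly produces the factor $(\sqrt{2+\log(2s)}+2)$; you instead studentize, use a per-coordinate Gaussian density bound on the window of relative width $O(\aleph_n z_{\alpha/(2s)}/\underline{\sigma}^2)$, sum with the union bound, and control $s\,\phi(z_{\alpha/(2s)})$ via Mills' ratio. Carried out carefully (the density must be evaluated at the lower edge $z_{\alpha/(2s)}(1-\delta)$, which costs only a constant once $z_{\alpha/(2s)}^2\delta=o(1)$), this yields a bound of order $\alpha\,z_{\alpha/(2s)}^2\,\aleph_n/\underline{\sigma}^2$, which is of the same order as (indeed no worse than, for fixed $\alpha$) the second element of the paper's $\tilde{\Delta}_{n,3}$, so the minimum and the final statement go through; your appeal to a ``sub-Gaussian maximal bound to restrict the window'' is unnecessary for this and seems to be an attempt to reproduce the paper's exact constant, which your elementary route does not literally give. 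What the paper's Nazarov-based argument buys is a one-line bound with the stated constant; what your route buys is an elementary, self-contained estimate that even exposes an extra factor of $\alpha$.
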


\subsubsection*{Asymptotically honest confidence sets: the bootstrap approach}

To construct the confidence set \Cref{eq::conf-rectangle}, one has to
compute the estimator $\hat{\Gamma}$ 
and the quantile $\hat{t}_\alpha$ in
\eqref{eq:hat.t.alpha.berry},
which may be computationally inconvenient. 
Similarly, the  hyper-rectangle \Cref{eq:hyper:CI} requires computing the diagonal entries in
$\hat{\Gamma}$. 

Below we rely on the bootstrap 
to construct analogous confidence sets,  centered at
$\hat{\theta}$, which do not need knowledge of $\hat{\Gamma}$.
We let $\hat{\psi}^*$ denote the sample average of an i.i.d. sample of size
$n$ from the bootstrap distribution, which is the empirical measure associated
to the sample $(W_1,\ldots,W_n)$. We also let $\hat{\theta}^* =
g(\hat{\psi}^*)$.

For a fixed $\alpha \in (0,1)$, let $\hat{t}^*_\alpha$ be the smallest
    positive number such that 
    \[
	\mathbb{P}\left( \sqrt{n} \| \hat{\theta}^* - \hat{\theta}\|
	\leq \hat{t}^*_\alpha \Big| (W_1,\ldots,W_n) \right) \geq 1 - \alpha.
    \]
    and let $(\tilde{t}^*_j, j =1,\ldots,s)$ be such that
    \[
	\mathbb{P}\left( \sqrt{n} | \hat{\theta}^*(j) - \hat{\theta}
	(j) \leq \tilde{t}^*_j, \forall j \Big| (W_1,\ldots,W_n) \right) \geq 1 - \alpha.
    \]
By the union bound, each $\tilde{t}^*_j$ can be chosen to
    be the largest positive number such that
    \[
	\mathbb{P}\left( \sqrt{n} | \hat{\theta}^*(j) - \hat{\beta}
	(j) > \tilde{t}^*_j, \Big|  (W_1,\ldots,W_n) \right)  \leq
	\frac{\alpha}{s}.
    \]
Consider the following two bootstrap confidence sets: 
\begin{equation}\label{eq:ci.boot.theta}
	    \hat{C}^*_{n} = \left\{ \theta \in \mathbb{R}^{s} \colon \|
	    \theta - \hat{\theta}
	    \|_\infty \leq \frac{ \hat{t}^*_{\alpha}}{\sqrt{n}} \right\} \quad
	    \text{and} \quad 
	    \tilde{C}^*_{n} = \left\{ \theta \in \mathbb{R}^{s} \colon |
	    \theta(j) - \hat{\theta}(j)
	    | \leq \frac{ \tilde{t}^*_{j}}{\sqrt{n}}, \forall j \in \wS \right\}
	\end{equation}

\begin{theorem}
\label{theorem::boot}
Assume the same conditions of Theorem \ref{theorem::deltamethod}
and that and $\hat{\psi}$ and $\hat{\psi}^*$ belong to  $\mathcal{S}_n$ almost surely.
Suppose that  $n$ is large enough so that the quantities 
$\underline{\sigma}^2_n = \underline{\sigma}^2 - C \aleph_n >0$ and  $v_n  = v - C \daleth_n$ are positive, 
where $C$ is the 
larger of the two constants  in \eqref{eq:upsilon} and in
\eqref{eq:matrix.bernstein.simple.2} and
\[
\daleth_n = 
\sqrt{  b \overline{v} \frac{ \log b + \log n }{n}   }.
\]
Also set $\overline{v}_n = \overline{v} + C \daleth_n$. Then, for a constant  $C$ depending only on $A$,
\begin{equation}\label{eq::boot-cov}
    \inf_{P\in {\cal P}_n}\mathbb{P}(\theta\in \hat{C}^*_n) \geq 1-\alpha -
C\left(\Delta^*_{n,1} + \Delta^*_{n,2} + \Delta_{n,3} +
\frac{1}{n}\right),
\end{equation}
where
\[
\Delta^*_{n,1} =
\frac{1}{\sqrt{v_n}} \left( \frac{ \overline{v}_n b (\log 2bn)^7}{n}   \right)^{1/6}
 ,\quad 
\Delta^*_{n,2} =
\frac{1}{\underline{\sigma}_n}\sqrt{\frac{ b \overline{v}_n  \overline{H}^2 (\log n)^2
\log b}{n}},
\]
 and $\Delta_{n,3}$ is given in (\ref{eq::this-is-upsilon}).
Similarly,
\begin{equation}\label{eq::boot-cov.bonf}
    \inf_{P\in {\cal P}_n}\mathbb{P}(\theta\in \tilde{C}^*_n) \geq 1-\alpha -
C\left(\Delta^*_{n,1} + \Delta^*_{n,2} + \Delta_{n,3} +
\frac{1}{n}\right).
\end{equation}
\end{theorem}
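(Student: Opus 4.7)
The plan is to adapt the proof of Theorem \ref{thm::coverage} by swapping the Gaussian with covariance $\hat\Gamma$ for the bootstrap law of $\sqrt n(\hat\theta^* - \hat\theta)$. Concretely, the coverage
\[
\mathbb{P}(\theta \in \hat C^*_n) = \mathbb{P}\bigl(\sqrt n \|\hat\theta - \theta\|_\infty \leq \hat t^*_\alpha\bigr)
\]
will be lower-bounded by chaining three approximations. First, by Theorem \ref{theorem::deltamethod} applied to $P$, the left-hand side is within $C(\Delta_{n,1} + \Delta_{n,2})$ of $\mathbb{P}(\|Z\|_\infty \leq \hat t^*_\alpha)$ with $Z \sim N(0,\Gamma)$, uniformly in $\hat t^*_\alpha$. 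Second, by Lemma \ref{lemma::upsilon} combined with the Gaussian comparison inequality (Theorem \ref{thm:comparisons}), that quantity differs from $\mathbb{P}(\|Z^*\|_\infty \leq \hat t^*_\alpha)$, with $Z^* \sim N(0,\hat\Gamma)$, by at most $C\Delta_{n,3}$, on the event of probability $\geq 1 - 2/n$ on which Lemma \ref{lemma::upsilon} holds. Third, by definition of $\hat t^*_\alpha$, a conditional application of Theorem \ref{theorem::deltamethod} to the bootstrap (empirical) distribution gives $\mathbb{P}(\|Z^*\|_\infty \leq \hat t^*_\alpha \mid W_1,\ldots,W_n) \geq 1 - \alpha - C(\Delta^*_{n,1} + \Delta^*_{n,2})$. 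Integrating out the data and assembling these three approximations yields \eqref{eq::boot-cov}.

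The only non-routine step is the third one: Theorem \ref{theorem::deltamethod} must be invoked on the data-dependent empirical distribution, so its hypotheses need to be verified on a high-probability event. Compact support in $[-A,A]^b$ is automatic. For the eigenvalue bounds on the bootstrap covariance, the matrix Bernstein inequality (Lemma \ref{lem:operator}) yields $\|\hat V - V\|_{\mathrm{op}} \leq C\daleth_n$ with probability at least $1-1/n$, whence $\lambda_{\min}(\hat V) \geq v_n$ and $\lambda_{\max}(\hat V) \leq \overline v_n$. For the minimum variance of the bootstrap linear approximation, Lemma \ref{lemma::upsilon} gives $\min_j \hat\Gamma(j,j) \geq \underline\sigma^2 - C\aleph_n = \underline\sigma_n^2$ on an event of probability $\geq 1 - 2/n$. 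On the intersection of these events, Theorem \ref{theorem::deltamethod} applies to the empirical measure with $(v_n, \overline v_n, \underline\sigma_n^2)$ playing the role of $(v, \overline v, \underline\sigma^2)$, which is precisely the origin of the starred quantities in $\Delta^*_{n,1}$ and $\Delta^*_{n,2}$; the complement of this event contributes the additive $C/n$ term in the final bound.

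For the hyper-rectangle $\tilde C^*_n$ the strategy is structurally identical, with the $\ell_\infty$ bootstrap quantile $\hat t^*_\alpha$ replaced by the coordinate-wise Bonferroni quantiles $\tilde t^*_j$. The high-dimensional Berry-Esseen and Gaussian comparison results are already formulated over hyper-rectangles, and the union bound built into the definition of $\tilde t^*_j$ secures the $\geq 1 - \alpha$ conditional bootstrap coverage, yielding \eqref{eq::boot-cov.bonf} with the same three error terms. The main obstacle is precisely this interplay between the data randomness (which governs the bootstrap law) and the bootstrap randomness itself: without the high-probability control on $\|\hat V - V\|_{\mathrm{op}}$ and on the coordinate-wise deviation $\|\hat\Gamma - \Gamma\|_{\max}$, Theorem \ref{theorem::deltamethod} could not be transported from $P$ to the empirical distribution, and the starred rates would not be guaranteed to be meaningful.
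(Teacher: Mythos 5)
Your proposal is correct and follows essentially the same route as the paper: the same three-term decomposition (Berry--Esseen for $P$ via Theorem \ref{theorem::deltamethod}, Gaussian comparison via Lemma \ref{lemma::upsilon} and Theorem \ref{thm:comparisons}, and a conditional application of Theorem \ref{theorem::deltamethod} to the bootstrap/empirical distribution on the high-probability event where $\|\hat V - V\|_{\mathrm{op}} \leq C\daleth_n$ and $\max_{j,l}|\hat\Gamma(j,l)-\Gamma(j,l)| \leq C\aleph_n$, giving the starred rates with $(v_n,\overline v_n,\underline\sigma_n)$ and the residual $1/n$ terms), and the same Bonferroni/coordinate-wise treatment for $\tilde C^*_n$. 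The only cosmetic omission is the remark that the unstarred $\Delta_{n,1},\Delta_{n,2}$ from the first step are dominated by $\Delta^*_{n,1},\Delta^*_{n,2}$, which is immediate.
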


{\bf Remark.} The assumption that $\hat{\psi}$ and $\hat{\psi}^*$ are in $\mathcal{S}_n$
almost surely can be relaxed to a high probability statement without 
 any issue, resulting in an additional bound on the probability of the complementary event.

{\bf Remark.} The proof of the theorem involves enlarging the class of
distributions 
$\mathcal{P}_n$ to a bigger collection $\mathcal{P}^*_n$ that is guaranteed to include the
bootstrap distribution (almost surely or with high probability). The resulting coverage error terms are
larger than the ones obtained in  \Cref{thm::coverage} using Normal approximations precisely
because $\mathcal{P}_n^*$ is a larger class. In the above result we simply increase the
rates arising from \Cref{thm::coverage} so that they hold for $\mathcal{P}^*_n$ without actually
recomputing the quantities $B$,
$\overline{H}$ and
$\underline{\sigma}^2$ in \eqref{eq:H.and.B} and \eqref{eq:sigma} over the new
class $\mathcal{P}^*_n$. 
Of course, better rates may be established should sharper bounds on those
quantities be available. 

{\bf Remark.} The error term $\Delta_{n,3}$ remains the same as in \Cref{thm::coverage} and \Cref{thm::bonf} because it
quantifies an error term, related to the Gaussian comparison \Cref{thm:comparisons}, which does not depend on the bootstrap
distribution.

\section{Conclusions}
\label{section::conclusion}

In this paper we have taken a modern look at 
inference based on sample splitting.
We have also investigated the accuracy of Normal and bootstrap approximations
and we have suggested new parameters for regression.

Despite the fact that sample splitting is on old idea, there remain many open
questions.
For example, in this paper, we focused on a single split of the data.
One could split the data many times and somehow combine the confidence sets.
However, for each split we are essentially estimating a different
(random) parameter.
So currently, it is nor clear how to combine this information.

The bounds on coverage accuracy --- which are of interest beyond sample splitting ---
are upper bounds.
An important open question is to find lower bounds.
Also, it is an open question
whether we can improve the bootstrap rates.
For example,
the remainder term in the Taylor approximation of
$\sqrt{n}(\hat\beta(j) - \beta(j))$ is
$$
\frac{1}{2n}\int
\int \delta^\top  H_j((1-t)\psi + t \hat\psi) \delta\, dt
$$
where
$\delta=\sqrt{n}(\hat\psi - \psi)$.
By approximating this quadratic term it might be possible to
correct the bootstrap distribution.
\cite{pouzo2015bootstrap} has results for bootstrapping quadratic forms that could be useful here.
In Section 
\ref{section::improving}
we saw that a modified bootstrap, that we called the image bootstrap,
has very good coverage accuracy even in high dimensions.
Future work is needed to compute the resulting confidence set efficiently.

Finally, we remind the reader that
we have taken a assumption-free
perspective.
If there are reasons to believe in some parametric model then of
course the
distribution-free, sample splitting approach used in this paper will be sub-optimal.

\section*{Acknowledgements}

The authors are grateful to the AE and the reviewers
for comments that led to substantial improvements
on the paper and the discovery of a mistake in the original version of the manuscript.
We also thank Lukas Steinberger, Peter Buhlmann and Iosif Pinelis for helpful suggestions and Ryan Tibshirani for comments on early drafts.

\bibliographystyle{plainnat}
\bibliography{paper}

\section{Appendix 1: Improving the Coverage Accuracy of the Bootstrap for the Projection Parameters}
\label{section::improving}

Throughout, we treat $S$ as a fixed, non-empty subset of $\{1,\ldots,d\}$ of size $k$ and assume an i.i.d. sample $(Z_1,\ldots,Z_n$) where  $Z_i = (X_i,Y_i)$ for all $i$, from a distribution from $\mathcal{P}_n^{\mathrm{OLS}}$.

The coverage accuracy for LOCO and prediction parameters is much higher than for the projection parameters and
 the inferences for $\beta_S$ are less accurate if $k$ is allowed to increase with $n$.
Of course, one way to ensure accurate inferences is simply
to focus on $\gamma_S$ or $\phi_S$ instead of $\beta_S$.
Here we discuss some other approaches to ensure coverage accuracy.

If we use ridge regression instead of least squares,
the gradient and Hessian with respect to $\beta$ are bounded and the error terms
are very small.
However, this could degrade prediction accuracy.
This leads to a tradeoff between inferential accuracy and prediction accuracy.
Investigating this tradeoff will be left to future work.

Some authors have suggested the estimator
$\hat\beta_S = \tilde\Sigma_S^{-1} \hat\alpha_S$
where $\tilde\Sigma_S$ is a block diagonal estimator of $\Sigma$.
If we restrict the block size to be bounded above by a constant,
then we get back the accuracy of the sparse regime.
Again there is a 
tradeoff between inferential accuracy and prediction accuracy.

The accuracy of the bootstrap can be increased by using
the {\em image bootstrap} as we now describe.
First we apply the bootstrap to get a confidence set for $\psi_S$.
Let
\[
H_n = \Biggl\{ \psi_S:\
||\psi_S - \hat\psi_S||_\infty \leq \frac{t^*_\alpha}{\sqrt{n}} \Biggr\}
\]
where
$t^*_\alpha$ is the bootstrap quantile defined by
$\hat F^*(t^*_\alpha) = 1-\alpha$ and
\[
\hat F^*(t) = P(\sqrt{n}||\hat\psi^*_S - \hat\psi_S||_\infty \leq t\,|\, Z_1,\ldots, Z_n).
\]
Since $\psi_S$ is just a vector of moments,
it follows from \Cref{thm:high.dim.clt} Theorem K.1 of
\cite{cherno1} and the Gaussian anti-concentration (\Cref{thm:anti.concentration}) that, for a constant $C$ depending on $A$ only, 
\begin{equation}\label{eq:image}
\sup_{P\in {\cal P}^{\mathrm{OLS}}_n}|\mathbb{P}(\psi \in H_n) - (1-\alpha)| \leq \frac{C}{a_n}\left( \frac{(\log k)^7}{n}\right)^{1/6}.
\end{equation}
In the above display $a_n = \sqrt{a - C \sqrt{ \frac{\log k }{n}}}$ and is positive for $n$ large enough,  and 
\[
 a \leq \inf_{P \in \mathcal{P}_n^{\mathrm{OLS}}} \min_{j \in \{1,\ldots,d\}} {\rm Var}_P(W_i(j)).
 \] Notice that $a$ is positive  since $a \geq v$, where $v$ is given the definition  \ref{def:Pdagger} of $\mathcal{P}_n^{\mathrm{OLS}}$. However,  $a$ can be significantly larger that $v$. The term $C \sqrt{\log k  }{n}$ appearing in the definition of $a_n$ is just a high probability bound on the maximal element-wise difference between $V$ and $\hat{V}$, valid for each $ P \in \mathcal{P}_n^{\mathrm{OLS}}$.

Next, recall that $\beta_S = g(\psi_S)$.
Now define
\begin{equation}
C_n = \Biggl\{ g(\psi) :\ \psi \in H_n \Biggr\}.
\end{equation}
We call $C_n$ the {\em image bootstrap confidence set}
as it is just the nonlinear function $g$ applied to the confidence set $H_n$.
Then, by \eqref{eq:image}, 
\[
\inf_{P\in {\cal P}_n'}\mathbb{P}(\beta \in C_n) \geq 1-\alpha - \frac{C}{a_n}\left( \frac{\log k}{n}\right)^{1/6}.
\]
In particular, the implied confidence set for $\beta(j)$ is
$$
C_j = \Biggl[\inf_{\psi \in H_n}g(\psi),\ \sup_{\psi \in H_n}g(\psi)\Biggr].
$$

Remarkably, in the coverage accuracy of the image-bootstrap the dimension $k$ enters only logarithmically. 
This is in stark contrast with the coverage accuracy guarantees for the projection parameters from \Cref{sec:projection}, which depend polynomially in $k$ and on the other eigenvalue parameters. 

The image bootstrap is usually avoided because it generally leads to
conservative confidence sets.
Below we derive bounds on the accuracy of the image bootstrap.

\begin{theorem}\label{thm:beta.accuracy}
Let $u_n$ be as in \eqref{eq:un} and assume that $k \geq u_n^2$. Then, for each $P \in \mathcal{P}_n^{\mathrm{OLS}}$, with probability at least $\frac{1}{n}$, the diameter of the image bootstrap confidence set $H_n$ is bounded by 
\[
C 
    \frac{k^{3/2}}{u_n^2}\sqrt{ \frac{\log k + \log n}{n}}.
\]
where $C>0$ depends on $A$ only. 
\end{theorem}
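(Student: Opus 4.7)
The plan is to bound the $L_\infty$-diameter of the image bootstrap set $C_n = g(H_n)$ by controlling two pieces separately: the bootstrap quantile $t^*_\alpha$ that determines the radius of $H_n$, and a Lipschitz-type estimate for $g$ on $H_n$. First I would bound $t^*_\alpha$. Since each coordinate of $W_i$ is a product of two entries from $(X_i,Y_i) \in [-A,A]^{d+1}$ and hence bounded in magnitude by $A^2$, a coordinate-wise Hoeffding inequality applied conditionally on the data to $\sqrt{n}(\hat\psi^*_{\wS} - \hat\psi_{\wS})$, combined with a union bound over the $b = k(k+3)/2$ coordinates, yields $t^*_\alpha \le C \sqrt{\log k + \log n}$ with probability at least $1 - 1/n$.

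Next I would establish a uniform spectral lower bound on $M_\psi := \mathrm{math}(\psi_1)$ over $\psi \in H_n$. The matrix Bernstein bound of Lemma \ref{lem:operator} gives $\|\hat\Sigma_{\wS} - \Sigma_{\wS}\|_{\mathrm{op}} \le K_{2,n}$ with probability at least $1 - 1/n$, so $\lambda_{\min}(\hat\Sigma_{\wS}) \ge u_n$ on that event. For $\psi \in H_n$,
\[
\|M_\psi - \hat\Sigma_{\wS}\|_{\mathrm{op}} \le \|M_\psi - \hat\Sigma_{\wS}\|_F \le \sqrt{2b}\, t^*_\alpha/\sqrt{n},
\]
which is small compared to $u_n$ in the operative asymptotic regime, so Weyl's inequality gives $\lambda_{\min}(M_\psi) \ge u_n/2$ uniformly over $H_n$ on that event.

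Finally, for $\psi,\psi' \in H_n$ written as $\psi = (\mathrm{vech}(M),\alpha)$ and $\psi' = (\mathrm{vech}(M'),\alpha')$, I would invoke the resolvent identity
\[
g(\psi) - g(\psi') = M^{-1}(\alpha - \alpha') + M^{-1}(M' - M)(M')^{-1}\alpha'.
\]
Bounding $\|\cdot\|_\infty$ by $\|\cdot\|_2$, the first term is at most $\|M^{-1}\|_{\mathrm{op}}\sqrt{k}\,\|\alpha - \alpha'\|_\infty \le 4\sqrt{k}\,t^*_\alpha/(u_n\sqrt{n})$; using $\|\alpha'\|_2 = O(A^2\sqrt{k})$ and $\sqrt{b} = O(k)$, the second term is at most $\|M^{-1}\|_{\mathrm{op}}\|M - M'\|_{\mathrm{op}}\|(M')^{-1}\|_{\mathrm{op}}\|\alpha'\|_2 \le C k^{3/2}\,t^*_\alpha/(u_n^2\sqrt{n})$. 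The hypothesis $k \ge u_n^2$ (equivalently $u_n \le \sqrt{k}$) ensures that the second bound dominates the first, and substituting the control on $t^*_\alpha$ from the first step yields the stated rate.

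The main technical obstacle is the uniform spectral control: one must verify that the Frobenius perturbation $\sqrt{b}\,t^*_\alpha/\sqrt{n}$, of order $k\sqrt{(\log kn)/n}$, is indeed negligible relative to $u_n$. This is the one place the argument is not purely algebraic; the remainder of the proof assembles concentration tools already developed in the paper (matrix Bernstein, Hoeffding) with a first-order resolvent expansion of $g$.
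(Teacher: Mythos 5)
Your argument is correct in substance and reaches the stated rate, but it takes a genuinely different route from the paper's. The paper's proof applies the integral mean value theorem to $g$ along the segment joining $\psi_{S}$ and $\hat\psi_{S}$: it bounds the integrated Jacobian in operator norm by the row bound $\|G_j\|\le C\bigl(\sqrt{k}/\hat{u}_t^2+1/\hat{u}_t\bigr)$ from \Cref{lemma::horrible} (see \eqref{eq:Gj.constants}), controls $\hat{u}_t\ge (1-t)\lambda_{\min}(\Sigma_{S})+t\lambda_{\min}(\hat\Sigma_{S})\ge u_n$ via the matrix Bernstein inequality of \Cref{lem:operator}, Weyl's theorem and convexity along the segment, and multiplies by the vector-Bernstein bound $\|\hat\psi_{S}-\psi_{S}\|\le Ck\sqrt{(\log k+\log n)/n}$, with $k\ge u_n^2$ collapsing the Jacobian bound to $C\sqrt{k}/u_n^2$. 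You instead bound the radius of $H_n$ directly through the bootstrap quantile ($t^*_\alpha\lesssim\sqrt{\log k+\log n}$ by Hoeffding plus a union bound over the $b$ coordinates; note this bound is in fact deterministic given $\mathcal{D}_{2,n}$, so no extra probability is spent there), and you replace the mean value theorem by the exact resolvent identity $M^{-1}\alpha-(M')^{-1}\alpha'=M^{-1}(\alpha-\alpha')+M^{-1}(M'-M)(M')^{-1}\alpha'$, together with a Frobenius-to-operator perturbation step giving $\lambda_{\min}(M_\psi)\ge u_n/2$ uniformly over $H_n$. What your version buys is that it genuinely controls $\|g(\psi)-g(\psi')\|$ for arbitrary pairs in $H_n$, i.e.\ the diameter of the image set, whereas the paper's displayed computation bounds the distance between $g(\hat\psi_{S})$ and $g(\psi_{S})$ at the true $\psi_{S}$ (re-using the concentration of $\hat\psi_{S}$ around $\psi_{S}$ as a stand-in for the radius of $H_n$) and uses $\lambda_{\min}(\Sigma_{S})\ge u$ at one endpoint, which is not available when the endpoint is an arbitrary element of $H_n$. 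The paper's route, in exchange, recycles lemmas it has already proved and avoids a separate spectral-perturbation argument.

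The one point to tidy up is the condition you flag yourself: the Weyl step needs the perturbation $\sqrt{2b}\,t^*_\alpha/\sqrt{n}\asymp k\sqrt{(\log k+\log n)/n}$ to be at most, say, $u_n/2$, and this is not among the literal hypotheses of the theorem. It is, however, exactly the regime in which the result is meant to operate (the paper already requires $n$ large enough that $u_n>0$, and its remark that the diameter vanishes when $k(\log k)^{1/3}=o(n^{1/3})$ with $u$ bounded away from zero presupposes the same smallness); moreover, the paper's own argument would need the identical condition if it were to bound the full diameter rather than the distance to the true parameter. State it explicitly (e.g.\ $Ck\sqrt{(\log k+\log n)/n}\le u_n/2$ with the constant from your quantile bound) and your proof is complete.
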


{\bf Remark.} The assumption that $k \geq u_n^2$ is not necessary and can be relaxed, resulting in a slightly more general bound.

Assuming  non-vanishing $u$,
the diameter tends uniformly to $0$ if
$k (\log k)^{1/3} = o(n^{1/3})$.
Interestingly, this is the same condition required in
\cite{portnoy1987central}
although the setting is quite different.

Currently, we do not have a computationally efficient method to
find the supremum and infimum.
A crude approximation is given by
taking a random sample
$\psi_1,\ldots, \psi_N$ from $H_n$ and taking
$$
a(j) \approx \min_{j} g(\hat\psi_j),\ \ \ 
b(j) \approx \max_{j} g(\hat\psi_j).
$$

{\bf Proof of \Cref{thm:beta.accuracy}.}
We will establish the claims by bound the quantity $\left\| 
    \hat\beta_{S} - \beta_{S} 
    \right\| $ uniformly over all $\beta_S \in H_n$.\\
Our proof relies on a first order
Taylor series expansion of of $g$ and on the uniform
bound on the norm of the gradient of each $g_j$ given in\Cref{eq::B-and-lambda}.
Recall that, by conditioning on $\mathcal{D}_{1,n}$, we can regard $S$ and $\beta_{S}$ as a
fixed. Then, letting $G(x)$ be the  $|S| \times b$-dimensional Jacobian of
$g$ at $x$ and using the mean value theorem,  we have that
\begin{align*}
    \left\| 
    \hat\beta_{S} - \beta_{S} 
    \right\| & = 
\left\| 
\left(  
\int_0^1 G\bigl( (1-t)\psi_{S} + ut \hat \psi_{S} \bigr) dt
\right) 
(\hat\psi_{S} - \psi_{S}) 
\right\|\\
& \leq 
 \left\| \int_0^1 G\bigl( (1-t)\psi_{S} + t \hat
 \psi_{S}\bigr) dt \right\|_{\mathrm{op}} \left\| \hat{\psi}_{S} - \psi_{S} \right\|.
\end{align*}
To further bound the previous expression we use the fact, established in
the proof of \Cref{lemma::upsilon}, that  $\| \hat{\psi}_{S} - \psi_{S} \| \leq C k \sqrt{ \frac{ \log n + \log k}{n}
}$ with probability at least $1/n$, where $C$ depends on $A$, for each $P \in  \mathcal{P}_n^{\mathrm{OLS}}$. 
Next, 
\[
 \left\| \int_0^1 G\bigl( (1-t)\psi_{S} + t \hat
 \psi_{S} \bigr) dt \right\|_{\mathrm{op}}  \leq 
 \sup_{t \in (0,1)} \left\| G\left( 1-t)\psi_{S} + t \hat
 \psi_{S} \right) \right\|_{\mathrm{op}}  \leq \sup_{t \in (0,1)} \max_{j \in S} \left\| G\left( 1-t)\psi_{S} + t \hat
 \psi_{S} \right) \right\|
 \]
where $G_i(\psi)$ is the $j^{\mathrm{th}}$ row of $G(\psi)$, which is the gradient of $g_j$ at
$\psi$. Above, the first inequality relies on the convexity  of the operator norm and the
second inequality uses that the fact that the operator norm of a matrix is
bounded by the maximal Euclidean norm of the rows. 
For each $P \in  \mathcal{P}_n^{\mathrm{OLS}}$ and each $t \in (0,1)$ and $j \in S$, the  bound in \eqref{eq:Gj.constants} yields that, for a $C>0$ depending on $A$ only,
\[
\left\| G\left( 1-t)\psi_{S} + t \hat
 \psi_{S} \right) \right\| \leq C \left( \frac{\sqrt{k}}{\hat{u}_t ^2} + \frac{1}{\hat{u}_t} \right), 
\]
where $\hat{u}_t \geq  (1 - t) \lambda_{\min}(\Sigma_{S}) + t \lambda_{\min}(\hat{\Sigma}_{S})$. By \eqref{eq:matrix.bernstein.simple.2} in \Cref{lem:operator} and Weyl's theorem, and using the fact that $ u > u_n$, on an event with probability at lest $1 - \frac{1}{n}$, 
\[
\left\| G\left( 1-t)\psi_{S} + t \hat
 \psi_{S} \right) \right\| \leq C \left( \frac{\sqrt{k}}{u_n ^2} + \frac{1}{u_n} \right) \leq C \frac{\sqrt{k}}{u_n^2},
\] 
where in the last inequality we assume $n$ large enough so that $k \geq u_n^2$. The previous bound does not depend on $t$, $j$ or $P$. The result now follows.
$\Box$

\section{Appendix 2: Proofs of the results in \Cref{section::splitting}}

In all the proofs of the results from \Cref{section::splitting}, we will condition on
the outcome of the sample splitting step, resulting in the random equipartition  $\mathcal{I}_{1,n}$ and $\mathcal{I}_{2,n}$ of $\{1,\ldots,2n\}$, and on $\mathcal{D}_{1,n}$. Thus, we can treat the outcome of the model selection and estimation procedure $w_n$ on $\mathcal{D}_{1,n}$ as a fixed. As a result, we regard $\wS$ as a deterministic, non-empty subset of
$\{1,\ldots,d\}$ of size  by $k < d$ and the projection parameter $\beta_{\wS}$ as a fixed vector of length $k$. Similarly, for the LOCO parameter $\gamma_{\wS}$, the quantities $\widehat{\beta}_{\wS}$ and $\widehat{\beta}_{\wS(j)}$, for $j \in \wS$, which depend on $\mathcal{D}_{1,n}$ also become fixed.
Due to the independence of $\mathcal{D}_{1,n}$ and $\mathcal{D}_{2,n}$, all the probabilistic statements made in the proofs are
therefore referring to the randomness in $\mathcal{D}_{2,n}$ only. Since all our bounds will
depend on $\mathcal{D}_{1,n}$ through the cardinality of $\wS$, which is fixed
at $k$, the same bounds will hold uniformly over all possible values taken on by $\mathcal{D}_{1,n}$ and $\mathcal{I}_{1,n}$  and all possible 
 outcomes of all model selection and estimation procedures $w_n \in \mathcal{W}_n$ run on $\mathcal{D}_{1,n}$. In particular, the bounds are valid unconditionally with respect to the joint distribution of the entire sample and of the splitting outcome.
 	
Also, in the proof $C$ denotes a positive positive that may depend on $A$ only
but not on any other variable,
and whose value may change from line to line.

{\bf Proof of \Cref{thm:beta.accuracy2}.}
As usual, we condition on  $\mathcal{D}_{1,n}$ and thus treat $\wS$ as a fixed subset of
$\{1,\ldots,d\}$ of size $k$.
Recalling the definitions of $\hat{\beta}_{\wS}$ and  $\beta_{\wS}$ given in
\eqref{eq:least.squares} and
\eqref{eq::projection-parameter}, respectively,  and dropping the dependence on
$\wS$ in the notation for convenience, we have that
\begin{align*}
    \| \hat{\beta}_{\wS} -  \beta_{\wS}\| & = \left\|  \left( \hat{\Sigma}^{-1}
    - \Sigma^{-1} \right) \hat{\alpha} + \Sigma^{-1}\left( \hat{\alpha} -
\alpha \right) \right\|\\ 
& \leq  \left\|  \hat{\Sigma}^{-1} - \Sigma^{-1} \right\|_{\mathrm{op}}
\|\hat{\alpha} \| + \frac{1}{u} \| \hat{\alpha} - \alpha\|\\
& = T_1 + T_2.
\end{align*}
By the vector
Bernstein inequality  \eqref{eq:vector.bernstein.simple},
\[
    \| \hat{\alpha} - \alpha \| \leq C A \sqrt{ \frac{k \log n}{n} },
\]
with probability at least $1 - \frac{1}{n}$ and for some universal constant $C$
(independent of $A$). 
Since 
the smallest eigenvalue of $\Sigma$ is bounded from below by $u$, we have that
\[
  T_1 \leq  C   \frac{1}{u} \sqrt{ \frac{k \log n}{n}}.
\]
To bound $\left\|  \hat{\Sigma}^{-1} - \Sigma^{-1} \right\|_{\mathrm{op}}$ in
the term $T_2$ we
write $\hat{\Sigma} = \Sigma + E$ and assume for the moment that $\|E\|_{\mathrm{op}} \|
\Sigma^{-1}\|_{\mathrm{op}} < 1 $ (which of course implies that $\| E \Sigma^{-1}
\|_{\mathrm{op}} < 1$). Since $E$ is symmetric, we have, by formula
5.8.2 in \cite{Horn:2012:MA:2422911}, that
\[
\left\|  \hat{\Sigma}^{-1} - \Sigma^{-1} \right\|_{\mathrm{op}} = \left\|
(\Sigma + E)^{-1} - \Sigma^{-1} \right\|_{\mathrm{op}} \leq \|
\Sigma^{-1}\|_{\mathrm{op}} \frac{\| E 
\Sigma^{-1}\|_{\mathrm{op}}  } {  1 - \| E 
\Sigma^{-1}\|_{\mathrm{op}}  },
\]
which in turn is upper  bounded by 
\[
    \|\Sigma^{-1}\|^2_{\mathrm{op}}  \frac{\|
    \hat{\Sigma} - \Sigma 
    \|_{\mathrm{op}}  } {  1 -  \|\hat{\Sigma} - \Sigma \|_{\mathrm{op}} \|
    \Sigma^{-1}\|_{\mathrm{op}}  }.
\]
The matrix Bernstein inequality \eqref{eq:matrix.bernstein.simple.2} along with the assumption that $U \geq \eta > 0$  yield that,
for a positive  $C$ (which depends on $\eta$), 
\[
\|\hat{\Sigma} - \Sigma \|_{\mathrm{op}} \leq C A    \sqrt{ k U  \frac{ \log k +
\log n}{n}},   
\]
with probability at least $1 - \frac{1}{n}$.  Using the fact that $\| \Sigma^{-1} \|_{\mathrm{op}} \leq
\frac{1}{u}$ and the assumed asymptotic scaling on $B_n$ we see that $\|
\Sigma^{-1} E \|_{\mathrm{op}}  \leq 1/2$ for all $n$ large enough.
Thus, for all such $n$, we obtain that, with probability at least $ 1-
\frac{1}{n}$, 
\[
T_2 \leq 2 C A 
    \frac{ k}{u^2}
    \sqrt{ U \frac{ \log k +
    \log n}{n}},
\]
since $\| \hat{\alpha} \| \leq A \sqrt{k}$ almost surely.  
Thus we have shown that \eqref{eq::beta2} holds, with
probability at least $1 - \frac{2}{n}$ and for all $n$ large enough.  
This bound holds uniformly over all $P \in \mathcal{P}_n^{\mathrm{OLS}}$.
$\Box$

\noindent{\bf Proof of \Cref{thm::big-theorem}.}
In what follows,
any term of the order $\frac{1}{n}$ are
absorbed into terms of asymptotic bigger order.

As remarked at the beginning of this section, we first condition on $\mathcal{D}_{1,n}$ and the outcome of the sample splitting, so that $\wS$ is regarded as a fixed non-empty subset $S$ of $\{1,\ldots,d\}$ of size at most $k$.
The bounds \eqref{eq:big-theorem.Linfty} and \eqref{eq:big-theorem.hyper} are established using \Cref{thm::coverage} and \Cref{thm::bonf} from \Cref{section::berry},
where we may take the function $g$ as in \eqref{eq:g.beta}, $s = k$,  $b =
\frac{k^2 + 3k}{2} $ and $\psi = \psi_{\wS}$ and $\hat{\psi}_{\wS}$ as in \eqref{eq:psi.beta} and
\eqref{eq:hat.psi.beta}, respectively.
As already noted, 
$\psi$ is always in the domain of $g$ and, as long as $n \geq d$, so is $\hat{\psi}$,
almost surely. 
A main technical difficulty in applying the results of \Cref{section::berry} is to
obtain  good approximations for the quantities
$\underline{\sigma}, \overline{H}$ and $B$. This can be accomplished using the bounds provided in
\Cref{lemma::horrible} below, which rely on matrix-calculus. Even so, the
claims in the theorem do not simply follow by plugging those bounds in Equations
\eqref{eq::Delta}, \eqref{eq::this-is-upsilon} and \eqref{eq.Delta3.tilde} from \Cref{section::berry}.
 Indeed, close inspection of the proof of \Cref{theorem::deltamethod} (which is needed by both Theorems \ref{thm::coverage} and \ref{thm::bonf}) shows that the quantity $\overline{H}$, defined in \eqref{eq:H.and.B}, is used there only once, but critically, to obtain the almost everywhere bound in Equation \eqref{eq:bound.H}.  Adapted to the present setting, such a bound would be of the form 
\[
 \max_{j \in S} \int_{0}^1 \left\| H_j\left((1-t) \psi_S(P) + t \hat{\psi}_S(P) \right) \right\|_{\mathrm{op}} dt \leq \overline{H},
\]
almost everywhere, for each $S$ and $P \in \mathcal{P}_n^{\mathrm{OLS}}$, where $\psi_S = \psi_S(P)$ and $\hat{\psi}_S = \hat{\psi}_S(P)$ are given in \eqref{eq:psi.beta} and \eqref{eq:hat.psi.beta}, respectively. Unfortunately, the above inequality cannot be expected to hold almost everywhere, like we did in \Cref{section::berry}. Instead we will derive a high probability bound. In detail, using the second inequality in \eqref{eq::B-and-lambda} below we obtain that, for any $t \in [0,1]$, $S$, $j \in S$ and $P \in \mathcal{P}_n^{\mathrm{OLS}}$,
\[
  \left\| H_j\left((1-t) \psi_S(P) + t \hat{\psi}_S(P) \right) \right\|_{\mathrm{op}} \leq C \frac{k}{\hat{u}_t^3}
\]
 where $\hat{u}_t  = \lambda_{\min}( t \Sigma_S + ( 1- t) \hat{\Sigma}_S) \geq t \lambda_{\min}(\Sigma_S) + (1-t) \lambda_{\min}(\hat{\Sigma}_S)$ and the constant $C$ is the same as in \eqref{eq::B-and-lambda} ( the dependence  $\Sigma_S$ and $\hat{\Sigma}_S$ on $P$ is implicit in our notation). Notice that, unlike in \eqref{eq:H.and.B} in the proof of \Cref{theorem::deltamethod}, the above bound is random. By assumption, $\lambda_{\min}(\Sigma_S) \geq u$ and, by \eqref{eq:matrix.bernstein.simple.2} in \Cref{lem:operator} and Weyl's theorem, $\lambda_{\min}(\hat{\Sigma}_S) \geq u_n$ with probability at least $1 - \frac{1}{n}$ for each $P \in \mathcal{P}_n$. Since $u_n \leq u$,  we conclude that, for each  $S$, $j \in S$ and $P \in \mathcal{P}_n^{\mathrm{OLS}}$,
   \[
  \max_{j \in S} \int_0^1 \left\| H_j\left((1-t) \psi_S(P) + t \hat{\psi}_S(P) \right) \right\|_{\mathrm{op}} dt \leq C \frac{k}{u_n^3},
 \]
  on an event of probability at least $1 - \frac{1}{n}$.
  The same arguments apply to the bound \eqref{eq:here} in the proof \Cref{lemma::upsilon}, yielding that the term $\aleph_n$, given in \eqref{eq:aleph}, can be bounded, on an event of probability at least $1 - \frac{1}{n}$ and using again \Cref{lemma::horrible}, by
  \begin{equation}\label{eq:new.aleph}
  C \frac{k^{5/2}}{u_n^3 u^2} \overline{v} \sqrt{ \frac{ \log n}{n}}, 
  \end{equation}
  for each $P \in \mathcal{P}_n^{\mathrm{OLS}}$ and some $C>0$ dependent on $A$ only. (In light of the bounds derived next in \Cref{lemma::horrible}, the dominant term in the bound on $\aleph_n$ given in \eqref{eq:upsilon} is $ \overline{H} B \overline{v} \sqrt{ b\frac{   
 \log n}{n}}$, from which \eqref{eq:new.aleph} follows. We omit the  details).
  
 Thus, for each $P \in \mathcal{P}_n^{\mathrm{OLS}}$,  we may now apply Theorems \ref{thm::coverage} and \ref{thm::bonf} on event with probability no smaller than $ 1- \frac{1}{n}$, whereby the term $\overline{H}$ is replaced by $C \frac{k}{u^3_n}$ and the terms $B$ and $\overline{\sigma}$ are bounded as in \Cref{lemma::horrible}.

\begin{lemma}
\label{lemma::horrible}
For any $j \in \wS$,  let $\beta_{\wS}(j) = e_j^\top \beta_{\wS} = g_j(\psi)$
where $e_j$ is the $j^{\mathrm{th}}$ standard unit vector.
Write $\alpha = \alpha_{\wS}$ and $\Omega = \Sigma^{-1}_{\wS}$ and assume that $k
\geq u^2$.
The gradient and Hessian of $g_j$ are given by
\begin{equation}\label{eq:Gj} 
G^\top_j = e^\top_j \Big( \left[ - \left( \alpha^\top \otimes I_k \right) 
(\Omega \otimes \Omega) \;\;\;\;\; \Omega\right] \Big)  D_h
\end{equation}
and
\begin{equation}\label{eq:Hj}
H_j = D_h^\top A_j D_h, 
\end{equation}
respectively, 
where
$$
A_j = 
\frac{1}{2}\left(   (I_b \otimes e^\top_j) H + H^\top (I_b \otimes e_j)
\right),  
$$
and
$$
 H =  \left[
    \begin{array}{c}
- \Big( ( \Omega \otimes \Omega) \otimes I_k \Big)  \Big[0_{k^3 \times
k^2} \;\;\;\;\; ( I_k \otimes \mathrm{vec}(I_k)) \Big]  +
 \Big( I_{k^2} \otimes (\alpha^\top
    \otimes I_k) \Big) G \Big[ (\Omega \otimes \Omega) \;\;\;\;\;
    0_{k^2 \times k}\Big]\\
 \;\\ 
 \Big[ - (\Omega \otimes \Omega) \;\;\;\;\; 0_{k^2 \times k} \Big]
    \end{array}
\right],
$$
and
$D_h$ is the modified duplication matrix defined by 
$D \psi_h = \psi$,
with
$\psi_h$ the vector consisting of the subset of $\psi$ not including
entries that correspond to the upper diagonal entries of $\Sigma$.
Assume that $k \geq u^2$.
Then, 
\begin{equation}\label{eq::B-and-lambda}
B= \sup_{P \in \mathcal{P}_n^{\mathrm{OLS}} } \max_j \|G_j(\psi(P)) \| \leq C \frac{ \sqrt{k} }{u^2},\ \ \ 
\overline{H}=\max_j \sup_{P \in \mathcal{P}_n^{\mathrm{OLS}}} \| H_j(\psi(P))\|_{\mathrm{op}}  \leq C
\frac{k}{u^3},
\end{equation}
and
\begin{equation}\label{eq:sigmamin}
    \underline{\sigma} = \inf_{P \in \mathcal{P}^{\mathrm{OLS}}_n} \min_j \sqrt{ G_j V G_j^\top}
    \geq \frac{ \sqrt{v } }{ U }, 
\end{equation}
where $C>0$ depends on $A$ only.
\end{lemma}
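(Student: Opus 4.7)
The plan is to compute the gradient and Hessian of $g_j$ explicitly by matrix calculus, then derive the three bounds in \eqref{eq::B-and-lambda} and \eqref{eq:sigmamin} as consequences of submultiplicativity of the operator norm applied to these formulas. Throughout I will write $\psi = (\mathrm{vech}(\Sigma),\alpha)$ and use the block structure $D_h = \mathrm{diag}(D,I_k)$, where $D$ is the standard duplication matrix mapping $\mathrm{vech}(\Sigma)$ to $\mathrm{vec}(\Sigma)$; in particular $\|D_h\|_{\mathrm{op}}$ is a universal constant.

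For the gradient, I would split according to the two blocks of $\psi$. Since $g_j(\psi) = e_j^\top \Sigma^{-1}\alpha$ is linear in $\alpha$, the $\alpha$-block of $\nabla_\psi g_j$ is $e_j^\top\Omega$. For the $\mathrm{vec}(\Sigma)$-block I would use the identity $d(\Sigma^{-1}) = -\Omega(d\Sigma)\Omega$ together with $\mathrm{vec}(AXB) = (B^\top\otimes A)\mathrm{vec}(X)$ to obtain $-e_j^\top(\alpha^\top\otimes I_k)(\Omega\otimes\Omega)$. Concatenating and right-multiplying by $D_h$ recovers \eqref{eq:Gj}. The upper bound on $B$ then follows from $\|\Omega\|_{\mathrm{op}}\le 1/u$, $\|\alpha\|\le A\sqrt{k}$, and $\|(\alpha^\top\otimes I_k)(\Omega\otimes\Omega)\|_{\mathrm{op}} \le \|\alpha\|\|\Omega\|_{\mathrm{op}}^2$, giving $\|G_j\|\le C(A\sqrt{k}/u^2 + 1/u)$, which under $k\ge u^2$ simplifies to $C\sqrt{k}/u^2$.

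For the Hessian, I would differentiate $\nabla_\psi g_j$ once more. The pure $\Sigma$-$\Sigma$ part comes from $d^2(\Sigma^{-1}\alpha) = 2\Omega(d\Sigma)\Omega(d\Sigma)\Omega\alpha$, while the mixed $\Sigma$-$\alpha$ block arises from differentiating $\Omega$ in the expression $e_j^\top\Omega$ for the $\alpha$-gradient; the $\alpha$-$\alpha$ block vanishes. Repeated application of $(A\otimes B)(C\otimes D) = (AC)\otimes(BD)$ and the commutation matrix $K_{k,k}$ to reorder mixed factors produces the matrix $H$ in \eqref{eq:Hj}; symmetrization into $A_j$ accounts for the fact that the scalar Hessian of $g_j$ must be symmetric, and conjugation by $D_h$ converts back to the $\mathrm{vech}$ parametrization. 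To bound $\|H_j\|_{\mathrm{op}}$, I would view each block as a bilinear form in the increments $M_1,M_2 = \mathrm{mat}(d\mathrm{vec}(\Sigma))$ and $d\alpha$ and apply submultiplicativity: the pure $\Sigma$ block is bounded by $2\|\Omega\|_{\mathrm{op}}^3\|\alpha\|\le 2A\sqrt{k}/u^3$, and the cross block by $\|\Omega\|_{\mathrm{op}}^2\le 1/u^2$. Absorbing $\sqrt{k}$ into $k$ (valid since $k\ge u^2$) yields $\|H_j\|_{\mathrm{op}}\le Ck/u^3$.

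For the lower bound on $\underline{\sigma}$, the key observation is that $G_j V G_j^\top \ge v\|G_j\|^2$, so it suffices to lower bound $\|G_j\|$. Because the $\alpha$-block of $D_h$ is the identity, the last $k$ coordinates of $G_j^\top$ are exactly $e_j^\top\Omega$, hence
\[
\|G_j\|^2 \ge \|e_j^\top\Omega\|^2 = e_j^\top\Omega^2 e_j \ge \lambda_{\min}(\Omega)^2 = \frac{1}{\lambda_{\max}(\Sigma)^2}\ge \frac{1}{U^2},
\]
so $\sqrt{G_j V G_j^\top}\ge \sqrt{v}/U$, uniformly in $j$ and $P\in \mathcal{P}_n^{\mathrm{OLS}}$. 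The main obstacle in the whole proof is the Kronecker bookkeeping needed to land on the explicit form \eqref{eq:Hj}; once that is in hand, the operator-norm bounds and the $\underline\sigma$ lower bound are routine.
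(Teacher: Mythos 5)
Your proposal is correct, and for two of the three bounds (the gradient bound on $B$ and the lower bound \eqref{eq:sigmamin} on $\underline{\sigma}$, via $G_jVG_j^\top\ge v\|G_j\|^2$ and the fact that the $\alpha$-block of $G_j$ is $\Omega e_j$, so $\|G_j\|\ge\lambda_{\min}(\Omega)\ge 1/U$) it coincides with the paper's argument. Where you genuinely diverge is the Hessian bound: the paper first derives the explicit Kronecker representation \eqref{eq:Hj} and then bounds $\sigma_1(H_j)$ factor by factor, paying Frobenius-norm prices such as $\sigma_1(\mathrm{vec}(I_k))=\sqrt{k}$ and $\sigma_1(J)\le 2\sqrt{k}\,\sigma_1(\Omega)$, which is how it arrives at $Ck/u^3$; you instead bound the Hessian directly as the bilinear form coming from the second differential, $Q(M,a)=e_j^\top\Omega M\Omega M\Omega\alpha-e_j^\top\Omega M\Omega a$, obtaining $C\bigl(\|\alpha\|/u^3+1/u^2\bigr)\le C\sqrt{k}/u^3$ under $k\ge u^2$, which is cleaner, avoids the Kronecker bookkeeping entirely for the estimate, and is in fact sharper than the stated $Ck/u^3$ (which it of course implies). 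Two small remarks: the explicit formula \eqref{eq:Hj} is itself part of the lemma's statement, and your route only sketches the reordering with $K_{k,k}$ needed to land exactly on that display (for your bounds you never need it, but to claim the lemma in full you would either carry out that bookkeeping as the paper does or note that the bounds are what is used downstream); and your $\|\alpha\|\le A\sqrt{k}$ should read $\|\alpha\|\le A^2\sqrt{k}$, which is immaterial since the constants are allowed to depend on $A$.
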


{\bf Remark.}
The assumption that $k \geq u^2$
is not actually needed but this is the most common case and it simplifies the expressions a bit.

{\bf Proof of \Cref{cor:accuracy.beta}.}
The  maximal length is of the sides of $\tilde{C}_n$ is
\[
2 \max_{j \in \wS}   z_{\alpha/(2k)}
    \sqrt{\frac{ \hat\Gamma_{\wS}(j,j)}{n}} \leq 2 \max_{j \in \wS} z_{\alpha/(2k)}
    \sqrt{\frac{ \Gamma_{\wS}(j,j) + \left| \hat\Gamma(j,j)-\Gamma(j,j)\right|}{n}}. 
  \]
By \Cref{lemma::upsilon} and Equation \eqref{eq:new.aleph}, the event that 
\[
\max_{  j,l  \in \wS } \left| \hat\Gamma(j,l)-\Gamma(j,l)\right| \leq C \frac{k^{3/2}}{u_n^3 u^2} \overline{v} \sqrt{ \frac{k^2 \log n}{n}}
\]
holds with probability at least $1 - \frac{2}{n}$ and for each $P \in \mathcal{P}_n^{\mathrm{OLS}}$, where $C > 0$ depends on $A$ only.
Next, letting $G = G(\psi_{\wS})$ and $V = V_{\wS}$, we have that, for each $j \in \wS$ and $P \in \mathcal{P}_n^{\mathrm{OLS}}$,
\[
\Gamma_{\wS}(j,j) = G_j V G_j^\top \leq \|G_j\|^2 \lambda_{\max}(V) \leq B^2 \overline{v} \leq C \frac{k }{u^4} \overline{v}
\]
where $G_j$ denotes the $j^{\mathrm{th}}$ row of $G$ and, as usual, $C>0$ depends on $A$ only. The second inequality in the last display follows from property 3. in \Cref{def:Pdagger} and by the definition of $B$ in \eqref{eq:H.and.B}, while the third inequality uses the first bound in Equation \eqref{eq::B-and-lambda}.
    The result follows from combining the previous bounds and the fact that $z_{\alpha/(2k)} = O \left( \sqrt{ \log k} \right)$. $\Box$

{\bf Proof of \Cref{theorem::beta.boot}.}
We  condition on $\mathcal{D}_{1,n}$ and the outcome of the sample
splitting. 
The claimed results follows almost directly from 
\Cref{theorem::boot}, with few additional technicalities.
The first difficulty is that the least squares estimator is not always
well-defined under the bootstrap measure, which is the probability distribution
of $n$ uniform draws with replacement from $\mathcal{D}_{2,n}$. In fact, any
draw consisting of less than $d$ distinct elements of
$\mathcal{D}_{2,n}$ will be such that the corresponding empirical covariance
matrix will be rank deficient and therefore not invertible. On the other hand, because the
distribution of $\mathcal{D}_{2,n}$ has a Lebesgue density by assumption, any
set of $d$ or more points from $\mathcal{D}_{2,n}$ will be in general position and
therefore will yield a unique set of least squares coefficients.
To deal with such
complication we will simply apply \Cref{theorem::boot} on the event that the
bootstrap sample contains  $d$ or more distinct elements of
$\mathcal{D}_{2,n}$, whose complementary event, given the assumed scaling of $d$ and $n$, has
probability is exponentially small in $n$, as shown
next.

\begin{lemma}\label{eq:lem.occupancy}
For $d \leq n/2$, the probability that sampling with replacement $n$ out of $n$ distinct objects
will result in a set with less than $d$ distinct elements is no larger than
\begin{equation}\label{eq:occupancy}
 \exp \left\{ - \frac{n (1/2 - e^{-1})^2}{2}  \right\}.
\end{equation}
\end{lemma}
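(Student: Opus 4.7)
The plan is to view the number of distinct elements in the bootstrap sample as a function of $n$ i.i.d. uniform draws from $\{1,\ldots,n\}$, and apply the bounded differences (McDiarmid) inequality. Denote by $D_n$ the number of distinct objects selected by sampling $n$ times with replacement, and write $D_n = f(U_1,\ldots,U_n)$, where $U_1,\ldots,U_n$ are i.i.d. uniform on $\{1,\ldots,n\}$. The key observation is that if we change the value of a single $U_i$, the number of distinct values can change by at most $1$: either the coordinate that used to be present is now absent (decrease by $1$), or a new value is introduced and the old one stays (no change), or the new value coincides with something already present while the old value is no longer present (decrease by $1$), etc. Hence $f$ satisfies the bounded differences condition with all constants equal to $1$.

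Next I would lower-bound $\mathbb{E}[D_n]$. Writing $D_n = \sum_{j=1}^n \mathbb{1}\{j \in \{U_1,\ldots,U_n\}\}$ and using linearity yields
\[
\mathbb{E}[D_n] \;=\; n\left( 1 - \left(1 - \tfrac{1}{n}\right)^n \right) \;\geq\; n(1 - e^{-1}),
\]
since $(1-1/n)^n \leq e^{-1}$ for all $n\geq 1$. Because $1 - e^{-1} > 1/2$, the assumption $d \leq n/2$ gives
\[
\mathbb{E}[D_n] - d \;\geq\; n(1 - e^{-1}) - n/2 \;=\; n\!\left(\tfrac{1}{2} - e^{-1}\right) \;>\; 0,
\]
so the event $\{D_n < d\}$ is a genuine lower-tail deviation from the mean.

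Finally, by McDiarmid's inequality applied to $f$ with unit bounded differences,
\[
\mathbb{P}\bigl(D_n \leq \mathbb{E}[D_n] - t\bigr) \;\leq\; \exp\!\left(-\frac{2 t^2}{n}\right), \quad t > 0.
\]
Taking $t = n(1/2 - e^{-1})$ and combining with the previous display gives
\[
\mathbb{P}(D_n < d) \;\leq\; \exp\!\left(-2 n \left(\tfrac{1}{2} - e^{-1}\right)^2\right),
\]
which is stronger than (and therefore implies) the claimed bound $\exp\bigl\{- n(1/2 - e^{-1})^2/2\bigr\}$.

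There is no real obstacle in this proof: the only mildly subtle point is verifying that $D_n$ genuinely has unit bounded differences when viewed as a function of the draws (rather than of the occupancy indicators, which are not independent), which is handled by a short case analysis on what happens when a single draw is altered. The factor $1/2$ rather than $2$ in the statement of the lemma is presumably a deliberate loosening to avoid stating the sharpest constant.
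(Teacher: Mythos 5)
Your proof is correct and is essentially the paper's own argument: the paper applies the bounded-differences inequality to $Z = n - D_n$, the number of objects \emph{not} selected, whose mean $n(1-1/n)^n \le n/e$ it computes exactly, and then uses $d \le n/2$ just as you do. The only difference is cosmetic---you invoke McDiarmid with the sharp constant $\exp(-2t^2/n)$ and hence get the stronger exponent $2n(1/2-e^{-1})^2$, whereas the paper uses the looser form $\mathbb{P}(|Z-\mathbb{E}[Z]|\ge t)\le 2e^{-t^2/(2n)}$, which is exactly where the factor $1/2$ in the stated bound comes from.
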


{\bf Remark.} The condition that $d \leq n/2$ can be replaced by the condition
that $d \leq c n$, for any $ c \in (0, 1 - e^{-1})$.

Thus, we will assume that the event that the bootstrap sample contains
$d$ or more distinct elements of $\mathcal{D}_{2,n}$. This will result in an extra term 
 that is of smaller order than any of the other terms and therefore can be
discarded by choosing a larger value of the leading constant.

At this point, the proof of the theorem is nearly identical to the proof of 
\Cref{theorem::boot} except for the way the term $A_3$ is handled. The
assumptions that $n$ be large enough so that $v_n$ and $u_n$ are
both positive implies, by \Cref{lem:operator} and Weyl's theorem, that, for each $P \in \mathcal{P}_{n}^{\mathrm{OLS}}$ and with probability at least $
1- \frac{2}{n}$  with respect to the distribution of $\mathcal{D}_{2,n}$, the bootstrap distribution
belongs to the class $\mathcal{P}_n^*$ of probability distributions for the pair $(X,Y)$ that
satisfy the properties of the probability distributions in the class
$\mathcal{P}_n^{\mathrm{OLS}}$ with two differences: (1) the quantities $U$, $u$, $v$ and
$\overline{v}$ are replaced by $U_n$, $u_n$, $v_n$ and $\overline{v}_n$, respectively, and (2) the distributions in $\mathcal{P}^*_n$ need not have
 a Lebesgue density. Nonetheless, since the Lebesgue density assumption
is only used to guarantee that empirical covariance matrix is invertible, a
fact that is also true for the bootstrap distribution under the event that the
bootstrap sample consists of $d$ or more distinct elements of $\mathcal{D}_{2,n}$, the
bound on the term $A_3$ established in  
\Cref{theorem::boot}
holds for the larger class $\mathcal{P}_n^*$ as well. 

Next, \Cref{lemma::horrible} can be used to bound the quantities 
$\overline{\sigma}$ and $B$ for the class $\mathcal{P}_n^*$. 
As for the bound on $\overline{H}$, we proceed as in the proof of \Cref{thm::big-theorem} and conclude that, for each non-empty subset $S$ of $\{1,\ldots,d\}$ and $P \in \mathcal{P}_n^*$,
\[
 \max_{j \in S} \int_{0}^1 \left\| H_j\left((1-t) \psi_S(P) + t \hat{\psi}_S(P) \right) \right\|_{\mathrm{op}} dt  \leq \frac{C}{k}{u_n^3}
\]
on an event of probability at least $1 - \frac{1}{n}$, where $C$ is the constant appearing in the second bound in \eqref{eq::B-and-lambda}.
Thus, we may take $\frac{C}{k}{u_n^3}$ in lieu of $\overline{H}$ and then apply \Cref{theorem::boot} (noting that the high probability bound in the  last display holds for each  $P \in \mathcal{P}_n^*$ separately).

\noindent {\bf Proof of \Cref{thm::CLT2}.}    As remarked at the beginning of this appendix, throughout the proof all probabilistic statements will be made conditionally on the
    outcome of the splitting and on 
    $\mathcal{D}_{1,n}$. Thus, in particular, $\wS$ is to be regarded as a fixed subset of
    $\{1,\ldots,d\}$ of size $k$.

    Let $Z_n\sim N(0,\hat \Sigma_{\wS})$, with $ \hat{\Sigma}_{\wS}$  given in \ref{eq:Sigma.loco}. 
    Notice that $\hat{\Sigma}_{\wS}$ is almost surely positive definite,
     a consequence of adding extra noise in the definition of
    $\gamma_{\wS}$ and $\hat{\gamma}_{\wS}$.
    Then, using Theorem 2.1 in
    \cite{cherno2},  there
    exists a universal  constant $C > 0$ such that   
\begin{equation}\label{eq::secondx}
    \sup_{ t = (t_j, j \in \wS) \in \mathbb{R}^{\wS}_{+}} \Bigl| \mathbb{P}(
    \sqrt{n}|\hat\gamma_{\wS}(j)  - \gamma_{\wS}(j) | \leq t_j, \forall j \in \wS) - 
    \mathbb{P}(|Z_n(j)|  \leq t_j, \forall j \in \wS )\Bigr| \leq C
    \mathrm{E}_{1,n},
\end{equation}
where $\mathrm{E}_{1,n}$ is given in \eqref{eq:E1n}.
By restricting the supremum in the above display to all $t \in \mathbb{R}^{\wS}_+$
with identical coordinates, we also obtain that
\begin{equation}\label{eq::firstx}
    \sup_{t > 0} \Bigl| \mathbb{P}(\sqrt{n}||\hat\gamma_{\wS} - \gamma_{\wS}||_\infty \leq t) - 
    \mathbb{P}\left(||Z_n||_\infty \leq t \right)\Bigr| \leq C \mathrm{E}_{1,n}. 
\end{equation}
In order to show \eqref{eq:loco.coverage1} and \eqref{eq:loco.coverage2}, we will use the
same arguments used in the proofs of \Cref{thm::coverage} and  \Cref{thm::bonf}.
We  first define $\mathcal{E}_n$ to be the event that
\begin{equation}\label{eq:loco.aleph}
\max_{i,j} \left| \widehat{\Sigma}_{\wS}(i,j) - \Sigma_{\wS}(i,j) \right| \leq
N_n, 
	\end{equation}
	where $N_n$ is as in \eqref{eq:Nn}.
	Each entry
of $\widehat{\Sigma}_{\wS} - \Sigma_{\wS}$  is   bounded in absolute value by
$\left( 2(A+\tau) + \epsilon \right)^2$, and therefore is a sub-Gaussian with
parameter $\left( 2(A+\tau) + \epsilon \right)^4$. Using 
a standard derivation for bounding the maximum of sub-Gaussian random variables we obtain
that
$\mathbb{P}(\mathcal{E}_n^c) \leq \frac{1}{n} $. 
The bound \eqref{eq:loco.coverage1} follows from the same arguments as in the proof
\Cref{thm::coverage}: combine the Gaussian comparison Theorem \ref{thm:comparisons} with
\eqref{eq::firstx} and notice that $\epsilon/\sqrt{3}$ is a lower bound on the
standard deviation
of the individual coordinates of the $\delta_i$'s. In particular, the Gaussian
comparison theorem yields the additional error term $C \mathrm{E}_{2,n} + 
\frac{1}{n}$ given in \eqref{eq:E2n}, for some universal positive constant $C$. Similarly, 
\eqref{eq:loco.coverage2} can be established along the lines of the proof of
\Cref{thm::bonf}, starting from the bound \eqref{eq::secondx}. In this case we pick up an additional error term
$C \tilde{\mathrm{E}}_{2,n} + \frac{1}{n}$ of different
form, shown in \eqref{eq:tildeE2.n}, where $C>0$ is a different universal
constant. 

Since all the bounds we have derived do not depend on
$\mathcal{D}_{1,n}$, the outcome of the splitting and $w_n$, the same bounds therefore hold for the joint
probabilities, and uniformly over the model selection and estimation procedures. 
The above arguments hold for each $P \in  \mathcal{P}_n^{\mathrm{LOCO}}$.
$\Box$

\noindent {\bf Proof of \Cref{cor:accuracy.LOCO}.}
Following the proof of \Cref{cor:accuracy.beta}, for each $P \in  \mathcal{P}_n^{\mathrm{LOCO}}$ and on the event $\mathcal{E}_n$ given in \eqref{eq:loco.aleph} (which has probability at leas $1- \frac{1}{n}$), we have that 
\begin{align*}
2 \max_{j \in \wS}   z_{\alpha/(2k)}
    \sqrt{\frac{ \hat\Sigma_{\wS}(j,j)}{n}} & \leq 2 \max_{j \in \wS} z_{\alpha/(2k)}
    \sqrt{\frac{ \Sigma_{\wS}(j,j) + \left| \hat\Sigma(j,j)-\Sigma(j,j)\right|}{n}}\\
    & \leq z_{\alpha/(2k)}
    \sqrt{ \frac{ (2(A + \tau) + \epsilon)^2 +  N_n }{n}}.
    \end{align*}
    The claimed bound follows from the definition of $N_n$ as in \eqref{eq:Nn}.
$\Box$

	\noindent {\bf Proof of \Cref{thm:boot.loco}.} All the probabilistic
	statements that follow are to be understood 
	conditionally on the outcome of the sample splitting and on
	$\mathcal{D}_{1,n}$. Thus, $\mathcal{I}_{1,n}$, $\wS$, $\hat{\beta}_{\wS}$ and, for each $j
	\in \wS$, $\hat{\beta}_{\wS(j)}$ are to be regarded as fixed, and the
	only randomness is with respect to the joint marginal distribution of
	$\mathcal{D}_{2,n}$ and $(\xi_i, i \in \mathcal{I}_{2,n})$, and two
	auxiliary independent standard Gaussian vectors in $\mathbb{R}^{\wS}$,
	$Z_1$ and $Z_2$, independent of everything else. 

     Let $\hat{\gamma}^*_{\wS} \in \mathcal{R}^{\wS}$ denotes the vector 
of LOCO parameters arising from the bootstrap distribution corresponding to the
empirical measure associated to the $n$  triplets $\left\{  (X_i,Y_i,\xi_i), i \in
    \mathcal{I}_{2,n} \right\}$. 
    
Next, 
\[
    \mathbb{P}\left( \sqrt{n} \| \gamma_{\wS} - \hat{\gamma}_{\wS}  \|_\infty
    \leq \hat{t}^*_{\alpha} \right) \geq  
    \mathbb{P}\Big( \sqrt{n} \| \hat{\gamma}^*_{\wS} - \hat{\gamma}_{\wS}   \|_\infty \leq \hat{t}^*_
	\alpha  \left | (X_i,Y_i,\xi_i), i \in
    \mathcal{I}_{2,n} \right. \Big) 
	-( A_1 + A_2 + A_3),
\]
where
\begin{align*}
    A_1 & = \sup_{t >0} \left| \mathbb{P}\left( \sqrt{n} \| \hat{\gamma}_{\wS} - \gamma_{\wS}   \|_\infty
    \leq t \right) - \mathbb{P}( \| Z \|_\infty \leq t ) \right|,\\
    A_2 & = \sup_{t>0} \left|  \mathbb{P}( \| Z \|_\infty \leq t ) - \mathbb{P}( \|
    \hat{Z} \|_\infty \leq t )      \right|,\\
    \text{and} & \\
    A_3 & = \sup_{t >0} \Big|
   \mathbb{P}\Big( \sqrt{n} \| \hat{\gamma}^*_{\wS} - \hat{\gamma}_{\wS}
   \|_\infty \leq t   \left | (X_i,Y_i,\xi_i), i \in
	\mathcal{I}_{2,n} \right. \Big) -  \mathbb{P}( \| \hat{Z} \|_\infty \leq
	t ) \Big|,
\end{align*}
with $Z = \Sigma_{\wS}^{1/2} Z_1$ and $\hat{Z} = \widehat{\Sigma}_{\wS} Z_2$.

Then, $A_1 \leq C \mathrm{E}_{1,n}$ by \eqref{eq::firstx} and $A_2 \leq
C \mathbb{E}_{2,n} + \frac{1}{n} $, by
 applying the Gaussian comparison Theorem \ref{thm:comparisons}
 on the event $\mathcal{E}_n$ that \eqref{eq:loco.aleph} holds, whereby
 $\mathbb{P}(\mathcal{E}_n^c) \leq \frac{1}{n}$ as argued in the proof of \Cref{thm::CLT2}.
Finally the bound on $A_3$ follows from applying Theorem 2.1 in
    \cite{cherno2} to the bootstrap measure, conditionally on $(X_i,Y_i,\xi_i), i \in
	\mathcal{I}_{2,n}$, just like it was done in the proof of \Cref{thm::CLT2}.
	In this case, we need to restrict to the even $\mathcal{E}_n$
	to ensure that the minimal variance for the bootstrap measure is bounded
	away from zero. To that end, it will be enough to take $n$ large enough
	so that
	$\epsilon_n $ is positive and to 
	replace $\epsilon$ with $\epsilon_n$. The price for this extra step is a
	factor of $\frac{1}{n}$, which upper bounds
	$\mathbb{P}(\mathcal{E}_n^c)$. Putting all the pieced together we arrive at the
	bound
	\[
	    A_3 \leq C \mathrm{E}^*_{1,n} + \frac{1}{n}.
	\]
	Finally notice that $\mathrm{E}_{1,n} \leq \mathrm{E}^*_{1,n}$ since
	$\epsilon_n \leq \epsilon$.

	The very same arguments apply to the other bootstrap confidence set
	$\tilde{C}^*_\alpha$, producing the very same bound. We omit the proof
	for brevity but refer the reader to the proof of
	\Cref{theorem::boot} for details.

	All the bounds obtained so far are conditionally on the outcome of the
	sample splitting and on $\mathcal{D}_{1,n}$ but are not functions of
	those random variables. Thus, the same bounds hold also unconditionally, for each $P \in  \mathcal{P}_n^{\mathrm{LOCO}}$. 

$\Box$

Let $F_{n,j}$ denote the empirical cumulative distribution function of $\{
    \delta_i(j), i \in \mathcal{I}_{2,n}\}$ and $F_j$ the true cumulative
    distribution function of $\delta_i(j)$.
    Thus, setting $\beta_l = l/n$ and $\beta_u = u/n$, we see that  
    $\delta_{(l)}(j) = F_{n,j}^{-1}(\beta_l)$ and $\delta_{(u)}(j) =
    F_{n,j}^{-1}(\beta_u)$
and, furthermore, that $F_{n,j}(F_{n,j}^{-1}(\beta_l)) = \beta_l$ and
$F_{n,j}F(_{n,j}^{-1}(\beta_u)) = \beta_u$.
In particular notice that $\beta_l$ is smaller than  $ \frac{1}{2} -
\sqrt{\frac{1}{2n}\log\left(\frac{2k}{\alpha}\right)}$ by at most $1/n$ and,
similarly, $\beta_u$ is larger than $ \frac{1}{2} +
\sqrt{\frac{1}{2n}\log\left(\frac{2k}{\alpha}\right)}$ by at most $1/n$.

By assumption, the median $\mu_j = F_j^{-1}(1/2)$ of
$\delta_i(j)$ is unique and the derivative of $F_j$ is larger than $M$ at all points 
within a distance of $\eta$ from $\mu_j$. Thus, by the mean value theorem, we must have that, for all $x \in \mathbb{R}$ such
that $| x -\mu_j | < \eta$,
\[
    M |x - \mu_j| \leq  | F_j(x)  -F_j(\mu_j)|.
\]
As a result, if
\begin{equation}\label{eq:M.inverse}
| F_j(x) - F_j(\mu_j) | \leq M \eta,
\end{equation}
it is the case that $|x- \mu_j| \leq \eta$, and therefore, that $| x- \mu_j|
\leq \frac{F_j(x) - F_j(\mu_j)}{M}$.
By the DKW inequality and the union bound, with probability at least $1-1/n$, 
\begin{equation}\label{eq:dkw.median}
    \max_{j \in \wS} \|F_{n,j} - F_j  \|_\infty \leq \sqrt{\frac{ \log 2kn}{2n} }.
\end{equation}
Thus, for any $j \in \wS$,
\[
    \left|  F_{n,j}( \delta_{(u)}(j)) - F_j( \delta_{(u)}(j))\right| \leq
    \sqrt{\frac{ \log 2kn}{2n} }.
    \]
Since 
\[
    F_{n,j}( \delta_{(u)}(j))   = \beta_u \leq 1/2 + \frac{1}{n}  + 
    \sqrt{\frac{1}{n}\log\left(\frac{2k}{\alpha}\right)} = F_j(\mu_j) +  \frac{1}{n}  +
    \sqrt{\frac{1}{n}\log\left(\frac{2k}{\alpha}\right)},
\]
using \eqref{eq:M.inverse}, we conclude that, on the event \eqref{eq:dkw.median}
and provided that $  \frac{1}{n}  +
    \sqrt{\frac{1}{2n}\log\left(\frac{2k}{\alpha}\right)} + \sqrt{
	\frac{ \log 2kn}{2n} }
 \leq \eta M$,
 \[
     | \mu_j - \delta_{(u)}(j) | \leq \frac{1}{M} \left(  \frac{1}{n}  +
    \sqrt{\frac{1}{2n}\log\left(\frac{2k}{\alpha}\right)} + \sqrt{
	\frac{ \log 2kn}{2n} } \right).
    \]
Similarly, under the same conditions, 
\[
     | \mu_j - \delta_{(l)}(j) | \leq \frac{1}{M} \left(   \frac{1}{n}  +
    \sqrt{\frac{1}{2n}\log\left(\frac{2k}{\alpha}\right)} + \sqrt{
	\frac{ \log 2kn}{2n} } \right).
    \]
The claim now follows by combining the last two displays. Notice that
the result holds uniformly
over all $j \in \wS$ and all distributions satisfying the conditions of the
theorem. $\Box$

\section{Appendix 3: Proof of the results in \Cref{section::splitornot}}

{\bf Proof of Lemma \ref{lemma::est-accuracy}.}
The upper bounds are obvious.
The lower bound (\ref{eq::lower1}) is from Section 4 in
\cite{sackrowitz1986evaluating}.
We now show (\ref{eq::lower2}).
Let $\hat\beta =g(Y)$ be any estimator
where $Y=(Y_1,\ldots, Y_n)$.
Given any $Y$ and any $w(Y)$,
$\hat\beta$ provides an estimate of $\beta(J)$
where $J= w(Y)$.
Let $w_j$ be such that
$w_j(X)=j$.
Then define
$\hat\beta = ( g(Y,w_1(Y)),\ldots, g(Y,w_D(Y)))$.
Let
$w_0(Y) = \argmax_j |\beta(j)-\hat\beta(j)|$.
Then
$\mathbb{E}[|\hat \beta(J) - \beta(J)|]=  \mathbb{E}[||\hat \beta - \beta||_\infty]$.
Let $P_0$ be multivariate Normal with mean
$(0,\ldots, 0)$ and
identity covariance.
For $j=1,\ldots, D$ let
$P_j$ 
be multivariate Normal with mean
$\mu_j=(0,\ldots,0,a,0, 0)$ and
identity covariance
where $a = \sqrt{ \log D/(16n)}$.
Then
\begin{align*}
\inf_{\hat\beta}\sup_{w\in {\cal W}_n}\sup_{P\in {\cal P}_n}\mathbb{E}[|\hat \beta(J) - \beta(J)|] &\geq
\inf_{\hat\beta}\sup_{P\in M}\mathbb{E}[|\hat \beta(J) - \beta(J)|]\\
&=
\inf_{\hat\beta}\sup_{P\in M}\mathbb{E}[||\hat \beta - \beta||_\infty]
\end{align*}
where $J= w_0(Y)$ and
$M = \{P_0,P_1,\ldots,P_D\}$.
It is easy to see that
$$
{\rm KL}(P_0,P_j) \leq \frac{\log D}{16 n}
$$
where KL denotes the Kullback-Leibler distance.
Also,
$||\mu_j - \mu_k||_\infty \geq a/2$
for each pair.
By Theorem 2.5 of 
\cite{tsybakov2009introduction},
$$
\inf_{\hat\beta}\sup_{P\in M}\mathbb{E}[||\hat \mu - \mu ||_\infty] \geq \frac{a}{2}
$$
which completes the proof. $\Box$

{\bf Proof of Lemma \ref{lemma::contiguity}.}
We use a contiguity argument like that in 
\cite{leeb2008can}.
Let $Z_1,\ldots, Z_D \sim N(0,1)$.
Note that $\hat\beta(j) \stackrel{d}{=} \beta(j)+ Z_j/\sqrt{n}$.
Then
\begin{align*}
\psi_n(\beta) &= \mathbb{P}(\sqrt{n}(\hat\beta(S) - \beta(S))\leq t) =
\sum_j \mathbb{P}(\sqrt{n}(\hat\beta(j) - \beta(j))\leq t,\ \hat\beta(j) > \max_{s\neq j}\hat\beta_s)\\
&=
\sum_j \mathbb{P}(\max_{s\neq j}Z_s + \sqrt{n}(\beta(s)-\beta(j)) < Z_j < t) = \sum_j \Phi(A_j)
\end{align*}
where
$\Phi$ is the $d$-dimensional standard Gaussian measure and
$$
A_j = \Bigl\{ \max_{s\neq j}Z_s + \sqrt{n}(\beta(s)-\beta(j) < Z_j < t \Bigr\}.
$$
Consider the case where
$\beta = (0,\ldots, 0)$.
Then
$$
\psi_n(0)= D\, \Phi(\max_{s\neq 1}Z_s   < Z_1 < t)  \equiv b(0).
$$
Next consider
$\beta_n = (a/\sqrt{n},0,0,\ldots, 0)$
where $a>0$ is any fixed constant.
Then
\begin{align*}
\psi(\beta_n) &= \Phi( (\max_{s\neq 1}Z_s )-a < Z_1 < t)\\
&\ \ \ \ \  +
\sum_{j=2}^D \Phi(\max\{Z_1+a,Z_2,\ldots, Z_{j-1},Z_{j+1},\ldots, Z_D\} < Z_j < t)\\
&\equiv b(a).
\end{align*}
Suppose that $\hat\psi_n$ is a consistent estimator of $\psi_n$.
Then, under $P_0$,
$\hat\psi_n \stackrel{P}{\to} b(0)$.
Let $P_n = N(\beta_n,I)$ and $P_0 = N(0,I)$.
It is easy to see that
$P_0^n(A_n)\to 0$ implies that
$P_n^n(A_n)\to 0$ so that
$P_n$ and $P_0$ are contiguous.
So, by Le Cam's first lemma \citep[see, e.g.][]{green.book},
under $P_n$, we also have that
$\hat\psi_n \stackrel{P}{\to} b(0)$.
But $b(0)\neq b(a)$, which contradicts the assumed consistency of $\hat\psi_n$. $\Box$

{\bf Proof of Lemma \ref{lemma::many-means-bound}.}
Let $P_0 = N(\mu_0, \frac{1}{n}I_D)$, where $\mu_0 = 0$, and for $j=1,\ldots,D$ let $P_j = N(\mu_j,
\frac{1}{n}I_D)$, where $\mu_j$ is the $D$-dimensional vector with $0$
entries except along the $j^{\mathrm{th}}$ coordinate, which takes the value $\sqrt{c
\frac{\log D}{n}}$, where $0 < c < 1$. Consider the mixture
$\overline{P} = \frac{1}{D}
\sum_{j=1}^D P_j$. Then, letting $\theta_j$ and $\theta_0$ be the
largest coordinates of $\mu_j$ and $\mu_0$ respectively, we have that
$\| \theta_j - \theta_0 \|^2 = \frac{c \log D}{n}$ for all $j$. 
Next, some algebra yields that	
the $\chi^2$ distance
between $P_0$ and the mixture $\overline{P} = \frac{1}{D}
\sum_{j=1}^D P_j$ is
$\frac{1}{D} e^{ c \log D} - \frac{1}{D}$, which vanishes as $D$ tends to
$\infty$. Since this is also an upper bound on the squared  total variation distance
between $P_0$ and $\overline{P}$, the result follows from an application
of Le Cam Lemma \citep[see, e.g.][]{tsybakov2009introduction}.  $\Box$

\section{Appendix 4: Proof of the results in \Cref{section::berry}}

{\bf Proof of Theorem \ref{theorem::deltamethod}.}
For ease of readability, we will write $G_j$ and $G$ instead of $G_j(\psi)$
and $G(\psi)$, respectively. Throughout the proof,  $C$ will indicate
a positive number whose value may change from line to line and which depends on
$A$ only, but on none of the remaining variables.

For each $j \in \{1,\ldots,s\}$, we use  a second order Taylor
expansion of $\widehat{\theta}_j$ to obtain that 
$$
\hat\theta_j = \theta_j + G_j^\top(\hat\psi - \psi) +
\frac{1}{2n}\delta^\top \Lambda_j \delta, \quad \forall j \in \{1, \ldots s\}
$$
where
$\delta = \sqrt{n}(\hat\psi - \psi)$ and
$\Lambda_j = \int_0^1 H_j( (1-t)\psi + t \hat\psi) dt \in \mathbb{R}^{b \times b}$.
Hence,
\begin{equation}\label{eq::taylor}
\sqrt{n}(\hat\theta - \theta) = \sqrt{n}(\hat\nu - \nu) + R
\end{equation}
where
$\nu = G\psi$,
$\hat\nu = G \hat\psi$ and $R$ is a random vector in $\mathbb{R}^s$ whose
$j^{\mathrm{th}}$ coordinate is
$$
R_j = \frac{1}{2\sqrt{n}} \delta^\top
\left[ \int_0^1 H_j( (1-t)\psi + t \hat\psi) dt \right]
\delta.
$$
By Lemma \ref{lem:hyper} below, there exists a constant $C>0$, depending on $A$
only, such that 
\begin{equation}\label{eq::CLT}
\sup_{P\in {\cal P}_n}
\sup_t
\Bigl|\mathbb{P}(\sqrt{n}||\hat\nu - \nu||_\infty \leq t) -
\mathbb{P}(||Z_n||_\infty \leq t)\Bigr| \leq C
\frac{1}{\sqrt{v}} \left( \frac{ \overline{v}^2 b (\log 2bn)^7}{n}
\right)^{1/6},
\end{equation}
where $Z_n \sim N_s(0,\Gamma)$.

Now we bound the effect of remainder $R$ 
in (\ref{eq::taylor}).
First, by assumption (see Equation \ref{eq:H.and.B}), we have that, almost everywhere,
\begin{equation}\label{eq:bound.H}
\sup_{u \in [0,1]}    \|  H_j( (1-u)\psi + u \hat\psi) \|_{\mathrm{op}} \leq
\overline{H},
\end{equation}
from which  it is follows that 
$$
\| R \|_\infty \leq \frac{\overline{H} ||\delta||^2}{2\sqrt{n}},
$$
with the  inequality holding uniformly in $\mathcal{P}_n$.
Next, consider the event
$\mathcal{E}_n = \Bigl\{ \frac{\overline{H} ||\delta||^2}{2\sqrt{n}} < \epsilon_n\Bigr\}$
where
\begin{equation}\label{eq:epsilon}
    \epsilon_n = C \sqrt{\frac{b \overline{v} \overline{H}^2 (\log n)^2}{n}},
\end{equation}
for
 a sufficiently large, positive constant $C$ to be specified later.
 Thus, since $\delta = \sqrt{n} (\hat{\psi}- \psi)$, we have that 
 \begin{align}
\nonumber
\mathbb{P}(\mathcal{E}_n^c) &= \mathbb{P}\left( \frac{\overline{H}
||\delta||^2}{2\sqrt{n}} > \epsilon_n\right)\\
\nonumber
    & =  \mathbb{P}\left( ||\hat{\psi} - \psi || >
    \sqrt{ \frac{2 \epsilon_n}{ \sqrt{n} \overline{H}}}\right)\\
    \nonumber
    & =  \mathbb{P}\left( ||\hat{\psi} - \psi || >
    C \sqrt{ \overline{v} b \frac{\log n}{n} } \right)\\
    \label{eq:Ac}
    & \leq \frac{1}{n},
 \end{align}
 where in the third identity we have used the definition of $\epsilon_n$ in
 \eqref{eq:epsilon} and the final inequality  inequality follows from the vector
 Bernstein inequality \eqref{eq:vector.bernstein} and by taking the constant $C$
 in \eqref{eq:epsilon} appropriately large. 
In fact, the bound on the probability of the event $\mathcal{E}_n^c$ 
holds uniformly over all $P \in \mathcal{P}_n$.

Next, for any $t > 0$ and uniformly in $P \in \mathcal{P}_n$,
\begin{align}
    \mathbb{P}( \sqrt{n}||\hat\theta - \theta||_\infty \leq t) &=
    \mathbb{P}( \sqrt{n}||\hat\theta - \theta||_\infty \leq t,\ \mathcal{E}_n) + 
    \mathbb{P}( \sqrt{n}||\hat\theta - \theta||_\infty \leq t,\ \mathcal{E}_n^c)
\nonumber \\
& \leq
\mathbb{P}( \sqrt{n}||\hat\nu - \nu||_\infty \leq t+\epsilon_n) + 
\mathbb{P}(\mathcal{E}_n^c) \nonumber \\
& =
\mathbb{P}( ||Z_n||_\infty \leq t+\epsilon_n) + 
C \frac{1}{\sqrt{v}} \left( \frac{\overline{v}^2 b (\log 2bn)^7}{n}
\right)^{1/6} + 
\mathbb{P}(\mathcal{E}_n^c)
\label{eq:sorryrogeryouaretigernow}
\end{align}
where the inequality follows from \eqref{eq::taylor} and the fact that $\|
R \|_\infty \leq \epsilon_n $ on the event $\mathcal{E}_n$ and the second identity
from the Berry-Esseen bound (\ref{eq::CLT}). 
By the Gaussian anti-concentration inequality of \Cref{thm:anti.concentration},
\[
    \mathbb{P}( ||Z_n||_\infty \leq t + \epsilon_n )\leq \mathbb{P}( ||Z_n||_\infty
\leq t) + \frac{\epsilon_n}{\underline{\sigma}} (\sqrt{2 \log b}  +2). 
\]
Using the previous inequality on the first term of
\eqref{eq:sorryrogeryouaretigernow}, we obtain that
\begin{align*}
\mathbb{P}( \sqrt{n}||\hat\theta - \theta||_\infty \leq t)& \leq
\mathbb{P}( ||Z_n||_\infty \leq t) + C \left[ \frac{\epsilon_n}{\underline{\sigma}} (\sqrt{2 \log b}  +2) +
    \frac{1}{\sqrt{v}} \left( \frac{\overline{v}^2 b (\log 2bn)^7}{n}
\right)^{1/6} 
\right] +
\mathbb{P}(\mathcal{E}_n^c)\\
& \leq
\mathbb{P}( ||Z_n||_\infty \leq t) + C \left [\frac{\epsilon_n}{\underline{\sigma}} (\sqrt{2 \log b}  +2)  +
    \frac{1}{\sqrt{v}} \left( \frac{\overline{v}^2 b (\log 2bn)^7}{n}
\right)^{1/6} 
  \right],
\end{align*}
where in the second inequality we have used the fact that
$\mathbb{P}(\mathcal{E}^c_n) \leq \frac{1}{n}$ by \eqref{eq:Ac} and have
absorbed this lower order term into higher order terms by increasing the
value of $C$.
By a symmetric argument, we have
$$
\mathbb{P}( \sqrt{n}||\hat\theta -  \theta||_\infty \leq t) \geq
\mathbb{P}( ||Z_n||_\infty \leq t) -C \left [\frac{\epsilon_n}{\underline{\sigma}} (\sqrt{2 \log b}  +2)  +
    \frac{1}{\sqrt{v}} \left( \frac{\overline{v}^2 b (\log 2bn)^7}{n}
\right)^{1/6} 
  \right].
$$
The result now follows by bounding $\epsilon_n$ as in \eqref{eq:epsilon}. $\Box$

The following lemma
shows that the linear term $\sqrt{n}(\hat\nu - \nu)$ in \eqref{eq::taylor} has a Gaussian-like
behavior and is key ingredient of our results. It is an application of the
Berry-Esseen \Cref{thm:high.dim.clt}, due to \cite{cherno2}. The proof is in 
\Cref{appendix:auxilary}. 

\begin{lemma}\label{lem:hyper}
There exists a constant $C>0$, depending on $A$ only, such  that 
\begin{equation}
\sup_{P\in {\cal P}}
\sup_t
\Bigl|\mathbb{P}(\sqrt{n}||\hat\nu - \nu||_\infty \leq t) -
\mathbb{P}(||Z_n||_\infty \leq t)\Bigr| \leq C
\frac{1}{\sqrt{v}} \left( \frac{\overline{v}^2 b (\log 2bn)^7}{n}
\right)^{1/6},
\end{equation}
where $Z_n \sim N_s(0,\Gamma)$.
\end{lemma}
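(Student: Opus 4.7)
The plan is to recognize the quantity $\sqrt{n}(\hat\nu - \nu)$ as a linear image of an average of i.i.d.~random vectors and then invoke the high-dimensional Berry--Esseen theorem for polyhedral (or hyper-rectangular) sets of Chernozhukov--Chetverikov--Kato stated as \Cref{thm:high.dim.clt}. First I would write
\[
\sqrt{n}(\hat\nu - \nu) \;=\; G\,\sqrt{n}(\hat\psi - \psi) \;=\; \frac{1}{\sqrt{n}} \sum_{i=1}^n G(W_i - \psi),
\]
so that the event $\{\sqrt{n}\|\hat\nu - \nu\|_\infty \leq t\}$ can be described equivalently (i) as the event that the $s$-dimensional i.i.d.~average $\frac{1}{\sqrt{n}}\sum_i G(W_i - \psi)$ lies in the hyper-rectangle $[-t,t]^s$, or (ii) as the event that the $b$-dimensional average $\sqrt{n}(\hat\psi - \psi)$ lies in the polyhedron $\{x \in \mathbb{R}^b : |G_j^\top x| \leq t,\ j=1,\ldots,s\}$, whose $2s$ half-space constraints match the hyper-rectangle shape that \Cref{thm:high.dim.clt} is designed for.

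Next, I would apply the high-dimensional CLT in the $b$-dimensional formulation (ii), since the statement of the lemma involves the dimension $b$ of the $W_i$'s and the eigenvalue bounds $v, \overline{v}$ of $V = V(P)$ directly. The theorem compares the probability of a polyhedron under the i.i.d.~sum to its probability under the Gaussian law $N(0,V)$; the latter yields exactly $\mathbb{P}(\|Z_n\|_\infty \leq t)$ after pushing forward through $G$ since $Z_n = G Z$ with $Z \sim N(0,V)$ has law $N(0,\Gamma)$. The rate supplied by \Cref{thm:high.dim.clt} is of the form $(M^2 b\, \log^7(bn)/n)^{1/6}$ up to a factor $1/\sqrt{\underline{\lambda}}$, where $M$ is a third-moment/sub-Gaussian type bound on the coordinates of $W_i - \psi$ and $\underline{\lambda}$ is a lower bound on their variances.

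The moment bounds are straightforward: since $W_i$ is supported in $[-A,A]^b$, the coordinates of $W_i - \psi$ are bounded in absolute value by $2A$, so all second and third moments of the coordinates are bounded by a constant depending only on $A$. The factor $\overline{v}^2$ enters through bounds on the second moments of $W_i$ used in the form of \Cref{thm:high.dim.clt} that is calibrated to the largest eigenvalue $\overline{v}$ of $V$, while the factor $1/\sqrt{v}$ arises from the lower bound $\lambda_{\min}(V) \geq v$, which ensures that the minimal variance of every coordinate (and in fact every one-dimensional marginal) of $W_i$ is at least $v$. Combining these pieces and collecting constants into a single $C > 0$ depending only on $A$ gives the claimed bound uniformly over $P \in \mathcal{P}_n$.

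The main obstacle is to verify that the specific form of \Cref{thm:high.dim.clt} reproduced in the appendix applies to the polyhedron $\{|G_j^\top x| \leq t\}$ rather than only to axis-aligned rectangles, and to keep explicit track of how the constants $v$, $\overline{v}$ and $b$ appear after rescaling; for this I would follow the derivation in \cite{cherno2} and use the rephrased version of their theorem stated in the appendix, which is designed precisely so that this application is immediate.
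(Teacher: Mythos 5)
Your proposal is correct and follows essentially the same route as the paper: the event is rewritten as membership of $\sqrt{n}(\hat\psi-\psi)$ in a polyhedron with $2s$ facets whose normals are the (normalized) rows of $G$, and \Cref{thm:high.dim.clt} is applied with condition (M1') verified via $v_l^\top V v_l \ge \lambda_{\min}(V)\ge v$ and (M2')/(E1') via the compact support. The one detail you leave to "tracking constants" is that the moment conditions concern the projections $v_l^\top(W_i-\psi)$, which are bounded by $2A\sqrt{b}$ rather than $2A$; combined with $\mathbb{E}\bigl[|v_l^\top(W_i-\psi)|^2\bigr]\le\overline{v}$ this gives $B_n = 2A\sqrt{b}\,\overline{v}$ and hence exactly the $\overline{v}^2 b$ factor in the stated rate.
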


{\bf Proof of Lemma \ref{lemma::upsilon}.}
Throughout the proof, we set $G = G(\psi)$, where $\psi = \psi(P)$ for some $P \in \mathcal{P}_n$, and $\hat{G} = G(\hat{\psi})$ where $\hat{\psi} = \hat{\psi}(P)$ is the sample average from an i.i.d. sample from $P$. Recall that the matrices $\Gamma$ and $\hat{\Gamma}$ are given in Equations \eqref{eq:Gamma} and \eqref{eq:hat.gamma.berry}, respectively. For convenience we will suppress the dependence of $\hat{\Gamma}$ and $\hat{G}$, and of $\Gamma$ and $G$ on $\hat{\psi}$ and $\psi$, respectively.

Express $\hat\Gamma - \Gamma$   as
\begin{align*}
(\hat G - G) V G^\top + G V (\hat G - G)^\top + &  (\hat G - G) V (\hat G - G)^\top
+\\
(\hat G - G)(\hat{V} - V) G^\top + &  G (\hat{V} - V) ( \hat{G}- G)^\top   + G (\hat
V - V)G^\top + (\hat{G} - G) (\hat{V} - V)^\top (\hat{G} - G )^\top.
\end{align*}
The first,  second and sixth terms are dominant, so it will be enough to compute 
high-probability bounds for 
 $(\hat G - G) V G^\top$ and $G (\hat
V - V)G^\top$. 

We first bound $(\hat G - G) V G^\top$.
 For any $j$ and $l$ in $\{1,\ldots,s\}$ and using the Cauchy-Schwartz
 inequality, we have that
\begin{equation}\label{eq::aaa}
   \left| \left( \hat{G}_j - G_j  \right) V G_l^\top \right| \leq \lambda_{\max}(V) \|
   \hat{G}_j - G_j\| B \leq \overline{v} B \| \hat{G}_j - G_j\|,
\end{equation}
by the definition of $B$ (see Equation \ref{eq:H.and.B}), where we recall that $G_j$ denotes the $j^{\mathrm{th}}$ row of $G$.

It remains to bound the stochastic term $\max _j \|
    \hat{G}_j - G_j\|$. Towards that end, we will show that, for some constant  $C$ dependent on $A$ only,
    \begin{equation}\label{eq:hatGjmGj}
    \mathbb{P} \left( \max_j \|\hat G_j - G_j\|\leq C \overline{H} \sqrt{b
    	\frac{  \log
	n}{ n}} \right) \geq 1 - 1/n.
\end{equation}
Indeed, by a Taylor expansion, 
\begin{align*}
    \hat G_j - G_j =(\hat\psi -\psi)^\top \int_0^1 H_j((1-t)\psi +t\hat{\psi})dt
    \quad \text{for all } j \in \{1,\ldots,s\},
\end{align*}
so that
\[
   \max_j \|\hat G_j - G_j \| \leq \| \psi - \hat{\psi} \| \max_j  \Big \|  \int_0^1 H_j((1-t)\psi
    +t\hat{\psi})dt \Big\|_{\mathrm{op}}. 
\]
Since the coordinates of $\hat{\psi}$ are bounded in absolute value by $A$,
the bound \eqref{eq:vector.bernstein.simple} implies that,  
 for some positive
constant $C$ dependent on $A$ only,
$ \mathbb{P} \left( \|\hat\psi-\psi\| \leq C  \sqrt{b (\log n)/n})
\right) \geq 1 - 1/n$,
for all $P\in {\cal P}_n^{\mathrm{OLS}}$.
We restrict to this event.
By convexity of the operator norm 
$||\cdot ||_{\rm op}$
and our assumption, we have that 
\begin{equation}\label{eq:here}
\max_j   \Biggl|\Biggl|\int_0^1 H_j((1-t)\psi +t \hat{\psi})dt\Biggr|\Biggr|_{\mathrm{op}}\le \overline{H},
\end{equation}
yielding the bound in \eqref{eq:hatGjmGj}.
Combined with (\ref{eq::aaa}), we conclude that on an event of probability at least $1 - 1/n$,
$\max_{j,l} |\hat\Gamma(j,l) - \Gamma(j,l)|\preceq   \aleph_n$. This bound holds
uniformly over $P \in \mathcal{P}_n$. 

As for the other term $G (\hat
V - V)G^\top$, we have that, by 
\eqref{eq:matrix.bernstein.simple.2} in \Cref{lem:operator},
\[
    \max_{j,l} \left| G_j (\hat
V - V)G_l^\top \right| \leq B^2 \| \hat{V} - V \|_{\mathrm{op}} \leq C B^2 
\sqrt{  b \overline{v} \frac{ \log b + \log n }{n}   },
\]
with probability at least $ 1- \frac{1}{n}$, where $C$ depends only on $A$ and we
have used the fact that $\max_j \| G_j (\psi(P))\|^2 \leq B^2$ uniformly over $P
\in\mathcal{P}_n$.

Thus, by a union bound, the claim holds on an event of probability at least $1- \frac{2}{n}$.
$\Box$

{\bf Proof of Theorem \ref{thm::coverage}.}
Let 
$Z_n \sim N(0,\Gamma)$ and recall that $\hat{Z}_n \sim N(0,\hat{\Gamma})$.
Using the triangle inequality, we have that 
$$
\mathbb{P}(\theta \in \hat{C}_n) =
\mathbb{P}(\sqrt{n}||\hat\theta - \theta||_\infty \leq  \hat{t}_\alpha) \geq
\mathbb{P}(||\hat Z_n||_\infty \leq \hat{t}_\alpha) - A_1 - A_2,
$$
where
$$
A_1 = 
\sup_{t > 0} | \mathbb{P}(\sqrt{n}||\hat\theta - \theta||_\infty \leq t) -
\mathbb{P}(||Z_n||_\infty \leq t) |
$$
and
$$
A_2 = \sup_{t > 0} | \mathbb{P}(||Z_n||_\infty \leq t) - \mathbb{P}( ||\hat
Z_n||_\infty \leq t)|.
$$
Now 
$$
\mathbb{P}(||\hat Z_n||_\infty \leq \hat{t}_\alpha) 
=\mathbb{E}[\mathbb{P}(||\hat Z_n||_\infty \leq \hat{t}_\alpha |
\hat\Gamma)] = 1-\alpha,
$$
by the definition of $\hat{t}_\alpha$.
\Cref{theorem::deltamethod} implies that
$A_1 \leq C ( \Delta_{1,n} + \Delta_{2,n}) $, where $C$ depends on $A$ only.
To bound $A_2$, consider the event 
$\mathcal{E}_n= \{ \max_{j,k} |\widehat{\Gamma} - \Gamma| \leq C \aleph_n\}$, where the constant $C$ is the same as in \Cref{lemma::upsilon}.
Then, by the same Lemma, $\mathbb{P}(\mathcal{E}_n) \geq 1 - 1/n$,
uniformly over all $P$ in ${\cal P}_n$. 
Next, we have that
$$
A_2 \leq
\mathbb{E}\left[ \sup_{t > 0} \left|
\mathbb{P}(||Z_n||_\infty \leq t) - \mathbb{P}( ||\hat Z_n||_\infty \leq
t|\hat\Gamma)\right|; \mathcal{E}_n \right] + \mathbb{P}(\mathcal{E}_n^c),
$$
where $\mathbb{E}[\cdot;\mathcal{E}_n]$ denotes expectation restricted to the
event $\mathcal{E}_n$.
By the Gaussian comparison \Cref{thm:comparisons}
the term inside the expected value is bounded by $\Delta_{n,3}$. $\Box$

{\bf Proof of \Cref{thm::bonf}.}
For $j=1,\ldots,s$, let $\gamma_j =\sqrt{\Gamma_{j,j}}$, $\hat{\gamma}_j = \sqrt{ \hat{\Gamma}_{j,j}}$ and
$\hat{t}_j = z_{\alpha/(2s)} \hat{\gamma}_j$
We use the same arguments and notation as in the proofs of
\Cref{theorem::deltamethod} and \Cref{lem:hyper}.
Thus, let $\mathcal{E}_n$ be the event that
$ \frac{\overline{H} ||\delta||^2}{2\sqrt{n}} < \epsilon_n$,
where $\frac{\overline{H} ||\delta||^2}{2\sqrt{n}}$ is an upper bound on $\|
R\|_\infty$, with $R$ the reminder in the Taylor series expansion
\Cref{eq::taylor} and $\epsilon_n$ as in \Cref{eq:epsilon}.  Then,
$\mathbb{P}\left( \mathcal{E}_n^c \right) \leq n^{-1}$ (see equation \ref{eq:Ac}).  

Next, for each $t \in \mathbb{R}^{2s}_+$ and any Jacobian matrix $G = G(\psi(P))$,
with $P \in \mathcal{P}_n$, let
 \begin{equation}\label{eq:polyhedron2}
    P(G,t) = \left\{  x \in \mathbb{R}^b \colon  v_l^\top x  \leq t_l , \forall
    v_l \in \mathcal{V}(G) \right\},
\end{equation}
where $\mathcal{V}(G)$ is defined in the proof of \Cref{lem:hyper}.
Then, for any positive numbers $(t'_1,\ldots,t'_s)$
 \[
     |     \sqrt{n}(\hat{\nu}_j - \nu_j ) | \leq t'_j, j=1,\ldots,s \quad
 \text{if and only if } \quad 
\sqrt{n} (\hat{\psi} - \psi) \in P(G,t),
 \]
 where the coordinates of $t \in \mathbb{R}^{2s}$ are as follows: for
 $j=1,\ldots,s$, $t_{2l-1} = t_{2l} = \frac{t'_l}{\|G_j\|}$. 

 Consider now the class of subsets of $\mathbb{R}^b$ of the
form specified in \eqref{eq:polyhedron2}, where $t$ ranges over the positive
vectors in $\mathbb{R}^{2s}$ and $G$ ranges  in $ \{  G(\psi(P)),  P \in
    \mathcal{P}_n\}$. 
This is a class comprised by polytopes with at most $2s$ faces in
$\mathbb{R}^b$. Thus, using the same arguments as in the proof of
\Cref{lem:hyper}, we obtain that
\begin{equation}\label{eq:delta1n.bonf}
    \sup_{t =(t_1,\ldots,t_s) \in \mathbb{R}^s_+}   \left|
    \mathbb{P}\left(\sqrt{n}|\hat\nu_j - \nu_j| \leq t_j, \forall j\right) -
  \mathbb{P}\left(|Z_{n,j} | \leq t_j, \forall j\right) \right| \leq 
  C \frac{1}{\sqrt{v}} \left( \frac{\overline{v}^2 b (\log 2bn)^7}{n}
\right)^{1/6},  
\end{equation}
for some $C>0$ depending only on $A$, where $Z_n \sim N(0,\Gamma)$.
Using the above display, and following the same arguments as in the proof of
\Cref{theorem::deltamethod}, we have that
\begin{align*}
    \mathbb{P}( \sqrt{n} |\hat\theta_j - \theta_j| \leq \hat{t}_j, \forall j) &=
    \mathbb{P}(\sqrt{n}|\hat\theta_j - \theta_j| \leq \hat{t}_j, \forall j;\
    \mathcal{E}_n) + 
    \mathbb{P}( \sqrt{n} |\hat\theta_j - \theta_j| \leq \hat{t}_j, \forall j;\
    \mathcal{E}_n^c)
 \\
& \leq
\mathbb{P}( \sqrt{n}|\hat\nu_j - \nu_j| \leq \hat{t}_j+\epsilon_n, \forall j) + 
\mathbb{P}(\mathcal{E}_n^c) \nonumber \\
& \leq
\mathbb{P}( |Z_{n,j}| \leq \hat{t}_j+\epsilon_n, \forall j) + 
C \frac{1}{\sqrt{v}} \left( \frac{\overline{v}^2 b (\log 2bn)^7}{n}
\right)^{1/6} + 
\frac{1}{n}
\\
& \leq \mathbb{P}( |Z_{n,j}| \leq \hat{t}_j, \forall j) + C \left[
\frac{\epsilon_n}{\underline{\sigma}} (\sqrt{2 \log b}  +2)  +
\frac{1}{\sqrt{v}} \left( \frac{\overline{v}^2 b (\log 2bn)^7}{n}
\right)^{1/6}  \right],
\end{align*}
where in the second-to-last inequality we have used the fact that
$\mathbb{P}(\mathcal{E}^c_n) \leq \frac{1}{n}$ and in the last inequality we
have applied the Gaussian anti-concentration
inequality in
\Cref{thm:anti.concentration} (and have absorbed the term
$\frac{1}{n}$ into higher order terms by increasing the value of $C$).
A similar argument gives
\[
    \mathbb{P}( \sqrt{n} |\hat\theta_j - \theta_j| \leq \hat{t}_j, \forall
    j)\geq \mathbb{P}( |Z_{n,j}| \leq \hat{t}_j, \forall j) - C \left[
\frac{\epsilon_n}{\underline{\sigma}} (\sqrt{2 \log b}  +2)  +
\frac{1}{\sqrt{v}} \left( \frac{\overline{v}^2 b (\log 2bn)^7}{n}
\right)^{1/6} \right].
\]
To complete the proof, we will show that
\begin{equation}\label{eq:min}
\mathbb{P}( |Z_{n,j}| \leq z_{\alpha/(2s)} \hat{\gamma}_j, \forall j) \geq
(1-\alpha) - \frac{1}{n} - \min \left\{ C \Delta_{3,n},  \frac{ \aleph_n z_{\alpha/(2s)}}{(\min_j
\gamma_j)^2} \left(\sqrt{ 2 + \log(2s ) } + 2 \right)\right\}. 
\end{equation}
Let $\hat{Z}_n \sim N(0,\hat \Gamma)$. By the Gaussian comparison \Cref{thm:comparisons}, 
\[
\mathbb{P}( |Z_{n,j}| \leq z_{\alpha/(2s)} \hat{\gamma}_j, \forall j) \geq
\mathbb{P}( |\hat{Z}_{n,j}| \leq z_{\alpha/(2s)} \hat{\gamma}_j, \forall j) - I
\geq 1 - \alpha - I
\]
where
\[
I \leq 
\mathbb{E}\left[ \sup_{t = (t_1,\ldots,t_s) \in \mathbb{R}^s_+} \left|
    \mathbb{P}(|Z_{j,n} |  \leq t_j, \forall j) - \mathbb{P}( |\hat Z_{j,n}| \leq
t, \forall j |\hat\Gamma)\right|; \mathcal{F}_n \right] +
\mathbb{P}(\mathcal{F}_n^c) \leq C \Delta_{3,n} + \frac{1}{n}.
\]
In the above expression the constant $C$ is the same as in \Cref{lemma::upsilon} and
$\mathcal{F}_n$ is the event that $\{ \max_{j,k}
|\widehat{\Gamma} - \Gamma| \leq C \aleph_n\}$, which is of probability at least $ 1-
\frac{1}{n}$, again by \Cref{lemma::upsilon}. This gives the first bound in \eqref{eq:min}.

To prove the second bound in \eqref{eq:min} we let
$\Xi_n  = C \frac{\aleph_n}{\min_j \gamma_j}$, where $C$ is the constant in
\Cref{lemma::upsilon}, and then notice that, on the event 
$\mathcal{F}_n$, 
\[
    |\hat\gamma_j - \gamma_j| =
    \frac{|\hat\gamma_j^2 - \gamma_j^2|} {|\hat\gamma_j + \gamma_j|}
     \leq
    \frac{|\hat\gamma_j^2 - \gamma_j^2|} {\gamma_j} \leq
    \frac{\max_j |\hat\gamma_j^2 - \gamma_j^2|}{\min_j \gamma_j} \leq \Xi_n.
\] 
Thus,
\begin{align*}
\mathbb{P}\left( |Z_{n,j}| \leq z_{\alpha/(2s)} \hat{\gamma}_j, \forall j\right)
& = \mathbb{P}\left( |Z_{n,j}| \leq z_{\alpha/(2s)} \hat{\gamma}_j, \forall
j\right) -\mathbb{P}\left( |Z_{n,j}| \leq z_{\alpha/(2s)} \gamma_j,
\forall j\right) + \mathbb{P}\left( |Z_{n,j}| \leq z_{\alpha/(2s)} \gamma_j, \forall j\right) \\
& \geq 
\mathbb{P} \left( |Z_{n,j}| \leq z_{\alpha/(2s)} \hat{\gamma}_j, \forall
j;\mathcal{F}_n\right)  -\mathbb{P}\left( |Z_{n,j}| \leq z_{\alpha/(2s)} \gamma_j,
\forall j\right) + \mathbb{P}\left( |Z_{n,j}| \leq z_{\alpha/(2s)} \gamma_j,
\forall j\right)\\
& \geq  \mathbb{P} \left( |Z_{n,j}| \leq z_{\alpha/(2s)} \hat{\gamma}_j, \forall
j;\mathcal{F}_n\right)  -\mathbb{P}\left( |Z_{n,j}| \leq z_{\alpha/(2s)} \gamma_j,
\forall j\right) + (1-\alpha),
    \end{align*}
    where in the last step we have used the union bound.
    Next,
    \[
\mathbb{P} \left( |Z_{n,j}| \leq z_{\alpha/(2s)} \hat{\gamma}_j, \forall
j;\mathcal{F}_n\right)  \geq \mathbb{P} \left( |Z_{n,j}| \leq z_{\alpha/(2s)}
(\gamma_j - \Xi_n), \forall
j; \mathcal{F}_n \right) \geq \mathbb{P} \left( |Z_{n,j}| \leq z_{\alpha/(2s)}
(\gamma_j - \Xi_n), \forall
j \right)  - \mathbb{P}\left( \mathcal{F}_n^c \right).
    \]
    Thus,
    \begin{align*}
\mathbb{P}\left( |Z_{n,j}| \leq z_{\alpha/(2s)} \hat{\gamma}_j, \forall j\right)
& \geq (1-\alpha)- \mathbb{P}\left( \mathcal{F}_n^c \right)+ \mathbb{P}\left(
|Z_{n,j}| \leq z_{\alpha/(2s)} (\gamma_j - \Xi_n), \forall
j \right) -\mathbb{P}\left( |Z_{n,j}| \leq z_{\alpha/(2s)} \gamma_j,
\forall j\right)\\ 
& \geq (1-\alpha) - \frac{1}{n} - \frac{ \Xi_n z_{\alpha/(2s)}}{\min_j
\gamma_j} \left(\sqrt{ 2 + \log(2s ) } + 2 \right), 
    \end{align*}
    since, by
    the Gaussian anti-concentration inequality of \Cref{thm:anti.concentration},
\[
\mathbb{P}\left( |Z_{n,j}| \leq z_{\alpha/(2s)} (\gamma_j - \Xi_n), \forall
j \right) -\mathbb{P}\left( |Z_{n,j}| \leq z_{\alpha/(2s)} \gamma_j,
\forall j\right) \geq - \frac{ \Xi_n z_{\alpha/(2s)}}{\min_j
\gamma_j} \left(\sqrt{ 2 + \log(2s ) } + 2 \right). 
\]

The result follows by combining all the above bounds and the fact that
$\underline{\sigma}^2 = \min_{P \in \mathcal{P}_n} \min_j \Gamma(j,j)$. As
usual, we have absorbed any lower order term (namely $\frac{1}{n}$) into higher
order ones.
$\Box$

\vspace{11pt}

{\bf Proof of \Cref{theorem::boot}.}
Let $Z_n \sim N(0,\Gamma)$ where $\Gamma = G V G^\top$ and 
$\hat Z_n \sim N(0,\hat\Gamma)$
where we recall that 
$\hat\Gamma = \hat G \hat V \hat G^\top$, $\hat G = G(\hat \psi)$ and
$\hat V = n^{-1}\sum_{i=1}^n (W_i - \hat\psi)(W_i - \hat\psi)^\top$.
Take $\mathcal{E}_n$ to be the event that 
$$
\left\{ \max_{j,k} |\widehat{\Gamma} - \Gamma| \leq C \aleph_n  \right\} \cap
\left\{     \| V - \hat{V} \|_{\mathrm{op}} \leq C  \daleth_n
\right\},
$$
where $C$ is the larger of the two constants in \eqref{eq:upsilon} and in
\eqref{eq:matrix.bernstein.simple.2}. 
Then, by  \Cref{lemma::upsilon} and \Cref{lem:operator}, $\mathbb{P}\left(  \mathcal{E}_n \right) \geq 1-2/n$, uniformly over all the distributions in ${\cal P}_n$.
By the triangle inequality, 
\begin{equation}\label{eq:F.boot}
\mathbb{P}(\theta \in \hat{C}^*_n) =
\mathbb{P}(\sqrt{n}||\hat\theta - \theta||_\infty \leq  \hat{t}^*_\alpha) \geq
\mathbb{P}( \sqrt{n}||\hat \theta^* - \hat{\theta}||_\infty \leq
\hat{t}^*_\alpha|(W_1,\ldots,W_n)) - (A_1 + A_2 + A_3),
\end{equation}
where
\begin{align*}
    A_1 & = \sup_{t >0} \left| \mathbb{P}\left( \sqrt{n} \| \hat{\theta} -
    \theta   \|_\infty
    \leq t \right) - \mathbb{P}( \| Z_n \|_\infty \leq t ) \right|,\\
    A_2 & = \sup_{t>0} \left|  \mathbb{P}( \| Z_n \|_\infty \leq t ) - \mathbb{P}( \|
    \hat{Z}_n \|_\infty \leq t )      \right|,\\
    \text{and} & \\
    A_3 & = \sup_{t >0} \left|  \mathbb{P}( \| \hat{Z}_n \|_\infty \leq
	t -  \mathbb{P}\left( \sqrt{n} \| \hat{\theta}^* - \hat{\theta}   \|_\infty \leq t
   \Big| (W_1,\ldots,W_n) \right)  \right|.
\end{align*}
Since, by definition, $\mathbb{P}( \sqrt{n}||\hat \theta^* - \hat{\theta}||_\infty \leq
\hat{t}^*_\alpha|(W_1,\ldots,W_n))   \geq 1 - \alpha$, it follows from
\eqref{eq:F.boot} that, in order to establish  (\ref{eq::boot-cov}) we will need to upper bound
each of the terms $A_1$,
$A_2$ and $A_3$ accordingly. 
The term $A_1$ has already been bounded by $C( \Delta_{1,n} + \Delta_{2,n} )$ in the earlier
\Cref{theorem::deltamethod}.
For $A_2$ we use the Gaussian comparison \Cref{thm:comparisons} as in the proof of
\Cref{thm::coverage} restricted to the event $\mathcal{E}_n$ to conclude that
$A_2 \leq C \Delta_{n,3} + \frac{2}{n}$.
Finally, to bound $A_3$,
 one can apply the same arguments as in Theorem
\ref{theorem::deltamethod}, but restricted to the event $\mathcal{E}_n$, to the larger class of
probability distributions
$\mathcal{P}^*_n$ differing from $\mathcal{P}_n$ only in the fact that $v$ is replaced by the
smaller quantity $v_n > 0$ and $\overline{v}$ by the larger quantity $\overline{v}_n  =\overline{v} + C \daleth_n$.
In particular, the bootstrap distribution belongs to $\mathcal{P}^*_n$. In detail, one can replace $\psi$ and  with $\hat\psi$, 
and $\hat\psi$ with $\hat\psi^*$ and, similarly,  $\Gamma$ with
$\hat{\Gamma}$ and $\hat{\Gamma}$ with $\hat{\Gamma}^* = G(\hat{\psi}^*)
\hat{V}^* G(\hat{\psi}^*)^\top$, where $\hat{V}^*$ is the empirical covariance
matrix based on a sample of size $n$ from the bootstrap distribution.
The assumption that $n$ is large enough so that $v_n$ and $\sigma^2_n$ are
positive ensures  that, on the event $\mathcal{E}_n$ of probability at
least $1-2/n$, $\min_j
\sqrt{\hat{\Gamma}(j,j)} > \sqrt{\underline{\sigma}^2 - C \aleph_n} >0$ and, by Weyl's inequality,
the minimal eigenvalue of $\hat{V}$ is no smaller than $v - C
\daleth_n > 0$. In particular, the error terms $\Delta^*_{n,1}$ and $\Delta^*_{n,2}$  are
well-defined (i.e. positive). 
Thus we have that 
\begin{equation}\label{eq:A3}
A_3 \leq C \left( \Delta^*_{n,1} + \Delta^*_{n,2} \right) +
\frac{2}{n},
 \end{equation}
 where the lower order term $\frac{1}{n}$ is reported to account for the
 restriction to the event $\mathcal{E}_n$. 
The result now follows by combining all the bounds, after noting that
$\Delta_{1,n} \leq \Delta^*_{1,n}$ and $\Delta_{2,n} \leq \Delta^*_{2,n}$.
 
To show  that the same bound holds for the  coverage of $\tilde{C}^*_\alpha$ we
proceed in a similar manner. 
Using the triangle inequality, and uniformly over all the distributions in
${\cal P}_n$,  
\begin{align*}
\mathbb{P}(\theta \in \tilde{C}^*_n)  & =
\mathbb{P}(\sqrt{n} |\hat\theta_j - \theta _j| \leq  \tilde{t}^*_{j,\alpha},
\forall j)\\
& \geq
\mathbb{P}\left( \sqrt{n} |\hat \theta^*_j - \hat{\theta}_j | \leq
\tilde{t}^*_{j,\alpha}, \forall j \Big| (W_1,\ldots,W_n) \right) - (A_1 + A_2 +
A_3)\\
& \geq	 (1 - \alpha) - (A_1 + A_2 +
A_3),
\end{align*}
where
\begin{align*}
    A_1 & = \sup_{t = (t_1,\ldots,t_s) \in \mathbb{R}_+^s} \left| \mathbb{P}\left( \sqrt{n} |
    \hat{\theta}_j-
    \theta_j   |
    \leq t_j, \forall j \right) - \mathbb{P}( |Z_{n,j}| \leq t_j, \forall j ) \right|,\\
    A_2 & = \sup_{t = (t_1,\ldots,t_s) \in \mathbb{R}_+^s } \left|  \mathbb{P}(
    | Z_{n,j} | \leq t_j, \forall j ) - \mathbb{P}( |
    \hat{Z}_{n,j} | \leq t_j, \forall j )      \right|,\\
    \text{and} & \\
    A_3 & = \sup_{t = (t_1,\ldots,t_s) \in \mathbb{R}_+^s   } \Big|  \mathbb{P}(
    | \hat{Z}_{n,j} | \leq
	t_j, \forall j) -  \mathbb{P}\left( \sqrt{n} | \hat{\theta}_j^* -
	\hat{\theta}_j   | \leq t_j, \forall j
   \Big| (W_1,\ldots,W_n) \right)  \Big|.
\end{align*}
The term $A_1$ is bounded by $C (\Delta_{1,n} + \Delta_{2,n})$, as shown in the
first part of the
proof of  \Cref{thm::bonf}.  The Gaussian comparison \Cref{thm:comparisons}
yields that
$A_2 \leq C \Delta_{n,3} + \frac{2}{n}$. To bound the term $A_3$, we repeat the
arguments  used in the first part of the proof of \Cref{thm::bonf}, applied
to the larger class $\mathcal{P}_n^*$ and restricting to the event $\mathcal{E}_n$. As argued above, we will replace $\psi$
with $\hat\psi$ and 
$\hat\psi$ with $\hat\psi^*$ and, similarly, $\Gamma$ with
$\hat{\Gamma}$ and $\hat{\Gamma}$ with $\hat{\Gamma}^*$. The assumption that  $n$ is large enough guarantees that, with
probability at least $1 - \frac{2}{n}$, both $v_n$ and $\sigma^2_n$ are positive.
Thus, the right hand side of \eqref{eq:A3} serves as an upper bound for the
current term $A_3$ as well. The claimed bound \eqref{eq::boot-cov.bonf}  then
follows.
$\Box$

\section{Appendix 5: Proofs of Auxiliary Results}
\label{appendix:auxilary}

{\bf Proof of \Cref{eq:lem.occupancy}.}
Let $Z$ be the number of objects that are not selected.
Then $\mathbb{E}[Z] = n \left( 1 - \frac{1}{n} \right)^n \leq \frac{n}{e}$.
Next, by the bounded difference inequality, 
\[
    \mathbb{P}\left(| Z - \mathbb{E}[Z] | \geq t \right) \leq 2 e^{
	-\frac{t^2}{2 n}},
\]
which implies that
\[
\mathbb{P}\left( Z > n - d \right)   \leq \exp\left\{ - \frac{(n -d
    -n(1-1/n)^n)^2 }{2n} \right\}.
\]
The claim \Cref{eq:occupancy} follows immediately, since $n \geq \frac{d}{2}$ and $\left( 1 - \frac{1}{n} \right)^n \leq
e^{-1}$ for all $n=1,2,\ldots$.

$\Box$

{\bf Proof of Lemma \ref{lem:hyper}.}
Let  $\psi$ be an arbitrary point in $\mathcal{S}_n$ and $G =
G(\psi) \in \mathbb{R}^{s \times b }$ be the corresponding Jacobian. Recall
that, for $j=1,\ldots,s$ the $j^{\mathrm{th}}$ row of $G$ is the transpose of $G_j =
G_j(\psi)$, the gradient of $g_j$ at $\psi$.
Let $\mathcal{V} =
\mathcal{V}(G) = \left\{ v_1,\ldots,v_{2s} \right\}$, where for
$j=1,2,\ldots,s$, we define $v_{2j-1} =
\frac{G_j}{\| G_j \|}$ and $v_{2j} = -\frac{G_j}{\| G_j\|}$.
For a given $t>0$ and for any Jacobian matrix $G = G(\psi)$, set
\begin{equation}\label{eq:polyhedron}
P(G,t) =     \left\{ x \in \mathbb{R}^b \colon  v_l^\top x  \leq t_l , \forall v_l \in \mathcal{V}(G) \right\},
\end{equation}
where, for $j=1,\ldots,s$, $t_{2j-1} = t_{2j} = \frac{t}{\|G_j\|}$.

Recalling that $\widehat{\nu} = G \widehat{\psi}$, we have that
$$
\left\|     \sqrt{n}(\hat{\nu} - \nu ) \right\|_\infty \leq t \quad
\text{if and only if } \quad 
\sqrt{n} (\hat{\psi} - \psi) \in P(G,t).
$$
Similarly, if $\tilde{Z}_n \sim
N_b(0,V)$ and $Z_n  = G \tilde{Z}_n \sim N_s(0,\Gamma)$
$$
\| Z_n \|_\infty \leq t\quad
\text{if and only if } \quad \tilde{Z}_n \in P(G,t). 
$$

Now consider the class $\mathcal{A}$ of all subsets of $\mathbb{R}^b$ of the
form specified in \eqref{eq:polyhedron}, where $t$ ranges over the positive
reals and $G$ ranges  in $ \{  G(\psi(P)),  P \in \mathcal{P}\}$. 
Notice that this class is comprised of polytopes with at most $2s$ facets.
Also, from the discussion above, 
\begin{equation}\label{eq:simple.convex}
\sup_{ P \in \mathcal{P}_n} \sup_{t >0}  \left |  \mathbb{P}\left( \| \sqrt{n} (\hat{\nu} - \nu) \|_\infty
\leq t\right)  -\mathbb{P}\left( \| Z_n \|_\infty \leq t  \right) \right|  = 
\sup_{A \in \mathcal{A}} \left |  \mathbb{P}(\sqrt{n} (\hat{\psi} - \psi) \in A)- \mathbb{P}( \tilde{Z}_n \in A) \right|. 
\end{equation}

The claimed result follows from applying  the Berry-Esseen bound for
polyhedral classes,
\Cref{thm:high.dim.clt} in the appendix, due to
\cite{cherno2} to the term on the left hand side of \Cref{eq:simple.convex}. 
To that end, we need to ensure that
conditions (M1'), (M2') and (E1') in that Theorem are satisfied.

For each $i=1,\ldots,n$, set $\tilde{W}_i=
(\tilde{W}_{i,1}, \ldots, \tilde{W}_{i,2s})= \left( (W_i - \psi)^\top v,v \in \mathcal{V}(G) \right)$.
Condition (M1') holds since, for each $l=1,\ldots,2s$,
\[
    \mathbb{E}\left[ \tilde{W}^2_{i,l} \right] \geq \min_l v_l^\top V v_l \geq
    \lambda_{\min}(V),
\]
where $V = \mathrm{Cov}[W]$.
Turning to condition (M2'), we have  that, for for each $l=1,\ldots,2s$ and $k=1,2$, 
\begin{align*}
\mathbb{E}\left[ | \tilde{W}_{i,l}|^{2+k} \right]  & \leq \mathbb{E}\left[ |v_l^\top
(W_i - \psi)|^2  \|W_i - \psi\|^{k} \right]\\
& \leq   \mathbb{E}\left[ |v_l^\top (W_i - \psi)|^2 \right] \left( 2A \sqrt{b}
\right)^k\\
& \leq \overline{v} \left( 2A \sqrt{b} \right)^k,
\end{align*}
where the first inequality follows from the bound $| v^\top_l  (W_i - \psi) |\leq \|
W_i - \psi\|$ (as each $v_l$ is of unit norm), the second from the fact that
the coordinates of $W_i$ are bounded in absolute value by $A$ and the third
by the fact that $\overline{v}$ is the largest eigenvalue of $V$.

Thus we see that by  setting $B_n = \overline{v} \left( 2A \sqrt{b} \right)$,  condition
(M2') is satisfied (here we have used the fact that $\overline{v} \geq 1$). 
Finally, condition (E1') is easily satisfied, possibly by increasing the
constant in the term  $B_n$.

Thus, \Cref{thm:high.dim.clt} gives
\[
              \sup_{A \in \mathcal{A}} \left |  \mathbb{P}(\sqrt{n} (\hat{\psi}
              - \psi)
                  \in A)
                      - \mathbb{P}( \tilde{Z}_n \in A) \right| 
                           \leq C \frac{1}{\sqrt{\lambda_{\min}(V)}} \left(
			   \frac{ \overline{v}^2 b (\log 2bn)^7}{n}   \right)^{1/6},
		      \]
and the result follows from \Cref{eq:simple.convex}, the fact that the choice of
$G = G(\psi)$ is arbitrary
and the fact that $\lambda_{\rm min}(V(P)) \geq v$ for all $P\in {\cal P}_n$, by
assumption. $\Box$

\vspace{11pt}

\begin{lemma}
    \label{lem:operator}
    Let $X_1,\ldots,X_n$ be independent, mean-zero vectors in $\mathbb{R}^p$,
    where $p
    \leq n$, such  that $\max_{i=1\dots,n} \|X_i \|_\infty \leq K$ almost surely for some $K>0$ with common covariance matrix $\Sigma$ with $\lambda_{\max}(\Sigma)
    \leq U$.  
 Then, there exists a universal constant $C>0$ such that
    \begin{equation}\label{eq:vector.bernstein.simple}
	\mathbb{P}\left(  \frac{1}{n} \left\| \sum_{i=1}^n X_i \right\|	\leq C K \sqrt{ p
	    \frac{\log n }{n}} \right) \geq 1 - \frac{1}{n}.
    \end{equation}
    Letting
    $\hat{\Sigma} = \frac{1}{n} \sum_{i=1}^n X_i X_i^\top$, if $U
    \geq \eta > 0$, then there exists a $C>0$, dependent on $\eta$ only, such that
    \begin{equation}\label{eq:matrix.bernstein.simple.2}
\mathbb{P}\left(  \| \widehat{\Sigma} - \Sigma \|_{\mathrm{op}} \leq C K 
\sqrt{p U \frac{ \log p + \log n}{n} } \right) \geq 1 - \frac{1}{n}.
    \end{equation}
\end{lemma}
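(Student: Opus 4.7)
The plan is to derive both bounds as direct applications of standard vector and matrix Bernstein-type concentration inequalities, exploiting the uniform $\ell_\infty$ bound on the $X_i$'s.

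For the first bound \eqref{eq:vector.bernstein.simple}, I would first observe that the $\ell_\infty$ assumption gives $\|X_i\| \leq K\sqrt{p}$ almost surely, while the trace bound $\mathrm{tr}(\Sigma) \leq p\,\lambda_{\max}(\Sigma) \leq pU$ yields $\sum_{i=1}^n \mathbb{E}\|X_i\|^2 \leq npU$. Since $U \leq K^2$ (an easy consequence of $\|X_i\|_\infty \leq K$), the variance term is dominated by $n K^2 p$, so a standard vector Bernstein inequality (e.g., for Hilbert-space-valued sums, cf.\ Pinelis--Sakhanenko) gives that, for any $t > 0$,
\[
\mathbb{P}\left(\left\|\sum_{i=1}^n X_i\right\| \geq t\right) \leq 2 \exp\left\{-c \min\left(\frac{t^2}{nK^2 p}, \frac{t}{K\sqrt{p}}\right)\right\}.
\]
Setting $t = C K\sqrt{n p \log n}$ with $C$ large enough and using $p \leq n$ makes the Gaussian branch dominant, delivering the claimed probability of at least $1 - 1/n$.

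For the second bound \eqref{eq:matrix.bernstein.simple.2}, I would write $\widehat\Sigma - \Sigma = \frac{1}{n}\sum_{i=1}^n Y_i$ with $Y_i = X_iX_i^\top - \Sigma$, which are i.i.d.\ centered symmetric $p \times p$ matrices. The $\ell_\infty$ bound gives $\|X_iX_i^\top\|_{\mathrm{op}} \leq \|X_i\|^2 \leq K^2 p$, so $\|Y_i\|_{\mathrm{op}} \leq K^2 p + U \leq 2K^2 p$ almost surely. For the matrix variance proxy, I would compute
\[
\bigl\|\mathbb{E}[Y_i^2]\bigr\|_{\mathrm{op}} = \bigl\|\mathbb{E}[\|X_i\|^2 X_i X_i^\top] - \Sigma^2\bigr\|_{\mathrm{op}} \leq K^2 p\, \|\Sigma\|_{\mathrm{op}} + U^2 \leq 2 K^2 p\, U,
\]
so $\bigl\|\sum_i \mathbb{E}[Y_i^2]\bigr\|_{\mathrm{op}} \leq 2 n K^2 p U$. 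The matrix Bernstein inequality of Tropp then yields
\[
\mathbb{P}\bigl(\|\widehat\Sigma - \Sigma\|_{\mathrm{op}} \geq t\bigr) \leq 2p\, \exp\!\left\{-c \min\!\left(\frac{n t^2}{K^2 p U},\ \frac{n t}{K^2 p}\right)\right\},
\]
and choosing $t = C K \sqrt{p U (\log p + \log n)/n}$ makes the Gaussian term in the minimum dominant under the assumption $U \geq \eta$ (which prevents $t$ from becoming so small that the sub-exponential term takes over, and also forces the leading constant $C$ to depend on $\eta$). This gives the required $1 - 1/n$ bound.

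The main technical nuisance is bookkeeping: one must verify that with $t$ as above the Gaussian tail of the Bernstein bound is indeed the binding branch, which is precisely where the assumption $U \geq \eta$ enters; without such a lower bound on $U$ the sub-exponential branch could dominate for very small $U$, inflating the rate and the dependence of the constant on $\eta$. Beyond that, everything is a direct plug-in into known concentration results, and the argument holds uniformly in the distribution since all bounds depend only on $K$, $U$, $p$ and $n$.
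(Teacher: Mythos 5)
Your proof is correct and follows essentially the same route as the paper: a vector Bernstein inequality (the paper cites Proposition 1.2 of Hsu et al.) with $\|X_i\|\leq K\sqrt{p}$ and $\mathbb{E}\|X_i\|^2\leq K^2p$ for \eqref{eq:vector.bernstein.simple}, and Tropp's matrix Bernstein inequality for \eqref{eq:matrix.bernstein.simple.2} with the same uniform bound $K^2p+U$ and variance proxy of order $K^2pU$, using $U\geq\eta$ and $p\leq n$ to absorb the sub-exponential term into a constant depending on $\eta$. The only differences are cosmetic (which vector Bernstein reference is invoked, and phrasing the absorption as a branch comparison in the $\min$ form rather than bounding the sum of the two Bernstein terms), so no further comment is needed.
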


    \noindent {\bf Proof of \Cref{lem:operator}.}
Since $\| X_i \| \leq K \sqrt{p}$ and $\mathbb{E}\left[ \| X_i \|^2
 \right] \leq U p$ for all $i = 1,\ldots,n$, Proposition 1.2 in \cite{hsu12}
 yields that
 \begin{equation}\label{eq:vector.bernstein}
	\mathbb{P}\left( \frac{1}{n} \left\| \sum_{i=1}^n X_i \right\|	\leq \sqrt{\frac{U p}{n}} + \sqrt{ 8 \frac{U p}{n} \log n} +
	\frac{4 K \sqrt{p}}{3 n} \log n \right) \geq 1 - \frac{1}{n}.
    \end{equation}
Equation \Cref{eq:vector.bernstein.simple} follows by bounding
$\mathbb{E}\left[ \| X_i \|^2 \right]$ with $K^2 p$ instead of $Up$.

Next, we prove \eqref{eq:matrix.bernstein.simple.2}. We let $\preceq$ denote the positive
semi-definite ordering, whereby, for any $p$-dimensional symmetric matrices $A$ and $B$, $A \preceq B$ if and only if $B-A$ is positive
semi-definite. For each $i =1,\ldots,n$, the triangle inequality and the
assumptions in the statement  yield
the bound
\[
    \left\| X_i X_i^\top - \Sigma\right\|_{\mathrm{op}}  \leq \| X_i \|^2 +
    \lambda_{\max}(\Sigma) \leq K^2 p + U.
\]
Similarly, $\| \mathbb{E}\left[ (X_i X_i^\top)^2 \right] - \Sigma\|_{\mathrm{op}} \leq
K^2p U$ for each $i = 1,\ldots, n$, since 
\[
    \mathbb{E}\left[ (X_i X_i^\top)^2 \right] - \Sigma^2 \preceq
    \mathbb{E}\left[ \| X_i \|^2 X_i X_i^\top \right] \preceq K^2 p \Sigma
    \preceq K^2 p U I_{p}.
\]
 with $I_p$ the $p $-dimensional identity matrix. 
 Thus, applying the Matrix Bernstein inequality  \citep[see Theorem
 1.4 in][]{Tropp2012}, we obtain that
    \begin{equation}\label{eq:matrix.bernstein}
	\mathbb{P}\left(  \| \widehat{\Sigma} - \Sigma \|_{\mathrm{op}} \leq \sqrt{ 2 K^2 p U  \frac{\log p + \log	2n }{n}} +
	\frac{2}{3} (K^2 p + U) \frac{\log p + \log 2n }{n}\right)\geq 1 -
	\frac{1}{n}.
    \end{equation}
The bound \eqref{eq:matrix.bernstein.simple.2} follows from choosing $C$ large
enough, depending on $\eta$, and using the fact that $p \leq n$.
    $\Box$

    {\bf Remark.} From \eqref{eq:matrix.bernstein}, by using  
    the looser
    bounds 
\[ \left\| X_i X_i^\top - \Sigma\right\|_{\mathrm{op}}  \leq 2 K^2 p \quad
    \text{and} \quad \mathbb{E}\left[ (X_i X_i^\top)^2 \right] - \Sigma^2
    \preceq K^4 p^2 I_p,
\]
    one can obtain directly that
    \begin{equation}\label{eq:matrix.bernstein.simple}
\mathbb{P}\left(  \| \widehat{\Sigma} - \Sigma \|_{\mathrm{op}} \leq C K^2 p
\sqrt{\frac{ \log p + \log n}{n} } \right) \geq 1 - \frac{1}{n},
    \end{equation}
    for some universal constant $C>0$. 
Clearly, the scaling in $p$ is worse.

{\bf Proof of Lemma \ref{lemma::horrible}.}
Throughout, we drop the dependence on $\wS$ in our notation and assume without
loss of generality that $\wS = \{1,\ldots,k\}$.
We refer the reader to \cite{magnus07} for a comprehensive treatment of matrix calculus techniques.
Recall that
$\psi = \left[ \begin{array}{c} \sigma \\ \alpha\\ \end{array} \right]$ and $\xi = \left[ \begin{array}{c} w\\ \alpha\\
\end{array} \right]$, where 
$\sigma =\mathrm{vec}(\Sigma)$
and $w = \mathrm{vec}(\Omega)$. 
The dimension of both $\psi$ and $\xi$ is $b  = k^2 + k$. 
For $ 1 \leq j \leq n$, let 
\[
\beta_j = g_j(\psi) = e^\top_j \Omega\alpha,
\]
where $e_j$ is the $j^{\mathrm{th}}$ elements of the standard basis in $\mathbb{R}^n$.
Then, we can write
\[
g_j(\psi) = g(f(\psi)),
\]
with
$f(\psi) = \xi \in \mathbb{R}^b$ and
$g(\xi) = e^\top_j \Omega \alpha \in \mathbb{R}$.

Using the chain rule, the derivative  of $g_j(\psi)$ is
\[
 D g_j(\psi) = D g(\xi) D f(\psi) =  e_j^\top \Big[\left( \alpha^\top \otimes I_k
    \right) E + \Omega
    F\Big] 
\left[
    \begin{array}{cc}
	- \Omega \otimes \Omega & 0 \\
	0 & I_k
    \end{array}
\right],
\]
where 
\[
    E = \Big[I_{k^2} \;\;\;\;\; 0_{k^2 \times k}\Big] = \frac{d w}{d \psi} \in
    \mathbb{R}^{k^2 \times b} \quad \text{and} \quad 
    F =
\Big[0_{k \times k^2} \;\;\;\;\; I_k\Big] = \frac{d \alpha}{d \psi} \in
\mathbb{R}^{ k \times b}.
\]
Carrying out the calculations, we have that
\begin{align*}
\left( \alpha^\top \otimes I_k
    \right) E 
\left[
    \begin{array}{cc}
	- \Omega \otimes \Omega & 0 \\
	0 & I_k
    \end{array}
\right] & = 
\left( \alpha^\top \otimes I_k
    \right) \Big[I_{k^2} \;\;\;\;\; 0_{k^2 \times k}\Big]  
\left[
    \begin{array}{cc}
	- \Omega \otimes \Omega & 0 \\
	0 & I_k
    \end{array}
\right] \\
& =  \Big[ - \left( \alpha^\top \otimes I_k
\right) (\Omega \otimes \Omega) \;\;\;\;\; 0_{k \times k}\Big] 
\end{align*}
and
\begin{align*}
\Omega
    F 
\left[
    \begin{array}{cc}
	- \Omega \otimes \Omega & 0 \\
	0 & I_k
    \end{array}
\right] & = 
\Omega
    \Big[0_{k \times k^2} \;\;\;\;\; I_k\Big] 
\left[
    \begin{array}{cc}
	- \Omega \otimes \Omega & 0 \\
	0 & I_k
    \end{array}
\right]  \\
& = \Omega \Big[ 0_{k \times k^2} \;\;\;\;\; I_k \Big] =
\Big[ 0_{k \times k^2} \;\;\;\;\; \Omega \Big] .
\end{align*}
Plugging the last two expressions into the initial formula for $D
g_j(\psi)$ we obtain that
\begin{align}
\nonumber
D g_j(\psi) & = e^\top_j \Big(   \left[ - \left( \alpha^\top \otimes I_k
\right) (\Omega \otimes \Omega) \;\;\;\;\; 0_{k \times k}\right]   +
\left[ 0_{k \times k^2} \;\;\; \Omega \right] \Big)\\
& = \label{eq::gj}
e^\top_j \Big( \left[ - \left( \alpha^\top \otimes I_k
\right) (\Omega \otimes \Omega) \;\;\;\;\; \Omega\right] \Big). 
\end{align}
The gradient of $g_j$ at $\psi$ is just the transpose of $Dg_j(\psi)$. 
Thus, the Jacobian of the function $g$  is
\begin{equation}\label{eq::GG}
\beta(j)/d\psi =  G = \left(
\begin{array}{c}
G_1^\top\\
\vdots\\
G_k^\top
\end{array}
\right).
\end{equation}

Next, we compute $Hg_j (\psi)$, the $b \times b$ Hessian of $g_j$ at $\psi$. 
Using the chain rule, 
\[
H g_j(\psi) =  D (D g_j(\psi)) =   (I_b \otimes e^\top_j) \frac{ d \; \mathrm{vec} \Big( \left[ - \left( \alpha^\top \otimes I_k
\right) (\Omega \otimes \Omega) \;\;\;\;\; \Omega\right] \Big)}{
d \psi},
\]
where the first matrix is of dimension $b \times kb$ and the second matrix is of
dimension $kb \times b$.
Then,
\begin{equation}\label{eq:H}
\frac{ d \; \mathrm{vec} \Big( \left[ - \left( \alpha^\top \otimes I_k
\right) (\Omega \otimes \Omega) \;\;\;\;\; \Omega\right] \Big)}{
d \psi} = \left[ 
    \begin{array}{c}
	-\frac{d   \left(   \alpha^\top \otimes I_k
\right) (\Omega \otimes \Omega) }{d \psi}\\
\;\; \\
	\frac{d \Omega}{d \psi}
    \end{array}
\right].
\end{equation}

The derivative at the bottom of the previous expression is
\[
    \frac{d \Omega}{d \psi} = \frac{d \Omega}{d \Sigma} \frac{d
    \Sigma}{d \psi}  =  - (\Omega \otimes \Omega) E = -
    (\Omega \otimes \Omega) [I_{k^2} \;\;\;\;\; 0_{k^2 \times k}]  =
    \Big[ - (\Omega \otimes \Omega) \;\;\;\;\; 0_{k^2 \times k} \Big].
\]
The top derivative in \eqref{eq:H} is more involved. By the product rule,
\[
\frac{d   \left(   \alpha^\top \otimes I_k
\right) (\Omega \otimes \Omega) }{d \psi} = 
    \Big( ( \Omega \otimes \Omega) \otimes I_k \Big) \frac{d
    (\alpha^\top \otimes I_k) }{d \psi} + \Big( I_{k^2} \otimes (\alpha^\top
    \otimes I_k) \Big) \frac{ d (\Omega \otimes \Omega)}{d
\psi}.
\]
The first derivative in the last expression is
\begin{align*}
\frac{d
(\alpha^\top \otimes I_k) }{d \psi}  & = 
\frac{d (\alpha^\top \otimes I_k) }{d \alpha} \frac{d \alpha}{d \psi} =
(I_k \otimes K_{1,k} \otimes I_k) (I_k \otimes \mathrm{vec}(I_k)) F \\
& = (I_k \otimes \mathrm{vec}(I_k) )F =
( I_k \otimes \mathrm{vec}(I_k)) \Big[0_{k \times k^2} \;\;\;\;\; I_k     \Big] \\
 & = \Big[0_{k^3 \times k^2} \;\;\;\;\;  I_k \otimes \mathrm{vec}(I_k) \Big] ,
\end{align*}
where $K_{k,1}$ is the appropriate  commutation matrix and the third identity follows since $K_{k,1} = I_k$ and, therefore, 
$(I_k \otimes K_{1,k} \otimes I_k)  = I_{k^3}$. 
Continuing with the second derivative in \eqref{eq:H},
\begin{align*}
\frac{ d (\Omega \otimes \Omega)}{d
\psi} & = \frac{ d (\Omega \otimes \Omega)}{d \Omega} \frac{d \Omega}{d \Sigma} \frac{d \Sigma}{d \psi}
= - J (\Omega \otimes \Omega) E \\
& = - J (\Omega \otimes \Omega) \Big[ I_{k^2} \;\;\;\;\; 0_{k^2 \times     k}\Big]
= -J \Big[ \Omega \otimes \Omega\ ;\;\;\;\; 0_{k^2 \times k}
\Big], 
\end{align*}
where
\[
    J =   \Big[ (I_k \otimes \Omega) \otimes I_{k^2}
    \Big] \Big( I_k \otimes K_{k,k} \otimes
    I_k \Big) \Big( I_{k^2} \otimes \mathrm{vec}(I_k) \Big) +  \Big[ I_{k^2} \otimes(
    \Omega \otimes I_k) \Big] \Big( I_k \otimes K_{k,k} \otimes
    I_k \Big) \Big( \mathrm{vec}(I_k) \otimes I_{k^2}  \Big).
\]
To see this,
notice that, by the product rule, we have
\[
J = \frac{d (\Omega\otimes
    \Omega)}{d \Omega}  = \frac{d (\Omega \otimes I_k)( I_k
    \otimes \Omega) } {d \Omega} = \Big[ (I_k \otimes \Omega) \otimes I_{k^2}
    \Big] \frac{d (\Omega \otimes I_k)}{d \Omega} + \Big[ I_{k^2} \otimes(
    \Omega \otimes I_k) \Big] \frac{d (I_k \otimes \Omega)}{d \Omega}.
\]
Next,
\[
    \frac{d (\Omega \otimes I_k)}{d \Omega}  = \Big( I_k \otimes K_{k,k} \otimes
    I_k \Big) \Big( I_{k^2} \otimes \mathrm{vec}(I_k) \Big) = \Big(
I_{k^2} \otimes K_{k,k} \Big) \Big(I_k \otimes \mathrm{vec}(I_k) \otimes I_k \Big)
\]
and
\[
    \frac{d (I_k \otimes \Omega )}{d \Omega}  = \Big( I_k \otimes K_{k,k} \otimes
    I_k \Big) \Big( \mathrm{vec}(I_k) \otimes I_{k^2}  \Big) = \Big(
K_{k,k} \otimes I_{k^2} \Big) \Big(I_k \otimes \mathrm{vec}(I_k) \otimes I_k
\Big).
\]
The formula for $J$ follows from the last three expressions.
Notice that $J$ is matrix of size $k^4 \times k^2$.
Finally, plugging the expressions for 
$\frac{d (\alpha^\top \otimes I_k) (\Omega \otimes \Omega) }{d \psi}$ and 
$\frac{ d \Omega }{d \psi}$  in \eqref{eq:H} we get that the Hessian
$H g_j(\psi)$ is 
\begin{equation}\label{eq::Hessian}
\frac{1}{2}\left(   (I_b \otimes e^\top_j) H + H^\top (I_b \otimes e_j)  \right)  
\end{equation}
where
\begin{equation}\label{eq:Halcazzo}
H =  \left[
    \begin{array}{c}
- \Big(  (\Omega \otimes \Omega) \otimes I_k \Big)  \Big[0_{k^3 \times
k^2} \;\;\;\;\;  I_k \otimes \mathrm{vec}(I_k) \Big]  +
 \Big( I_{k^2} \otimes (\alpha^\top
    \otimes I_k )\Big) J \Big[ \Omega \otimes \Omega \;\;\;\;\;
    0_{k^2 \times k}\Big]\\
 \;\\ 
 \Big[ - \Omega \otimes \Omega \;\;\;\;\; 0_{k^2 \times k} \Big]
    \end{array}
\right].
\end{equation}

So far we have ignored the facts that $\Sigma$ is symmetric.
Account for the symmetry,
the Hessian of $g_j(\psi)$ is
\[
D_h^\top  H g_j(\psi) D_h,
\]
where $D_h$ is the modified duplication matrix such that
$D \psi_h = \psi$,
with $\psi_h$ the vector comprised by the sub-vector of $\psi$ not including
the entries corresponding to the upper (or lower) diagonal entries of $\Sigma$.

We now prove the bounds \eqref{eq::B-and-lambda} and \eqref{eq:sigmamin}. We
will use repeatedly the fact that $\sigma_1(A \otimes B) = \sigma_1(A)
\sigma_1(B)$ and, for a vector $x$, $\sigma_1(x) = \|x\|$. For notational
convenience, we drop the dependence on $\psi$, since all our bounds hold
uniformly over all $\psi \in \mathcal{S}_n$.
The first bound in \eqref{eq::B-and-lambda} on the norm of the gradient of $g_j$
is straightforward:
\begin{align}
\nonumber
||G_j|| & \leq ||e_j|| \times   \sigma_1\left( \left[ - \left( \alpha^\top \otimes I_k
\right) (\Omega \otimes \Omega) \;\;\;\;\; \Omega\right] \right)\\
\nonumber
& \leq  \Big(
||\alpha||\times \sigma_1(\Omega)^2 + \sigma_1(\Omega) \Big)\\
\label{eq:Gj.constants}
& \leq \frac{A^2 \sqrt{ k}}{u^2} + \frac{1}{u}\\
    \nonumber
& \leq  
C \frac{\sqrt{k}}{u^2},
\end{align}
since  $\sigma_1(\Omega) \leq
\frac{1}{u}$, $\| \alpha \| \leq \sqrt{A^2 \mathrm{tr}(\Sigma)} \leq 
 A^2 \sqrt{k}$,  and we assume that $k \geq u^2$.

Turning to the second bound in \eqref{eq::B-and-lambda},
we will bound the largest singular
values of the individual terms in \eqref{eq::Hessian}.
First, for the lower block matrix in \eqref{eq:Halcazzo}, we have that 
$$
\sigma_1([ \Omega\otimes\Omega \;\;\;\;\; 0_{k^2 \times k}]) = 
\sigma_1( \Omega\otimes\Omega) = \sigma_1^2(\Omega) = 1/u^2.
$$
Next, we consider the two matrix in the upper block part of \eqref{eq:Halcazzo}.
For the first matrix we have that 
\begin{align}
    \label{eq:mammamia}
    \sigma_1 \Big( (\Omega\otimes\Omega\otimes I_k) \Big[ 0_{k^3 \times k^2 } \;\;\;\; I_k \otimes {\rm
vec}(I_k) \Big] \Big)&=
\sigma_1 \Big( \Big[ 0_{k^3 \times k^2} \;\;\;\; \Omega \otimes {\rm vec}(\Omega)
\Big]
\Big)\\
\nonumber
& = \sigma_1\left( \Omega \otimes \mathrm{vec}(\Omega)\right) \\
\nonumber
&=
\sigma_1 ( \Omega) \sigma_1({\rm vec}(\Omega))\\
\nonumber
& \leq \frac{\sqrt{k}}{u^2},
\end{align}
since
$$
\sigma_1({\rm vec}(\Omega)) = ||\Omega||_F =
\sqrt{\sum_{i=1}^k \sigma_i^2(\Omega)}\leq \sqrt{k}\sigma_1(\Omega) = \frac{\sqrt{k}}{u}.
$$
The identity in \eqref{eq:mammamia} is established using the following facts, valid for conformal matrices
$A$, $B$, $C$, $D$ and $X$:
\begin{itemize}
    \item $(A \otimes B)(C \otimes D) = AC \otimes BD$ , with $A =
	\Omega$, $B = \Omega \otimes I_k$, $C = I_k$ and $D =
	\mathrm{vec}(\Omega)$, and
    \item $AXB = C$ is equivalent to $\left(  B^\top \otimes A\right)
	\mathrm{vec}(X) = \mathrm{vec}(C)$, with $B=C = \Omega$ and $X = A =
	I_k$.
\end{itemize}
We now bound 
$\sigma_1 \Big([I_{k^2}\otimes \alpha^\top \otimes I_k] \; J \; [ \Omega\otimes\Omega
\;\;\;\; 0_{k^2\times k}] \Big)$, the second matrix in the upper block in
\eqref{eq:Halcazzo}.
We have that
\begin{align*}
\sigma_1(J) & \leq
2\sigma_1( (I_k \otimes \Omega \otimes I_{k^2})(I_k \otimes K_{k,k}\otimes I_k)(I_{k^2}\otimes {\rm vec}(I_k))\\
&=
2\sigma_1(\Omega)||I_k||_F\\
&= 2\sqrt{k}\sigma_1(\Omega), 
\end{align*}
since $\sigma_1(K_{k,k}) = 1$. 
Hence, using the fact that $\sigma_1([I_{k^2}\otimes \alpha^\top \otimes I_k]) =  ||\alpha||$,
$$
\sigma_1 \Big([I_{k^2}\otimes \alpha^\top \otimes I_k] \; J \; [ \Omega\otimes\Omega
\;\;\;\; 0_{k^2\times k}] \Big) \leq
2\sqrt{k} ||\alpha|| \sigma_1^3(\Omega) \leq 2 \sqrt{A U}  \frac{k}{u^3},
$$
since $\| \alpha \| \leq \sqrt{A U k}$.
Thus, we have obtained the following bound for the
largest singular value of the matrix $H$ in \eqref{eq:Halcazzo}:
$$
\sigma_1(H)\leq 
C \Big( \frac{1}{u^2} + \frac{\sqrt{k}}{u^2}+ \frac{k}{u^3} \Big),
$$
where $C$ is a positive number depending on $A$ only.
Putting all the pieces together,
\begin{align*}
\sigma_1(H_j) &=
\sigma_1\left( \frac{1}{2}((I_b \otimes e_j)H + H^\top (I_b\otimes e_j))\right)\\
& \leq
\sigma_1((I_b \otimes e_j)H)\\
& \leq
\sigma_1(I_b)\sigma_1(e_j)\sigma_1(H)\\
& \leq C \Big( \frac{1}{u^2} + \frac{\sqrt{k}}{u^2}+ \frac{k}{u^3} \Big).
\end{align*}
Whenever $u \leq \sqrt{k}$, the dominant term in the above expression is
$\frac{k}{u^3}$.
This gives the bound on $\overline{H}$ in (\ref{eq::B-and-lambda}).
The bound on $\underline{\sigma}$ given in \eqref{eq:sigmamin} follows
from \eqref{eq:Gj}. Indeed, for every $P \in \mathcal{P}^{\mathrm{OLS}}$
\[
 \min_j \sqrt{ G_j V G_j^\top}
 \geq   \sqrt{v} \min_j \| G_j \|.
\]
 Then, using \eqref{eq:Gj},
\[ 
    \min_j \| G_j \| \geq  \min_j \| \Omega_j \| \geq \lambda_{\min}(\Omega) =  \frac{1 }{
    	U },
\]
where $\Omega_j$ denotes the $j^{\mathrm{th}}$ row of $\Omega$.

The final value of the constant $C$ depends only on $A$ and $U$, and since $U \leq A$, we can
reduce the dependence of such constant on $A$ only.

$\Box$

\section{Appendix 6: Anti-concentration and comparison 
bounds for maxima of Gaussian random vectors and Berry-Esseen bounds for
polyhedral sets}
\label{app:high.dim.clt}

Now
we collect some results that can be are derived from
\cite{chernozhukov2015comparison}, \cite{cherno2} and \cite{nazarov1807maximal}.
However, our statement of the results is slightly different
than in the original papers.
The reason for this is that we need to keep track
of some constants in the proofs that affect our rates.

The following anti-concentration result for the maxima of Gaussian vectors follows from Lemma A.1 in
\cite{cherno2} and relies on a deep result in \cite{nazarov1807maximal}. 

\vspace{11pt}

\begin{theorem}[Anti-concentration of Gaussian maxima]
    \label{thm:anti.concentration}
    Let $(X_1\ldots,X_p)$ be a centered Gaussian vector in $\mathbb{R}^p$
    with $\sigma_j^2 = \mathbb{E}[X_j^2] > 0$ for all $j=1,\ldots,p$. Moreover,
	let $\underline{\sigma} = \min_{1 \leq j \leq p} \sigma_j$. Then, for
	any $y = (y_1,\ldots,y_p) \in \mathbb{R}^p$ and $a > 0$ 
	\[
	    \mathbb{P}(  X_j  \leq y_j + a, \forall j) - \mathbb{P}( X_j
		\leq y_j, \forall j) \leq \frac{a}{\underline{\sigma}} \left( \sqrt{2 \log p} + 2 \right).
	\]
\end{theorem}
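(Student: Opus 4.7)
The plan is to reduce the statement to Nazarov's inequality for centered Gaussian vectors whose marginals have been normalized to unit variance, and then invoke that inequality directly. First I would rescale: set $\tilde{X}_j = X_j/\sigma_j$, so that $(\tilde{X}_1,\ldots,\tilde{X}_p)$ is a centered Gaussian vector each of whose coordinates has unit variance (but with a generally different correlation structure inherited from the original). Under this rescaling the events $\{X_j \leq y_j\}$ and $\{X_j \leq y_j + a\}$ translate into $\{\tilde{X}_j \leq \tilde{y}_j\}$ and $\{\tilde{X}_j \leq \tilde{y}_j + a_j\}$, where $\tilde{y}_j = y_j/\sigma_j$ and $a_j = a/\sigma_j$. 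The crucial observation is that $a_j \leq a/\underline{\sigma}$ for every $j$.

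After this normalization, it suffices to establish that, for a centered Gaussian vector $(\tilde{X}_1,\ldots,\tilde{X}_p)$ with unit-variance marginals,
\[
\mathbb{P}\bigl(\tilde{X}_j \leq \tilde{y}_j + a_j, \ \forall j \bigr) - \mathbb{P}\bigl(\tilde{X}_j \leq \tilde{y}_j, \ \forall j \bigr) \;\leq\; \bigl(\max_j a_j\bigr)\bigl(\sqrt{2\log p} + 2\bigr).
\]
This is precisely Nazarov's anti-concentration inequality, and I would invoke it as a black box, citing either Nazarov (2003) or Lemma A.1 of \cite{cherno2} where it is stated in exactly this form with the explicit constant $\sqrt{2\log p} + 2$. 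Combining this with $\max_j a_j \leq a/\underline{\sigma}$ gives the claimed bound.

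The main obstacle, were one to prove Nazarov's inequality from scratch rather than cite it, lies in the use of the Ehrhard log-concavity inequality. The idea is to interpret $F(t) = \mathbb{P}\bigl(\tilde{X}_j \leq \tilde{y}_j + t \, a_j, \forall j \bigr)$ as a function of $t \in [0,1]$; Ehrhard's inequality, applied to the Minkowski-type parametrization of the orthant, implies that $t \mapsto \Phi^{-1}(F(t))$ is concave on $[0,1]$. One then combines this concavity with a bound on the derivative $F'(0)$ in terms of the density of the Gaussian maximum (which is of order $\sqrt{2\log p}$), together with the tail bound $\Phi^{-1}(F(0)) \geq -\sqrt{2\log p}$ for orthants carrying non-negligible mass, to produce the explicit constant $\sqrt{2\log p} + 2$. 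For the purposes of the present paper, however, none of this machinery needs to be reproduced: the reduction by marginal rescaling together with the ready-made statement in Lemma A.1 of \cite{cherno2} yields the theorem immediately.
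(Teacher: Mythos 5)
Your proof is correct and takes essentially the same route as the paper, which obtains the statement directly from Nazarov's inequality as recorded in Lemma A.1 of \cite{cherno2} (and ultimately \cite{nazarov1807maximal}). The only remark is that your rescaling to unit variances, with the resulting coordinate-dependent shifts $a_j = a/\sigma_j$ (which need the trivial monotonicity observation $a_j \leq a/\underline{\sigma}$ since the cited lemma uses a common scalar shift), is not actually necessary: that lemma is already stated for centered Gaussian vectors whose coordinate variances are merely bounded below, so the theorem follows immediately by taking the lower bound to be $\underline{\sigma}^2$.
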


The previous result implies that, for any $a > 0$ and $y = (y_1,\ldots,y_p) \in
\mathbb{R}^p_+$, 
	\[
	    \mathbb{P}(  |X_j|  \leq y_j + a, \forall j) - \mathbb{P}( |X_j|
		\leq y_j, \forall j) \leq \frac{a}{\underline{\sigma}} \left(
		\sqrt{2 \log 2p} + 2 \right)
	\]
and that, for any $y > 0$,
	\[
	    \mathbb{P}( \max_j |X_j | \leq y + a) - \mathbb{P}(\max_j |X_j|
		\leq y) \leq \frac{a}{\underline{\sigma}} \left( \sqrt{2 \log 2p} + 2 \right).
	\]

The following high-dimensional central limit theorem follows from Proposition
2.1 in \cite{cherno2} and \Cref{thm:anti.concentration}. Notice that we have
kept the dependence on the minimal variance explicit. 

\vspace{11pt}

\begin{theorem}[Berry-Esseen bound for simple convex sets]
\label{thm:high.dim.clt}
Let $X_1,\ldots,X_n$ be independent centered random vectors in $\mathbb{R}^p$.
Let $S^X_n = \frac{1}{\sqrt{n}} \sum_{i=1}^n X_i$ and, similarly, let $S^Y_n
= \frac{1}{n} \sum_{i=1}^n Y_i$, where $Y_1,\ldots, Y_n$ are independent
vectors with $Y_i \sim N_p(0,\mathbb{E}[X_i X_i^\top])$.
Let $\mathcal{A}$ be the collection of polyhedra $A$ in $\mathbb{R}^p$ of the
form
$$
A =\left\{ x \in \mathbb{R}^d \colon v^\top x \leq t_v , v \in
\mathcal{V}(\mathcal{A}) \right\}
$$
where $ \mathcal{V}(\mathcal{A}) \subset \mathbb{R}^p$ is a set of $m$
points of unit norm, with $m \leq (n p)^d$ for some constant $d>0$,
and $( t_v \colon v \in \mathcal{V}(\mathcal{A}) )$ is a set of $m$
positive numbers. For each $i=1,\ldots,n$ let 
$$
\tilde{X}_i =
(\tilde{X}_{i1},\ldots,\tilde{X}_{im})^\top = \left( v^\top X_i, v \in
\mathcal{V}(\mathcal{A}) \right).
$$
Assume that the following conditions are satisfied, for some $B_n \geq 1$ and
$\underline{\sigma}>0$:
    \begin{enumerate}
	\item[(M1')] $n^{-1} \sum_{i=1}^n \mathbb{E}\left[ \tilde{X}_{ij}^2
	    \right] \geq
	    \underline{\sigma}^2$, for all $j=1,\ldots, m$;
	\item[(M2')] $n^{-1} \sum_{i=1}^n \mathbb{E}\left[ |
	    \tilde{X}_{ij}|^{2+k} \right] \leq B^{k}_n$, for all
	    $j=1,\ldots,m$ and $k=1,2$;
	\item[(E1')] $\mathbb{E}\left[ \exp\left( | \tilde{X}_{i,j} | / B_n
	    \right) \right] \leq 2$, for $i=1,\ldots,n$ and $k=1,2$.
    \end{enumerate}
Then, there exists a constant $C>0$ depending only on $d$ such that 
\[
	\sup_{A \in \mathcal{A}} \left|\mathbb{P}(S^X_n \in A) -
	\mathbb{P}(S^Y_n \in A)  \right| \leq \frac{C}{\underline{\sigma}}
	\left( \frac{B_n^2 \log^7(pn) }{n } \right)^{1/6}.
\]
\end{theorem}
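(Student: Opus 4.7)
The plan is to reduce the polyhedral class $\mathcal{A}$ to a class of hyper-rectangles in a higher-dimensional space and then invoke Proposition 2.1 of \cite{cherno2} (the high-dimensional CLT for hyper-rectangles), paired with the sharp Nazarov anti-concentration bound of \Cref{thm:anti.concentration} to make the dependence on $\underline{\sigma}$ explicit. Concretely, for a fixed $A \in \mathcal{A}$ with defining set $\mathcal{V}(A) = \{v_1,\ldots,v_m\} \subset \mathbb{R}^p$ and thresholds $(t_{v_1},\ldots,t_{v_m})$, the event $\{S^X_n \in A\}$ is identical to $\{v_j^\top S^X_n \leq t_{v_j}, \forall j\}$, which is a coordinate-wise one-sided inequality on the $m$-dimensional vector $\tilde S^X_n := \tfrac{1}{\sqrt n}\sum_i \tilde X_i$. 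The analogous reformulation applies to $S^Y_n$, and the Gaussian $\tilde S^Y_n$ has precisely the covariance $n^{-1}\sum_i \mathbb{E}[\tilde X_i \tilde X_i^\top]$ obtained by projecting the covariance of $S^Y_n$ through the rows of the matrix whose columns are the $v_j$'s. So
\[
\sup_{A \in \mathcal{A}} \left| \mathbb{P}(S^X_n \in A) - \mathbb{P}(S^Y_n \in A) \right|
\]
is bounded by the Kolmogorov distance between $\tilde S^X_n$ and $\tilde S^Y_n$ over the class of axis-aligned half-spaces (equivalently, hyper-rectangles) in $\mathbb{R}^m$.

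The second step is to verify that the $m$-dimensional vectors $\tilde X_i$ satisfy the hypotheses of the CCK hyper-rectangle CLT. But this is immediate: conditions (M1'), (M2'), and (E1') in the statement are tailored to be exactly the hypotheses of Proposition 2.1 of \cite{cherno2} applied to the $\tilde X_i$'s, with minimum-variance parameter $\underline{\sigma}^2$ and moment/Orlicz parameter $B_n$. In the canonical form of that proposition (where one assumes $\underline{\sigma} \geq 1$) one then obtains a bound of the order $(B_n^2 \log^7(mn)/n)^{1/6}$. The last step is just a counting argument: the assumption $m \leq (np)^d$ gives $\log(mn) \leq (d+1)\log(np)$, so $\log^7(mn) \leq (d+1)^7 \log^7(pn)$, and the factor $(d+1)^7$ is absorbed into the constant $C$, which is allowed to depend on $d$.

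The genuine technical point, and the reason this theorem is stated separately from the original CCK bound, is that the dependence on $\underline{\sigma}$ must be made explicit and that it enters as $1/\underline{\sigma}$ rather than as some worse power. I expect this to be the main obstacle: it requires revisiting the smoothing argument in \cite{cherno2} and inserting \Cref{thm:anti.concentration} wherever the proof uses anti-concentration of the maximum of a Gaussian vector to convert a smoothed indicator back to a true indicator. Nazarov's inequality gives exactly a bound of the form (smoothing bandwidth)$\,\times \underline{\sigma}^{-1} \times \sqrt{\log(2m)}$ for that conversion, and after optimizing the bandwidth against the bias of Gaussian smoothing one recovers the rate $\underline{\sigma}^{-1}(B_n^2 \log^7(mn)/n)^{1/6}$. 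A naive rescaling $\tilde X_i \mapsto \tilde X_i/\underline{\sigma}$ would instead yield only $\underline{\sigma}^{-1/3}$, which is the wrong power for our downstream applications; hence a careful inspection and re-optimization of the CCK smoothing step, rather than a black-box invocation, is needed.
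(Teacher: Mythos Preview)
Your proposal is correct and follows the paper's approach exactly. The paper's own proof is a single sentence stating that the result ``follows from Proposition 2.1 in \cite{cherno2} and \Cref{thm:anti.concentration},'' together with the note that ``we have kept the dependence on the minimal variance explicit''; your elaboration --- reduce polyhedra to hyper-rectangles via the projections $\tilde X_i$, verify that (M1')--(E1') are precisely the hypotheses of CCK's Proposition 2.1, absorb $\log m$ into $\log(pn)$ via $m \leq (np)^d$, and track $\underline\sigma$ through Nazarov's inequality at the anti-concentration step --- is exactly what that sentence encodes.

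One minor quibble on your last paragraph: the side claim that a naive rescaling $\tilde X_i \mapsto \tilde X_i/\underline\sigma$ yields only $\underline\sigma^{-1/3}$ is not accurate. For $\underline\sigma \leq 1$, condition (M2') with $k=1$ forces the rescaled moment parameter to be $B_n' = B_n/\underline\sigma^{3}$ (and one then checks that (M2') for $k=2$ and (E1') are also satisfied with this $B_n'$), whence the black-box CCK bound becomes $((B_n')^2 \log^7/n)^{1/6} = \underline\sigma^{-1}(B_n^2\log^7/n)^{1/6}$. So careful rescaling already recovers the correct power of $\underline\sigma$; revisiting the smoothing argument with Nazarov's inequality is an alternative route to the same exponent, and is the one the paper's phrasing points to. This does not affect the correctness of your plan.
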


Finally, we make frequent use the following comparison theorem for the maxima of Gaussian
vectors.  Its proof can be established using arguments from the proof of Theorem
4.1 in \cite{cherno2} -- which itself relies on a modification of Theorem 1 from \cite{chernozhukov2015comparison} -- along with the above anti-concentration bound of
    \Cref{thm:anti.concentration}.
As usual,  we have
kept the dependence on the minimal variance explicit. 

\vspace{11pt}

\begin{theorem}[Gaussian comparison]
    \label{thm:comparisons}
Let $X \sim N_p(0,\Sigma_X)$ and $Y \sim N_p(0,\Sigma_Y)$ with
\[
    \Delta = \max_{i,j} | \Sigma_X(j,k) - \Sigma_Y(j,k)|
\]
Let $\underline{\sigma}^2 = \max\{  \min_j \Sigma_X(j,j) ,  
\min_j \Sigma_Y(j,j) \}$. Then, there exists a universal constant $C>0$ such that
\[
    \sup_{t \in \mathbb{R}^p} \left| \mathbb{P}( X \leq t) -
    \mathbb{P}( Y \leq t)    \right| \leq C \frac{\Delta^{1/3} (2 \log
    p)^{1/3}}{\underline{\sigma}^{2/3}}.
    \]
\end{theorem}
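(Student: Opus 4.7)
The plan is to follow the Slepian interpolation / smoothing strategy developed in \cite{chernozhukov2015comparison} and adapted in \cite{cherno2}, while carefully tracking the dependence on $\underline{\sigma}$ using the anti-concentration bound of \Cref{thm:anti.concentration}. The key reduction is that for any $t \in \mathbb{R}^p$, $\mathbb{P}(X \leq t) = \mathbb{P}(\max_j (X_j - t_j) \leq 0)$, so it suffices to compare $\mathbb{E}[\mathbf{1}\{\max_j(X_j-t_j) \leq 0\}]$ for $X$ and $Y$; the shift does not alter the covariance matrices, so both $\Delta$ and $\underline{\sigma}$ are preserved. Thus it is enough to prove, uniformly in $t$, that $|\mathbb{E} f(X-t) - \mathbb{E} f(Y-t)|$ is small where $f(x) = \mathbf{1}\{\max_j x_j \leq 0\}$, and we may henceforth assume $t = 0$.

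First, I would smooth the indicator using the CCK construction: let $F_\beta(x) = \beta^{-1}\log\bigl(\sum_{j=1}^p e^{\beta x_j}\bigr)$, which satisfies $\max_j x_j \leq F_\beta(x) \leq \max_j x_j + \beta^{-1}\log p$, and compose with a smooth monotone function $g_0$ that is $1$ on $(-\infty,0]$, $0$ on $[\epsilon,\infty)$, with $\|g_0'\|_\infty \lesssim 1/\epsilon$ and $\|g_0''\|_\infty \lesssim 1/\epsilon^2$. Set $m_{\beta,\epsilon}(x) = g_0(F_\beta(x))$. A direct computation (Lemma A.5 in \cite{cherno2}) yields the derivative bound $\sum_{j,k} |\partial^2_{jk} m_{\beta,\epsilon}(x)| \lesssim \beta^2 / \epsilon + \beta / \epsilon^2$ uniformly in $x$.

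Next, I would invoke the Gaussian interpolation identity: defining $Z_\tau = \sqrt{\tau} X + \sqrt{1-\tau} Y$ for $\tau \in [0,1]$, Stein's lemma gives
\begin{equation*}
\mathbb{E}[m_{\beta,\epsilon}(X)] - \mathbb{E}[m_{\beta,\epsilon}(Y)]
= \tfrac{1}{2}\int_0^1 \sum_{j,k} \bigl(\Sigma_X(j,k)-\Sigma_Y(j,k)\bigr)\, \mathbb{E}\bigl[\partial^2_{jk} m_{\beta,\epsilon}(Z_\tau)\bigr]\, d\tau.
\end{equation*}
Taking absolute values and using the derivative bound produces $|\mathbb{E}[m_{\beta,\epsilon}(X)] - \mathbb{E}[m_{\beta,\epsilon}(Y)]| \lesssim \Delta (\beta^2/\epsilon + \beta/\epsilon^2)$. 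The approximation error between $m_{\beta,\epsilon}$ and the indicator is controlled by the event $\{-\epsilon - \beta^{-1}\log p \leq \max_j X_j \leq \epsilon\}$ (and analogously for $Y$), whose probability, by \Cref{thm:anti.concentration} applied to the centered Gaussian vectors $X$ and $Y$, is at most $\bigl(\epsilon + \beta^{-1}\log p\bigr)\underline{\sigma}^{-1}(\sqrt{2\log p} + 2)$.

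Combining these two bounds gives, up to constants and logarithmic factors,
\begin{equation*}
\sup_{t}\bigl|\mathbb{P}(X\leq t) - \mathbb{P}(Y\leq t)\bigr|
\;\lesssim\; \Delta\!\left(\tfrac{\beta^2}{\epsilon} + \tfrac{\beta}{\epsilon^2}\right) + \tfrac{\epsilon + \beta^{-1}\log p}{\underline{\sigma}}\sqrt{\log p}.
\end{equation*}
The remaining step is a standard optimization: setting $\epsilon \sim \beta^{-1}\log p$ and then choosing $\beta$ to balance $\Delta \beta \log p$ against $\beta^{-1}\underline{\sigma}^{-1}(\log p)^{3/2}$ yields the optimal exponent $1/3$ in $\Delta$ and the factor $\underline{\sigma}^{-2/3}$. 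After cleaning up the logarithmic factors one recovers the stated rate $C \Delta^{1/3}(2\log p)^{1/3}/\underline{\sigma}^{2/3}$.

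The main obstacle is the careful tracking of the $\underline{\sigma}$ dependence. The CCK results in their original form are typically stated under the normalization $\min_j \Sigma(j,j) \geq 1$, and simply rescaling variables changes $\Delta$ in a non-homogeneous way. The clean way out is to apply Nazarov's anti-concentration in its unnormalized form (as stated in \Cref{thm:anti.concentration}), which introduces the factor $1/\underline{\sigma}$ directly into the approximation-error term, and then to verify that the Stein/interpolation step produces no additional dependence on $\underline{\sigma}$ because the second derivative bound on $m_{\beta,\epsilon}$ is scale-free. The optimization over $(\beta,\epsilon)$ then cleanly yields the $\underline{\sigma}^{-2/3}$ factor.
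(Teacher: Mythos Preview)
Your strategy is exactly the one the paper points to: it gives no self-contained argument but defers to the proof of Theorem~4.1 in \cite{cherno2} (itself a modification of Theorem~1 in \cite{chernozhukov2015comparison}) combined with Nazarov's inequality (\Cref{thm:anti.concentration}), with the instruction to keep the dependence on $\underline{\sigma}$ explicit. Two points in your write-up need fixing, however.

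\textbf{(i) The second-derivative bound and the $\log p$ exponent.} For $m_{\beta,\epsilon}=g_0\circ F_\beta$ with $\|g_0'\|_\infty\lesssim \epsilon^{-1}$, $\|g_0''\|_\infty\lesssim \epsilon^{-2}$, a direct computation gives
\[
\sum_{j,k}\bigl|\partial^2_{jk} m_{\beta,\epsilon}(x)\bigr|
\;\lesssim\; \epsilon^{-2}+\beta\,\epsilon^{-1},
\]
not $\beta^2/\epsilon+\beta/\epsilon^2$. With the corrected bound, setting $\epsilon\asymp\beta^{-1}\log p$ and balancing $\Delta\,\beta^2/\log p$ against $\beta^{-1}(\log p)^{3/2}/\underline{\sigma}$ yields
\[
\sup_{t}\bigl|\mathbb{P}(X\le t)-\mathbb{P}(Y\le t)\bigr|
\;\lesssim\;\frac{\Delta^{1/3}(\log p)^{2/3}}{\underline{\sigma}^{2/3}} ,
\]
i.e.\ the exponent on $\log p$ is $2/3$, not $1/3$. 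This is in fact how the paper \emph{uses} the result: compare $\Delta_{n,3}$ in \Cref{thm::coverage} and $\mathrm{E}_{2,n}$ in \Cref{thm::CLT2}, both of which carry $(2\log 2s)^{2/3}$ and $(2\log 2k)^{2/3}$. The $1/3$ in the theorem statement is a typo.

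\textbf{(ii) Why $\underline{\sigma}^2=\max\{\min_j\Sigma_X(j,j),\min_j\Sigma_Y(j,j)\}$.} You apply anti-concentration to both $X$ and $Y$, which would force the \emph{smaller} of the two minimum variances into the bound. The trick that gives the $\max$ is that anti-concentration is needed for only one of them. Using the sandwich $\mathbf{1}\{\cdot\le t-\epsilon'\}\le m^-\le \mathbf{1}\{\cdot\le t\}\le m^+\le \mathbf{1}\{\cdot\le t+\epsilon'\}$ with $\epsilon'=\epsilon+\beta^{-1}\log p$, one gets
\[
\mathbb{P}(X\le t)\ \le\ \mathbb{E}\,m^+(X)\ \le\ \mathbb{E}\,m^+(Y)+I\ \le\ \mathbb{P}(Y\le t+\epsilon')+I,
\]
and analogously $\mathbb{P}(X\le t)\ge \mathbb{P}(Y\le t-\epsilon')-I$, where $I$ is the interpolation term. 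Both inequalities reduce to an anti-concentration bound for $Y$ alone; by symmetry one could instead route everything through $X$. Taking the better of the two gives the stated $\underline{\sigma}$.
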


{\bf Remark.}  The above result further implies that
\[
\sup_{t >0 } \left| \mathbb{P}( \| X \|_\infty \leq t) -
    \mathbb{P}( \| Y \|_\infty \leq t)    \right| \leq 2 C \frac{\Delta^{1/3} (2 \log
    p)^{1/3}}{\underline{\sigma}^{2/3}},
\]
which corresponds to the original formulation of the Gaussian comparison theorem of \cite{chernozhukov2015comparison}.

\section{Appendix 7: The Procedures}

\begin{figure}
\rule{7in}{.5mm}
\begin{center}
{\sf Boot-Split}
\end{center}
{\sc Input}: Data ${\cal D} = \{(X_1,Y_1),\ldots, (X_{2n},Y_{2n})\}$.
Confidence parameter $\alpha$.  Constant $\epsilon$ (Section
\ref{sec:loco.parameters}).\\
{\sc Output}: Confidence set $\hat{C}^*_{\wS}$ for $\beta_{\wS}$ and
$\hat{D}^*_{\wS}$ for $\gamma_{\wS}$.

\begin{enum}
\item Randomly split the data into two halves
  ${\cal D}_{1,n}$ and ${\cal D}_{2,n}$.
\item Use ${\cal D}_{1,n}$ to select a subset of variables $\wS$.
This can be forward stepwise, the lasso, or any other method.
Let $k= |\wS|$.
\item Write ${\cal D}_{2,n}=\{(X_1,Y_1),\ldots, (X_n,Y_n)\}$.
  Let $P_n$ be the empirical distribution of ${\cal D}_{2,n}$.
\item For $\beta_{\wS}$:
\begin{enum}
\item Get $\hat\beta_{\wS}$ from ${\cal D}_{2,n}$ by least squares.
\begin{enum}
\item Draw $(X_1^*,Y_1^*),\ldots, (X_m^*,Y_m^*) \sim P_n$.
  Let $\hat\beta^*_{\wS}$ be the estimator constructed from the bootstrap sample.
\item Repeat $B$ times to get $\hat\beta_{\wS,1}^*, \ldots,\hat\beta_{\wS,B}^*$.
\item Define $\hat{t}_\alpha$ by
$$
\frac{1}{B}\sum_{b=1}^B I\Bigl(\sqrt{n}||\hat\beta_{\wS,b}^* - \hat\beta_{\wS}||_\infty > \hat{t}_\alpha\Bigr) = \alpha.
$$
\end{enum}
\item Output: $\hat{C}^*_{\wS} = \{ \beta\in\mathbb{R}^k:\ ||\beta-\hat\beta_{\wS}||_\infty\leq \hat{t}_\alpha/\sqrt{n}\}$.
\end{enum}
\item For $\gamma_{\wS}$:
\begin{enum}
\item Get $\hat\beta_{\wS}$ from ${\cal D}_{1,n}$.
This can be any estimator.
For $j\in \hat{S}$ let
$\hat\gamma_{\hat{S}}(j) = \frac{1}{n}\sum_{i=1}^n r_i$
where
$r_i = (\delta_i(j) + \epsilon \xi_i(j))$,
$\delta_i(j) =  |Y_i - \hat\beta_{\wS,j}^\top X_i|  - |Y_i - \hat\beta_{\wS}^\top X_i|$ and
$\xi_i(j)\sim {\rm Unif}(-1,1)$.
Let $\hat\gamma_{\wS} = (\hat\gamma_{\wS}(j):\ j\in \wS)$.
\begin{enum}
\item Draw $(X_1^*,Y_1^*),\ldots, (X_n^*,Y_n^*) \sim P_n$.
\item Let
$\hat\gamma_{\wS}(j) = \frac{1}{n}\sum_{i=1}^n r_i^*$.
Let $\hat\gamma_{\wS}^* = (\hat\gamma_{\wS}^*(j):\ j\in \wS)$.
\item Repeat $B$ times to get
$\hat\gamma_{\wS,1}^*, \ldots, \hat\gamma_{\wS,B}^*$.
\item Define $\hat{u}_\alpha$ by
$$
\frac{1}{B}\sum_{b=1}^B I\Bigl(\sqrt{n}||\hat\gamma_{\wS,b}^* - \hat\gamma_{\wS}||_\infty > \hat{u}_\alpha\Bigr) = \alpha.
$$
\end{enum}
\item Output: $\hat{D}^*_{\wS} = \{ \gamma_{\wS} \in\mathbb{R}^k:\ ||\gamma_{\wS}-\hat\gamma_{\wS}||_\infty\leq \hat{u}_\alpha/\sqrt{n}\}$.
\end{enum}
\end{enum}
\caption{\emph{The Boot-Split Algorithm.}}
\label{fig::boot-split}
\rule{7in}{.5mm}
\end{figure}

\begin{figure}
\rule{7in}{.5mm}
\begin{center}
{\sf Normal-Split}
\end{center}
{\sc Input}: Data ${\cal D} = \{(X_1,Y_1),\ldots, (X_{2n},Y_{2n})\}$.
Confidence parameter $\alpha$. Threshold and variance parameters $\tau$ and $\epsilon$ (only for $\gamma_{\wS}$).\\
{\sc Output}: Confidence set $\hat{C}_{\wS}$ for $\beta_{\wS}$ and
$\hat{D}_{\wS}$ for $\gamma_{\wS}$.

\begin{enum}
\item Randomly split the data into two halves
  ${\cal D}_{1,n}$ and ${\cal D}_{2,n}$.
\item Use ${\cal D}_{1,n}$ to select a subset of variables $\wS$.
This can be forward stepwise, the lasso, or any other method.
Let $k= |\wS|$.
\item For $\beta_{\wS}$:
\begin{enum}
\item Get $\hat\beta_{\wS}$ from ${\cal D}_{2,n}$ by least squares.
\item Output $\hat{C}_{\wS} = \bigotimes_{j\in \wS} C(j)$ where
$C(j) = \hat\beta_{\wS}(j) \pm z_{\alpha/(2k)} \sqrt{\hat\Gamma_n(j,j)}$
where $\hat\Gamma$ is given by (\ref{eq::Ga}).
\end{enum}
\item For $\gamma_{\wS}$:
\begin{enum}
\item Get $\hat\beta_{\wS}$ from ${\cal D}_{1,n}$.
This can be any estimator.
For $j\in \wS$ let
$\hat\gamma_{\wS}(j) = \frac{1}{n}\sum_{i=1}^n r_i$
where
$r_i = (\delta_i(j) + \epsilon \xi_i(j))$,
$\delta_i(j) =  \left| Y_i - t_{\tau} \left( \hat\beta_{\wS,j}^\top X_i \right) \right|  - \left|Y_i - t_{\tau} \left( \hat\beta_{\wS}^\top X_i \right)\right|$ and
$\xi_i(j)\sim {\rm Unif}(-1,1)$.
Let $\hat\gamma_{\wS} = (\hat\gamma_{\wS}(j):\ j\in \wS)$.
\item Output $\hat{D}_{\wS} = \bigotimes_{j\in \wS} D(j)$ where
$D(j) = \hat\gamma_{\wS}(j) \pm z_{\alpha/(2k)}\hat{\Sigma}(j,j)$,
with $\hat{\Sigma}(j,j)$ given by \eqref{eq:Sigma.loco}.
\end{enum}
\end{enum}
\caption{\emph{The Normal-Split Algorithm.}}
\label{fig::normal-split}
\rule{7in}{.5mm}
\end{figure}

\begin{figure}
\rule{7in}{.5mm}
\begin{center}
{\sf Median-Split}
\end{center}
{\sc Input}: Data ${\cal D} = \{(X_1,Y_1),\ldots, (X_{2n},Y_{2n})\}$.
Confidence parameter $\alpha$.\\
{\sc Output}: Confidence set $\hat{E}_{\wS}$.

\begin{enum}
\item Randomly split the data into two halves
  ${\cal D}_{1,n}$ and ${\cal D}_{2,n}$.
\item Use ${\cal D}_{1,n}$ to select a subset of variables $\wS$.
This can be forward stepwise, the lasso, or any other method.
Let $k= |\wS|$.
\item Write ${\cal D}_{2,n}=\{(X_1,Y_1),\ldots, (X_n,Y_n)\}$.
  For $(X_i,Y_i)\in {\cal D}_{2,n}$ let
$$
W_i(j) = 
|Y_i-\hat\beta_{\wS,j}^\top X_i| - |Y_i-\hat\beta_{\wS}^\top X_i|,
$$
\item Let $W_{(1)}(j) \leq \cdots \leq W_{(n)}(j)$ be the order statistics
and let
$E(j) = [W_{(n-k_2)}(j),W_{(n-k_1+1)}(j)]$
where
$$
k_1 =  \frac{n}{2} + \sqrt{n \log\left( \frac{2k}{\alpha}\right)},\ \ \ 
k_2 =  \frac{n}{2} - \sqrt{n \log\left( \frac{2k}{\alpha}\right)}.
$$
\item Let $\hat{E}_{\wS} = \bigotimes_{j\in S} E(j)$.
\end{enum}
\caption{\emph{The Median-Split Algorithm.}}
\label{fig::median-split}
\rule{7in}{.5mm}
\end{figure}

\end{document}